\definecolor{darkgreen}{rgb}{0.0, 0.6, 0.13}
\tikzset{
fillcirc/.style={
fill,
circle,
minimum size=0.3cm,
draw=black}
}
\tikzset{
circ/.style={
circle,
minimum size=0.3cm,
draw=black}
}
\tikzset{
emptyrectangle/.style={
circle,
minimum size=0.3cm,
draw=black}
}
\tikzset{
fillstar/.style={
star,
star points=5,
star point ratio=2.5, 
inner sep=0.08cm,
draw=black,
fill}
}
\newtheorem{thm}{Theorem}[section]
 \newtheorem{lem}[thm]{Lemma}
 \newtheorem{prop}[thm]{Proposition}
 \theoremstyle{definition}
 \newtheorem{defn}[thm]{Definition}
 \theoremstyle{remark}
 \newtheorem{rem}[thm]{Remark}
 \numberwithin{equation}{section}
\newcommand{\defeq}{\mathrel{\mathop:}=}
\newcommand{\R}{\mathbb R}
\newcommand{\T}{\mathbb T}
\newcommand{\Z}{\mathbb Z}
 \newcommand{\Addresses}{{
  \bigskip
  \footnotesize

  Xiao Ma, \textsc{Department of Mathematics, Princeteon University}\par\nopagebreak
  \textit{E-mail address}: \texttt{xiaom@princeton.edu}

  }}
\begin{document}
  \author{Xiao Ma}
 \title{Almost sharp wave kinetic theory of multidimensional KdV type equations with $d\ge 3$}
 \begin{abstract}
In this work, we study the random series expansion of a multidimensional KdV type equation with a diffusion term, the so-called Zakharov-Kuznetsov (ZK) equation. We impose random initial data and periodic boundary condition with period $L$ on this equation. Using the random series expansion, we derive the $3$-wave kinetic equation on the inertial range for $t\lesssim L^{-\varepsilon}T_{\text{kin}}$. Our result reaches kinetic time scale up to $\varepsilon$ loss.


\end{abstract}

 \maketitle
 
 \tableofcontents
 \section{Introduction}
 
In this paper, we study the wave turbulence theory for the following KdV type equation
\begin{equation}
    \partial_t\psi(t,x)+\Delta\partial_{x_1}\psi(t,x)-\nu \Delta \psi(t,x)=\lambda \partial_{x_1}(\psi^2(t,x)).
\end{equation}
as an example of three wave system. 

Before talking about our main result, let us recall some basic concepts from the wave turbulence theory. For a more thorough introduction to this topic, see the book \cite{Nazarenko}.

\subsection{Wave turbulence theory.} It is well-known that in many PDEs coming from physics, energy can be transferred from low frequency Fourier modes to high frequency modes. 
This process can be repeated many times so that finally the energy can be injected to very high frequency. 
This is called the energy cascade phenomenon. Since it takes many steps to transfer energy to very high modes, the information in the low energy modes is lost and high energy modes become random and exhibit \textit{universality}. This is very similar to the game, \textit{Galton board}, as illustrated in Figure \ref{fig.Galtonboard}.
Each step of energy transfer is similar to a collision of a ball with a pin. To get to the bottom (the high energy modes), a ball must collide with many pins (there must be many steps of energy transfer). Finally, the distribution of balls (energy) becomes random and universal.  
\begin{figure}[H]
    \centering
    \includegraphics[scale = 0.2]{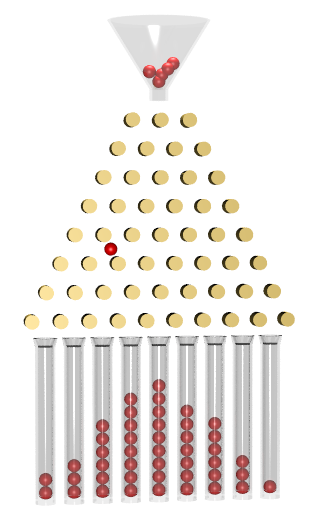}
    \caption{The Galton board}
    \label{fig.Galtonboard}
    \end{figure}

In the Galton board case, the final distribution is the Gaussian distribution. This distribution is universal in the sense that it is independent of the details of the input flow of balls. Although in many examples of energy cascade the final distribution is random and universal, they are usually not i.i.d Gaussian or even do not exhibit properties similar to i.i.d Gaussian.

The cascade spectrum of strong turbulence in the fluid is one example of highly non-Gaussian universality behavior. The structure functions satisfy power laws which are independent of the initial data. The second order structure function satisfies the famous Kolmogorov $5/3$ law while the high order structure functions exhibit a significant deviation from the Kolmogorov's prediction. This deviation is called \textit{intermittency}. 

\begin{figure}[H]
    \centering
    \includegraphics[scale = 0.6] {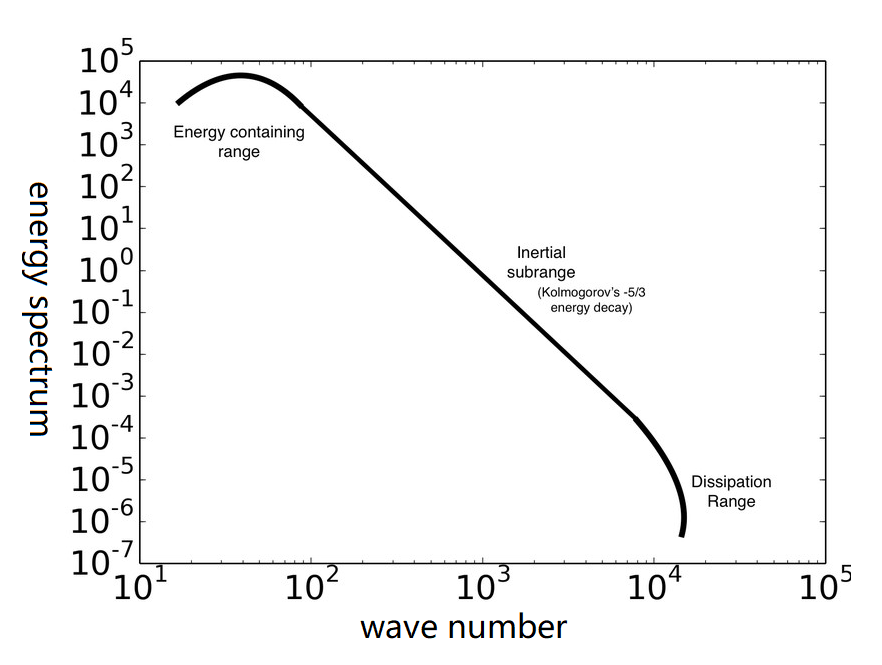}
    \caption{The $5/3$ law}
    \label{fig.5/3law}
    \end{figure}

While i.i.d Gaussian assumption does not work for turbulence in fluid, it can work for turbulence generated by dispersive waves. Unlike fluid equation, a dispersive equation can exhibit turbulent behavior even when the size of the solution is small. In this case, the distribution can be calculated by a perturbative expansion and it is close to i.i.d Gaussian variables if it is so initially. In other words, there is no intermittency in this case. In the wave turbulence, the energy distribution $n(k)$ is supposed to evolve according to the following wave kinetic equation
\[
\tag{WKE}\label{eq.WKE}
\begin{split}
\partial_t n(t, k) =&\mathcal K\left(n(t, \cdot)\right),
\\
\mathcal K(n)(k):=& |k_x|^2\int_{\substack{(k_1, k_2)\in \R^{2d}\\k_1+k_2=k}}n(k_1) n(k_2)\delta(|k_1|^2k_{1x}+|k_2|^2k_{2x}-|k|^2k_{x})\, dk_1 dk_2
\\
-& 2n(k)\int_{\mathbb{R}^d}k_x(k_x-k_{1x})n(k_1) \delta(|k_1|^2k_{1x}+|k_2|^2k_{2x}-|k|^2k_{x})\, dk_1
\end{split}
\]
\eqref{eq.WKE} admits a special solution $|k|^{-d-1}$ and the general solution of \eqref{eq.WKE} is supposed to converge to this power law. This power law is universal and independent of the detail of external force and initial data.

The \eqref{eq.WKE} is a Boltzmann type equation. Compared to the original ZK equation, \eqref{eq.WKE} has a monotonically decreasing entropy (see \cite{GI}) and exhibits irreversibility. The boltzmann equation statistically describes the collision of particles and in the collision integral there are momentum and energy momentum conservation conditions $p_1+p_2=p$ and $p^2_1+p^2_2=p^2$. \eqref{eq.WKE} is a counterpart which describes interaction of waves. The right hand side of \eqref{eq.WKE} also contains momentum and energy conservation $k_1+k_2=k$,  $|k_1|^2k_{1x}+|k_2|^2k_{2x}-|k|^2k_{x}$. These conditions come from the oscillatory phase in \eqref{eq.intmain} $e^{i s\Omega(k_1,k_2,k)}$. The energy conservation is also the time resonance surface in the space-time resonance method.

For a general dispersive PDE, there may not be exactly $3$ wave numbers $k_1,k_2,k$ in the WKE. In general, a quadratic equation has $3$ wave numbers in its WKE, while a cubic equation has $4$ wave numbers in its WKE. Null conditions can increase the number of waves in WKE. If the resonant coefficients vanish on the resonant surface, a quadratic (cubic) equation can have $4$ ($5$) wave numbers in its WKE. ZK equation is an example of $3$ wave interaction case. Nonlinear Schrodinger equations studied in \cite{DH}-\cite{DH3} are examples of $4$ wave interaction case.

The energy distribution $n(k)$ looks very different in different ranges. As in Figure \ref{fig.5/3law}, in the energy-containing range $|k|\ll 1$, the wave number $k$ is not large and the energy distribution in this range can be influenced by the external force and initial data, so there is no universality in this range. In the inertial range $1\lesssim |k|\ll l_d^{-1}$, the influence of external force and initial data starts to be lost, while the wave number is not large enough to activate the effect of dissipation. The energy distribution is universal and satisfies the \eqref{eq.WKE}.
In the dissipation range $|k|\gtrsim l_d^{-1}$, the effect of dissipation is dominant and $n(k)$ decays to $0$ very fast. In this paper, we are mostly interested in the energy distribution in the inertial and dissipation range.

In this paper, we choose the ZK equation as an example of $3$ wave turbulence theory.  There are many other PDEs (mostly from plasma physics and capillary waves) whose resonant surfaces contain $3$ wave numbers. The proof in this paper can be adapted to these equations with some additional efforts. 

\subsection{The setup of this paper} We now specify rigorously the randomness and energy distribution used in this paper.

In this paper, we consider the Cauchy problem of the following equation,
\begin{equation}\tag{MKDV}\label{eq.MKDV}
\begin{cases}
\partial_t\psi(t,x)+\Delta\partial_{x_1}\psi(t,x)-\nu \Delta \psi(t,x)=\lambda \partial_{x_1}(\psi^2(t,x)),\\[.6em]
\psi(0,x) = \psi_{\textrm{in}}(x), \quad x\in \T^d_{L}.
\end{cases}    
\end{equation}
We consider the periodic boundary condition, which implies that the spatial domain is a torus $\T^d_{L}=[0,L]^d$. 

We know that the Fourier coefficients of $\psi$ lie on the lattice $\mathbb{Z}_L^d\defeq \{k=\frac{K}{L}:K\in \mathbb{Z}^d\}$. Let $n_{\textrm{in}}$ be a known function, we assume that
\begin{equation}\label{eq.wellprepared}
\psi_{\textrm{in}}(x)=\frac{1}{L^d}\sum_{k\in\mathbb{Z}^d_L}\sqrt{n_{\textrm{in}}(k)} \eta_k(\omega)\,  e^{2\pi i kx}
\end{equation}
where $\eta_k(\omega)$ are mean-zero and identically distributed complex Gaussian random variables satisfying $\mathbb E |\eta_k|^2=1$. To ensure $\psi_{\textrm{in}}$ to be a real value function, we assume that $n_{\textrm{in}}(k)=n_{\textrm{in}}(-k)$ and $\eta_k=\overline{\eta_{-k}}$. Finally, we assume that $\eta_k$ is independent of $\{\eta_{k'}\}_{k'\ne k,-k}$.

In a real turbulent wave, the low frequency Fourier modes in the energy-containing range are influenced by the external force and initial data, so they are not random. In the wave turbulence theory, we just care about high frequency part in the inertial and dissipation range. To simplify the theory, we assume that all Fourier coefficients are random.


The energy spectrum $n(t,k)$ mentioned in previous section is defined to be $\mathbb E |\widehat \psi(t, k)|^2$, where $\psi(t, k)$ are Fourier coefficients of the solution. Although the initial data is assumed to be the Gaussian random field, it is possible to develop a theory for other types of random initial data.

\subsection{Statement of the results} We define $\Lambda(k)\coloneqq k_{1}(k_1^2+\cdots k_d^2)$. Under this new notations the ZK equation becomes,
\[
\partial_t\psi(t,x)=i\Lambda(\nabla)\psi(t,x)+\nu \Delta \psi(t,x)+\lambda \partial_{x_1}(\psi^2(t,x)).
\]
Now we introduce the main theorem of this paper.

\begin{thm}\label{th.main}
Let $d\ge 3$ and $L$ be a large number. Suppose that $n_{\mathrm{in}} \in C^\infty_0(\mathbb{R}^d)$ is compactly supported in a domain whose diameter is bounded by $D$. Assume that $\psi$ is a solution of \eqref{eq.MKDV} with randomized initial data $\psi_{\mathrm{in}}$ given by \eqref{eq.wellprepared}. Set $\alpha=\lambda L^{-\frac{d}{2}}$ to be the strength of the nonlinearity and $T_{\mathrm{kin}}=\frac{1}{8\pi\alpha^2}$ to be the wave kinetic time. Fix a small constant $\varepsilon> 0$, set $T_{\text{max}} = L^{-\varepsilon} \alpha^{-2}=O(L^{-\varepsilon}T_{\mathrm{kin}})$. If $\alpha$ satisfies
\begin{equation}\label{eq.conditionalpha}
\alpha^{-1}\le L^{\frac{1}{2}}
\end{equation}
and for some small constant $c$, $\nu$ satisfies
\begin{equation}\label{eq.conditionnu}
\nu\ge c^2T^{-1}_{\text{max}}
\end{equation}
then for all $L^{\varepsilon} \leq t \leq T_{\text{max}}$, we have the following conclusions
\begin{enumerate}
    \item If $\sup_{k}n_{\mathrm{in}}(k)\le C_{\mathrm{in}}$, then $\mathbb E |\widehat \psi(t, k)|^2$ is bounded by $2C_{\mathrm{in}}$ for $t\le T_{\text{max}}$ and for any $M$, we can construct an approximation series 
    \begin{equation}\label{eq.approx1}
        \mathbb E |\widehat \psi(t, k)|^2=n_{\mathrm{in}}(k)+n^{(1)}(k)+n^{(2)}(k)+\cdots n^{(N)}(k)+O(L^{-M})
    \end{equation}
    where each terms $n^{(i)}(k)$ can be exactly calculated.
    \item Define $l_{d}=(\nu T_{\mathrm{max}})^{\frac{1}{2}}\ge c$ and  $\theta=C_1\varepsilon$ ($C_1$ just depends on dimension $d$). We assume that $D\le C_2 l_d^{-1}$. Then in the inertial range $|k|\le \epsilon_1 l_{d}^{-1}$ and dissipation range $|k|\ge C_{2}l_d^{-1}$, we have the following estimate respectively
    \begin{equation}\label{eq.n1}
        n^{(1)}(k)=\left\{
        \begin{aligned}
            &\frac{t}{T_{\mathrm{kin}}}\mathcal K(n_{\mathrm{in}})(k)+O_{\ell^\infty_k}\left(L^{-\theta}\frac{T_{\text{max}}}{T_{\mathrm {kin}}}\right)+\widetilde{O}_{\ell^\infty_k}\left(\epsilon_1\text{Err}_{D}(k_x)\frac{T_{\text{max}}}{T_{\mathrm {kin}}}\right)
            && \text{if } |k|\le \epsilon_1 l_{d}^{-1},
            \\
            &0, && \text{if } |k|\ge 2C_{2}  l_{d}^{-1}
        \end{aligned}\right.
    \end{equation}
    and 
    \begin{equation}\label{eq.n(j)estimate}
        n^{(j)}(k)=O_{\ell^\infty_k}\left(L^{-\theta}\frac{t}{T_{\mathrm {kin}}}\right), \qquad j>1
    \end{equation}
    where $\mathcal K$ is defined in \eqref{eq.WKE}, and $O_{\ell^\infty_k}(A)$ (resp. $\widetilde{O}_{\ell^\infty_k}(A)$) is a quantity that is bounded by $A$ in $\ell^\infty_k$ by some universal constant (resp. constant just depending on $d$). The definition of universal constant can be found in section \ref{sec.notat}. The definition of $\text{Err}_D$ is 
    \begin{equation}
        \text{Err}_{D}(k_x)=\left\{\begin{aligned}
             &D^{d+1}, && \text{if } |k_x|\le D,
            \\
            &D^{d-1}(|k_x|^2+D|k_x|), && \text{if } |k_x|\ge D.
        \end{aligned}
        \right.
    \end{equation}
    
\end{enumerate}

\end{thm}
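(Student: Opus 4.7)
The plan is to prove Theorem \ref{th.main} via the random (Duhamel) series expansion of $\widehat\psi(t,k)$ indexed by binary trees, compute $\mathbb E|\widehat\psi|^2$ by Wick contractions between pairs of trees, and then control each resulting couple diagram by a combination of oscillatory time-integral analysis and number-theoretic lattice counting in frequency. The scheme follows the Feynman-diagram/couple approach of Deng--Hani \cite{DH,DH3} for NLS, adapted to the cubic ZK dispersion $\Lambda(k)=k_1|k|^2$ and to the lower dimension $d\ge 3$ (versus the higher-dimensional case treated in \cite{ST}).

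First, I would Fourier transform \eqref{eq.MKDV} and iterate Duhamel. Because the nonlinearity is quadratic, each iteration doubles the number of leaves, so the $n$th-order term is a sum over binary trees with $n$ interior nodes; each such tree contributes an $n$-fold time integral of an oscillatory sum over frequencies $k_1,\ldots,k_{n+1}\in\mathbb Z_L^d$ with $\sum_j k_j=k$, weighted by the phase $e^{is\Omega}$ with $\Omega$ built from $\Lambda$, by dissipative factors $\prod_j e^{-\nu|k_j|^2 s_j}$, and by leaf data $\sqrt{n_{\text{in}}(k_j)}\,\eta_{k_j}$. Truncating this series at depth $N=N(M,\varepsilon)$ will yield the expansion \eqref{eq.approx1}, with the tail controlled by a fixed-point/bootstrap argument in a high-moment norm; the same bootstrap gives the uniform bound $\mathbb E|\widehat\psi|^2\le 2C_{\text{in}}$ in part (1).

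Next, I take expectations diagram by diagram. Since the $\eta_k$ are independent complex Gaussians, Wick's theorem forces only leaf-to-leaf pairings between two trees to survive, so $n^{(j)}(k)$ becomes a sum over couples of total size $2j$. The leading $j=1$ contribution is the simplest pair diagram at order $\alpha^2$: after integrating $s\in[0,t]$ against the phase $e^{is\Omega(k_1,k_2,k)}$ with $\Omega=|k_1|^2k_{1,x}+|k_2|^2k_{2,x}-|k|^2 k_x$, and replacing the lattice sum on $\mathbb Z_L^d$ by the continuum integral using $n_{\text{in}}\in C_0^\infty$ of diameter $\le D$, one recovers precisely $\tfrac{t}{T_{\text{kin}}}\mathcal K(n_{\text{in}})(k)$ for $|k|\le\epsilon_1 l_d^{-1}$, with $\mathrm{Err}_D(k_x)$ quantifying the lattice-to-continuum discrepancy at scale $D$. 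For $|k|\ge 2C_2 l_d^{-1}$ the dissipation hypothesis \eqref{eq.conditionnu} produces rapid $e^{-\nu|k|^2 t}$ decay, making $n^{(1)}(k)=0$ up to a negligible remainder.

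The main obstacle is estimate \eqref{eq.n(j)estimate}: showing that every couple contributing to $n^{(j)}$ for $j>1$ gains a factor $L^{-\theta}$ beyond the naive $(t/T_{\text{kin}})^j$ bookkeeping. I would organize couples by their pair-graph topology, separate the regular (iterated-pairing) parts from the irregular ones, and then run lattice counting estimates for near-resonant triples $|\Lambda(k_1)+\Lambda(k_2)-\Lambda(k_1+k_2-k)|\lesssim t^{-1}$. The restriction $d\ge 3$ is exactly where this counting closes: the cubic ZK dispersion yields a $(2d-2)$-dimensional resonant manifold with sufficient transversality in $d\ge 3$ to give a divisor-type bound strong enough to convert the time cutoff $t\le L^{-\varepsilon}\alpha^{-2}$ into an $L^{-\theta}$ gain per extra order, which when summed over all labeled couple topologies yields \eqref{eq.n(j)estimate}. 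Verifying the compatibility of these gains with the dissipation condition and with the compact-support hypothesis on $n_{\text{in}}$ are the remaining technical items that close the proof.
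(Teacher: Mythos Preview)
Your outline follows the paper's architecture closely: Duhamel iteration indexed by binary trees, Wick contractions producing couples, a bootstrap controlling the tail, and lattice counting near the resonant surface. So at the level of strategy you are essentially on target.

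However, you miss the one technical point that distinguishes ZK from the NLS treatment in \cite{DH} and that the paper isolates as its main contribution. For the ZK dispersion $\Lambda(k)=k_x|k|^2$, the elementary count
\[
\#\{k_1:\ \Lambda(k_1)+\Lambda(k-k_1)-\Lambda(k)=\sigma+O(T^{-1})\}\lesssim L^d T^{-1}|k_x|^{-1}
\]
degenerates as $k_x\to 0$, so a tree-by-tree number-theoretic bound (as in \cite{DH}) cannot close. The paper's resolution is to perform the counting directly on the \emph{couple}: each normal edge $\mathfrak e$ of the couple carries a nonlinearity factor $|k_{\mathfrak e,x}|$ (coming from the $\partial_{x_1}$ in \eqref{eq.MKDV}), and the counting bound (Proposition~\ref{prop.counting}) produces exactly $\prod_{\mathfrak e}\kappa_{\mathfrak e}^{-1}$, so the two cancel after a dyadic decomposition in $|k_{\mathfrak e,x}|\sim\kappa_{\mathfrak e}$. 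This cancellation is implemented via an edge-cutting algorithm that inductively reduces a couple to one-node pieces while preserving a ``normal leg'' invariant (Property~P). Your plan to ``separate regular from irregular couples'' is closer to the \cite{DH2} combinatorics and would not by itself handle the $k_x$-degeneracy.

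Two smaller corrections. First, $n^{(1)}(k)=0$ for $|k|\ge 2C_2 l_d^{-1}$ is \emph{not} a dissipation effect; it follows immediately from the compact support of $n_{\mathrm{in}}$ (diameter $\le C_2 l_d^{-1}$) and $k_1+k_2=k$. Second, the bootstrap does not run in a ``high-moment norm'' but in the pointwise $X^p$ norm; since $X^p$ is not Hilbert, the paper replaces the $TT^*$ argument for the linearized operator $L$ by a Neumann series trick $(1-L)^{-1}=(1-L^K)^{-1}(1+L+\cdots+L^{K-1})$, controlling $\|L^K\|$ directly via large deviation on the matrix entries.
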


\begin{rem}
The condition $d\ge 3$ is essential. When $d=1$, the ZK equation becomes the KdV equation which is an integrable system whose long time behavior is quasi-periodic instead of turbulent \cite{JM}. When $d=2$, as mentioned in Remark \ref{rem.nottrue2d}, the desire number theory result is not true and a major revision to \eqref{eq.WKE} is required to obtain a valid wave kinetic theory.
\end{rem}

\begin{rem}
$\frac{t}{T_{\mathrm{kin}}}\mathcal K(n_{\mathrm{in}})(k)=O_{\ell^\infty_k}\left(\frac{t}{T_{\mathrm{kin}}}\right)=O_{\ell^\infty_k}\left(\frac{T_{\mathrm{max}}}{T_{\mathrm{kin}}}\right)$ in \eqref{eq.n1} is the largest term in \eqref{eq.n1} and \eqref{eq.n(j)estimate}. Compared to the second term in \eqref{eq.n1} or the first term in \eqref{eq.n(j)estimate}, $\frac{t}{T_{\mathrm{kin}}}\mathcal K(n_{\mathrm{in}})(k)$ is larger than them by a factor $L^{\theta}$.  It is also larger than the third term in \eqref{eq.n1} by a factor of $\epsilon_1$.
\end{rem}

\begin{rem}
The restriction on $\alpha^{-1}$ is not optimal. The optimal result is expected to be $\alpha^{-1}\le L$ for general torus and $\alpha^{-1}\le L^{d/2}$ for generic torus. Here the torus is general or generic in the sense of \cite{DH}. Except for those in the appendix \ref{sec.numbertheoryA}, all the arguments in this paper work under these stronger assumptions.
\end{rem}

\begin{rem}
Due to $\partial_x$ in the nonlinearity, there is a potential risk of loss of derivative. This causes a serious difficulty in controlling the high frequency part. To resolve this difficulty, some regularization to ZK equation is required. In \cite{ST} and this paper, grid discretization and viscosity are introduced respectively. Both of them serve as canonical high frequency truncations.
\end{rem}

\subsection{Ideas of the proof} The basic strategy of proving the main theorem is to construct an approximation series and use probability theory and number theory to control the size and error of this approximation.

\subsubsection{The approximate solution}\label{sec.appsol} The equation of Fourier coefficients is

\begin{equation}\label{eq.Fourierintro}
\dot{\psi}_{k} =  i\Lambda(k) \psi_k -\nu |k|^2 \psi_k
 +\frac{i\lambda}{L^{d}} \sum\limits_{\substack{(k_1,k_2) \in (\mathbb{Z}^d_L)^2 \\ k_1 + k_2 = k}} k_{x_1}\psi_{k_1} \psi_{k_2}
\end{equation}

Define a new dynamical variable $\phi= e^{-it\Lambda(\nabla)} \psi$ and integrate \eqref{eq.Fourierintro} in time. Then (\ref{eq.MKDV}) with initial data (\ref{eq.wellprepared}) becomes
\begin{equation}\label{eq.intmainintro}
\begin{split}
    \phi_k =\xi_k+\frac{i\lambda}{L^{d}} \sum\limits_{k_1 + k_2 = k}\int^{t}_0k_{x_1}\phi_{k_1} \phi_{k_2}e^{i s\Omega(k_1,k_2,k)-\nu(t-s)|k|^2} ds.  
\end{split}
\end{equation}

Here $\Omega(k_1,k_2,k) =\Lambda(k_1)+\Lambda(k_2)-\Lambda(k)$ and $\xi_k$ are the Fourier coefficients of the initial data of $\psi$ defined by $\xi_k=\sqrt{n_{\textrm{in}}(k)} \, \eta_{k}(\omega)$.

Denote the second term of right hand side by $\mathcal{T}(\psi,\psi)_k$ and the right hand side by $\mathcal{F}(\psi)_k=\xi_k+\mathcal{T}(\psi,\psi)_k$. Then the equation becomes $\psi=\mathcal{F}(\psi)_k$. We can construct the approximation by iteration: $\psi=\mathcal{F}(\psi)=\mathcal{F}(\mathcal{F}(\psi))=\mathcal{F}(\mathcal{F}(\mathcal{F}(\psi)))=\cdots$. 

Define the approximate solution by $\psi_{app}=\mathcal{F}^{N}(\xi)$. By recursively expanding  $\mathcal{F}^{N}$, we know that $\psi_{app}$ is a polynomial of $\xi$.
The expansion can be described as the following,
\begin{equation*}
\begin{split}
    \psi_{app}=&\mathcal{F}^{N}(\xi)=\xi+\mathcal{T}(\mathcal{F}^{N-1}(\xi),\mathcal{F}^{N-1}(\xi))
    \\
    =&\xi+\mathcal{T}\Big(\xi+\mathcal{T}(\mathcal{F}^{N-2}(\xi),\mathcal{F}^{N-2}(\xi)),
    \cdots\Big)=\xi+\mathcal{T}(\xi,\xi)+\cdots
    \\
    =&\xi+\mathcal{T}(\xi,\xi)+\mathcal{T}(\mathcal{T}(\xi,\xi),\xi)
    +\mathcal{T}(\xi,\mathcal{T}(\xi,\xi))+\cdots
\end{split}    
\end{equation*}
In the above iteration, we recursively replace $\mathcal{F}^{l}(\xi)$ by $\xi+\mathcal{T}(\mathcal{F}^{l-1}(\xi),\mathcal{F}^{l-1}(\xi))$.

We need a good upper bound for each terms of $\psi_{app}$. To get this we introduce tree diagrams to represent terms $\xi$, $\mathcal{T}(\xi,\xi)$, $\mathcal{T}(\mathcal{T}(\xi,\xi),\xi)$, $\cdots$. The basic notation of tree diagrams will be introduced in section \ref{sec.appFey}.

\subsubsection{The perturbative analysis}\label{sec.pert intro} To prove the main theorem, we need to bound the approximation error of $\psi_{app}$ defined by $w=\psi-\psi_{app}$. To do this, we use the follow equation of $w$ which can be derived from (\ref{eq.intmainintro}):
\begin{equation}\label{eq.eqwintro}
    w= Err(\xi)+Lw+B(w,w)
\end{equation}
Here $Err(\xi)$ is a polynomial of $\xi$ whose degree $\le N+1$ monomials vanish. $Lw$, $B(w,w)$ are linear, quadratic in $w$ respectively.

We prove the smallness of $w$ using the bootstrap method.

Define $||w||_{X^p}=\sup_{k} \langle k\rangle^{p} |w_k|$. Starting from the assumption that $\sup_t||w||_{X^p}\le CL^{-M}$ ($C,M\gg 1$), we need to prove that $\sup_t||w||_{X^p}\le (1+C/2)L^{-M}<CL^{-M}$. To prove $||w||_{X^p}\le (1+C/2)L^{-M}$, we use (\ref{eq.eqwintro}), which gives
\begin{equation}\label{eq.ineqw}
    ||w||_{X^p}\le ||Err(\xi)||_{X^p}+||Lw||_{X^p}+||B(w,w)||_{X^p}
\end{equation}

We just need to show that 
\begin{equation}
    ||Err(\xi)||_{X^p}\le L^{-M},
    \quad ||B(w,w)||_{X^p}\le C^2L^{d+O(1)-2M}.
\end{equation}
Combining with a special treatment of $Lw$, the above estimates imply that $||w||_{X^p}\le (1+C/2)L^{-M}$ which closes the bootstrap.

\subsubsection{Couple diagrams, lattice points counting and $||Err(\xi)||_{X^p}$}\label{sec.latticeintro} In this section we explain the idea of proving upper bound of $||Err(\xi)||_{X^p}$.

$(Err(\xi))_{k}$ is a sum of terms of the form
\begin{equation}
\begin{split}
    &\mathcal{J}_{k}^0(\xi)=  \xi_k, \quad \mathcal{J}_k^1(\xi)=\frac{i\lambda}{L^{d}} \sum_{k_1+k_2-k=0} H^1_{k_1k_2}  \xi_{k_1}\xi_{k_2} , \quad\cdots  \\
    &\mathcal{J}_{T,k}^l(\xi)=\left(\frac{i\lambda}{L^{d}}\right)^l\sum_{k_1+k_2+\cdots+k_{l+1}-k=0} H^l_{k_1\cdots k_{l+1}}(T)  \xi_{k_1}\xi_{k_2}\cdots\xi_{k_{l+1}}, \quad\cdots 
\end{split}
\end{equation}
According to section \ref{sec.appFey}, each terms correspond to a tree diagram and their coefficients can be calculated from these diagrams. This calculation is done in section \ref{sec.refexp}. As a corollary of tree diagram representation, we know that $H^l$ is large near a surface given by $2l$ equations $S=\{S_{\mathfrak{n}_1}(T)=0,\Omega_{\mathfrak{n}_1}(T)=0,\cdots,S_{\mathfrak{n}_{l}}(T)(T)=0,\Omega_{\mathfrak{n}_l}(T)=0\}$.

By the large deviation principle, to obtain upper bounds of Gaussian polynomials $\mathcal{J}_{T,k}^l(\xi)$, it suffices to calculate their variance. This calculation is done in section \ref{sec.coupwick} using the Wick theorem and we introduce the concept of couple diagrams to represent the final result. 

As a corollary of couple diagram representation, we know that the coefficients of the variance concentrate near a surface given by $n$ equations ($n$ is the number of nodes in the couple) $S=\{S_{\mathfrak{n}_1}(T)=0,\Omega_{\mathfrak{n}_1}(T)=0,\cdots,S_{\mathfrak{n}_{n}}(T)(T)=0,\Omega_{\mathfrak{n}_n}(T)=0\}$. Then in order to estimate the variance it suffices to upper bound the number of lattice points near this surface. This is done in section \ref{sec.numbertheory} using the edge cutting argument to reduce the size of the couple. 

The method in \cite{DH} of getting number theory estimate based on tree diagram does not work in our setting. This is because the energy conservation equation $\Lambda(k_1)+\Lambda(k_2)-\Lambda(k)=0$ of ZK equation degenerates seriously when $k_{x}$ is close to $0$. In \eqref{eq.numbertheory1}, the number of solutions of the diophantine equation $\Lambda(k_1)+\Lambda(k_2)=\Lambda(k)+\sigma+O(T^{-1})$ can only be bounded by $|k_x|^{-1}$ which goes to infinity when $k_{x}\rightarrow 0$. This difficulty is resolved by the fact that the multiplier $k_x$ in the last term of \eqref{eq.Fourierintro} vanishes when $k_{x}\rightarrow 0$. Since multipliers become very complicated in higher order tree terms, we introduce the concept of norm edges to keep track of them. 

In conclusion, combining the above arguments, we can show that, for any $M$, we can take $N$ large enough so that $||Err(\xi)||_{X^p}\le L^{-M}$.


\subsubsection{Upper bounds for $||B(w,w)||_{X^p}$} $||B(w,w)||_{X^p}$ is a sum of terms of the form
\begin{equation}
    \frac{i\lambda}{L^{d}} \int^{t}_0\sum_{k_1+k_2-k=0} B_{k_1k_2}(s)  w_{k_1}(\xi)w_{k_2}
\end{equation}

The upper bound of $||B(w,w)||_{X^p}$ can be obtained by a straight forward estimate
\begin{equation}
||B(w,w)||_{X^p}\le L^{O(1)} ||w||_{X^p} \le C^2 L^{O(1)-2M},
\end{equation}

Therefore, we get the desire upper bounds $||B(w,w)||_{X^p}\ll L^{-M}$ by taking $M> O(1)$.

\subsubsection{A random matrix bound and $Lw$}\label{sec.randmatintro} To obtain a good upper bound for $Lw$, we need to estimate the norm of the random matrix $L$, following the idea in \cite{DH}, \cite{DH2}.

In \cite{DH}, they consider solutions in the Bourgain space $X^{s,b}$ and use $TT^*$ method to get the upper bound for the operator norm of $L$, $||L||_{X^{s,b}\rightarrow X^{s,b}}\ll 1$. But we prefer to work in the simpler functional space $X^p$ which is not a Hilbert space. Although the standard $TT^*$ method is not useful in a non-Hilbert space, we can bypass it using a Neumann series argument.

Let us first explain how $TT^*$ method works. Here we pretend that $||\cdot||_{X^p}$ is a Hilbert norm. The key idea of $TT^*$ method is the inequality $||L||_{X^p\rightarrow X^p}=||(LL^*)^K||_{X^p\rightarrow X^p}^{\frac{1}{K}}\le (L^d\sup_{k,l} ((LL^*)^K)_{k,l})^{1/K}$. To upper bound $||L||_{X^p\rightarrow X^p}$, we just need to estimate $(LL^*)^K)_{k,l}$ which can be calculated by couple diagrams and be estimated by the large deviation inequality. By taking $K$ large, the loss $L^{d/K}$ could be made arbitrarily small.

Unfortunately, $||\cdot||_{X^p}$ is not a Hilbert norm. However, we can bypass the $TT^*$ method using a Neumann series argument. Note that from \eqref{eq.intmainintro} we have the identity
\begin{equation}
    w-Lw= Err(\xi)+B(w,w).
\end{equation}
We have good upper bounds for all of the three terms on the right hand side. By Neumann series argument we have
\begin{equation}
    w= (1-L)^{-1}(\textit{RHS}) =(1-L^K)^{-1}(1+L+\cdots+L^{K-1})(\textit{RHS}).
\end{equation}
By calculating $(L^K)_{k,l}$ we can show that $||L^K||_{X^p\rightarrow X^p}\ll 1$. This implies that $||(1-L^K)^{-1}||_{X^p\rightarrow X^p}\lesssim 1$. Therefore, a good upper bound of $\textit{RHS}$ gives us a good upper bound of $(1+L+\cdots+L^{K-1})(\textit{RHS})$. Combining the above arguments, we obtain the desire estimate of $w$. This is done in section \ref{sec.errorw} and \ref{sec.randommatrices}.

One additional difficulty is the unboundedness of $L$ due to the derivative $\partial_x$ in the nonlinearity. This is controlled by the high frequency decay coming from viscosity.

\subsubsection{Proof of the main theorem} In summary, the above arguments in section \ref{sec.appsol}-\ref{sec.randmatintro} prove that when $t\le \alpha^{-2}$, we have $||w||_{X^p}\le L^{-M}$ with high probability ($P(\textit{false})\lesssim e^{-CL^{\theta}}$).

The above inequality is equivalent to $\sup_k\, |\langle k \rangle^s w_k|\le CL^{-M}$. Remember that $w:=\psi-\psi_{app}$, so with high probability we have the following estimate $\sup_k\, \langle k \rangle^s |\psi_k-\psi_{app,k}|\le CL^{-M}$. This implies that $\mathbb E |\widehat \psi(t, k)|^2=\mathbb E |\psi_{app,k}|^2+O(L^{-M})$. This suggests that we may get the approximation of $\mathbb E |\widehat \psi(t, k)|^2$ by calculating $\mathbb E |\psi_{app,k}|^2$. $\mathbb E |\psi_{app,k}|^2$ can be exactly calculated and the theorem can be proved by extract the main term in $\mathbb E |\psi_{app,k}|^2$. This is done in section \ref{sec.proofmain}.

\subsection{Notations}\label{sec.notat} 

\underline{Universal constants:} In this paper, universal constants are constants that just depend on dimension $d$, diameter $D$ of the support of $n_{\text{in}}$ and the length of the inertial range $l^{-1}_d$. 

\underline{$O(\cdot)$, $\ll$, $\lesssim$, $\sim$:} Throughout this paper, we frequently use the notation, $O(\cdot)$, $\ll$, $\lesssim$. $A=O(B)$ or $A\lesssim B$ means that there exists $C$ such that $A\lesssim CB$. $A\ll B$ means that there exists a small constant $c$ such that $A\lesssim cB$. $A\sim B$ means that there exist two constant $c$, $C$ such that $cB\lesssim A\lesssim CB$. Here the meaning of constant depends on the context. If they appear in conditions involving $k$, $\Lambda$, $\Omega$, etc., like $|k|\lesssim 1$, $\iota_{\mathfrak{e}_1}k_{\mathfrak{e}_1}+\iota_{\mathfrak{e}_2}k_{\mathfrak{e}_2}+\iota_{\mathfrak{e}}k_{\mathfrak{e}}=0$, then they are universal constants. If these constants appear in an estimate which gives upper bound of some quantity, like $||L^K||_{X^p\rightarrow X^p}\ll 1$ or $\sup_t\sup_k  |(\mathcal{J}_T)_k|\lesssim L^{O(l(T)\theta)} \rho^{l(T)}$, then in addition to the quantities that universal constants depend, they can also depend on the quantities $\theta$, $\varepsilon$, $K$, $M$, $N$, $\epsilon_1$.

\underline{Order of constants:} Here is the order of all constants which can appear in the exponential or superscript of $L$. These constants are $\theta$, $\varepsilon$, $K$, $M$, $N$, $\epsilon_1$.

All the constants are small compared to $L$ in the sense they are less than $L^{\theta}$ for arbitrarily small $\theta>0$.

$\varepsilon$ can be an arbitrarily small constant less than $0.5$, the reader is encouraged to assume it to be $0.01$. The order of other constants can be decided by the relations $\theta\ll \varepsilon$, $K=O(\theta^{-1})$, $M\gg K$, $N\ge M/\theta$, here the constants in $\ll$, $O(\cdot)$ are universal. 

\underline{$\mathbb{Z}_L^d$:} $\mathbb{Z}_L^d\defeq \{k=\frac{K}{L}:K\in \mathbb{Z}^d\}$

\underline{$k_x$, $k_{\perp}$:} Given any vector $k$, let $k_x$ be its first component and $k_{\perp}$ be the vector formed by the rest components. 

\underline{$\Lambda(k)$, $\Lambda(\nabla)$:} $\Lambda(k)\coloneqq k_{1}(k_1^2+\cdots k_d^2)$ and $\Lambda(\nabla) = i|\nabla|^2\partial_{x_1}$

\underline{Fourier series:} The spatial Fourier series of a function $u: \T_L^d \to \mathbb C$ is defined on $\Z^d_L:=L^{-1}\Z^{d}$ by
\begin{equation}\label{fourierset}
u_k=\int_{\T^d_L} u(x) e^{-2\pi i k\cdot x},\quad \mathrm{\; so \,that \;}\quad u(x)=\frac{1}{L^d}\sum_{k \in \Z^d_L} u_k \,e^{2\pi i k\cdot x}. 
\end{equation}
Given any function $F$, let $F_k$ or $(F)_k$ be its Fourier coefficients.

\underline{Order of $L$:} In this paper, $L$ is assumed to be a constant which is much larger than all the universal constants and $\theta$, $\varepsilon$, $K$, $M$, $N$, $\epsilon_1$. 

\underline{$L$-certainty:} If some statement $S$ involving $\omega$ is true with probability $\geq 1-O_{\theta}(e^{-L^\theta})$, then we say this statement $S$ is $L$-certain.

\subsection{A short survey of previous papers} (1) \underline{Results about the ZK equation:} ZK equation was introduced in \cite{ZK} as an asymptotic model to describe the propagation of nonlinear ionic-sonic waves in a magnetized plasma. For a good reference about the physical background, see the book \cite{Dbook}. For rigorous results about wellposedness and derivation from the Euler-Poisson system see \cite{LLS} and references therein.

(2) \underline{Previous papers about wave turbulence theory:} There are numerous physics papers about the derivation of wave kinetic equation. In particular, the wave kinetic equation for the ZK equation is derived in \cite{K}. For general references, see the books \cite{ZLFBook} and \cite{Nazarenko}, and the review paper \cite{NR}. 


The WKE was rigorously verified for the Gibbs measure initial data by Lukkarinen and Spohn \cite{LukSpohn}. Then the basic concepts of general wave turbulence were rigorously formulated by Buckmaster, Germain, Hani, Shatah \cite{BGHS2} and a non-trivial result that verified WKE for a short time scale was also proved by them. The WKE was proved for almost sharp time scale independently by Deng and Hani \cite{DH} and Collot and Germain \cite{CG1}, \cite{CG2} using the ideas from the study of randomly initialized PDE. The full WKE for the sharp time was proved independently by the deep works of Deng and Hani \cite{DH2} and Staffilani and Tran \cite{ST} for a four-wave problem and a three-wave problem respectively. One key contribution of \cite{DH2} and \cite{ST} was the classification of Feynman diagrams in the contexts of normal form expansion and Liouville equation respectively. WKE for the space-inhomogeneous case was derived by Ampatzoglou, Collot, and Germain \cite{ACG} for almost sharp time scale. The higher order correlation functions were studied by Deng and Hani \cite{DH3}. A linearized wave kinetic equation near the Rayleigh-Jeans spectrum was derived by Faou \cite{Faou}. The discrete wave turbulence was studied by Dymov and Kuksin \cite{DK1}-\cite{DK4}.

Among the above papers, \cite{ST} is the only one working on ZK equation. \cite{ST} derived the wave kinetic equation for lattice ZK equation with random force for $t\le T_{\text{kin}}$, while our paper is for dissipative continuous ZK equation for $t\le L^{-\varepsilon}T_{\text{kin}}$.



(3) \underline{Previous papers about the dynamics of WKE:} There are also many papers about the dynamics of WKE itself. For references, see \cite{GI}, \cite{GST}, \cite{SoT1}, \cite{SoT2} and the reference therein.



\section{The Perturbation Expansion}
In this section, we calculate the renormalized approximation series and introduce Feynman diagrams to represent terms in this series. Then we bound the error of this approximation by the bootstrap method, assuming several propositions about the upper bounds of higher order terms. We will prove these propositions in the rest part of the paper.

\subsection{The approximation series and Feynman diagrams}\label{sec.appFey} In this section we derive the equation for Fourier coefficients and construct the approximate solution. 

\subsubsection{The Equation of Fourier coefficients}

Let $\psi_k$ be the Fourier coefficient of $\psi$. Then in term of $\psi_k$ equation (\ref{eq.MKDV}) becomes
\begin{equation}\label{eq.mainfourier}
\begin{cases}
 \dot{\psi}_{k} =  i\Lambda(k) \psi_k -\nu |k|^2 \psi_k
 +\frac{i\lambda}{L^{d}} \sum\limits_{\substack{(k_1,k_2) \in (\mathbb{Z}^d_L)^2 \\ k_1 + k_2 = k}} k_{x}\psi_{k_1} \psi_{k_2}  \\[2em]
\psi_k(0) = \xi_k = \sqrt{n_{\textrm{in}}(k)} \, \eta_{k}(\omega)
\end{cases}
\end{equation}

Define the linear profile by
\begin{equation}
\phi_k(t):= e^{-i\Lambda(k) t}  \psi_k(t)    
\end{equation}

Rewriting \eqref{eq.mainfourier} in terms of $\phi_k$ gives  

\begin{equation}\label{eq.mainlinearprofile}
\begin{split}
\dot{\phi}_{k} 
= -\nu |k|^2 \phi_k + \frac{i\lambda}{L^{d}} \sum\limits_{S(k_1,k_2,k)=0}k_{x}\phi_{k_1} \phi_{k_2}e^{i t\Omega(k_1,k_2,k)}
\end{split}
\end{equation}
where
\begin{equation}
\begin{split}
    &S(k_1,k_2,k) = k_1 + k_2 - k,
    \\
    &\Omega(k_1,k_2,k) =\Lambda(k_1)+\Lambda(k_2)-\Lambda(k).
\end{split}
\end{equation}

We will work with \eqref{eq.mainlinearprofile} in the rest part of this paper.

Integrating (\ref{eq.mainlinearprofile}) gives

\begin{equation}\label{eq.intmain}
\begin{split}
    \phi_k =\xi_k+
    \underbrace{\frac{i\lambda}{L^{d}} \sum\limits_{S(k_1,k_2,k)=0}\int^{t}_0k_{x}\phi_{k_1} \phi_{k_2}e^{i s\Omega(k_1,k_2,k)- \nu|k|^2(t-s)} ds}_{\mathcal{T}(\phi,\phi)_k}.  
\end{split}
\end{equation}

Denote the second term on the right hand side by $\mathcal{T}(\phi,\phi)_k$. Denote the right hand side by $\mathcal{F}(\phi)_k=\xi_k+\mathcal{T}(\phi,\phi)_k$. With these notations, \eqref{eq.intmain} becomes  $\phi=\mathcal{F}(\phi)_k$. 

We construct the approximation series by iteration: $\phi=\mathcal{F}(\phi)=\mathcal{F}(\mathcal{F}(\phi))=\mathcal{F}(\mathcal{F}(\mathcal{F}(\phi)))=\cdots$. To estimate this approximation series, we need a compact graphical notation to represent the huge amount of terms generated from iteration. This is done by introducing the concept of Feynman diagrams.

\subsubsection{Some basic definitions from graph theory} In this section we introduce the concept of binary trees, branching nodes, leaves, subtrees, node decoration
and expanding leaves.

\begin{defn}
\begin{enumerate}
    \item \textbf{Binary trees:} A \underline{binary tree} $T$ is a tree in which each node has $2$ or $0$ children. An example of binary tree used in this paper is show in Figure \ref{fig.decsub}.
    \item \textbf{Branching nodes:} A \underline{branching node} in a binary tree is a node which has $2$ children. The number of all branching nodes in a tree $T$ is denoted by $l(T)$. In Figure \ref{fig.decsub}, $l(T)=2$.
    \item \textbf{Leaves:} A \underline{leaf} of a tree $T$ is a node which has no child. In Figure \ref{fig.decsub}, all $\star$ nodes and $\Box$ nodes are leaves.
    \item \textbf{Subtrees:} If any child of any node in a subset $T'$ of a tree $T$ is also contained in $T'$ then $T' $ also forms a tree, we call $T'$ a \underline{subtree} of $T$. If the root node of $T'$ is $\mathfrak{n}\in T$, we say $T'$ is the \underline{subtree rooted at $\mathfrak{n}$} or \underline{subtree of $\mathfrak{n}$} and denote it by $T_\mathfrak{n}$. In Figure \ref{fig.decsub}, the tree inside the box is the subtree rooted at node $\bullet$.
     \item \textbf{Node decoration:} In Figure \ref{fig.decsub}, each node is associated with a symbol in $\{\bullet,\ \star,\ \Box\}$. If a node $\mathfrak{n}$ has symbol $\bullet$ (similarly $\star,\ \Box$), we say $\mathfrak{n}$ is decorated by $\bullet$ ($\star,\ \Box$) or $\mathfrak{n}$ has decoration $\bullet$ ($\star,\ \Box$). In what follows we adopt the convention that leaves always have decoration $\star$ or $\Box$ and nodes other than leaves always have decoration $\bullet$.
     
    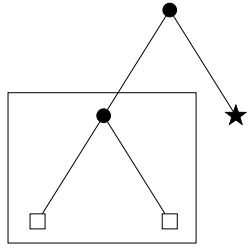
\begin{figure}[H]
    \centering
    \scalebox{0.5}{
    \begin{tikzpicture}[level distance=80pt, sibling distance=100pt]
        \draw node[fillcirc](1) {}
            child {node[fillcirc] (2) {}
                   child {node[draw, minimum size=0.4cm] (5) {}}
                   child {node[draw, minimum size=0.4cm] (6) {}}
                  }
            child {node[fillstar] (3) {}};
        \node[rectangle, draw, minimum width = 5cm, minimum height = 4cm] at (-1.8,-4.2) {}; 
    \end{tikzpicture}
    }
    \caption{Subtrees and node decoration.}
    \label{fig.decsub}
    \end{figure}

    \item \textbf{Expanding and final leaves:} Leaves denoted by $\Box$ are called \underline{expanding leaves}. Other leaves denoted by $\star$ are called \underline{final leaves}. The notion of expanding leaves is useful in the construction of trees, in which the presence of $\Box$ means that the construction is not finishing and $\Box$ denotes leaves that may be replaced by a branching node later.
    
    The concept of expanding leaves and $\Box$ is only used in section \ref{sec.connection}, so the readers can safely forget it after that section.
\end{enumerate}


\end{defn}

\subsubsection{Connection between iteration and trees}\label{sec.connection} In this section we explain non-rigorously the connection between perturbation expansion and trees. Rigorous argument can be find in the next section. 

This iteration process can be described as the following, 
\begin{equation*}
\begin{split}
    \phi=&\mathcal{F}(\phi)=\xi+\mathcal{T}(\phi,\phi)
    \\
    =&\xi+\mathcal{T}\Big(\xi+\mathcal{T}(\phi,\phi),
    \cdots\Big)=\xi+\mathcal{T}(\xi,\xi)+\mathcal{T}\Big(\mathcal{T}(\phi,\phi),
    \xi\Big)\cdots
    \\
    =&\xi+\mathcal{T}(\xi,\xi)+\mathcal{T}(\mathcal{T}(\xi,\xi),\xi)
    +\mathcal{T}(\xi,\mathcal{T}(\xi,\xi))+\cdots
\end{split}    
\end{equation*}
In the above iteration, we recursively choose one $\phi$, replace it by $\xi+\mathcal{T}(\phi,\phi)$ and use the linearity of $\mathcal{T}$ to expand into two terms.
\begin{equation}\label{eq.termgeneration}
\begin{split}
    &\mathcal{T}\Big(\cdots,\mathcal{T}(\mathcal{T}(\xi,\underline{\phi}),\cdots)\Big)\rightarrow \mathcal{T}\Big(\cdots,\mathcal{T}(\mathcal{T}(\xi,\underline{\xi+\mathcal{T}(\phi,\phi)}),\cdots)\Big)
    \\
    =& \underbrace{\mathcal{T}\Big(\cdots,\mathcal{T}(\mathcal{T}(\xi,\underline{\xi}),\cdots)\Big)}_{I}
    +\underbrace{\mathcal{T}\Big(\cdots,\mathcal{T}(\mathcal{T}(\xi,\underline{\mathcal{T}(\phi,\phi)}),\cdots)\Big)}_{II}
\end{split}        
\end{equation}
Here $I$ and $II$ are obtained by replacing $\phi$ by $\xi$ and $\mathcal{T}(\phi,\phi)$ respectively.

In summary, all terms in the expansion can be generated by following steps

\begin{itemize}
    \item \textbf{Step $0$.} Add a term $\phi$ in the summation $\mathcal{J}$.
    \item \textbf{Step $i$ ($i\ge 1$).} Assume that \textbf{Step $i-1$} has been finished which produces a sum of terms $\mathcal{J}$, then choose a term in $\mathcal{J}$ which has least number of $\xi$ and $\phi$, remove this term from $\mathcal{J}$ and add the two terms in $\mathcal{J}$ constructed in \eqref{eq.termgeneration}.
\end{itemize}

This process is very similar to the construction of binary trees, in which we recursively replace a chosen expanding node by a leaf or branching node.

\begin{itemize}
    \item \textbf{Step $0$.} Start from a expanding root node $\Box$.
    
    \item \textbf{Step $i$ ($i\ge 1$).} Assume that we have finish the \textbf{Step $i-1$} which produces a collection of trees $\mathscr{T}$, then choose a tree in $\mathscr{T}$ which has least number of expanding leaves $\Box$ and final leaves $\star$, remove this tree from $\mathscr{T}$ and add two new trees in $\mathscr{T}$. In these two new trees, we replace a expanding leaf $\Box$ by a final leaf $\star$ or a branching node $\bullet$ with two expanding children leaves $\Box$. This construction is illustrated by Figure \ref{fig.construction}.
    \begin{figure}[H]
    \centering
    \scalebox{0.5}{
    \begin{tikzpicture}[level distance=80pt, sibling distance=100pt]
        \draw node[fillcirc](1) {} 
            child {node[draw, minimum size=0.4cm] (2) {}}
            child {node[fillstar] (3) {}};
        \node[draw, single arrow,
              minimum height=33mm, minimum width=8mm,
              single arrow head extend=2mm,
              anchor=west, rotate=0] at (4,-1.5) {};  
        \node[scale=3.0] at (16,-2.9) {,};
        \node[fillcirc](4) at (12,0) {} 
            child {node[fillstar] (5) {}}
            child {node[fillstar] (6) {}};
        \node[fillcirc](7) at (22,1.5) {} 
            child {node[fillcirc] (8) {}
                child {node[draw, minimum size=0.4cm] (5) {}}
                child {node[draw, minimum size=0.4cm] (6) {}}
                }
            child {node[fillstar] (9) {}};    
    \end{tikzpicture}
    }
    \caption{One step in the construction of binary trees}
    \label{fig.construction}
    \end{figure}
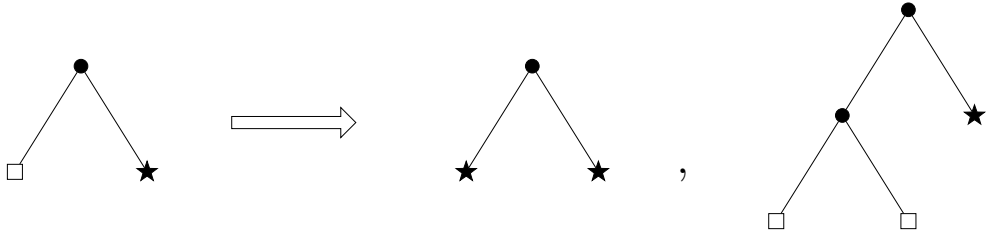

\end{itemize}

By comparing the above two process, we can make the connection between terms and trees more explicit. Each node $\bullet$ other than leaf in the tree $T$ corresponds to a $\mathcal{T}(\cdots,\cdots)$ in a term $\mathcal{J}_{T}$. Each final leaf $\star$ and expanding leaf $\Box$ corresponds to $\xi$ and $\phi$ respectively. The \textbf{Step} $i$ of replacing $\phi$ by $\xi$ or $\mathcal{T}(\phi,\phi)$ corresponds to replacing $\Box$ by $\star$ or a branching node with two children $\Box$.

We have following recursive formula for calculating a term $\mathcal{J}_T$ from a binary tree $T$. 

If $T$ has only one node then $\mathcal{J}_T=\xi$. Otherwise let $\bullet_1$, $\bullet_2$ be two children of the root node $\bullet$, let $T_{\bullet_1}$, $T_{\bullet_2}$ be the subtrees of $T$ rooted at the above nodes. If $\mathcal{J}_{T_{\bullet_1}}$, $\mathcal{J}_{T_{\bullet_2}}$ have been recursively calculated, then $\mathcal{J}_T$ can be calculated by
\begin{equation}\label{eq.treeterm'}
    \mathcal{J}_T=\mathcal{T}(\mathcal{J}_{T_{\bullet_1}}, \mathcal{J}_{T_{\bullet_2}}).
\end{equation}

The formal power series obtained by iterating $\phi=\mathcal{F}(\phi)$ can be calculated from trees by $\sum_{T\in \mathscr{T}} \mathcal{J}_T$.

Let $l(T)$ be the number of branching nodes in $T$, then it can be shown that $\mathcal{J}_T$ is a degree $l(T)+1$ polynomial of $\xi$. We define the approximation series to be a finite degree truncation of the formal power series which equals to $\sum_{l(T)\le N} \mathcal{J}_T$.

\subsubsection{Feynman diagrams and construction of the approximation solution} In this section we present the rigorous argument equivalent to that in the above section. 

In the construction of trees, finally all $\Box$ nodes will be replaced by $\bullet$, $\star$, so in what follows we only consider trees whose nodes are decorated by $\bullet$, $\star$.

\begin{defn}\label{def.treeterms} Given a binary tree $T$ whose nodes are decorated by $\bullet$, $\star$,
we inductively define the quantity $\mathcal{J}_T$ by:
\begin{equation}\label{eq.treeterm}
    \mathcal{J}_T=
    \begin{cases}
    \xi, \qquad\qquad\quad\  \textit{ if $T$ has only one node $\star$.}
    \\
    \mathcal{T}(\mathcal{J}_{T_{\mathfrak{n}_1}}, \mathcal{J}_{T_{\mathfrak{n}_2}}), \textit{ otherwise.}
    \end{cases}
\end{equation}
Here $\mathfrak{n}_1$, $\mathfrak{n}_2$ are two children of the root node $\mathfrak{r}$ and $T_{\mathfrak{n}_1}$, $T_{\mathfrak{n}_2}$ are the subtrees of $T$ rooted at the above nodes.
\end{defn}

\begin{defn}
Given a large number $N$, define the approximate solution $\phi_{app}$ by
\begin{equation}\label{eq.approxsol}
    \phi_{app}=\sum_{l(T)\le N} \mathcal{J}_T
\end{equation}
\end{defn}

Section \ref{sec.connection} explains why the approximation series should equals to \eqref{eq.approxsol}, a sum of many tree terms, but if we know this fact, we can directly prove it, and forget all the motivations. The lemma below proves that $\phi_{app}$ defined by the above expression is an approximate solution.  

\begin{lem}\label{lem.approxerror} Define 
\begin{equation}
    Err=\mathcal{F}(\phi_{app})-\phi_{app},
\end{equation}
then we have 
\begin{equation}\label{eq.approxerror}
    Err=\sum_{T\in \mathcal{T}_{>N}^*} \mathcal{J}_T,
\end{equation}
where $\mathcal{T}_{>N}^*$ is defined by
\begin{equation}
\begin{split}
    \mathcal{T}_{>N}^*=\{&T:l(T)>N,\ l(T_{\mathfrak{n}_1})\le N,
    l(T_{\mathfrak{n}_2})\le N,
    \\
    &\textit{$T_{\mathfrak{n}_1}$, $T_{\mathfrak{n}_2}$ are the subtrees defined in Definition \ref{def.treeterms}} \}
\end{split}
\end{equation}
\end{lem}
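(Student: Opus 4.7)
The plan is to expand $\mathcal{F}(\phi_{app})$ directly using the definition $\mathcal{F}(\phi) = \xi + \mathcal{T}(\phi,\phi)$ and the bilinearity of $\mathcal{T}$, then compare term-by-term with the series defining $\phi_{app}$.

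First I would substitute $\phi_{app} = \sum_{l(T) \le N} \mathcal{J}_T$ into $\mathcal{F}$ and use the bilinearity of $\mathcal{T}$ in its two arguments to obtain
\begin{equation*}
    \mathcal{F}(\phi_{app}) = \xi + \mathcal{T}(\phi_{app},\phi_{app}) = \xi + \sum_{\substack{l(T_1)\le N \\ l(T_2)\le N}} \mathcal{T}(\mathcal{J}_{T_1},\mathcal{J}_{T_2}).
\end{equation*}
Next, by the recursive Definition \ref{def.treeterms}, each summand $\mathcal{T}(\mathcal{J}_{T_1},\mathcal{J}_{T_2})$ equals $\mathcal{J}_T$, where $T$ is the binary tree obtained by attaching $T_1$ and $T_2$ as the left and right subtrees of a new branched root $\bullet$. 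This gives an obvious bijection between ordered pairs $(T_1,T_2)$ of trees and trees $T$ with $l(T)\ge 1$, under which the subtrees $T_{\mathfrak{n}_1}, T_{\mathfrak{n}_2}$ at the two children of the root of $T$ recover $T_1, T_2$. Absorbing $\xi$ into the sum as $\mathcal{J}_{T_\star}$ for the trivial one-node tree $T_\star$ (which contributes $l(T_\star)=0$ and has no children to worry about), I rewrite
\begin{equation*}
    \mathcal{F}(\phi_{app}) = \sum_{\substack{l(T_{\mathfrak{n}_1})\le N \\ l(T_{\mathfrak{n}_2})\le N}} \mathcal{J}_T,
\end{equation*}
with the convention that the single-leaf tree is included.

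Finally I would split this sum according to whether $l(T)\le N$ or $l(T) > N$. Observing that if $l(T)\le N$ then automatically $l(T_{\mathfrak{n}_i})\le N$ for $i=1,2$ (trivially, since $l(T_{\mathfrak{n}_1})+l(T_{\mathfrak{n}_2})+1 = l(T)$ when the root is branched), the first part is exactly $\sum_{l(T)\le N}\mathcal{J}_T = \phi_{app}$, while the remainder is precisely the sum over trees in $\mathcal{T}_{>N}^*$. Subtracting $\phi_{app}$ from both sides yields \eqref{eq.approxerror}.

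The only subtlety — the ``main obstacle,'' though it is very mild — is making sure the bookkeeping on the degenerate case $l(T)=0$ is handled correctly (the single-leaf tree has no children, so the condition on $l(T_{\mathfrak{n}_i})$ is vacuous) and that the bijection between ordered pairs of trees and their join really is a bijection, so that no combinatorial overcounting occurs. This is immediate from the definition of binary tree but worth stating explicitly.
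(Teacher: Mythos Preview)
Your proof is correct and follows essentially the same approach as the paper: expand $\mathcal{F}(\phi_{app})$ using bilinearity, identify each $\mathcal{T}(\mathcal{J}_{T_1},\mathcal{J}_{T_2})$ with $\mathcal{J}_T$ via the recursive definition, and then split the resulting sum according to whether $l(T)\le N$. Your explicit remark that $l(T_{\mathfrak{n}_1})+l(T_{\mathfrak{n}_2})+1=l(T)$ forces the subtree constraints when $l(T)\le N$ is exactly the observation the paper uses (phrased there as $\sum_{l(T)\le N}=\sum_{l(T)\le N,\, l(T_1),l(T_2)\le N}$).
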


\begin{rem}
Notice that all terms in $\sum_{T\in \mathcal{T}_{>N}^*}$ are polynomials of $\xi$ of degree $>N$. Therefore, the approximation error of $\phi_{app}$ is of very high order, which proves that $\phi_{app}$ is an appropriate approximation solution. 
\end{rem}

\begin{proof} By \eqref{eq.approxsol}, we get
\begin{equation}\label{eq.lemapproxerror}
\begin{split}
    Err=&\mathcal{F}(\phi_{app})-\phi_{app}    
    \\
    =&\xi+\mathcal{T}(\phi_{app},\phi_{app})-\phi_{app}
    \\
    =&\xi+\sum_{l(T_1),l(T_2)\le N} \mathcal{T}(\mathcal{J}_{T_1},\mathcal{J}_{T_2})-\sum_{l(T)\le N} \mathcal{J}_T
\end{split}
\end{equation}

Let $T$ be a tree constructed by connecting the root nodes $\mathfrak{n}_1$, $\mathfrak{n}_2$ of $T_1$, $T_2$ to a new node $\mathfrak{r}$. We define $\mathfrak{r}$ to be the root node of $T$.

Then by \eqref{eq.treeterm}, we have
\begin{equation}
    \mathcal{J}_T=\mathcal{T}(\mathcal{J}_{T_1}, \mathcal{J}_{T_2})
\end{equation}
and 
\begin{equation}
    \sum_{l(T_1),l(T_2)\le N} \mathcal{T}(\mathcal{J}_{T_1},\mathcal{J}_{T_2})=\sum_{\substack{l(T)\ge 1\\ l(T_1),l(T_2)\le N}} \mathcal{J}_{T}
\end{equation}

By \eqref{eq.lemapproxerror}, we get
\begin{equation}
\begin{split}
    Err=&\xi+\sum_{\substack{l(T)\ge 1\\ l(T_1),l(T_2)\le N}} \mathcal{J}_{T}-\sum_{l(T)\le N} \mathcal{J}_T
    \\
    =&\sum_{\substack{T_1,T_2\text{ are subtrees of }\mathfrak{r}\\ l(T_1),l(T_2)\le N}} \mathcal{J}_{T}-\sum_{\substack{T_1,T_2\text{ are subtrees of }\mathfrak{r}\\l(T)\le N\\ l(T_1),l(T_2)\le N}} \mathcal{J}_T.
    \\
    =&\sum_{T\in \mathcal{T}_{>N}^*} \mathcal{J}_T
\end{split}
\end{equation}
Here in the second equality, we use the fact that $\sum_{l(T)\le N}=\sum_{\substack{l(T)\le N\\ l(T_1),l(T_2)\le N}}$.

Therefore, we complete the proof of this lemma.
\end{proof}

\subsection{Estimates of the approximation solution}

\subsubsection{Estimates of tree terms} By \eqref{eq.approxerror}, in order to control the approximation error $Err$, it suffices to get upper bounds of tree terms $\mathcal{J}_T$. We state the upper bound in the proposition below and delay its proof to section \ref{sec.treetermsupperbound}.

Let us introduce a definition before state the proposition.

\begin{defn}\label{def.Lcertainly}
Given a property $A$, we say $A$ happens $L$-certainly if the probability that $A$ happens satisfies $P(A)\ge 1-Ce^{-L^\theta}$ for some $C, \theta>0$.
\end{defn}

\begin{prop}\label{prop.treetermsupperbound}
We have $L$-certainly for any $\theta$ that 
\begin{equation}\label{eq.treetermsupperbound}
    \sup_t\sup_k  |(\mathcal{J}_T)_k|\lesssim L^{O(l(T)\theta)} \rho^{l(T)}.
\end{equation}
and $(\mathcal{J}_T)_k=0$ if $|k|\gtrsim 1$. Here $(\mathcal{J}_T)_k$ is the Fourier coefficients of $\mathcal{J}_T$ and \begin{equation}
    \rho=\alpha\, T^{\frac{1}{2}}_{\text{max}}.
\end{equation}
\end{prop}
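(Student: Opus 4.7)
The plan is to view $(\mathcal{J}_T)_k$ as a Gaussian polynomial in the i.i.d.\ base variables $\{\eta_k\}$ of degree $l(T)+1$, compute its variance via Wick's theorem, and then apply Gaussian hypercontractivity to upgrade the $L^2$ estimate to a uniform estimate at the cost of a small $L^{O(\theta)}$ factor.

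First I would unfold the recursion of Definition \ref{def.treeterms} to express $(\mathcal{J}_T)_k$ as a sum over decorations of the $l(T)+1$ leaves of $T$ by wave numbers $k_{\mathfrak{l}} \in \mathbb{Z}^d_L$, subject to momentum conservation $S=0$ at each internal branch. The kernel is a product over the $l(T)$ branches of the factor $\frac{i\lambda}{L^d} k_x$ times a nested oscillatory time integral $\int_0^{\,\cdot}\cdots e^{is\Omega - \nu|\cdot|^2(t-s)}\,ds$, and the leaves carry $\xi_{k_{\mathfrak l}} = \sqrt{n_{\rm in}(k_{\mathfrak l})}\,\eta_{k_{\mathfrak l}}$. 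The support statement $(\mathcal J_T)_k=0$ for $|k|\gtrsim 1$ is immediate from the compact support of $n_{\rm in}$: each leaf momentum lies in $\supp n_{\rm in}$ (bounded by $D$), and momentum conservation forces $k$ to be a signed sum of the $l(T)+1$ leaf momenta, hence uniformly bounded.

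Next I would apply the Gaussian chaos large deviation inequality: for a polynomial $P(\eta)$ of degree $n$ in i.i.d.\ complex Gaussians,
\begin{equation*}
\mathbb{P}\Bigl(|P(\eta)| > A\,\bigl(\mathbb{E}|P|^2\bigr)^{1/2}\Bigr) \le C_n\, e^{-c A^{2/n}}.
\end{equation*}
With $A=L^\theta$ and $n=l(T)+1$ this gives $|(\mathcal{J}_T)_k|\lesssim L^{O(\theta)}(\mathbb{E}|(\mathcal{J}_T)_k|^2)^{1/2}$ with probability $\ge 1-Ce^{-L^\theta}$ for each fixed $(t,k)$. The supremum over $k$ in the bounded lattice region and over a sufficiently fine discretization of $t\in[0,T_{\max}]$ (using that $\partial_t(\mathcal J_T)_k$ admits crude polynomial-in-$L$ bounds from \eqref{eq.mainlinearprofile}) costs only a polynomial number of union-bound events, which is absorbed into $L^{O(\theta)}$ by slightly enlarging $\theta$.

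The heart of the argument is then the variance estimate $\mathbb{E}|(\mathcal{J}_T)_k|^2 \lesssim L^{O(l(T)\theta)}\rho^{2l(T)}$. I would expand the variance by Wick's theorem as a sum over perfect pairings of the $2(l(T)+1)$ leaf Gaussians from $(\mathcal J_T)_k$ and $(\overline{\mathcal J_T})_k$; each pairing is precisely a couple diagram in the sense of section \ref{sec.coupwick}. After pairing and imposing momentum conservation at every branch, one is left with $l(T)$ free internal wave numbers on $\mathbb{Z}_L^d$. The prefactor $(\lambda/L^d)^{2l(T)}$ from squaring, combined with the volume $L^{dl(T)}$ from the free sums, gives $\lambda^{2l(T)}L^{-dl(T)}$; the $l(T)$ time integrals and the oscillatory phases $e^{is\Omega}$, localized by the resonance $\Omega=0$ and damped by $e^{-\nu|\cdot|^2(t-s)}$, contribute at most $T_{\max}^{l(T)}$ up to an $L^{O(l(T)\theta)}$ loss from the number-theoretic lattice count near the resonance surface carried out in section \ref{sec.numbertheory}. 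Assembling these gives exactly $\lambda^{2l(T)}L^{-dl(T)}T_{\max}^{l(T)} = \rho^{2l(T)}$ with the stated tolerance.

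The main obstacle is the lattice-point count. As remarked in section 1.4, the ZK resonance $\Lambda(k_1)+\Lambda(k_2)=\Lambda(k)$ degenerates as $k_x\to 0$, and a naive per-edge count on the tree would produce uncontrolled factors $|k_x|^{-1}$. The fix, to be executed in section \ref{sec.numbertheory}, is to perform the counting on the full couple diagram via the edge-cutting argument, using the dissipation $e^{-\nu|k|^2(t-s)}$ from the viscosity to tame high frequencies; once that number-theoretic input is available, the three steps above combine mechanically to give \eqref{eq.treetermsupperbound}.
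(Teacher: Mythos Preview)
Your proposal is essentially the paper's own argument: variance via Wick pairing into couple diagrams, the couple-based lattice count of section~\ref{sec.numbertheory} to handle the $k_x\to 0$ degeneration, then hypercontractivity plus an $\varepsilon$-net/union bound over $(t,k)$. One small correction: with polynomial degree $n=l(T)+1$, taking $A=L^\theta$ in the large-deviation inequality only yields failure probability $e^{-cL^{2\theta/n}}$, not $e^{-L^\theta}$; you need $A=L^{n\theta/2}$ (as the paper does), which is harmless since the resulting $L^{O(l(T)\theta)}$ prefactor is exactly what the statement allows.
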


\subsubsection{Linearization around the approximation solution} Let $w=\phi-\phi_{app}$ be the deviation of $\phi_{app}$ to the true solution $\phi_{app}$. In order to estimate $w$, we consider the linearized equation of
\begin{equation}
    \phi=\mathcal{F}(\phi)=\xi+\mathcal{T}(\phi,\phi).
\end{equation}
The linearized equation is a equation of $w$ given by
\begin{equation}\label{eq.eqw}
    w= Err(\xi)+Lw+B(w,w),
\end{equation}
where $Err(\xi)$, $Lw$, $B(w,w)$ are given by
\begin{equation}
\left\{
\begin{aligned}
    &Err=\mathcal{F}(\phi_{app})-\phi_{app},
    \\
    &Lw=2\mathcal{T}(\phi_{app},w),
    \\
    &B(w,w)=\mathcal{T}(w,w).
\end{aligned}\right.
\end{equation}

As explained in section \ref{sec.randmatintro}, to control $w$, we need operator norm bound $||L^K||_{X^p}$. Notice that $\phi_{app}=\sum_{l(T)\le N} \mathcal{J}_T$, so we know that $Lw$ is a sum of terms like $\mathcal{T}(\mathcal{J}_{T},w)$.

It suffices to get following operator norm bound for $w\rightarrow \mathcal{T}(\mathcal{J}_{T},w)$ in order to upper bound $L^K$.

\begin{prop}\label{prop.operatorupperbound}
Define $\rho=\alpha\, T^{\frac{1}{2}}_{\text{max}}$ as in Proposition \ref{prop.treetermsupperbound} and $\mathcal{P}_{T}$ to be the linear operator
\begin{equation}
\begin{split}
    w\rightarrow \mathcal{T}(\mathcal{J}_{T},w).
\end{split}
\end{equation} 
Then for any sequence of trees $\{T_1,\cdots,T_K\}$, we have $L$-certainly for any $\theta$ the operator bound
\begin{equation}\label{eq.operatornorm'}
    \left|\left|\prod_{j=1}^K\mathcal{P}_{T_j}\right|\right|_{L_t^{\infty}X^p\rightarrow L_t^{\infty}X^p}\le L^{O\left(1+\theta\sum_{j=1}^K l(T_j)\right)} \rho^{\sum_{j=1}^K l(T_j)}.
\end{equation}
for any $T_j$ with $l(T_j)\le N$. 

The the above inequality implies that 
\begin{equation}\label{eq.operatornorm}
    \left|\left|L^K\right|\right|_{L_t^{\infty}X^p\rightarrow L_t^{\infty}X^p}\le L^{O(1+K\theta)} \rho^{K}.
\end{equation}
\end{prop}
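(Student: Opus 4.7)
The plan is to prove Proposition \ref{prop.operatorupperbound} by a $TT^{\ast}$-type argument carried out in $\ell^2$, absorbing the mismatch with the non-Hilbert norm $X^p$ by a soft embedding loss and reducing the operator bound to a pointwise estimate on the matrix entries of a high power of $\mathcal{M}^{\ast}\mathcal{M}$, where $\mathcal{M}:=\prod_{j=1}^K\mathcal{P}_{T_j}$. Unfolding the composition, $\mathcal{M}$ is an integral operator whose kernel $\mathcal{M}(t;k,l)$ is an iterated oscillatory sum in which each $\mathcal{P}_{T_j}$ contributes a factor $\lambda/L^d$, a phase $e^{is\Omega}$, and a viscosity decay $e^{-\nu(t-s)|\cdot|^2}$, multiplied by the Gaussian coefficients of $\mathcal{J}_{T_j}$. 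By Proposition \ref{prop.treetermsupperbound} and the viscosity cutoff, $\mathcal{M}(t;\cdot,\cdot)$ is supported in a finite frequency box of cardinality $L^{O(1)}$, so the embeddings $\ell^\infty\hookrightarrow\ell^2\hookrightarrow\ell^\infty$ each cost only $L^{O(1)}$ and $\|\mathcal{M}\|_{X^p\to X^p}\lesssim L^{O(1)}\|\mathcal{M}\|_{\ell^2\to\ell^2}$.

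Next I fix an auxiliary integer $K'=O(\theta^{-1})$ and use the Hilbert-space identity $\|\mathcal{M}\|_{\ell^2\to\ell^2}^{2K'}=\|(\mathcal{M}^{\ast}\mathcal{M})^{K'}\|_{\ell^2\to\ell^2}$, together with the crude Frobenius-type bound $\|A\|_{\ell^2\to\ell^2}\le L^{O(1)}\sup_{k,l}|A(k,l)|$ on the effective box, to get
\[
\bigl\|\mathcal{M}\bigr\|_{\ell^2\to\ell^2}\;\le\;L^{O(1/K')}\Bigl(\sup_{k,l}\bigl|(\mathcal{M}^{\ast}\mathcal{M})^{K'}(k,l)\bigr|\Bigr)^{1/(2K')},
\]
with the first factor at most $L^{O(\theta)}$ by the choice of $K'$. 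The problem thus reduces to a high-probability pointwise bound on the entries of $(\mathcal{M}^{\ast}\mathcal{M})^{K'}$, each of which is an explicit oscillatory Gaussian polynomial of degree $2K'\sum_j(l(T_j)+1)$ indexed by a ``doubled'' tree structure obtained by gluing the $2K'$ copies of the underlying trees along their root variables.

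Applying Wick's theorem and the standard large-deviation inequality for Gaussian polynomials, this supremum is $L$-certainly bounded by $L^{O(\theta)}$ times the square root of the variance. The variance itself expands as a sum over couple diagrams, and the edge-cutting / lattice-point-counting estimates of Section \ref{sec.numbertheory}, combined with the viscosity factor $e^{-\nu(t-s)|k|^2}$ which tames the unbounded multiplier $k_x$ arising from the derivative in the nonlinearity, deliver a bound of the form $L^{O(\theta K'(1+\sum_j l(T_j)))}\rho^{\,2K'\sum_j l(T_j)}$. Taking the $2K'$-th root and combining with the embedding and $TT^{\ast}$-iteration losses yields \eqref{eq.operatornorm'}; the bound \eqref{eq.operatornorm} follows immediately by expanding $L=2\sum_{l(T)\le N}\mathcal{P}_T$, applying \eqref{eq.operatornorm'} to each product, and taking a union bound over the (finitely many) tree sequences.

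The delicate step is the couple-diagram variance estimate. As emphasized in Section \ref{sec.latticeintro}, the energy resonance $\Omega=0$ for the ZK dispersion degenerates when $k_x\to 0$, so the tree-level lattice-point count of \cite{DH} cannot be used directly and one must work at the level of couples, cutting edges carefully so as not to accumulate powers of $|k_x|^{-1}$. Keeping the resulting losses at the level $L^{O(\theta(1+\sum_j l(T_j)))}$, rather than something that would spoil the gain $\rho^{\sum_j l(T_j)}$, is where the bulk of the technical work sits, and the $O(\theta^{-1})$ choice of $K'$ is precisely what allows the $\ell^\infty\leftrightarrow\ell^2$ embedding loss and the Frobenius loss to both be absorbed into the final $L^{O(\theta)}$.
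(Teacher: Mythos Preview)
Your approach is genuinely different from the paper's, and it has a real gap. The paper does \emph{not} use $TT^{\ast}$; indeed Section~\ref{sec.randmatintro} singles out $TT^{\ast}$ as the method of \cite{DH} and explains that it is avoided here precisely because $L^\infty_t X^p$ is not a Hilbert space. Instead the paper bounds the kernel entries directly. After a dyadic decomposition $\mathcal{P}_T=\sum_\tau\mathcal{P}_T^\tau$ in the variable $t-s$, the composition $\prod_j\mathcal{P}_{T_j}^{\tau_j}$ is written (Lemma~\ref{lem.treetermsoperator}) as an integral operator with kernel $H^{\tau_1\cdots\tau_K}_{Tkk'}(t,s)$, which is itself a Gaussian polynomial in $\xi$. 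Its variance is estimated via couples and the lattice-point counting of Section~\ref{sec.numbertheory} (Proposition~\ref{prop.treetermsvarianceoperator}), large deviation then gives a pointwise bound carrying a summable factor $2^{-\frac14\sum_j\tau_j}$, and one simply sums over the dyadic indices and over $k'$ using the band-limited structure $|k-k'|\lesssim 1$. The input space is restricted to $X^p_{L^{2M}}$ (Proposition~\ref{prop.operatorupperbound'}); the high-frequency complement is handled by a separate crude bound exploiting $\int_0^t e^{-\nu|k|^2(t-s)}\,ds\le\nu^{-1}|k|^{-2}$.

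The concrete gap in your argument is the claim that ``$\mathcal{M}(t;\cdot,\cdot)$ is supported in a finite frequency box of cardinality $L^{O(1)}$''. Proposition~\ref{prop.treetermsupperbound} only gives $|k-k'|\lesssim 1$ (band limitation near the diagonal), not compact support in $k$, and the viscosity factor $e^{-\nu|k|^2(t-s)}$ is \emph{not} a hard cutoff because $t-s$ can be arbitrarily small inside the Duhamel integral. Without that support claim your $X^p\leftrightarrow\ell^2$ embedding step fails, and the $L^\infty_t$ time component is in any case not Hilbert, so the identity $\|\mathcal{M}\|_{\ell^2\to\ell^2}^{2K'}=\|(\mathcal{M}^\ast\mathcal{M})^{K'}\|_{\ell^2\to\ell^2}$ does not apply to the space-time operator as stated. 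The paper's dyadic-in-time decomposition is exactly what repairs this: on each piece $t-s\sim 2^{-\tau}T_{\max}$ the viscosity genuinely bites, producing the $2^{-\tau/2}$ gain that both controls the derivative loss $k_x$ and makes the sum over $\tau$ converge, after which the direct kernel bound suffices and no $TT^\ast$ iteration is needed.
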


The proof of this proposition can be found in section \ref{sec.randommatrices}.
\subsection{Bound the error of the approximation}\label{sec.errorw}

Define $w=v-v_{app}$ to be the approximation error. In this section we prove the following theorem that gives an upper bound of $w$, assuming Proposition \ref{prop.treetermsupperbound} and Proposition \ref{prop.operatorupperbound} in previous section.

\begin{thm}\label{th.app}
Let $w=\phi-\phi_{app}$. Given any $M\gg 1$, there exists $N$ such that if $\phi_{app}$ is the $N$-th order approximate solution, then $\sup_{t\le T_{\text{max}}}||w(t)||_{X^p}\lesssim L^{-M}$ $L$-certainly (with probability $\geq 1-Ce^{-CL^\theta}$).
\end{thm}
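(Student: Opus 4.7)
The plan is a bootstrap in $\sup_t \|w(t)\|_{X^p}$, in which the inhomogeneity $Err(\xi)$ is made arbitrarily small by increasing $N$, the bilinear term $B(w,w)$ is absorbed by its own smallness, and the linear obstruction $Lw$ is removed by a Neumann inversion of $1-L$ built out of Proposition \ref{prop.operatorupperbound}. A recurring observation is that $\rho = \alpha T_{\text{max}}^{1/2} = L^{-\varepsilon/2}\ll 1$, which is the source of every contractive factor below.

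First I would bound $Err$. Lemma \ref{lem.approxerror} writes $Err(\xi) = \sum_{T\in \mathcal{T}_{>N}^*}\mathcal{J}_T$ with each tree satisfying $N<l(T)\le 2N+1$ (in particular $\#\mathcal{T}_{>N}^*\le 16^N$). Proposition \ref{prop.treetermsupperbound}, whose frequency-localization at $|k|\lesssim 1$ reduces the $X^p$ norm to $\sup_k$ up to a universal constant, gives $L$-certainly
$$\|\mathcal{J}_T\|_{L^\infty_t X^p}\lesssim L^{O(l(T)\theta)}\rho^{l(T)}\le L^{-N\varepsilon/4}$$
using $\theta\ll\varepsilon$. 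Summing produces $\|Err(\xi)\|_{L^\infty_t X^p}\le L^{-N\varepsilon/4+o(1)}$, which can be made $\le L^{-M-C_0}$ for any universal constant $C_0$ fixed in advance, by choosing $N$ large.

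Next, I would open the bootstrap with $\|w(t)\|_{X^p}\le L^{-M+1/2}$ on some $[0,T_0]\subset[0,T_{\text{max}}]$, which holds trivially near $t=0$ since $w(0)=0$. A crude pointwise estimate on $\mathcal{T}(w,w)_k$, using $|\lambda|L^{-d}$, the time integral, and the viscous cutoff $e^{-\nu|k|^2(t-s)}$ to truncate the admissible frequencies, gives $\|B(w,w)\|_{X^p}\le L^{C_1}\|w\|_{X^p}^2\le L^{-M-C_0}$ once $M$ is large. For the linear part, the equation becomes $(1-L)w=Err(\xi)+B(w,w)$. Choose $K=O(\theta^{-1})$ so large that $K\varepsilon/2$ dominates $O(1+K\theta)$; then Proposition \ref{prop.operatorupperbound} yields $L$-certainly $\|L^K\|_{X^p\to X^p}\le L^{O(1+K\theta)}\rho^K\le 1/2$, so $(1-L^K)^{-1}$ exists with norm $\le 2$ and
$$(1-L)^{-1} = (1-L^K)^{-1}\bigl(1+L+\cdots+L^{K-1}\bigr).$$
Applying Proposition \ref{prop.operatorupperbound} to the shorter products bounds each $\|L^j\|$ for $j<K$ by $L^{O(1)}$, hence $\|(1-L)^{-1}\|_{X^p\to X^p}\le L^{C_0}$ for some universal $C_0$. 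Combining,
$$\|w(t)\|_{X^p}\le L^{C_0}\bigl(\|Err(\xi)\|_{X^p}+\|B(w,w)\|_{X^p}\bigr)\le 2L^{-M},$$
strictly improving the bootstrap hypothesis. Continuity of $t\mapsto \|w(t)\|_{X^p}$ from \eqref{eq.intmain} extends the bound to all of $[0,T_{\text{max}}]$, and a union bound over the finitely many $L$-certain events retains $L$-certainty.

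The hardest step is the Neumann inversion. Because $X^p$ is not Hilbertian, the usual $TT^*$ trick of \cite{DH} for proving $\|L\|\ll 1$ directly is unavailable, and $L$ itself is only bounded by $L^{O(1)}$ (the $l(T)=0$ contribution inside $\phi_{app}$ is a bare $\xi$ and carries no smallness). The cure, as outlined in section \ref{sec.randmatintro}, is to import contractivity from $K$-fold products, where Proposition \ref{prop.operatorupperbound} is proved via couple-diagram large-deviation estimates, and then to reorganize $(1-L)^{-1}=\sum_j L^j$ into blocks of length $K$. The scale hierarchy $\theta\ll\varepsilon$ and $K=O(\theta^{-1})$ from the notation section is exactly what balances the combinatorial loss $L^{O(K\theta)}$ against the gain $\rho^K=L^{-K\varepsilon/2}$.
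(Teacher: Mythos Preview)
Your proposal is correct and follows essentially the same approach as the paper: a bootstrap closed via the Neumann factorization $(1-L)^{-1}=(1-L^K)^{-1}(1+L+\cdots+L^{K-1})$, with Proposition~\ref{prop.operatorupperbound} supplying $\|L^K\|\ll 1$, Proposition~\ref{prop.treetermsupperbound} controlling $Err$, and the crude quadratic bound handling $B(w,w)$. The only cosmetic difference is that the paper estimates $L^j(Err)$ by rewriting it as a composite tree term $\mathcal{J}_{T_1\circ\cdots\circ T_j\circ T}$ via \eqref{eq.operatoreqsimpleJ_T} and then invoking Proposition~\ref{prop.treetermsupperbound}, whereas you bound $\|L^j\|$ and $\|Err\|$ separately; both routes yield the same $L^{O(N\theta)}\rho^N$ control.
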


\begin{proof}
If for some $C$ sufficiently large we can show that

\begin{equation}\label{eq.claimw}
    \sup _{t\le T}||w(t)||_{X^p}\le CL^{-M}\textit{  implies that } \sup _{t\le T}||w(t)||_{X^p}< CL^{-M},
\end{equation}
for all $T\le T_{\text{max}}$, then we finish the proof of this theorem. 

Here is the explanation. \eqref{eq.claimw} implies that if we define the set $A=\{T: \sup _{t\le T}||w||_{X^p}\le CL^{-M}\}$ then the set equals to $\{T: \sup _{t\le T}||w||_{H^s}< CL^{-M}\}$ which is open. The original definition $A=\{T: \sup _{t\le T}||w||_{X^p}\le CL^{-M}\}$ implies that this set is also closed. It is nonempty because $||w(0)||_{X^p}=0$ implies that $0\in A$. Therefore, $A$ is open, closed and nonempty in $[0,T_{\text{max}}]$, so $A=[0,T_{\text{max}}]$ which implies that the theorem.

Now we prove (\ref{eq.claimw}). By \eqref{eq.eqw}, 
\begin{equation}
    w-Lw= Err(\xi)+B(w,w).
\end{equation}
By Neumann series we have
\begin{equation}\label{eq.maintheoremeq1}
    \begin{split}
        w=& (1-L)^{-1}(Err(\xi)+B(w,w))
        \\
        =&(1-L^K)^{-1}(1+L+\cdots+L^{K-1})(Err(\xi)+B(w,w)).
    \end{split}
\end{equation}
Assume that the constant in $O(1+K\theta)$ in \eqref{eq.operatornorm} is $C_{norm}$. Since $T_{\text{max}}\le L^{-\varepsilon} \alpha^{-2}$, we know that $\rho=\alpha\, T^{\frac{1}{2}}_{\text{max}}\lesssim L^{-\varepsilon}$. Take $\theta\le C_{norm}\varepsilon/2$ and $K\gg \frac{C_{norm}}{\varepsilon}$, then $\left|\left|L^K\right|\right|_{L^{\infty}X^p\rightarrow L^{\infty}X^p}\le L^{C_{norm}(1+K\theta)}\rho^K\lesssim L^{C_{norm}(1+K\theta)} L^{-K\varepsilon}\ll 1$, so we get $\left|\left|L^K\right|\right|_{L^{\infty}X^p\rightarrow L^{\infty}X^p}\ll 1$ and thus $\left|\left|(1-L^K)^{-1}\right|\right|_{L^{\infty}X^p\rightarrow L^{\infty}X^p}\lesssim 1$. 

By \eqref{eq.maintheoremeq1}, we get
\begin{equation}
    ||w(t)||_{X^p}\lesssim \sum_{j=1}^K||L^j(Err(\xi))||_{X^p}+ ||(1+L+\cdots+L^{K-1})(B(w,w))||_{X^p}
\end{equation}

By \eqref{eq.approxerror}, we know that $Err$ is a sum of tree terms $\sum_{T\in \mathcal{T}_{>N}^*} \mathcal{J}_T$ of order $\ge N$. Since $L=\sum_{1\le l(T)\le N} \mathcal{P}_{T}$, we know that $L^j(Err(\xi))$ is a sum of terms like $\mathcal{P}_{T_1}\circ\cdots\circ\mathcal{P}_{T_{j}}(\mathcal{J}_T)$ which by \eqref{eq.operatoreqsimpleJ_T} equals to $\mathcal{J}_{T_1\circ\cdots\circ T_{j}\circ T}$. By Proposition \ref{prop.treetermsupperbound}, we get $||\mathcal{J}_{T_1\circ\cdots\circ T_{j}\circ T}||_{X^p}\lesssim (L^{O(\theta)} \rho)^{l(T_1\circ\cdots\circ T_{j}\circ T)}\lesssim L^{O(l(T)\theta)} \rho^{l(T)}$. Since $\rho=\alpha\, T^{\frac{1}{2}}_{\text{max}}\lesssim L^{-\varepsilon}$ and $l(T)>N$ in the sum of $Err$, we get 
\begin{equation}\label{eq.maintheoremeq2}
    \sum_{j=1}^K||L^j(Err(\xi))||_{X^p}\lesssim L^{O(N\theta)} \rho^{N}\lesssim L^{O(N\theta)} L^{-N\varepsilon}\ll L^{-M}
\end{equation}
if we take $N\gg M/\varepsilon$ and $\theta\ll \varepsilon$.

Taking $K=1$ in \eqref{eq.operatornorm'}, we know that $\left|\left|L\right|\right|_{L^{\infty}X^p\rightarrow L^{\infty}X^p}\le L^{O(1)}$, so $\left|\left|L^j\right|\right|_{L^{\infty}X^p\rightarrow L^{\infty}X^p}\le L^{O(K)}$ if $j\le K$. Taking $M\gg K$, by \eqref{eq.claimw}, we have $\sup _{t\le T}||w(t)||_{X^p}\le CL^{-M}$. Therefore, we have 
\begin{equation}\label{eq.maintheoremeq3}
    ||(1+L+\cdots+L^{K-1})(B(w,w))||_{X^p}\lesssim L^{O(K)} L^d ||w||^2_{X^p}\lesssim L^{O(K)} L^{O(1)} L^{-2M}\ll L^{-M}.
\end{equation}

Combining \eqref{eq.maintheoremeq2} and \eqref{eq.maintheoremeq3}, we prove $\sup _{t\le T}||w(t)||_{X^p}\ll L^{-M} < CL^{-M}$ from the assumption that $\sup _{t\le T}||w(t)||_{X^p}\le CL^{-M}$. We thus complete the proof of Theorem \ref{th.app}.
\end{proof}

\subsection{Proof of the main theorem}\label{sec.proofmain} In this section, we prove Theorem \ref{th.main}.

\begin{proof}[Proof of Theorem \ref{th.main}] \textbf{Step 1.} ($\mathbb E |\widehat \psi(t, k)|^2$ is close to $\mathbb E |\psi_{app,k}|^2$) By Theorem \ref{th.app}, we know that when $t\le T_{\text{max}}= L^{-\varepsilon}\alpha^{-2}$, we have $||w||_{X^p}\le L^{-M}$ with $L$-certainly.

The above inequality is equivalent to $\sup_k\, |\langle k \rangle^s w_k|\le CL^{-M}$. Remember that $w:=\psi-\psi_{app}$, so $L$-certainly we have the following estimate
\begin{equation}\label{eq.psikminusxik}
    \sup_k\, \langle k \rangle^s |\psi_k-\psi_{app,k}|\le CL^{-M}
\end{equation}

Denote by $A$ the event that the above estimate is true, then $\mathbb E |\widehat \psi(t, k)|^2=\mathbb E (|\psi_k|^2 1_{A})+\mathbb E (|\psi_k|^2 1_{A^c})$. $L$-certainty implies that $\mathbb P(A^c) \lesssim e^{-CL^{\theta}}$. Since $||\psi||_{L^2}$ is conservative and $|\psi_k|^2\le L^{d/2} ||\psi||_{L^2}\le L^{d/2}$, we know that $\mathbb E (|\psi_k|^2 1_{A^c})\lesssim L^{d/2} e^{-CL^{\theta}}= O(L^{-M})$. Therefore, $\mathbb E |\widehat \psi(t, k)|^2=\mathbb E (|\psi_k|^2 1_{A})+O(L^{-M})$. Since we also have $\mathbb E |\psi_{app,k}|^2=\mathbb E (|\psi_{app,k}|^2 1_{A})+O(L^{-M})$, we conclude that
\begin{equation}
    \mathbb E |\widehat \psi(t, k)|^2=\mathbb E |\psi_{app,k}|^2+\mathbb E ((|\psi_k|^2-|\psi_{app,k}|^2)1_{A})+O(L^{-M})
\end{equation}

By (\ref{eq.psikminusxik}), $\mathbb E ((|\psi_k|^2-|\psi_{app,k}|^2)1_{A})=O(L^{-M})$. We may conclude that 
\begin{equation}
    \mathbb E |\widehat \psi(t, k)|^2=\mathbb E |\psi_{app,k}|^2+O(L^{-M}).
\end{equation}
This suggests that we may get the approximation of $\mathbb E |\widehat \psi(t, k)|^2$ by calculating $\mathbb E |\psi_{app,k}|^2$.

\textbf{Step 2.} (Expansion of $\mathbb E |\psi_{app,k}|^2$)
By \eqref{eq.approxsol}, we know that 
\begin{equation}
    \phi_{app}=\sum_{l(T)\le N} \mathcal{J}_T
\end{equation}

Define 
\begin{equation}\label{eq.n(j)}
    n^{(j)}(k)\defeq \sum_{l(T)+l(T')=j} \mathbb E \mathcal{J}_{T,k}\overline{\mathcal{J}_{T',k}}
\end{equation}
then Proposition \ref{prop.treetermsupperbound} or \ref{prop.treetermsvariance} gives upper bounds of $n^{(j)}(k)$ , which proves (1) and \eqref{eq.n(j)estimate} of Theorem \ref{th.main}.

\textbf{Step 3.} (Asymptotics of $n^{(1)}(k)$) The only thing left in Theorem \ref{th.main} is \eqref{eq.n1}. This is a corollary of Proposition \ref{prop.mainterms}.
\end{proof}

\section{Lattice points counting and convergence results}
In this section, we prove Proposition \ref{prop.treetermsupperbound} and \ref{prop.operatorupperbound} which gives upper bounds for tree terms $\mathcal{J}_{T,k}$ and the linearization operator $\mathcal{P}_T$. As explained before, these results are crucial in the proof of the main theorem. The proof of is divided into several steps.

In section \ref{sec.refexp}, we calculate the coefficients of $\mathcal{J}_{T,k}$ as polynomials of Gaussian random variables.

In section \ref{sec.uppcoef}, we obtain upper bounds for the coefficients of these Gaussian polynomials.

Large deviation theory suggests that an upper bound of an Gaussian polynomial can be derived from an upper bound of its expectation and variance.

In section \ref{sec.coupwick}, we introduce the concept of couples which is a graphical method of calculating the expectation of Gaussian polynomials.

In section \ref{sec.numbertheory}, we use couple to establish an lattice points counting result.

In section \ref{sec.treetermsupperbound} and section \ref{sec.randommatrices}, we apply the lattice points counting result to derive upper bounds for  $\mathcal{J}_{T,k}$ and $\mathcal{P}_T$ respectively. This finishes the proof of Proposition \ref{prop.treetermsupperbound} and \ref{prop.operatorupperbound} and therefore the proof of the main theorem.

\subsection{Refined expression of coefficients}\label{sec.refexp} From \eqref{eq.treeterm}, it is easy to show that $\mathcal{J}_{T,k}$ are polynomials of $\xi$. In this section, we calculate the coefficients of $\mathcal{J}_{T,k}$ using the definition \eqref{eq.treeterm} of them.

Notice that all non-leaf nodes other than the root in a tree have degree $3$. For convenience, we add a new edge, called \underline{leg} $\mathfrak{l}$, to the root node, which makes the root also of degree $3$. This process is illustrated by Figure \ref{fig.leg}.
    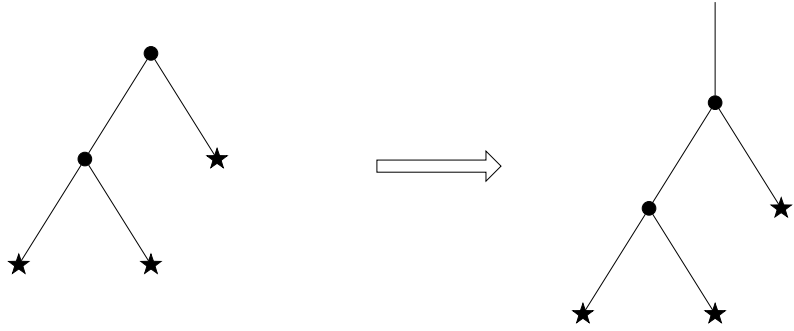
\begin{figure}[H]
    \centering
    \scalebox{0.5}{
    \begin{tikzpicture}[level distance=80pt, sibling distance=100pt]
        \node[fillcirc] {} 
            child {node[fillcirc]  {}
                child {node[fillstar] {}}
                child {node[fillstar] {}}
                }
            child {node[fillstar] {}};
        \node[draw, single arrow,
              minimum height=33mm, minimum width=8mm,
              single arrow head extend=2mm,
              anchor=west, rotate=0] at (6,-3) {};  
        \node[] at (15,1.5) {} 
            child {node[fillcirc] {} 
                child {node[fillcirc]  {}
                    child {node[fillstar] {}}
                    child {node[fillstar] {}}
                }
                child {node[fillstar] {}}
            };
    \end{tikzpicture}
    }
        \caption{Adding a leg to a tree}
        \label{fig.leg}
    \end{figure}

We also need following concepts about trees.
\begin{defn}\label{def.treemore}
\begin{enumerate}
    \item \textbf{Nodes and children of an edge:} As in Figure \ref{fig.childrenofedge}, let $\mathfrak{n}_{u}$ and $\mathfrak{n}_{l}$ be two endpoints of an edge $\mathfrak{e}$ and assume that $\mathfrak{n}_{l}$ is a children of $\mathfrak{n}_{u}$. We define $\mathfrak{n}_{u}$ (resp. $\mathfrak{n}_{l}$) to the \underline{upper node} (resp. \underline{lower node}) of $\mathfrak{e}$. Let $\mathfrak{n}_1$, $\mathfrak{n}_2$ be the two children of $\mathfrak{n}_{l}$ and let $\mathfrak{e}_1$(resp. $\mathfrak{e}_2$) be the edge between two nodes $\mathfrak{n}_{l}$ and $\mathfrak{n}_1$(resp. $\mathfrak{n}_2$). $\mathfrak{e}_1$, $\mathfrak{e}_2$ are defined to be the two \underline{children edges} of $\mathfrak{e}$. 
    \begin{figure}[H]
    \centering
    \scalebox{0.5}{
    \begin{tikzpicture}[level distance=80pt, sibling distance=100pt]
        \node[scale=2.0] at (13.8,-2.7) {$\mathfrak{e}$};
        \node[scale=2.0] at (14.3,-1.5) {$\mathfrak{n}_{u}$};
        \node[scale=2.0] at (12.6,-4) {$\mathfrak{n}_{l}$};
        \node[scale=2.0] at (12,-5.5) {$\mathfrak{e}_1$};
        \node[scale=2.0] at (14.5,-5.5) {$\mathfrak{e}_2$};
        \node[scale=2.0] at (11.5,-7.5) {$\mathfrak{n}_1$};
        \node[scale=2.0] at (15.2,-7.5) {$\mathfrak{n}_2$};
        \node[] at (15,1.5) {} 
            child {node[fillcirc] {} 
                child {node[fillcirc]  {}
                    child {node[fillstar] {}}
                    child {node[fillstar] {}}
                }
                child {node[fillstar] {}}
            };
    \end{tikzpicture}
    }
        \caption{Children $\mathfrak{e}_1$, $\mathfrak{e}_2$ of an edge $\mathfrak{e}$}
        \label{fig.childrenofedge}
    \end{figure}
    \item \textbf{Direction of an edge:} As in Figure \ref{fig.orientation}, each edge $\mathfrak{e}$ is  assigned with a \underline{direction}. This concept is mostly used to decide the value of variables $\iota_{\mathfrak{e}}\in\{\pm\}$ that will be defined later. Although it can be shown that the final result does not depend on the choices of direction of each edge, for definiteness, we assign downward direction to each edge. The orientation in Figure \ref{fig.orientation} is one example of this choices.
    \begin{figure}[H]
    \centering
    \scalebox{0.5}{
    \begin{tikzpicture}[level distance=80pt, sibling distance=100pt]
        \node[scale=2.0] at (11.5,-7.5) {$1$};
        \node[scale=2.0] at (15.0,-7.5) {$2$};
        \node[scale=2.0] at (16.8,-4.7) {$3$};
        \node[] at (15,1.5) (1) {} 
            child {node[fillcirc] (2) {} 
                child {node[fillcirc] (3) {}
                    child {node[fillstar] (4) {}}
                    child {node[fillstar] (5) {}}
                }
                child {node[fillstar] (6) {}}
            };
        \draw[-{Stealth[length=5mm, width=3mm]}] (1) -- (2);
        \draw[-{Stealth[length=5mm, width=3mm]}] (2) -- (3);
        \draw[-{Stealth[length=5mm, width=3mm]}] (2) -- (6);
        \draw[-{Stealth[length=5mm, width=3mm]}] (3) -- (4);
        \draw[-{Stealth[length=5mm, width=3mm]}] (3) -- (5);
    \end{tikzpicture}
    }
        \caption{Children $\mathfrak{e}_1$, $\mathfrak{e}_2$ of an edge $\mathfrak{e}$}
        \label{fig.orientation}
    \end{figure}
    \item \textbf{Labelling of leaves:} As in Figure \ref{fig.orientation}, each leaf is labelled by $1$, $2$, $\cdots$, $l(T)+1$ from left to right. An edge pointing to a leaf $\mathfrak{n}$ is also labelled by $j$ if $\mathfrak{n}$ is labelled by $j$.
\end{enumerate}

\end{defn}

Now we calculate the coefficients of $\mathcal{J}_{T,k}$.

\begin{lem}\label{lem.treeterms} Given a tree $T$ of depth $l=l(T)$, denote by $T_{\text{in}}$ the tree formed by all non-leaf nodes $\mathfrak{n}$, then associate each node $\mathfrak{n}\in T_{\text{in}}$ and edge $\mathfrak{l}\in T$ with a variable $t_{\mathfrak{n}}$ and $k_{\mathfrak{l}}$ respectively. Given a labelling of all leaves by $1$, $2$, $\cdots$, $l+1$, we identify $k_{\mathfrak{e}}$ with $k_j$ if $\mathfrak{e}$ is connected to a leaf labelled by $j$. Given a node $\mathfrak{n}$, let $\mathfrak{e}_1$, $\mathfrak{e}_2$ and $\mathfrak{e}$ be the three edges from and pointing to $\mathfrak{n}$ ($\mathfrak{e}$ is the parent of $\mathfrak{e}_1$ and $\mathfrak{e}_2$), $\mathfrak{n}_1$, $\mathfrak{n}_2$ be children of $\mathfrak{n}$ and $\hat{\mathfrak{n}}$ be the parent of $\mathfrak{n}$. 

Let $\mathcal{J}_T$ be terms defined in Definition \ref{def.treeterms}, then their Fourier coefficients $\mathcal{J}_{T,k}$ are degree $l$ polynomials of $\xi$ given by the following formula

\begin{equation}\label{eq.coefterm}
\mathcal{J}_{T,k}=\left(\frac{i\lambda}{L^{d}}\right)^l\sum_{k_1,\, k_2,\, \cdots,\, k_{l+1}} H^T_{k_1\cdots k_{l+1}}  \xi_{k_1}\xi_{k_2}\cdots\xi_{k_{l+1}}   
\end{equation}
where $H^T_{k_1\cdots k_{l+1}}$ is given by
\begin{equation}\label{eq.coef}
H^T_{k_1\cdots k_{l+1}}=\int_{\cup_{\mathfrak{n}\in T_{\text{in}}} A_{\mathfrak{n}}} e^{\sum_{\mathfrak{n}\in T_{\text{in}}} it_{\mathfrak{n}}\Omega_{\mathfrak{n}}-\nu(t_{\widehat{\mathfrak{n}}}-t_{\mathfrak{n}})|k_{\mathfrak{e}}|^2} \prod_{\mathfrak{n}\in T_{\text{in}}} dt_{\mathfrak{n}} 
\ \delta_{\cap_{\mathfrak{n}\in T_{\text{in}}} \{S_{\mathfrak{n}}=0\}}\ \prod_{\mathfrak{e}\in T_{\text{in}}}\iota_{\mathfrak{e}}k_{\mathfrak{e},x} ,
\end{equation}
and $\iota$, $A_{\mathfrak{n}}$, $S_{\mathfrak{n}}$, $\Omega_{\mathfrak{n}}$ are defined by 
\begin{equation}\label{eq.iotadef}
    \iota_{\mathfrak{e}}=\begin{cases}
        +1 \qquad \textit{if $\mathfrak{e}$ pointing inwards to $\mathfrak{n}$}
        \\
        -1 \qquad  \textit{if $\mathfrak{e}$ pointing outwards from $\mathfrak{n}$}
    \end{cases}
\end{equation}
\begin{equation}
    A_{\mathfrak{n}}=
        \begin{cases}
            \{t_{\mathfrak{n}_1},\, t_{\mathfrak{n}_2},\, t_{\mathfrak{n}_3}\le t_{\mathfrak{n}}\} \qquad \textit{if $\mathfrak{n}\ne$ the root $\mathfrak{r}$}
            \\
            \{t_{\mathfrak{r}}\le t\} \qquad\qquad\qquad\  \textit{if $\mathfrak{n}= \mathfrak{r}$ }
        \end{cases}
\end{equation}
\begin{equation}\label{eq.defnS_n}
    S_{\mathfrak{n}}=\iota_{\mathfrak{e}_1}k_{\mathfrak{e}_1}+\iota_{\mathfrak{e}_2}k_{\mathfrak{e}_2}+\iota_{\mathfrak{e}}k_{\mathfrak{e}}
\end{equation}
\begin{equation}
    \Omega_{\mathfrak{n}}=\iota_{\mathfrak{e}_1}\Lambda_{k_{\mathfrak{e}_1}}+\iota_{\mathfrak{e}_2}\Lambda_{k_{\mathfrak{e}_2}}+\iota_{\mathfrak{e}}\Lambda_{k_{\mathfrak{e}}}
\end{equation}
For root node $\mathfrak{r}$, we impose the constrain that $k_{\mathfrak{r}}=k$ and $t_{\widehat{\mathfrak{r}}}=t$ (notice that $\mathfrak{r}$ does not have a parent so $\widehat{\mathfrak{r}}$ is not well defined). 
\end{lem}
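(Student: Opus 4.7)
The plan is to prove this lemma by induction on the depth $l = l(T)$, using the recursive definition (\ref{eq.treeterm}) of $\mathcal{J}_T$ together with the explicit form of $\mathcal{T}$ from (\ref{eq.intmain}). The formula (\ref{eq.coef}) is ``additive'' in the tree structure: each internal node $\mathfrak{n}$ contributes its own time variable $t_\mathfrak{n}$, its own phase factor $e^{it_\mathfrak{n}\Omega_\mathfrak{n}}$, its own delta constraint $S_\mathfrak{n}=0$, its own damping $e^{-\nu(t_{\widehat{\mathfrak{n}}}-t_\mathfrak{n})|k_\mathfrak{e}|^2}$, and its own factor $\iota_\mathfrak{e} k_{\mathfrak{e},x}$, so the induction should pass cleanly.

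For the base case $l = 0$, the tree is a single leaf; then $\mathcal{J}_T = \xi$, so the Fourier coefficient is $\xi_k$, matching (\ref{eq.coefterm}) with all products over $T_{\text{in}}$ being empty and the one-point sum in $k_1$ collapsing (with $k_1 = k$). For the inductive step, assume the formula for all trees of depth strictly less than $l$ and let $T$ have depth $l \geq 1$, with root $\mathfrak{r}$, subtrees $T_{\mathfrak{n}_1}, T_{\mathfrak{n}_2}$ of the two children, and depths $l_1 + l_2 = l - 1$. From (\ref{eq.treeterm}) and (\ref{eq.intmain}),
\begin{equation*}
(\mathcal{J}_T)_k \;=\; \frac{i\lambda}{L^{d}}\sum_{k^{(1)}+k^{(2)}=k}\int_0^{t} k_{x}\, (\mathcal{J}_{T_{\mathfrak{n}_1}})_{k^{(1)}}\,(\mathcal{J}_{T_{\mathfrak{n}_2}})_{k^{(2)}}\, e^{i s\, \Omega(k^{(1)},k^{(2)},k) - \nu |k|^2 (t-s)}\, ds.
\end{equation*}
Next, I would substitute the inductive-hypothesis expression (\ref{eq.coef}) for each of $(\mathcal{J}_{T_{\mathfrak{n}_j}})_{k^{(j)}}$, which produces integrals over the node-time variables of the two subtrees and delta constraints on the two subtree leaf-momenta. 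Renaming the outer integration variable $s$ to $t_{\mathfrak{r}}$ and identifying $k^{(j)} = k_{\mathfrak{e}_j}$ with the momentum on the edge from $\mathfrak{r}$ to $\mathfrak{n}_j$, the outer factors assemble exactly into the root's contribution: the constraint $k^{(1)}+k^{(2)}=k$ becomes $S_\mathfrak{r} = 0$ (using the orientation convention (\ref{eq.iotadef}) that both children edges point outward from $\mathfrak{r}$ while the leg points inward, or the reverse, with the signs matching (\ref{eq.defnS_n})); the oscillatory phase $e^{is\Omega(k^{(1)},k^{(2)},k)}$ becomes $e^{it_\mathfrak{r}\Omega_\mathfrak{r}}$; the factor $k_x$ becomes $\iota_\mathfrak{l}k_{\mathfrak{l},x}$ up to sign; and the damping $e^{-\nu|k|^2(t-s)}$ becomes $e^{-\nu(t_{\widehat{\mathfrak{r}}}-t_\mathfrak{r})|k_\mathfrak{l}|^2}$ under the convention $t_{\widehat{\mathfrak{r}}} = t$.

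The remaining step is to check that the time-integration regions in the two subtrees, combined with $0 \leq t_\mathfrak{r} \leq t$, reproduce $\bigcup_{\mathfrak{n}\in T_{\text{in}}} A_\mathfrak{n}$. By hypothesis, the subtree integration is over $\{t_{\mathfrak{n}'} \leq t_{\widehat{\mathfrak{n}'}}\}$ for all internal nodes $\mathfrak{n}'$ of the subtree, with the subtree root's parent set to $t_\mathfrak{r}$ (which is the outer integration variable); together these are exactly the conditions defining $A_\mathfrak{n}$ for all internal $\mathfrak{n} \neq \mathfrak{r}$. Counting prefactors, we pick up one additional $i\lambda/L^d$ from the outer $\mathcal{T}$, raising the total from $(i\lambda/L^d)^{l_1+l_2}$ to $(i\lambda/L^d)^l$; similarly the product $\prod_{\mathfrak{e}\in T_{\text{in}}}\iota_\mathfrak{e} k_{\mathfrak{e},x}$ acquires the missing root factor $\iota_\mathfrak{l} k_{\mathfrak{l},x}$.

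The main bookkeeping obstacle is matching the sign convention: I would want to verify once and for all that, under the ``all edges oriented away from the root'' convention in Definition \ref{def.treemore}(2), the signs $\iota_\mathfrak{e}$ defined at each internal node $\mathfrak{n}$ via (\ref{eq.iotadef}) are consistent across nodes (an edge $\mathfrak{e}$ is shared by its two endpoints and must get opposite $\iota$ there, which is exactly what (\ref{eq.iotadef}) enforces), so that pairing $S_\mathfrak{n} = 0$ with $\Omega_\mathfrak{n}$ reproduces, node by node, the input-momentum constraint and phase of the ambient $\mathcal{T}$. Once this orientation bookkeeping is settled, the inductive step is mechanical and the lemma follows.
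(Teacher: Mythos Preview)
Your proposal is correct and follows essentially the same approach as the paper: the paper's proof simply states that one checks the explicit formula (\ref{eq.coefterm})--(\ref{eq.coef}) satisfies the recursive definition (\ref{eq.treeterm}) by direct substitution, which is logically equivalent to your induction on $l(T)$. Your write-up is considerably more detailed about the bookkeeping (time-domain matching, sign conventions, prefactor counting) than the paper's one-line verification, but the underlying argument is the same.
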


\begin{proof}
We can check that $\mathcal{J}_T$ defined by \eqref{eq.coefterm} and \eqref{eq.coef} satisfies the recursive formula \eqref{eq.treeterm} by a direct substitution, so they are the unique solution of that recursive formula, and this proves Lemma \ref{lem.treeterms}.
\end{proof}

\subsection{An upper bound of coefficients in expansion series}\label{sec.uppcoef} In this section, we derive an upper bound for coefficients $H^T_{k_1\cdots k_{l+1}}$.

Notice that in \eqref{eq.coefterm}, $H^T_{k_1\cdots k_{l+1}}$ are integral of some oscillatory functions. An upper bound can be derived by the standard integration by parts arguments.

Associate each $\mathfrak{n}\in T_{\text{in}}$ with two variables $a_{\mathfrak{n}}$, $b_{\mathfrak{n}}$. Then we define
\begin{equation}\label{eq.defF_T}
F_{T}(t,\{a_{\mathfrak{n}}\}_{\mathfrak{n}\in T_{\text{in}}},\{b_{\mathfrak{n}}\}_{\mathfrak{n}\in T_{\text{in}}})=\int_{\cup_{\mathfrak{n}\in T_{\text{in}}} A_{\mathfrak{n}}} e^{\sum_{\mathfrak{n}\in T_{\text{in}}} it_{\mathfrak{n}} a_{\mathfrak{n}} - \nu(t_{\widehat{\mathfrak{n}}}-t_{\mathfrak{n}})b_{\mathfrak{n}}} \prod_{\mathfrak{n}\in T_{\text{in}}} dt_{\mathfrak{n}} 
\end{equation}

\begin{lem}\label{lem.boundcoef'}
We have the following upper bound for $F_{T}(t,\{a_{\mathfrak{n}}\}_{\mathfrak{n}},\{b_{\mathfrak{n}}\}_{\mathfrak{n}})$,
\begin{equation}\label{eq.boundcoef'}
    \sup_{\{b_{\mathfrak{n}}\}_{\mathfrak{n}}\lesssim 1} |F_{T}(t,\{a_{\mathfrak{n}}\}_{\mathfrak{n}},\{b_{\mathfrak{n}}\}_{\mathfrak{n}})|\lesssim \sum_{\{d_{\mathfrak{n}}\}_{\mathfrak{n}\in T_{\text{in}}}\in\{0,1\}^{l(T)}}\prod_{\mathfrak{n}\in T_{\text{in}}}\frac{1}{|q_{\mathfrak{n}}|+T^{-1}_{\text{max}}}.
\end{equation}
Fix a sequence $\{d_{\mathfrak{n}}\}_{\mathfrak{n}\in T_{\text{in}}}$ whose elements $d_{\mathfrak{n}}$ takes boolean values $\{0,1\}$. We define the two sequences $\{q_{\mathfrak{n}}\}_{\mathfrak{n}\in T_{\text{in}}}$, $\{r_{\mathfrak{n}}\}_{\mathfrak{n}\in T_{\text{in}}}$ by following recursive formula
\begin{equation}\label{eq.q_n'}
    q_{\mathfrak{n}}=
    \begin{cases}
    a_{\mathfrak{r}}, \qquad\qquad \textit{ if $\mathfrak{n}=$ the root $\mathfrak{r}$.}
    \\
    a_{\mathfrak{n}}+d_{\mathfrak{n}}q_{\mathfrak{n}'},\ \ \textit{ if $\mathfrak{n}\neq\mathfrak{r}$ and $\mathfrak{n}'$ is the parent of $\mathfrak{n}$.}
    \end{cases}
\end{equation}
\begin{equation}\label{eq.r_n'}
    r_{\mathfrak{n}}=
    \begin{cases}
    b_{\mathfrak{r}}, \qquad\qquad \textit{ if $\mathfrak{n}=$ the root $\mathfrak{r}$.}
    \\
    b_{\mathfrak{n}}+d_{\mathfrak{n}}q_{\mathfrak{n}'},\ \ \textit{ if $\mathfrak{n}\neq\mathfrak{r}$ and $\mathfrak{n}'$ is the parent of $\mathfrak{n}$.}
    \end{cases}
\end{equation}

\end{lem}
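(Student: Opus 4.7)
The plan is to induct on $l(T)$, computing the time integrals one variable at a time via the identity
\[
\int_0^T e^{(ia + \nu b)s}\, ds \;=\; \frac{e^{(ia+\nu b)T} - 1}{ia + \nu b}.
\]
In the base case $l(T)=1$ this directly yields the contribution $\tfrac{e^{it a_\mathfrak{r}} - e^{-\nu t b_\mathfrak{r}}}{ia_\mathfrak{r} + \nu b_\mathfrak{r}}$, which is $\lesssim \tfrac{1}{|a_\mathfrak{r}| + T_{\max}^{-1}}$ after combining the lower bound $|ia_\mathfrak{r} + \nu b_\mathfrak{r}| \geq |a_\mathfrak{r}|$ with the trivial length bound $\int_0^t \leq t \leq T_{\max}$. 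The real-part exponentials $|e^{-\nu t b}|$ are $O(1)$ because $\nu T_{\max} = l_d^2$ is a universal constant and $|b_\mathfrak{n}|\lesssim 1$ by hypothesis.

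For the inductive step, the integration over any one $t_\mathfrak{n}$ always splits into two pieces: one in which an additional factor $e^{it_{\widehat{\mathfrak n}} a_\mathfrak{n}}$ is deposited onto the parent's integration (shifting the parent's effective frequency by $a_\mathfrak{n}$; this corresponds to $d_\mathfrak{n}=1$), and one in which only a damping factor $e^{-\nu t_{\widehat{\mathfrak n}}b_\mathfrak{n}}$ is deposited (shifting only the real part; this is $d_\mathfrak{n}=0$). Iterating across all $l(T)$ interior nodes produces the sum over $\{d_\mathfrak{n}\}_\mathfrak{n}\in\{0,1\}^{l(T)}$ in the statement, with each resulting denominator $|ia_\mathfrak{n}^{\mathrm{eff}} + \nu b_\mathfrak{n}^{\mathrm{eff}}|$ bounded below by its imaginary part $|q_\mathfrak{n}|$ and by $T_{\max}^{-1}$ via the length of the corresponding interval---hence by $\tfrac{1}{|q_\mathfrak{n}| + T_{\max}^{-1}}$.

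The main obstacle is to verify that the accumulated effective frequencies produced by the splittings indeed match the top-down recursion $q_\mathfrak{n} = a_\mathfrak{n} + d_\mathfrak{n}\, q_{\mathfrak{n}'}$ of \eqref{eq.q_n'}---a parent-to-child accumulation---rather than the child-to-parent accumulation that a naive bottom-up integration from the innermost leaves of $T_{\mathrm{in}}$ would yield. I plan to resolve this by performing the integrations in root-first order, so that each $d_\mathfrak{r}=1$ branch at a parent propagates the shift $a_\mathfrak{r}$ into the effective phases of \emph{all} descendants, and induction on the subtrees rooted at the children of $\mathfrak{r}$ reproduces the full recursion. A secondary technicality is that the $d_\mathfrak{n} = 0$ branches generate $O(1)$ shifts (``$b$-shifts'') in the real parts of subsequent denominators; these remain controlled under $|b_\mathfrak{n}|\lesssim 1$ and do not affect the imaginary-part lower bound $|q_\mathfrak{n}|$ used in each factor, so the final product of bounds reproduces \eqref{eq.boundcoef'} term by term.
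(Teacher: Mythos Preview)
Your inductive strategy is the same as the paper's, but there is a genuine mechanical gap in the ``root-first'' step. Your claim that the integration over $t_{\mathfrak n}$ ``always splits into two pieces'' via the antiderivative formula is only correct when $\mathfrak n$ has at most one child in $T_{\text{in}}$: then $t_{\mathfrak n}$ ranges over a simple interval and the formula applies. When $\mathfrak n$ (in particular the root $\mathfrak r$) has \emph{two} branch children $\mathfrak n_1,\mathfrak n_2$, the domain of $t_{\mathfrak r}$ is $[\max(t_{\mathfrak n_1},t_{\mathfrak n_2}),t]$, and evaluating the antiderivative at the lower limit produces a factor $e^{(ia_{\mathfrak r}+\nu b_{\mathfrak r})\max(t_{\mathfrak n_1},t_{\mathfrak n_2})}$ that couples the two subtrees; the remaining integral no longer factors as $F_{T_1}\cdot F_{T_2}$, so your proposed ``induction on the subtrees rooted at the children of $\mathfrak r$'' does not go through as stated. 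The paper handles exactly this by integrating by parts (Stokes) in $t_{\mathfrak r}$, which yields one boundary contribution for \emph{each} active constraint: $t_{\mathfrak r}=t$ gives the factored piece $e^{ita_{\mathfrak r}}F_{T_1}(t)F_{T_2}(t)$, while $t_{\mathfrak r}=t_{\mathfrak n_j}$ (for $j=1,2$) merges the root with child $\mathfrak n_j$, shifting that child's phase by $a_{\mathfrak r}$ and producing a \emph{modified} tree $T^{(j)}$ with one fewer node to which the induction hypothesis is applied. It is this merging mechanism, not a simple two-way split, that realises the parent-to-child recursion $q_{\mathfrak n}=a_{\mathfrak n}+d_{\mathfrak n}q_{\mathfrak n'}$.

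A secondary difference: the paper does not combine an $|a_{\mathfrak r}|$ bound with a separate length bound. Instead it inserts a regulariser $T_{\max}^{-1}\operatorname{sgn}(a_{\mathfrak r})$ into the phase before integrating by parts, so the denominator is $i(a_{\mathfrak r}+T_{\max}^{-1}\operatorname{sgn}(a_{\mathfrak r}))+\nu b_{\mathfrak r}$ with imaginary part $|a_{\mathfrak r}|+T_{\max}^{-1}$ directly. The price is an extra remainder $F_{II}$ coming from $\tfrac{d}{dt_{\mathfrak r}}e^{-iT_{\max}^{-1}t_{\mathfrak r}\operatorname{sgn}(a_{\mathfrak r})}$, but this carries a factor $T_{\max}^{-1}$ that is absorbed by the length $|t_{\mathfrak r}|\le T_{\max}$ together with the induction hypothesis on the subtrees. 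Your $\min(T_{\max},2/|a|)$ device is fine in the base case, but once the root integration spawns the three-plus boundary pieces above, having the $+T_{\max}^{-1}$ built into every denominator uniformly is what makes the induction close cleanly.
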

\begin{proof} The lemma is proved by induction.

For a tree $T$ contains only one node $\mathfrak{r}$, $F_{T}=1$ and \eqref{eq.boundcoef'} is obviously true.

Assume that \eqref{eq.boundcoef'} is true for trees with $\le n-1$ nodes. We prove the $n$ nodes case. 

For general $T$, let $T_1$, $T_2$ be the two subtrees and $\mathfrak{n}_1$, $\mathfrak{n}_2$ be the two children of the root $\mathfrak{r}$, then by the definition of $F_T$\eqref{eq.defF_T}, we get
\begin{equation}\label{eq.lemboundcoef'1}
\begin{split}
    F_{T}(t)=&\int_{\cup_{\mathfrak{n}\in T_{\text{in}}} A_{\mathfrak{n}}} e^{\sum_{\mathfrak{n}\in T_{\text{in}}}it_{\mathfrak{n}} a_{\mathfrak{n}} - \nu(t_{\widehat{\mathfrak{n}}}-t_{\mathfrak{n}})b_{\mathfrak{n}}} \prod_{\mathfrak{n}\in T_{\text{in}}} dt_{\mathfrak{n}}    
    \\
    =&\int_{\cup_{\mathfrak{n}\in T_{\text{in}}} A_{\mathfrak{n}}}e^{it_{\mathfrak{r}} a_{\mathfrak{r}} - \nu(t-t_{\mathfrak{r}})b_{\mathfrak{r}}} e^{\sum_{\mathfrak{n}\in T_{\text{in},1}\cup T_{\text{in},2}} it_{\mathfrak{n}} a_{\mathfrak{n}} - \nu(t_{\widehat{\mathfrak{n}}}-t_{\mathfrak{n}})b_{\mathfrak{n}}}  \left(dt_{\mathfrak{r}}\prod_{j=1}^2\prod_{\mathfrak{n}\in T_{\text{in},j}}dt_{\mathfrak{n}}  \right)
    \\
    =&\int_{\cup_{\mathfrak{n}\in T_{\text{in}}} A_{\mathfrak{n}}}e^{it_{\mathfrak{r}}(a_{\mathfrak{r}}+T^{-1}_{\text{max}}\, \text{sgn}(a_{\mathfrak{r}}))- \nu(t-t_{\mathfrak{r}})b_{\mathfrak{r}}} e^{-iT^{-1}_{\text{max}}t_{\mathfrak{r}} \text{sgn}(a_{\mathfrak{r}})} e^{\sum_{\mathfrak{n}\in T_{\text{in},1}\cup T_{\text{in},2}} it_{\mathfrak{n}} a_{\mathfrak{n}} - \nu(t_{\widehat{\mathfrak{n}}}-t_{\mathfrak{n}})b_{\mathfrak{n}}}  \left(dt_{\mathfrak{r}}\prod_{j=1}^2\prod_{\mathfrak{n}\in T_{\text{in},j}}dt_{\mathfrak{n}}  \right)
\end{split}
\end{equation}

We do integration by parts in the above integrals using Stokes formula. Notice that for $t_{\mathfrak{r}}$, there are three inequality constrains, $t_{\mathfrak{r}}\le t$ and $t_{\mathfrak{r}}\ge t_{\mathfrak{n}_1},t_{\mathfrak{n}_2}$.

\begin{equation}\label{eq.lemboundcoefexpand}
\begin{split}
    F_{T}(t)=\frac{1}{ia_{\mathfrak{r}}+iT^{-1}_{\text{max}} \text{sgn}(a_{\mathfrak{r}})+\nu b_{\mathfrak{r}} }&\int_{\cup_{\mathfrak{n}\in T_{\text{in}}} A_{\mathfrak{n}}} \frac{d}{dt_{\mathfrak{r}}}e^{it_{\mathfrak{r}}(a_{\mathfrak{r}}+T^{-1}_{\text{max}}\, \text{sgn}(a_{\mathfrak{r}}))- \nu(t-t_{\mathfrak{r}})b_{\mathfrak{r}}}  
    \\
    &e^{-iT^{-1}_{\text{max}}t_{\mathfrak{r}} \text{sgn}(a_{\mathfrak{r}})} e^{\sum_{\mathfrak{n}\in T_{\text{in},1}\cup T_{\text{in},2}} it_{\mathfrak{n}} a_{\mathfrak{n}} - \nu(t_{\widehat{\mathfrak{n}}}-t_{\mathfrak{n}})b_{\mathfrak{n}}}  \left(dt_{\mathfrak{r}}\prod_{j=1}^2\prod_{\mathfrak{n}\in T_{\text{in},j}}dt_{\mathfrak{n}}  \right)
\end{split}
\end{equation}
\begin{flalign*}
\hspace{1.3cm}
=&\frac{1}{ia_{\mathfrak{r}}+iT^{-1}_{\text{max}} \text{sgn}(a_{\mathfrak{r}})+\nu b_{\mathfrak{r}} }\left(\int_{\cup_{\mathfrak{n}\in T_{\text{in}}} A_{\mathfrak{n}},\ t_{\mathfrak{r}}=t}-\int_{\cup_{\mathfrak{n}\in T_{\text{in}}} A_{\mathfrak{n}},\ t_{\mathfrak{r}}=t_{\mathfrak{n}_1}}-\int_{\cup_{\mathfrak{n}\in T_{\text{in}}} A_{\mathfrak{n}},\ t_{\mathfrak{r}}=t_{\mathfrak{n}_2}}\right) &&
\\
& e^{it_{\mathfrak{r}}(a_{\mathfrak{r}}+T^{-1}_{\text{max}}\, \text{sgn}(a_{\mathfrak{r}}))- \nu(t-t_{\mathfrak{r}})b_{\mathfrak{r}}} e^{-iT^{-1}_{\text{max}}t_{\mathfrak{r}} \text{sgn}(a_{\mathfrak{r}})} e^{\sum_{\mathfrak{n}\in T_{\text{in},1}\cup T_{\text{in},2}} it_{\mathfrak{n}} a_{\mathfrak{n}} - \nu(t_{\widehat{\mathfrak{n}}}-t_{\mathfrak{n}})b_{\mathfrak{n}}} \left(dt_{\mathfrak{r}}\prod_{j=1}^2\prod_{\mathfrak{n}\in T_{\text{in},j}}dt_{\mathfrak{n}}  \right) &&
\end{flalign*}
\begin{flalign*}
\hspace{1.3cm}
-&\frac{1}{ia_{\mathfrak{r}}+iT^{-1}_{\text{max}} \text{sgn}(a_{\mathfrak{r}})+\nu b_{\mathfrak{r}} }\int_{\cup_{\mathfrak{n}\in T_{\text{in}}} A_{\mathfrak{n}}}e^{it_{\mathfrak{r}}(a_{\mathfrak{r}}+T^{-1}_{\text{max}}\, \text{sgn}(a_{\mathfrak{r}}))- \nu(t-t_{\mathfrak{r}})b_{\mathfrak{r}}} &&
    \\
    &\qquad\qquad\qquad\qquad\qquad \frac{d}{dt_{\mathfrak{r}}}(e^{-iT^{-1}_{\text{max}}t_{\mathfrak{r}} \text{sgn}(a_{\mathfrak{r}})}) e^{\sum_{\mathfrak{n}\in T_{\text{in},1}\cup T_{\text{in},2}} it_{\mathfrak{n}} a_{\mathfrak{n}} - \nu(t_{\widehat{\mathfrak{n}}}-t_{\mathfrak{n}})b_{\mathfrak{n}}}  \left(dt_{\mathfrak{r}}\prod_{j=1}^2\prod_{\mathfrak{n}\in T_{\text{in},j}}dt_{\mathfrak{n}}  \right) &&
\end{flalign*}
\begin{flalign*}
\hspace{1.3cm}
= \frac{1}{ia_{\mathfrak{r}}+iT^{-1}_{\text{max}} \text{sgn}(a_{\mathfrak{r}})+\nu b_{\mathfrak{r}} }(F_{I}-F_{T^{(1)}}-F_{T^{(2)}}-F_{II}) &&
\end{flalign*}

Here $T^{(j)}$, $j=1,2$ are trees that is obtained by deleting the root $\mathfrak{r}$, adding edges connecting $\mathfrak{n}_j$ with another node and defining $\mathfrak{n}_j$ to be the new root. For $T^{(j)}$, we can define the term $F_{T^{(j)}}$ by \eqref{eq.defF_T}. It can be shown that $F_{T^{(j)}}$ defined in this way is the same as the $\int_{\cup_{\mathfrak{n}\in T_{\text{in}}} A_{\mathfrak{n}},\ t_{\mathfrak{r}}=t_{\mathfrak{n}_j}}$ term in the second equality of \eqref{eq.lemboundcoefexpand}, so the last equality of \eqref{eq.lemboundcoefexpand} is true. $F_{I}$ is the $\int_{\cup_{\mathfrak{n}\in T_{\text{in}}} A_{\mathfrak{n}},\ t_{\mathfrak{r}}=t}$ term and $F_{II}$ is the last term containing $\frac{d}{dt_{\mathfrak{r}}}$.

We can apply the induction assumption to $F_{T^{(j)}}$ and show that $\frac{1}{ia_{\mathfrak{r}}+iT^{-1}_{\text{max}} \text{sgn}(a_{\mathfrak{r}})+\nu b_{\mathfrak{r}} } F_{T^{(j)}}$ can be bounded by the right hand side of \eqref{eq.boundcoef'}.

A direct calculation gives that 
\begin{equation}
    F_{I}(t)=e^{it a_{\mathfrak{r}} } F_{T_1}(t)F_{T_2}(t).
\end{equation}
Then the induction assumption implies that $\frac{1}{ia_{\mathfrak{r}}+iT^{-1}_{\text{max}} \text{sgn}(a_{\mathfrak{r}})+\nu b_{\mathfrak{r}} } F_{I}$ can be bounded by the right hand side of \eqref{eq.boundcoef'}.

Another direct calculation gives that 
\begin{equation}
    F_{II}(t)=\int^t_0  e^{it_{\mathfrak{r}}(a_{\mathfrak{r}}+T^{-1}_{\text{max}}\, \text{sgn}(a_{\mathfrak{r}}))- \nu(t-t_{\mathfrak{r}})b_{\mathfrak{r}}} \frac{d}{dt_{\mathfrak{r}}}(e^{-iT^{-1}_{\text{max}}t_{\mathfrak{r}} \text{sgn}(a_{\mathfrak{r}})})  F_{T_1}(t_{\mathfrak{r}})F_{T_2}(t_{\mathfrak{r}}) dt_{\mathfrak{r}}.
\end{equation}
Apply the induction assumption
\begin{equation}
\begin{split}
    &\left| \frac{1}{ia_{\mathfrak{r}}+iT^{-1}_{\text{max}} \text{sgn}(a_{\mathfrak{r}})+\nu b_{\mathfrak{r}} } F_{II}(t)\right|
    \\
    \le& \frac{1}{|q_{\mathfrak{r}}|+T^{-1}_{\text{max}}}\prod_{j=1}^2\left(\sum_{\{d_{\mathfrak{n}}\}_{\mathfrak{n}\in T_{\text{in},j}}\in\{0,1\}^{l(T_j)}}\prod_{\mathfrak{n}\in T_{\text{in},j}}\frac{1}{|q_{\mathfrak{n}}|+T^{-1}_{\text{max}}}\right)
    \\
    \le& \sum_{\{d_{\mathfrak{n}}\}_{\mathfrak{n}\in T_{\text{in}}}\in\{0,1\}^{l(T)}}\prod_{\mathfrak{n}\in T_{\text{in}}}\frac{1}{|q_{\mathfrak{n}}|+T^{-1}_{\text{max}}}.
\end{split}
\end{equation}

Combining the bounds of $F_{I}$, $F_{T^{(1)}}$, $F_{T^{(2)}}$, $F_{II}$, we conclude that $F_T$ can be bounded by the right hand side of \eqref{eq.boundcoef'} and thus complete the proof of Lemma \ref{lem.boundcoef'}.
\end{proof}
 

A straight forward application of the above lemma gives following upper bound of the coefficients $H^T_{k_1\cdots k_{l+1}}$. 

\begin{lem}\label{lem.boundcoef}
We have the following upper bound for $H^T_{k_1\cdots k_{l+1}}$,
\begin{equation}\label{eq.boundcoef}
    |H^T_{k_1\cdots k_{l+1}}|\lesssim \sum_{\{d_{\mathfrak{n}}\}_{\mathfrak{n}\in T_{\text{in}}}\in\{0,1\}^{l(T)}}\prod_{\mathfrak{n}\in T_{\text{in}}}\frac{1}{|q_{\mathfrak{n}}|+T^{-1}_{\text{max}}}\ \prod_{\mathfrak{e}\in T_{\text{in}}}|k_{\mathfrak{e},x}|\ \delta_{\cap_{\mathfrak{n}\in T_{\text{in}}} \{S_{\mathfrak{n}}=0\}}.
\end{equation}
Fix a sequence $\{d_{\mathfrak{n}}\}_{\mathfrak{n}\in T_{\text{in}}}$ whose elements $d_{\mathfrak{n}}$ takes boolean values $\{0,1\}$. We define the two sequences $\{q_{\mathfrak{n}}\}_{\mathfrak{n}\in T_{\text{in}}}$, $\{r_{\mathfrak{n}}\}_{\mathfrak{n}\in T_{\text{in}}}$ by following recursive formula
\begin{equation}\label{eq.q_n}
    q_{\mathfrak{n}}=
    \begin{cases}
    \Omega_{\mathfrak{r}}, \qquad\qquad \textit{ if $\mathfrak{n}=$ the root $\mathfrak{r}$.}
    \\
    \Omega_{\mathfrak{n}}+d_{\mathfrak{n}}q_{\mathfrak{n}'},\ \ \textit{ if $\mathfrak{n}\neq\mathfrak{r}$ and $\mathfrak{n}'$ is the parent of $\mathfrak{n}$.}
    \end{cases}
\end{equation}
\begin{equation}\label{eq.r_n}
    r_{\mathfrak{n}}=
    \begin{cases}
    |k_{\mathfrak{r}}|^2, \qquad\qquad \textit{ if $\mathfrak{n}=$ the root $\mathfrak{r}$.}
    \\
    |k_{\mathfrak{n}}|^2+d_{\mathfrak{n}}q_{\mathfrak{n}'},\ \ \textit{ if $\mathfrak{n}\neq\mathfrak{r}$ and $\mathfrak{n}'$ is the parent of $\mathfrak{n}$.}
    \end{cases}
\end{equation}

\end{lem}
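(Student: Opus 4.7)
The plan is to obtain Lemma \ref{lem.boundcoef} as a direct consequence of Lemma \ref{lem.boundcoef'}, since the oscillatory integral defining $H^T_{k_1\cdots k_{l+1}}$ in \eqref{eq.coef} has exactly the same shape as $F_T$ in \eqref{eq.defF_T}, together with purely algebraic factors $\prod_{\mathfrak{e}\in T_{\text{in}}}\iota_{\mathfrak{e}}k_{\mathfrak{e},x}$ and the momentum-conservation delta $\delta_{\cap_{\mathfrak{n}\in T_{\text{in}}}\{S_{\mathfrak{n}}=0\}}$. Concretely, I would first make the substitution $a_{\mathfrak{n}}=\Omega_{\mathfrak{n}}$ and $b_{\mathfrak{n}}=|k_{\mathfrak{e}}|^2$, where $\mathfrak{e}$ is the edge above $\mathfrak{n}$ used in the viscous factor $e^{-\nu(t_{\widehat{\mathfrak{n}}}-t_{\mathfrak{n}})|k_{\mathfrak{e}}|^2}$, so that
\begin{equation*}
H^T_{k_1\cdots k_{l+1}}=F_{T}\!\left(t,\{\Omega_{\mathfrak{n}}\},\{|k_{\mathfrak{e}}|^2\}\right)\,\delta_{\cap_{\mathfrak{n}\in T_{\text{in}}}\{S_{\mathfrak{n}}=0\}}\,\prod_{\mathfrak{e}\in T_{\text{in}}}\iota_{\mathfrak{e}}k_{\mathfrak{e},x}.
\end{equation*}
Taking absolute values reduces the problem to bounding $|F_{T}(t,\{\Omega_{\mathfrak{n}}\},\{|k_{\mathfrak{e}}|^2\})|$.

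Next, I would verify that the hypothesis of Lemma \ref{lem.boundcoef'} is met. Since $H^T$ only enters the approximation for $|k|\lesssim 1$ (cf.\ the support statement in Proposition \ref{prop.treetermsupperbound}) and momentum conservation $S_{\mathfrak{n}}=0$ together with the compact support of $n_{\text{in}}$ propagates this bound to every edge, we have $|k_{\mathfrak{e}}|\lesssim 1$ for all $\mathfrak{e}\in T_{\text{in}}$, so the constants $b_{\mathfrak{n}}=|k_{\mathfrak{e}}|^2$ indeed satisfy $b_{\mathfrak{n}}\lesssim 1$, and Lemma \ref{lem.boundcoef'} applies.

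Now I would compare the recursions \eqref{eq.q_n'} and \eqref{eq.q_n}. Under the substitution $a_{\mathfrak{n}}=\Omega_{\mathfrak{n}}$, the recursion that defines $q_{\mathfrak{n}}$ in Lemma \ref{lem.boundcoef'} reads $q_{\mathfrak{r}}=\Omega_{\mathfrak{r}}$ and $q_{\mathfrak{n}}=\Omega_{\mathfrak{n}}+d_{\mathfrak{n}}q_{\mathfrak{n}'}$, which matches \eqref{eq.q_n} verbatim. (The auxiliary recursion \eqref{eq.r_n'} for $r_{\mathfrak{n}}$ plays no role, since $r_{\mathfrak{n}}$ does not appear in the bound \eqref{eq.boundcoef'}.) Consequently, applying the bound of Lemma \ref{lem.boundcoef'} and multiplying back by the pointwise factors $\prod_{\mathfrak{e}\in T_{\text{in}}}|k_{\mathfrak{e},x}|$ and the delta support gives \eqref{eq.boundcoef}.

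Since the work has already been done inside Lemma \ref{lem.boundcoef'} via integration by parts, there is no real obstacle here—the only thing to watch is that the edge used to produce the viscous dissipation coefficient $b_{\mathfrak{n}}$ is the one above $\mathfrak{n}$ (so that the assignment is well defined for every non-root node and unambiguous at the root where one reads $b_{\mathfrak{r}}=|k_{\mathfrak{r}}|^2=|k|^2$), and that the substitution is compatible with the definitions of $\Omega_{\mathfrak{n}}$ and $S_{\mathfrak{n}}$ at each internal vertex. Once these bookkeeping issues are aligned, the lemma follows immediately.
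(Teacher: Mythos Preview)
Your proposal is correct and matches the paper's own proof, which is simply the one-line observation that Lemma~\ref{lem.boundcoef} follows from Lemma~\ref{lem.boundcoef'} upon substituting $a_{\mathfrak{n}}=\Omega_{\mathfrak{n}}$ and $b_{\mathfrak{n}}=|k_{\mathfrak{e}}|^2$. Your additional care in verifying $b_{\mathfrak{n}}\lesssim 1$ via momentum conservation and the compact support of $n_{\text{in}}$ is a point the paper leaves implicit (and strictly speaking is only needed where the bound is actually applied, e.g.\ in the proof of Lemma~\ref{lem.Tpvariance} where the leaf variables are restricted to $|k_j|\lesssim 1$), but it does no harm to spell it out.
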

\begin{proof}
This is a direct corollary of Lemma \ref{eq.boundcoef'} if we take $a_{\mathfrak{n}}=\Omega_{\mathfrak{n}}$, $b_{\mathfrak{n}}=|k_{\mathfrak{e}}|^2$. 
\end{proof}

Lemma \ref{lem.boundcoef} suggests that the coefficients are small when $|q_{\mathfrak{n}}|\gg T^{-1}_{\text{max}}$. Therefore, in order to bound $\mathcal{J}_{T,k}$, we should count the lattice points on $|q_{\mathfrak{n}}|\lesssim T^{-1}_{\text{max}}$
\begin{equation}\label{eq.diophantineeq''}
    \{k_{\mathfrak{e}}\in \mathbb{Z}^d_L,\ |k_{\mathfrak{e}}|\lesssim 1,\ \forall \mathfrak{e}\in T: |q_{\mathfrak{n}}|\lesssim T^{-1}_{\text{max}},\ S_{\mathfrak{n}}=0,\ \forall \mathfrak{n}\in T.\ k_{\mathfrak{l}}=k\}
\end{equation}

By solving \eqref{eq.q_n}, we know that $\Omega_{\mathfrak{n}}$ is a linear combination of $q_{\mathfrak{n}}$, so there exist constants $c_{\mathfrak{n},\mathfrak{n}'}$ such that $\Omega_{\mathfrak{n}}=\sum_{\mathfrak{n}'}c_{\mathfrak{n},\mathfrak{n}'}q_{\mathfrak{n}'}$. Therefore, $|q_{\mathfrak{n}}|\lesssim T^{-1}_{\text{max}}$ implies that $|\Omega_{\mathfrak{n}}|\le\sum_{\mathfrak{n}'}|c_{\mathfrak{n},\mathfrak{n}'}q_{\mathfrak{n}'}|\lesssim T^{-1}_{\text{max}}$.

$|\Omega_{\mathfrak{n}}|\lesssim T^{-1}_{\text{max}}$ implies that \eqref{eq.diophantineeq''} is a subset of
\begin{equation}\label{eq.diophantineeq'}
    \{k_{\mathfrak{e}}\in \mathbb{Z}^d_L,\ |k_{\mathfrak{e}}|\lesssim 1,\ \forall \mathfrak{e}\in T: |\Omega_{\mathfrak{n}}|\lesssim T^{-1}_{\text{max}},\ S_{\mathfrak{n}}=0,\ \forall \mathfrak{n}\in T. \ k_{\mathfrak{l}}=k\}.
\end{equation}
To bound the number of elements of \eqref{eq.diophantineeq''}, we just need to do the same thing for \eqref{eq.diophantineeq'}.

\eqref{eq.diophantineeq'} can be read from the tree diagrams $T$. As in Figure \ref{fig.equations}, each edge corresponds to a variable $k_{\mathfrak{e}}$. The leg $\mathfrak{l}$ corresponds to equation $k_{\mathfrak{l}}=k$. Each node $\mathfrak{n}$ is connected with three edges $\mathfrak{e}_1$, $\mathfrak{e}_2$, $\mathfrak{e}$ whose corresponding variables $k_{\mathfrak{e}_1}$, $k_{\mathfrak{e}_2}$, $k_{\mathfrak{e}}$ satisfy the momentum conservation equation
\begin{equation}
\iota_{\mathfrak{e}_1}k_{\mathfrak{e}_1}+\iota_{\mathfrak{e}_2}k_{\mathfrak{e}_2}+\iota_{\mathfrak{e}}k_{\mathfrak{e}}=0
\end{equation}
and the energy conservation equation (if the node is decorated by $\bullet$)
\begin{equation}
    \iota_{\mathfrak{e}_1}\Lambda_{k_{\mathfrak{e}_1}}+\iota_{\mathfrak{e}_2}\Lambda_{k_{\mathfrak{e}_2}}+\iota_{\mathfrak{e}}\Lambda_{k_{\mathfrak{e}}} = O(T^{-1}_{\text{max}}).
\end{equation}

\begin{figure}[H]
    \centering
    \scalebox{0.5}{
    \begin{tikzpicture}[level distance=80pt, sibling distance=150pt]
        \node[scale=2.0] at (14.3,0) {$k_{\mathfrak{e}}$};
        \node[scale=2.0] at (14.5,-1.3) {$\mathfrak{n}$};
        \node[scale=2.0] at (13,-2.4) {$k_{\mathfrak{e}_1}$};
        \node[scale=2.0] at (17,-2.4) {$k_{\mathfrak{e}_2}$};
        \node[] at (15,1.5) (1) {} 
            child {node[fillcirc] (2) {} 
                child {node[fillstar] (3) {}
                }
                child {node[fillstar] (6) {}}
            };
        \draw[-{Stealth[length=5mm, width=3mm]}] (1) -- (2);
        \draw[-{Stealth[length=5mm, width=3mm]}] (2) -- (3);
        \draw[-{Stealth[length=5mm, width=3mm]}] (2) -- (6);
        \node[scale=2.0] at (15,-5.5) {$\iota_{\mathfrak{e}_1}=\iota_{\mathfrak{e}_2}=1, \iota_{\mathfrak{e}}=-1$};
        \node[scale=2.0] at (15,-6.5) {$k_{\mathfrak{e}_1}+k_{\mathfrak{e}_1}-k_{\mathfrak{e}}=0$};
        \node[scale=2.0] at (15,-7.5) {$\Lambda_{k_{\mathfrak{e}_1}}+\Lambda_{k_{\mathfrak{e}_2}}-\Lambda_{k_{\mathfrak{e}}}=O(T^{-1}_{\text{max}})$};
    \end{tikzpicture}
    }
        \caption{Equations of a node $\mathfrak{n}$}
        \label{fig.equations}
    \end{figure}
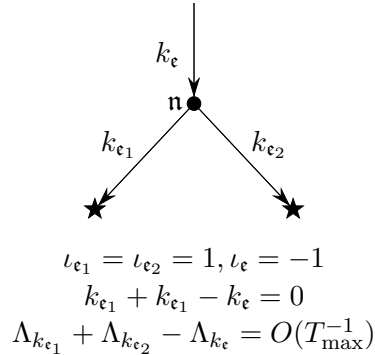

The goal of the next two sections is to count the number of solutions of a modified version of the above equation \eqref{eq.diophantineeq'}. 

\subsection{Couples and Wick theorem} In this section, we calculate $\mathbb{E}|\mathcal{J}_{T,k}|^2$ using Wick theorem. We also introduce another type of diagrams, the couple diagrams, to represent the result.

\label{sec.coupwick} By the upper bound in the last section, the coefficients $H^T_{k_1\cdots k_{l+1}}$ concentrate near the surface $q_{\mathfrak{n}}=0$, $\forall \mathfrak{n}$. But to get an upper bound of $\mathcal{J}_{T,k}$, we need upper bound of their variance $\mathbb{E}|\mathcal{J}_{T,k}|^2$. The coefficients of $\mathbb{E}|\mathcal{J}_{T,k}|^2$ also concentrate near a surface whose expression is similar to \eqref{eq.diophantineeq''}. 

Let's derive the expression of the coefficients of $\mathbb{E}|\mathcal{J}_{T,k}|^2$ and its concentration surface.



By Lemma \ref{lem.treeterms}, we know that $\mathcal{J}_{T,k}$ is a polynomial of $\xi$ which are proportional to i.i.d Gaussians. Therefore, 
\begin{equation}\label{eq.termexp1}
\begin{split}
    \mathbb{E}|\mathcal{J}_{T,k}|^2=&\mathbb{E}(\mathcal{J}_{T,k}\overline{\mathcal{J}_{T,k}})=\left(\frac{\lambda}{L^{d}}\right)^{2l(T)}
    \sum_{k_1,\, k_2,\, \cdots,\, k_{l(T)+1}}\sum_{k'_1,\, k'_2,\, \cdots,\, k'_{l(T)+1}}
    \\[0.5em]
    & H^T_{k_1\cdots k_{l(T)+1}} \overline{H^{T}_{k'_1\cdots k'_{l(T)+1}}}  \mathbb{E}\Big(\xi_{k_1}\xi_{k_2}\cdots\xi_{k_{l(T)+1}}\xi_{k'_1}\xi_{k'_2}\cdots\xi_{k'_{l(T)+1}}\Big)
\end{split}
\end{equation}

We just need to calculate 
\begin{equation}\label{eq.expectation''}
    \mathbb{E}\Big(\xi_{k_1}\xi_{k_2}\cdots\xi_{k_{l(T)+1}}
    \xi_{k'_1}\xi_{k'_2}\cdots\xi_{k'_{l(T)+1}}\Big).
\end{equation}
Notice that $\xi_k=\sqrt{n_{\textrm{in}}(k)} \, \eta_{k}(\omega)$ and $\eta_{k}$ are i.i.d Gaussians. We can apply the Wick theorem to calculate the above expectations. 

To introduce the Wick theorem, we need the following definition.

\begin{defn}
\begin{enumerate}
    \item \textbf{Pairing:} Suppose that we have a set $A=\{a_1,\cdots,a_{2m}\}$. A \underline{pairing} is a partition of $A=\{a_{i_1},a_{i_2}\}\cup\cdots\cup \{a_{i_{2m-1}},a_{i_{2m}}\}$ into $m$ disjoint subsets which have exactly two elements. Given a pairing $p$, elements $a_{i_{k}}$, $a_{i_{k'}}$ in the same subset of $p$ are called \underline{paired} with each other, which is denoted by $a_{i_{k}}\sim_{p} a_{i_{k'}}$.
    \item \textbf{$\mathcal{P}(A)$:} Denote by $\mathcal{P}(A)$ the set of all pairings of $A$.
\end{enumerate}



\end{defn}

\begin{lem}[Wick theorem]\label{th.wick}
Let $\{\eta_k\}_{k\in\mathbb{Z}^d_L}$ be i.i.d complex Gaussian random variables with reflection symmetry (i.e. $\eta_{k}=\bar{\eta}_{-k}$). Let $\mathcal{P}$ be the set of all pairings of $\{k_1,k_2,\cdots,k_{2m}\}$, then
\begin{equation}
    \mathbb{E}(\eta_{k_1}\cdots \eta_{k_{2m}})=\sum_{p\in \mathcal{P}}  \delta_{p}(k_1,\cdots,k_{2m}), 
\end{equation}
where 
\begin{equation}\label{eq.deltapairing}
\delta_{p}=\begin{cases}
1\qquad \textit{if $k_{i}=-k_{j}$ for all $k_{i}\sim_{p}k_{j}$,}
\\
0\qquad \textit{otherwise.}
\end{cases}
\end{equation}
\end{lem}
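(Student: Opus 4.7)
The plan is to reduce Wick's theorem in this form to the elementary moment identities of a single circular complex Gaussian, by exploiting the independence assumption and the reflection symmetry. Since the only relations among the $\eta_k$'s are $\eta_k=\overline{\eta_{-k}}$ and independence across distinct pairs $\{k,-k\}$, the proof should factor completely across equivalence classes under $k\sim -k$.

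First I would partition the multiset of indices $\{k_1,\ldots,k_{2m}\}$ into equivalence classes $C_1,\ldots,C_r$, where two indices are equivalent iff they are equal or differ by a sign. By the independence hypothesis, variables coming from different classes are independent, so
\[
\mathbb E\bigl(\eta_{k_1}\cdots\eta_{k_{2m}}\bigr)
=\prod_{j=1}^r \mathbb E\Bigl(\prod_{k_i\in C_j}\eta_{k_i}\Bigr).
\]
Any pairing $p\in\mathcal P$ with $\delta_p\neq 0$ must pair $k_i$ with some $k_j=-k_i$, hence only pairs lying entirely inside one of the $C_j$ contribute. Consequently, the sum $\sum_{p}\delta_p$ factors identically over the classes $C_j$, and it suffices to prove the lemma in the case of a single equivalence class.

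Fix a class $C_j$ consisting of $a$ copies of some index $K$ and $b$ copies of $-K$ (with $a+b$ even, else both sides vanish trivially by a parity check using independence of real/imaginary parts). Using $\eta_{-K}=\overline{\eta_K}$, the factor becomes $\mathbb E(\eta_K^{a}\overline{\eta_K}^{b})$. For a mean-zero circular complex Gaussian $\eta_K$ (with $\mathbb E|\eta_K|^2=1$ and $\mathbb E\eta_K^2=0$, as specified in \eqref{eq.wellprepared}), the standard one-variable computation (expand $\eta_K=X+iY$ with $X,Y$ independent real $\mathcal N(0,1/2)$ and apply the real Isserlis identity, or alternatively integrate against the Gaussian measure $\pi^{-1}e^{-|z|^2}\,dz\,d\bar z$) yields
\[
\mathbb E\bigl(\eta_K^{a}\overline{\eta_K}^{b}\bigr)=a!\,\delta_{a,b}.
\]
On the other hand, the number of pairings of $a+b$ labeled indices that match each $K$ with some $-K$ is exactly $a!$ when $a=b$ and $0$ otherwise, and for each such pairing $\delta_p=1$. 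Thus the per-class factors match.

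The only mildly delicate point is the case $K=0$, where $\eta_0$ is a single real Gaussian rather than a circular complex one; here one applies the real Isserlis formula directly and checks the count of pairings agrees. Multiplying the identities across all classes $C_1,\ldots,C_r$ and rearranging the product of sums into one sum over global pairings $p$ completes the proof. The main conceptual step, in my view, is the initial factorization across equivalence classes; once that is set up correctly, the rest is a one-variable bookkeeping exercise.
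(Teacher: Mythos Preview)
Your proof is correct, but it takes a different route from the paper's. The paper's proof is a two-line application of Isserlis' theorem to the full product $\eta_{k_1}\cdots\eta_{k_{2m}}$ (treating these as jointly Gaussian real-and-imaginary parts), after which the identity $\mathbb E[\eta_{k_i}\eta_{k_j}]=\delta_{k_i=-k_j}$ immediately gives $\prod_{i\sim_p j}\mathbb E[\eta_{k_i}\eta_{k_j}]=\delta_p$. You instead factor first across the independent equivalence classes $\{K,-K\}$ and then reduce each factor to the single-variable moment $\mathbb E(\eta_K^a\overline{\eta_K}^b)=a!\,\delta_{a,b}$, matching this against the count of admissible pairings within the class. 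Your approach is more self-contained (it does not require the general multivariate Isserlis statement as a black box) and makes the role of circularity and of the $K=0$ real case explicit, at the cost of being longer; the paper's approach is shorter but hides the same circularity hypothesis inside the pair-correlation computation. Both are perfectly valid for this lemma.
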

\begin{proof}
By Isserlis' theorem, for $X_1$, $X_2$, $\cdots$, $X_n$ zero-mean i.i.d Gaussian, we have 
\begin{equation}
    \mathbb{E} [X_1 X_2 \cdots X_n] = \sum_{p\in\mathcal{P}} \prod_{i\sim_{p}j} \mathbb{E} [X_i X_j]
\end{equation}
Here $\mathcal{P}$ is the set of pairings of $\{1,2,\cdots,n\}$.

Since $\mathbb{E} [\eta_{k_i}\eta_{k_j}]=\delta_{k_i=-k_j}$, take $X_1=\eta_{k_1}$, $\cdots$, $X_{2m}=\eta_{k_{2m}}$, then we can check that $\prod_{i\sim_{p}j} \mathbb{E} [X_i X_j]=\delta_{p}(k_1,\cdots,k_{2m})$. This finishes the proof of the Wick theorem.
\end{proof}

Applying Wick theorem to \eqref{eq.termexp1}, we get
\begin{equation}\label{eq.termexp'}
\begin{split}
    &\mathbb{E}|\mathcal{J}_{T,k}|^2=\left(\frac{\lambda}{L^{d}}\right)^{2l(T)}
    \sum_{p\in \mathcal{P}(\{k_1,\cdots, k_{l(T)+1}, k'_1,\cdots, k'_{l(T)+1}\})}
    \\[0.5em]
    & \underbrace{\sum_{k_1,\, k_2,\, \cdots,\, k_{l(T)+1}}\sum_{k'_1,\, k'_2,\, \cdots,\, k'_{l(T)+1}} H^T_{k_1\cdots k_{l(T)+1}} \overline{H^{T}_{k'_1\cdots k'_{l(T)+1}}} \delta_{p}(k_1,\cdots, k_{l(T)+1}, k'_1,\cdots, k'_{l(T)+1})\sqrt{n_{\textrm{in}}(k_1)}\cdots}_{Term(T, p)_k}.
\end{split}
\end{equation}

We see that the correlation of two tree terms is a sum of smaller expressions $Term(T, p)$. By \eqref{eq.diophantineeq'}, the coefficients $H^T_{k_1\cdots k_{l(T)+1}} H^{T}_{k'_1\cdots k'_{l(T)+1}}$ of $Term(T, p)$ concentrate near the subset 

\begin{equation}\label{eq.diophantineequnpaired}
\{k_{\mathfrak{e}}, k_{\mathfrak{e}'}\in \mathbb{Z}^d_L,\ |k_{\mathfrak{e}}|, |k_{\mathfrak{e}'}|\lesssim 1,\ \forall \mathfrak{e},\mathfrak{e}'\in T:|\Omega_{\mathfrak{n}}|,|\Omega_{\mathfrak{n}'}|\lesssim T^{-1}_{\text{max}},\ S_{\mathfrak{n}}=S_{\mathfrak{n}'}=0\ \forall \mathfrak{n},\mathfrak{n}'\in T.\ k_{\mathfrak{l}}=-k_{\mathfrak{l}}'=k\}.
\end{equation}

The pairing $p$ in Wick theorem introduces new equations $k_{i}=-k'_{j}$ (defined in \eqref{eq.deltapairing}) and the coefficients $H^T_{k_1\cdots k_{l(T)+1}} H^{T}_{k'_1\cdots k'_{l(T)+1}} \delta_{p}$ concentrate near the subset 
\begin{equation}\label{eq.diophantineeqpaired}
\begin{split}
    \{k_{\mathfrak{e}}, k_{\mathfrak{e}'}\in \mathbb{Z}^d_L,\ &|k_{\mathfrak{e}}|, |k_{\mathfrak{e}'}|\lesssim 1,\ \forall \mathfrak{e},\mathfrak{e}'\in T: |\Omega_{\mathfrak{n}}|,|\Omega_{\mathfrak{n}'}|\lesssim T^{-1}_{\text{max}},\ S_{\mathfrak{n}}=S_{\mathfrak{n}'}=0\ \forall \mathfrak{n},\mathfrak{n}'\in T.
    \\
    k_{\mathfrak{l}}=-k_{\mathfrak{l}}'=k.\  
    &\textit{$k_{i}=-k'_{j}$ (and $k_{i}=-k_{j}$, $k'_{i}=-k'_{j}$) for all $k_{i}\sim_{p}k'_{j}$ (and $k_{i}\sim_{p}k_{j}$, $k'_{i}\sim_{p}k'_{j}$)}\}.
\end{split}
\end{equation}

As in the case of \eqref{eq.diophantineeq'}, there is a graphical representation of \eqref{eq.diophantineeqpaired}. To explain this, we need the concept of couples.


\begin{defn}[Construction of couples]\label{def.conple} Given two trees $T$ and $T'$, we flip the orientation of all edges in $T'$ (as in the two left trees in Figure \ref{fig.treepairing}). We also label their leaves by $1, 2, \cdots, l(T)+1$ and $1, 2, \cdots, l(T')+1$ so that the corresponding variables of these leaves are $k_1, k_2, \cdots, k_{l(T)+1}$ and $k_1, k_2, \cdots, k_{l(T')+1}$. Assume that we have a pairing $p$ of the set $\{k_1, k_2, \cdots, k_{l(T)+1}, k_1, k_2, \cdots, k_{l(T')+1}\}$, then this pairing induces a pairing between leaves (if $k_i\sim_p k_j$ then define $\textit{the $i$-th leaf}\sim_p \textit{the $j$-th leaf}$). Given this pairing of leaves, we define the following procedure which glues two trees $T$ and $T'$ into a couple  $\mathcal{C}(T,T,p)$. Some example of pairing can be find in Figure \ref{fig.treepairing}. 

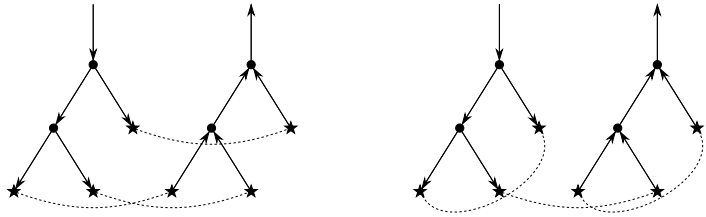
\begin{figure}[H]
    \centering
    \scalebox{0.3}{
    \begin{tikzpicture}[level distance=80pt, sibling distance=100pt]
        \node[] at (15,1.5) (1) {} 
            child {node[fillcirc] (2) {} 
                child {node[fillcirc] (3) {}
                    child {node[fillstar] (4) {}}
                    child {node[fillstar] (5) {}}
                }
                child {node[fillstar] (6) {}}
            };
        \draw[-{Stealth[length=5mm, width=3mm]}] (1) -- (2);
        \draw[-{Stealth[length=5mm, width=3mm]}] (2) -- (3);
        \draw[-{Stealth[length=5mm, width=3mm]}] (2) -- (6);
        \draw[-{Stealth[length=5mm, width=3mm]}] (3) -- (4);
        \draw[-{Stealth[length=5mm, width=3mm]}] (3) -- (5);
        \node[] at (22,1.5) (11) {} 
            child {node[fillcirc] (12) {} 
                child {node[fillcirc] (13) {}
                    child {node[fillstar] (14) {}}
                    child {node[fillstar] (15) {}}
                }
                child {node[fillstar] (16) {}}
            };
        \draw[{Stealth[length=5mm, width=3mm]}-] (11) -- (12);
        \draw[{Stealth[length=5mm, width=3mm]}-] (12) -- (13);
        \draw[{Stealth[length=5mm, width=3mm]}-] (12) -- (16);
        \draw[{Stealth[length=5mm, width=3mm]}-] (13) -- (14);
        \draw[{Stealth[length=5mm, width=3mm]}-] (13) -- (15);
        \draw[bend right =20, dashed] (4) edge (14);
        \draw[bend right =20, dashed] (5) edge (15);
        \draw[bend right =20, dashed] (6) edge (16);

        \node[] at (33,1.5) (new1) {} 
            child {node[fillcirc] (new2) {} 
                child {node[fillcirc] (new3) {}
                    child {node[fillstar] (new4) {}}
                    child {node[fillstar] (new5) {}}
                }
                child {node[fillstar] (new6) {}}
            };
        \draw[-{Stealth[length=5mm, width=3mm]}] (new1) -- (new2);
        \draw[-{Stealth[length=5mm, width=3mm]}] (new2) -- (new3);
        \draw[-{Stealth[length=5mm, width=3mm]}] (new2) -- (new6);
        \draw[-{Stealth[length=5mm, width=3mm]}] (new3) -- (new4);
        \draw[-{Stealth[length=5mm, width=3mm]}] (new3) -- (new5);
        \node[] at (40,1.5) (new11) {} 
            child {node[fillcirc] (new12) {} 
                child {node[fillcirc] (new13) {}
                    child {node[fillstar] (new14) {}}
                    child {node[fillstar] (new15) {}}
                }
                child {node[fillstar] (new16) {}}
            };
        \draw[{Stealth[length=5mm, width=3mm]}-] (new11) -- (new12);
        \draw[{Stealth[length=5mm, width=3mm]}-] (new12) -- (new13);
        \draw[{Stealth[length=5mm, width=3mm]}-] (new12) -- (new16);
        \draw[{Stealth[length=5mm, width=3mm]}-] (new13) -- (new14);
        \draw[{Stealth[length=5mm, width=3mm]}-] (new13) -- (new15);
        \draw[bend right =90, dashed] (new4) edge (new6);
        \draw[bend right =20, dashed] (new5) edge (new15);
        \draw[bend right =90, dashed] (new14) edge (new16);
    \end{tikzpicture}
    }
        \caption{Example of pairings between trees.}
        \label{fig.treepairing}
    \end{figure}

\begin{enumerate}
    \item \textbf{Merging edges connected to leaves:} Given two edges with opposite orientation connected to two paired leaves, these two edges can be \underline{merged} into one edge as in Figure \ref{fig.pairingleaves}.
    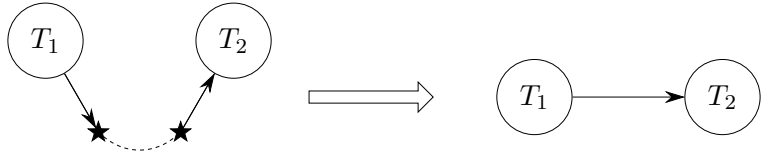
\begin{figure}[H]
    \centering
    \scalebox{0.5}{
    \begin{tikzpicture}[level distance=80pt, sibling distance=100pt]
        \node[draw, circle, minimum size=1cm, scale=2] at (0,0) (1) {$T_1$} [grow =300] 
            child {node[fillstar] (2) {}};
        \node[draw, circle, minimum size=1cm, scale=2] at (5,0) (3) {$T_2$} [grow =240] 
            child {node[fillstar] (4) {}};
        \draw[-{Stealth[length=5mm, width=3mm]}] (1) -- (2);
        \draw[{Stealth[length=5mm, width=3mm]}-] (3) -- (4);
        \draw[bend right =40, dashed] (2) edge (4);
        
        \node[draw, single arrow,
              minimum height=33mm, minimum width=8mm,
              single arrow head extend=2mm,
              anchor=west, rotate=0] at (7,-1.5) {}; 
              
        \node[draw, circle, minimum size=1cm, scale=2] at (13,-1.5) (5) {$T_1$}; 
        \node[draw, circle, minimum size=1cm, scale=2] at (18,-1.5) (6) {$T_2$};
        \draw[-{Stealth[length=5mm, width=3mm]}] (5) -- (6);
            
    \end{tikzpicture}
    }
        \caption{Pairing and merging of two edges}
        \label{fig.pairingleaves}
    \end{figure}
    We know that two edges connected to leaves correspond to two indices $k_i$, $k_j$. Merging two such edges is a graphical interpretation that $k_i=-k_j$. 
    \item \textbf{Pairing of trees and couples:} Given a pairing $p$ of the set of leaves in $T$, $T'$ we merge all edges paired by $p$ as in Figure \ref{fig.couple} and the resulting combinatorial structure is called a \underline{couple}. 
     \begin{figure}[H]
    \centering
    \scalebox{0.3}{
    \begin{tikzpicture}[level distance=80pt, sibling distance=100pt]
        \node[] at (0,0) (1) {} 
            child {node[fillcirc] (2) {} 
                child {node[fillstar] (3) {}}
                child {node[fillstar] (4) {}}
            };
        \draw[-{Stealth[length=5mm, width=3mm]}] (1) -- (2);
        \draw[-{Stealth[length=5mm, width=3mm]}] (2) -- (3);
        \draw[-{Stealth[length=5mm, width=3mm]}] (2) -- (4);
        
        \node[] at (0,-14.5) (11) {} [grow =90] 
            child {node[fillcirc] (12) {} 
                child {node[fillstar] (13) {}}
                child {node[fillstar] (14) {}}
            };    
        \draw[{Stealth[length=5mm, width=3mm]}-] (11) -- (12);
        \draw[{Stealth[length=5mm, width=3mm]}-] (12) -- (13);
        \draw[{Stealth[length=5mm, width=3mm]}-] (12) -- (14);    
            
        \draw[dashed] (3) edge (14);
        \draw[dashed] (4) edge (13);
        
        \node[draw, single arrow,
              minimum height=66mm, minimum width=16mm,
              single arrow head extend=4mm,
              anchor=west, rotate=0] at (7,-7.5) {};
        
        \node[] at (20,0) (21) {}; 
        \node[fillcirc] at (20,-4) (22) {};
        \node[fillcirc] at (20,-11) (23) {};
        \node[] (24) at (20,-15) {};
        \draw[-{Stealth[length=5mm, width=3mm]}] (21) edge (22);
        \draw[-{Stealth[length=5mm, width=3mm]}, bend right =40] (22) edge (23);
        \draw[-{Stealth[length=5mm, width=3mm]}, bend left =40] (22) edge (23);
        \draw[-{Stealth[length=5mm, width=3mm]}] (23) edge (24);
    \end{tikzpicture}
    }
        \caption{The construction of a couple}
        \label{fig.couple}
    \end{figure}
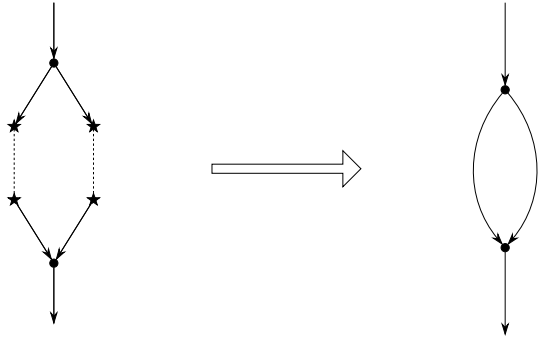
    
    We know that each edge connected to leaf corresponds to a variable $k_i$. A pairing $p$ of $\{k_1,k_2,\cdots,k_{2m}\}$ in \eqref{eq.diophantineeqpaired} induces a pairing of edges connected to leaves. Merging paired edges corresponds to $k_{i}=-k'_{j}$ for all $k_{i}\sim_{p}k'_{j}$ in \eqref{eq.diophantineeqpaired}. 
\end{enumerate}
\end{defn}

The following proposition introduce the graphical representation of \eqref{eq.diophantineeqpaired}.

\begin{prop}\label{prop.couple}
\eqref{eq.diophantineeqpaired} can be read from a couple diagram $\mathcal{C}(T,T,p)$. Each edge corresponds to a variable $k_{\mathfrak{e}}$. The leg $\mathfrak{l}$ corresponds to equation $k_{\mathfrak{l}}=k$. Each node corresponds to a momentum conservation equation
\begin{equation}
    \iota_{\mathfrak{e}_1}k_{\mathfrak{e}_1}+\iota_{\mathfrak{e}_2}k_{\mathfrak{e}_2}+\iota_{\mathfrak{e}}k_{\mathfrak{e}}=0,
\end{equation} 
and a energy conservation equation \begin{equation}
    \iota_{\mathfrak{e}_1}\Lambda_{k_{\mathfrak{e}_1}}+\iota_{\mathfrak{e}_2}\Lambda_{k_{\mathfrak{e}_2}}+\iota_{\mathfrak{e}}\Lambda_{k_{\mathfrak{e}}} = O(T^{-1}_{\text{max}}).
\end{equation}  
\end{prop}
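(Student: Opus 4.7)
The plan is a direct unwinding of definitions: the couple $\mathcal{C}(T,T',p)$ was designed precisely to encode the set \eqref{eq.diophantineeqpaired}, so the proof reduces to bookkeeping of variables and signs. First I would set up the variable correspondence. On $T$ each edge $\mathfrak{e}$ carries a variable $k_{\mathfrak{e}}$, and similarly on $T'$. In Definition \ref{def.conple} the orientations of edges in $T'$ are flipped and paired leaf-edges are merged; on each merged edge I assign a single variable $k_{\mathfrak{e}} \defeq k_i$ inherited from the $T$-side. The pairing identity $k_i = -k_j'$ from \eqref{eq.diophantineeqpaired} is then absorbed for free: after the flip, the sign $\iota$ at the $T'$-leaf endpoint reverses by \eqref{eq.iotadef}, so the effective variable appearing in that node's equations is $-k_j' = k_i$, matching the shared value on the merged edge. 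With this convention every edge of the couple carries exactly one variable in $\mathbb{Z}^d_L$ with $|k_{\mathfrak{e}}| \lesssim 1$, and the two leg constraints $k_{\mathfrak{l}} = k$, $k_{\mathfrak{l}'} = -k$ collapse to the single couple-level condition $k_{\mathfrak{l}} = k$.

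Next I would read off the node equations. Every internal node of $T$ or $T'$ remains a node of $\mathcal{C}(T,T',p)$, still incident to exactly three edges (two children and one parent). For nodes of $T$ all $\iota$-signs are unchanged, so the equations $S_{\mathfrak{n}} = 0$ and $|\Omega_{\mathfrak{n}}| \lesssim T^{-1}_{\text{max}}$ transfer verbatim to the claimed momentum and energy conservation conditions of the proposition. For a node $\mathfrak{n}'$ of $T'$ every incident $\iota$ flips sign under the reorientation, but $S_{\mathfrak{n}'}$ and $\Omega_{\mathfrak{n}'}$ are linear in these signs, so they are merely negated and the two conditions $S_{\mathfrak{n}'} = 0$, $|\Omega_{\mathfrak{n}'}| \lesssim T^{-1}_{\text{max}}$ are preserved with the same tolerance. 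Thus every defining condition of \eqref{eq.diophantineeqpaired} corresponds term by term to a momentum or near-resonance relation at the corresponding node of the couple.

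The main point requiring care is coherence of the sign convention $\iota_{\mathfrak{e}}$ at a merged leaf-edge, which is now shared between one endpoint coming from $T$ and one endpoint coming from the flipped $T'$. Using Figure \ref{fig.pairingleaves} and the merging rule, I would check that the orientation of the merged edge makes the contribution $\iota_{\mathfrak{e}}k_{\mathfrak{e}}$ at each of its two endpoints agree with the original contribution from each separate tree once the $T'$-side reorientation is accounted for. This is a direct verification from \eqref{eq.iotadef} and the rule shown in Figure \ref{fig.pairingleaves}. Once this sign-coherence check is done, the set \eqref{eq.diophantineeqpaired} is literally the set of $\mathbb{Z}^d_L$-labelings of the edges of $\mathcal{C}(T,T',p)$ with $|k_{\mathfrak{e}}| \lesssim 1$ satisfying the node-wise momentum and near-energy conservation equations together with the single leg condition $k_{\mathfrak{l}} = k$, which is the claim of the proposition.
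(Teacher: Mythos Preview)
Your proposal is correct and follows the same approach as the paper: the paper's proof is the single line ``This directly follows from the definition of couples,'' and your argument is precisely the detailed unwinding of Definition~\ref{def.conple} that justifies that sentence. Your explicit sign-coherence check for merged leaf-edges is more careful than the paper bothers to be, but it is exactly the verification implicit in the paper's one-line proof.
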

\begin{rem}
In a couple diagram, we only have nodes decorated by $\bullet$. Nodes decorated by $\star$ have been removed in (1), (2) of Definition \ref{def.conple}.
\end{rem}
\begin{rem}
Through the process of (1), (2) in Definition \ref{def.conple}, a couple diagram can automatically encode the equation $k_{i}=-k'_{j}$ for all $k_{i}\sim_{p}k'_{j}$. Therefore, they do not appear in Proposition \ref{prop.couple}.
\end{rem}

\begin{proof}
This directly follows from the definition of couples. 
\end{proof}

The calculations of this section are summarized in the following proposition.  

\begin{prop}\label{prop.termcouple} (1) Define $Term(T,p)$ in the same way as in \eqref{eq.termexp'},
\begin{equation}\label{eq.termTp}
\begin{split}
    Term(T, p)_k=&\sum_{k_1,\, k_2,\, \cdots,\, k_{l(T)+1}}\sum_{k'_1,\, k'_2,\, \cdots,\, k'_{l(T)+1}}
    \\
    &H^T_{k_1\cdots k_{l(T)+1}} \overline{H^{T}_{k'_1\cdots k'_{l(T)+1}}} \delta_{p}(k_1,\cdots, k_{l(T)+1}, k'_1,\cdots, k'_{l(T)+1})\sqrt{n_{\textrm{in}}(k_1)}\cdots\sqrt{n_{\textrm{in}}(k'_1)}\cdots
\end{split}
\end{equation}
then $\mathbb{E}|\mathcal{J}_{T,k}|^2$ is a sum of $Term(T,p)_k$ for all $p\in \mathcal{P}$, (in \eqref{eq.termexp'} the sum is over set of all possible pairing $\mathcal{P}$)
\begin{equation}\label{eq.termexp}
\begin{split}
    \mathbb{E}|\mathcal{J}_{T,k}|^2=\left(\frac{\lambda}{L^{d}}\right)^{2l(T)}
    \sum_{p\in \mathcal{P}(\{k_1,\cdots, k_{l(T)+1}, k'_1,\cdots, k'_{l(T)+1}\})} Term(T, p).
\end{split}
\end{equation}

(2) $Term(T,p)$ concentrates near the subset \eqref{eq.diophantineeqpaired} which has a simple graphical representation given by Proposition \ref{prop.couple}. 
\end{prop}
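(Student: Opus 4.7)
The plan is to directly expand $\mathbb{E}|\mathcal{J}_{T,k}|^2$ using the polynomial expression for $\mathcal{J}_{T,k}$ from Lemma \ref{lem.treeterms} and then apply the Wick theorem to the resulting Gaussian moment. From \eqref{eq.coefterm}, $\mathcal{J}_{T,k}$ is a degree $l(T)$ polynomial in the Gaussians $\xi$, so $\mathcal{J}_{T,k}\overline{\mathcal{J}_{T,k}}$ is a double sum whose summand contains the product $\xi_{k_1}\cdots\xi_{k_{l(T)+1}}\overline{\xi_{k'_1}\cdots\xi_{k'_{l(T)+1}}}$. Using the reflection symmetry $\eta_k=\overline{\eta_{-k}}$ one rewrites each $\overline{\xi_{k'_i}}$ as $\sqrt{n_{\mathrm{in}}(-k'_i)}\,\eta_{-k'_i}$ (equivalently, reindexing $k'_i\mapsto -k'_i$), so the expectation reduces to a moment of $2(l(T)+1)$ i.i.d.\ complex Gaussians weighted by $\prod\sqrt{n_{\mathrm{in}}}$.

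Next I would apply Lemma \ref{th.wick} to this Gaussian moment. The theorem expresses the moment as a sum over pairings $p\in\mathcal{P}$ of the indicator $\delta_p$, which forces $k_i=-k_j$ whenever the two indices are paired. Inserting this sum into \eqref{eq.termexp1} and interchanging the order of summation between $p$ and the momentum variables yields exactly \eqref{eq.termexp}, with $Term(T,p)_k$ given by \eqref{eq.termTp}. This proves part (1); it is essentially bookkeeping, and no obstacle arises beyond being careful with the complex conjugation/reflection step above.

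For part (2), I would combine the concentration statement already established for the individual coefficients with the new pairing constraints. By Lemma \ref{lem.boundcoef}, each of $H^T_{k_1\cdots k_{l(T)+1}}$ and $H^T_{k'_1\cdots k'_{l(T)+1}}$ is essentially supported on the momentum-conservation set $\{S_\mathfrak{n}=0\}$ and decays away from $\{|q_\mathfrak{n}|\lesssim T_{\max}^{-1}\}$; since $\Omega_\mathfrak{n}$ is a fixed linear combination of the $q_\mathfrak{n}$'s (from inverting \eqref{eq.q_n}), this in turn enforces $|\Omega_\mathfrak{n}|\lesssim T_{\max}^{-1}$ on both the $T$ side and its conjugate. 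The pairing factor $\delta_p$ then superimposes the identifications $k_i=-k'_j$ (and $k_i=-k_j$, $k'_i=-k'_j$ if the pairing is intra-tree). Together these are exactly the defining relations of the set \eqref{eq.diophantineeqpaired}, and by the construction in Definition \ref{def.conple} these relations are precisely what the couple $\mathcal{C}(T,T,p)$ encodes: each edge carries a momentum, each (necessarily decorated by $\bullet$) node imposes one momentum and one (approximate) energy conservation equation, the leg fixes $k_\mathfrak{l}=k$, and the edge-merging procedure removes the $\star$-leaves by identifying paired momenta. Part (2) then follows immediately from Proposition \ref{prop.couple}.

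The only mildly delicate point is ensuring that the sign conventions on $\iota_\mathfrak{e}$ before and after flipping the orientation of all edges in $T'$ (step preceding Definition \ref{def.conple}(1)) are consistent with turning the conjugate side's equations $S_{\mathfrak{n}'}=0$, $\Omega_{\mathfrak{n}'}=0$ into equations attached to the same couple; this is a matter of carefully tracking signs and contributes no substantive difficulty. With that verified the proposition is a direct consequence of Lemma \ref{th.wick}, Lemma \ref{lem.boundcoef}, and Proposition \ref{prop.couple}.
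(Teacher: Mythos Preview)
Your proposal is correct and follows the same approach as the paper: the paper's own proof reads ``The proof of (1), (2) is easy and thus skipped,'' since the derivation via Wick's theorem (equations \eqref{eq.termexp1} and \eqref{eq.termexp'}) and the concentration statement (Lemma \ref{lem.boundcoef} together with Proposition \ref{prop.couple}) have already been laid out in the text preceding the proposition. Your write-up simply makes explicit the bookkeeping that the paper leaves implicit.
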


\begin{proof} The proof of (1), (2) is easy and thus skipped. 
\end{proof}


\subsection{Counting lattice points}\label{sec.numbertheory} In this section, we use the connection between couples and concentration subsets \eqref{eq.diophantineeqpaired} to count the number of solutions of a generalized version of \eqref{eq.diophantineeqpaired},

\begin{equation}\label{eq.diophantineeqpairedsigma}
\begin{split}
    &\{k_{\mathfrak{e}}, k_{\mathfrak{e}'}\in \mathbb{Z}^d_L,\ |k_{\mathfrak{e}}|, |k_{\mathfrak{e}}'|\lesssim 1,\ \forall \mathfrak{e},\mathfrak{e}'\in T:\  |k_{\mathfrak{e}x}|\sim \kappa_{\mathfrak{e}}, |k'_{\mathfrak{e}x}|\sim \kappa_{\mathfrak{e}'},\ \forall \mathfrak{e},\mathfrak{e}'\in T, 
    \\
    &|\Omega_{\mathfrak{n}}-\sigma_{\mathfrak{n}}|,|\Omega'_{\mathfrak{n}}-\sigma'_{\mathfrak{n}}|\lesssim T^{-1}_{\text{max}},\ S_{\mathfrak{n}}=S_{\mathfrak{n}'}=0,\ \forall \mathfrak{n}.\ k_{\mathfrak{l}}=-k_{\mathfrak{l}}'=k.
    \\
    &\textit{$k_{i}=-k'_{j}$ (and $k_{i}=-k_{j}$, $k'_{i}=-k'_{j}$) for all $k_{i}\sim_{p}k'_{j}$ (and $k_{i}\sim_{p}k_{j}$, $k'_{i}\sim_{p}k'_{j}$)}\}.
\end{split}
\end{equation}

In \eqref{eq.diophantineeqpairedsigma}, $\kappa_{\mathfrak{e}}\in \{0\}\cup  \mathcal{D}(\alpha,1)$, where $\mathcal{D}(\alpha,1)\coloneqq\{2^{-K_{\mathfrak{e}}}:K_{\mathfrak{e}}\in  \mathbb{Z}\cap [0,ln\ \alpha^{-1}]\}$. The relation $|k_{\mathfrak{e}x}|\sim \kappa_{\mathfrak{e}}$ is defined by 
\begin{equation}\label{eq.kappa}
    |k_{\mathfrak{e}x}|\sim \kappa_{\mathfrak{e}}\text{ if and only if }\left\{\begin{aligned}
        & \frac{1}{2}\kappa_{\mathfrak{e}}\le  |k_{\mathfrak{e}x}|\le 2\kappa_{\mathfrak{e}} \qquad && \text{ if  $\kappa_{\mathfrak{e}}\ne 0$}
        \\[1em]
        &  |k_{\mathfrak{e}x}|\lesssim \alpha^2,\ k_{\mathfrak{e}x}\ne 0   \qquad && \text{ if  $\kappa_{\mathfrak{e}}= 0$}
    \end{aligned}
    \right.
\end{equation}

\eqref{eq.diophantineeqpairedsigma} is obtained by replacing $\Omega_{\mathfrak{n}}$, $\Omega'_{\mathfrak{n}}$ by $\Omega_{\mathfrak{n}}-\sigma_{\mathfrak{n}}$, $\Omega'_{\mathfrak{n}}-\sigma'_{\mathfrak{n}}$ in \eqref{eq.diophantineeqpaired} and adding conditions $|k_{\mathfrak{e}}|\sim \kappa_{\mathfrak{e}}$, where  $\sigma_{\mathfrak{n}}$, $\sigma'_{\mathfrak{n}}$ and $\kappa_{\mathfrak{e}}$ are some given constants. The counterpart of Proposition \ref{prop.couple} in this case is

\begin{prop}\label{prop.couple'}
\eqref{eq.diophantineeqpairedsigma} can be read from a couple diagram $\mathcal{C}=\mathcal{C}(T,T,p)$. Each edge corresponds to a variable $k_{\mathfrak{e}}$. The leg $\mathfrak{l}$ corresponds to equation $k_{\mathfrak{l}}=k$. Each node corresponds to a momentum conservation equation
\begin{equation}\label{eq.momentumconservationunit}
    \iota_{\mathfrak{e}_1}k_{\mathfrak{e}_1}+\iota_{\mathfrak{e}_2}k_{\mathfrak{e}_2}+\iota_{\mathfrak{e}}k_{\mathfrak{e}}=0,
\end{equation} 
and a energy conservation equation 
\begin{equation}\label{eq.energyconservationunit}
    \iota_{\mathfrak{e}_1}\Lambda_{k_{\mathfrak{e}_1}}+\iota_{\mathfrak{e}_2}\Lambda_{k_{\mathfrak{e}_2}}+\iota_{\mathfrak{e}}\Lambda_{k_{\mathfrak{e}}} = \sigma_{\mathfrak{n}} + O(T^{-1}_{\text{max}}).
\end{equation}  
Denote the momentum and energy conservation equations by $MC_{\mathfrak{n}}$ and $EC_{\mathfrak{n}}$ respectively, then \eqref{eq.diophantineeqpairedsigma} can be rewritten as 
\begin{equation}\label{eq.diophantineeqpairedsigma'}
    \text{\eqref{eq.diophantineeqpairedsigma}}=\{k_{\mathfrak{e}}\in \mathbb{Z}^d_L,\ |k_{\mathfrak{e}}|\lesssim 1\ \forall \mathfrak{e}\in \mathcal{C}:\  |k_{\mathfrak{e}x}| \sim \kappa_{\mathfrak{e}},\ \forall \mathfrak{e}\in \mathcal{C}_{\text{norm}}.\ MC_{\mathfrak{n}},\  EC_{\mathfrak{n}},\ \forall \mathfrak{n}\in \mathcal{C}.\ k_{\mathfrak{l}} = - k_{\mathfrak{l}}'= k.\}
\end{equation}
\end{prop}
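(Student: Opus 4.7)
The plan is to reduce Proposition \ref{prop.couple'} directly to Proposition \ref{prop.couple} by reinterpreting the extra data $(\sigma_\mathfrak{n}, \sigma'_\mathfrak{n}, \kappa_\mathfrak{e})$ as additional decorations on the couple diagram $\mathcal{C} = \mathcal{C}(T,T,p)$. The constraints defining \eqref{eq.diophantineeqpairedsigma} fall into four families: (i) the pairing identities $k_i = -k'_j$ for $k_i \sim_p k'_j$, together with the internal pairing identities within each tree; (ii) the momentum conservations $S_\mathfrak{n} = S_{\mathfrak{n}'} = 0$ at every internal node; (iii) the shifted energy conservations $|\Omega_\mathfrak{n} - \sigma_\mathfrak{n}| \lesssim T^{-1}_{\text{max}}$ and their primed analogues; and (iv) the dyadic constraints $|k_{\mathfrak{e}x}| \sim \kappa_\mathfrak{e}$. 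Families (i) and (ii) are purely topological and are already encoded by the construction in Definition \ref{def.conple}, exactly as in the proof of Proposition \ref{prop.couple}.

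For family (iii), I would observe that the passage from the tree data $(T, T')$ with pairing $p$ to the couple $\mathcal{C}$ merges paired edges, so each edge $\mathfrak{e}$ of $\mathcal{C}$ carries a single momentum variable $k_\mathfrak{e}$, and each internal node of $\mathcal{C}$ inherits the triple $(\mathfrak{e}_1,\mathfrak{e}_2,\mathfrak{e})$ from whichever of $T$ or $T'$ it originated in. Consequently the shifted condition $|\Omega_\mathfrak{n} - \sigma_\mathfrak{n}| \lesssim T^{-1}_{\text{max}}$ reads off as \eqref{eq.energyconservationunit}, once we allow each node to be decorated by its shift $\sigma_\mathfrak{n}$. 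The only novelty compared with Proposition \ref{prop.couple} is the bookkeeping of these shifts, which is a trivial extension.

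Family (iv) is handled by decorating each edge $\mathfrak{e}$ of the couple with its dyadic scale $\kappa_\mathfrak{e}$ and reading the relation $|k_{\mathfrak{e}x}| \sim \kappa_\mathfrak{e}$ directly from the definition \eqref{eq.kappa}. After all four families have been translated into edge and node decorations on $\mathcal{C}$, the set \eqref{eq.diophantineeqpairedsigma} is precisely the set of $(k_\mathfrak{e})_{\mathfrak{e} \in \mathcal{C}}$ satisfying $MC_\mathfrak{n}$, $EC_\mathfrak{n}$, the dyadic window at every edge, and the leg condition $k_\mathfrak{l} = -k_{\mathfrak{l}'} = k$. This is exactly the right-hand side of \eqref{eq.diophantineeqpairedsigma'}, which establishes the claim.

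I do not anticipate a real obstacle; the statement is a bookkeeping generalization of Proposition \ref{prop.couple} rather than a substantive new claim. The only step requiring mild care is checking that the orientation conventions $\iota_\mathfrak{e}$ from \eqref{eq.iotadef} are preserved under the flip of $T'$ performed in Definition \ref{def.conple}, so that merging two oppositely oriented edges leaves the linear forms $S_\mathfrak{n}$ and $\Omega_\mathfrak{n}$ consistent on both sides of each merged edge. This is immediate from the construction but should be recorded explicitly for readers who want to track the signs.
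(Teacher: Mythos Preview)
Your proposal is correct and follows the same approach as the paper: reduce the statement directly to Proposition \ref{prop.couple}, treating the shifts $\sigma_{\mathfrak{n}}$ and dyadic scales $\kappa_{\mathfrak{e}}$ as extra decorations that do not affect the underlying graphical encoding. The paper's own proof is a single line (``This directly follows from Proposition \ref{prop.couple}''), so your write-up is in fact considerably more detailed than what the paper provides.
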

\begin{proof}
This directly follows from Proposition \ref{prop.couple}. 
\end{proof}

To explain the counting argument in this paper, we need the following definitions related to couples.

\begin{defn}\label{def.morecouple}
\begin{enumerate}
    \item \textbf{Connected couples:} A couple $\mathcal{C}$ is a \underline{connected couple} if it is connected as a graph.  
    \item \textbf{Equations of a couple $Eq(\mathcal{C})$:} Given a couple $\mathcal{C}$ and constants $k$, $\sigma_{\mathfrak{n}}$, let $Eq(\mathcal{C},\{\sigma_{\mathfrak{n}}\}_{\mathfrak{n}}, k)$ (or simply $Eq(\mathcal{C})$) be the system of equation \eqref{eq.diophantineeqpairedsigma'} constructed in Proposition \ref{prop.couple'}. For any system of equations $Eq$, let $\#(Eq)$ be its number of solutions.
    \item \textbf{Normal edges and leaf edges:} Remember that any couple $\mathcal{C}$ is constructed from a pairing of two trees $T$, $T'$ and therefore all edges in $\mathcal{C}$ comes from $T$, $T'$. We define edges coming from $T_{\text{in}}$, $T'_{\text{in}}$ to be \underline{normal edges} and edges from those connected to leaves in $T$, $T'$ to be \underline{leaf edges}. The set of all normal edges is denoted by $\mathcal{C}_{\text{norm}}$. A leg in $\mathcal{C}$ which is a normal edge is called a \underline{normal leg}.
\end{enumerate}
\end{defn}

The main goal of this section is to prove an upper bound of $\#Eq(\mathcal{C})$. The main idea of proving this is to decompose a large couple $\mathcal{C}$ into smaller pieces and then prove this for smaller piece using induction hypothesis. To explain the idea, let us first focus on an example. Let $\mathcal{C}$ be the left couple in the following picture. (The corresponding variables of each edge are labelled near these edges.)
\begin{figure}[H]
    \centering
    \scalebox{0.4}{
    \begin{tikzpicture}[level distance=80pt, sibling distance=100pt]
        \node[] at (0,0) (1) {}; 
        \node[fillcirc] at (3,0) (2) {}; 
        \node[fillcirc] at (6,-2) (3) {}; 
        \node[fillcirc] at (9,-2) (4) {}; 
        \node[fillcirc] at (12,0) (5) {}; 
        \node[] at (15,0) (6) {}; 
        \draw[-{Stealth[length=5mm, width=3mm]}] (1) edge (2);
        \draw[-{Stealth[length=5mm, width=3mm]}] (2) edge (3);
        \draw[-{Stealth[length=5mm, width=3mm]}, bend left =40] (3) edge (4);
        \draw[-{Stealth[length=5mm, width=3mm]}, bend right =40] (3) edge (4);
        \draw[-{Stealth[length=5mm, width=3mm]}] (4) edge (5);
        \draw[-{Stealth[length=5mm, width=3mm]}] (5) edge (6);
        \draw[-{Stealth[length=5mm, width=3mm]}, bend left =40] (2) edge (5);
         
         \node[scale=2.0] at (3,-0.7) {$\mathfrak{n}_{1}$};
         \node[scale=2.0] at (6,-2.7) {$\mathfrak{n}_{2}$};
         \node[scale=2.0] at (9,-2.7) {$\mathfrak{n}_{3}$};
         \node[scale=2.0] at (12,-0.7) {$\mathfrak{n}_{4}$};
         \node[scale=2.0] at (1.5,-0.5) {$k$};
         \node[scale=2.0] at (4.3,-1.4) {$a$};
         \node[scale=2.0] at (7.5,-3.1) {$b$};
         \node[scale=2.0] at (7.5,-1) {$c$};
         \node[scale=2.0] at (10.7,-1.4) {$d$};
         \node[scale=2.0] at (7.5,2.2) {$e$};
         \node[scale=2.0] at (13.5,-0.5) {$-k$};

        \node[draw, single arrow,
              minimum height=33mm, minimum width=8mm,
              single arrow head extend=2mm,
              anchor=west, rotate=0] at (16,0) {};

        \node[] at (20,0) (11) {}; 
        \node[fillcirc] at (23,0) (12) {}; 
        \node[fillstar] at (26,-2) (13) {};
        \node[fillstar] at (26,2) (14) {};
        \draw[-{Stealth[length=5mm, width=3mm]}] (11) edge (12);
        \draw[-{Stealth[length=5mm, width=3mm]}] (12) edge (13);
        \draw[-{Stealth[length=5mm, width=3mm]}] (12) edge (14);
        
        \node[scale=2.0] at (23,-0.7) {$\mathfrak{n}_{1}$};
        \node[scale=2.0] at (21.5,-0.5) {$k$};
        \node[scale=2.0] at (24.3,-1.4) {$a$};
        \node[scale=2.0] at (24.3,1.4) {$e$};
        \node[scale=2.0] at (23,-1.8) {$A$};

        \node[] at (28,-2) (32) {}; 
        \node[fillcirc] at (31,-2) (33) {}; 
        \node[fillcirc] at (34,-2) (34) {}; 
        \node[fillcirc] at (37,0) (35) {}; 
        \node[] at (40,0) (36) {}; 
        \node[] at (34,2) (37) {};
        \draw[-{Stealth[length=5mm, width=3mm]}] (32) edge (33);
        \draw[-{Stealth[length=5mm, width=3mm]}, bend left =40] (33) edge (34);
        \draw[-{Stealth[length=5mm, width=3mm]}, bend right =40] (33) edge (34);
        \draw[-{Stealth[length=5mm, width=3mm]}] (34) edge (35);
        \draw[-{Stealth[length=5mm, width=3mm]}] (35) edge (36);
        \draw[-{Stealth[length=5mm, width=3mm]}] (37) edge (35);
        
        \node[scale=2.0] at (31,-2.7) {$\mathfrak{n}_{2}$};
        \node[scale=2.0] at (34,-2.7) {$\mathfrak{n}_{3}$};
        \node[scale=2.0] at (37,-0.7) {$\mathfrak{n}_{4}$};
        \node[scale=2.0] at (29.5,-2.5) {$a$};
        \node[scale=2.0] at (32.5,-3.1) {$b$};
        \node[scale=2.0] at (32.5,-1) {$c$};
        \node[scale=2.0] at (35.7,-1.4) {$d$};
        \node[scale=2.0] at (35.7,1.4) {$e$};
        \node[scale=2.0] at (38.5,-0.5) {$-k$};
        \node[scale=2.0] at (37,-2.2) {$B_{a,e}$};
    \end{tikzpicture}
    }
        \caption{An example of decomposing a couple}
        \label{fig.exampleofcuttingidea}
    \end{figure}
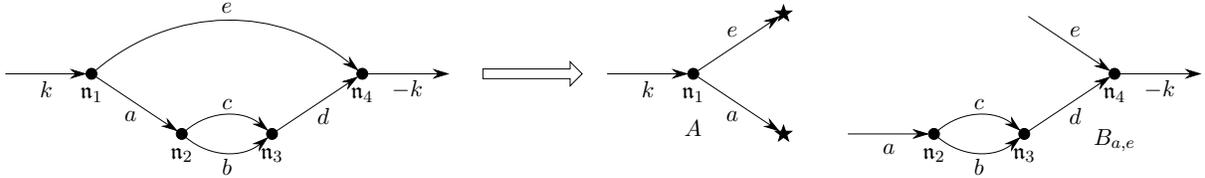

By \eqref{eq.diophantineeqpairedsigma'}, we know that the couple $\mathcal{C}$ corresponds to the following equations.
\begin{equation}\label{eq.cuttingexmaple}
    \begin{split}
        \{a, b, c, d, e:\ &(|a|\text{ to }|e|)\lesssim 1,\ (|a_x|\text{ to }|e_x|)\sim (\kappa_{a}\text{ to }\kappa_{e})
        \\
        &a+e=k,\ \Lambda(a) + \Lambda(e) - \Lambda(k) =\sigma_{1} + O(T^{-1}_{\text{max}})
        \\
        &a+c=b,\ \Lambda(a) + \Lambda(c) - \Lambda(b) =\sigma_{2} + O(T^{-1}_{\text{max}})
        \\
        &b+c=d,\ \Lambda(b) + \Lambda(c) - \Lambda(d) =\sigma_{3} + O(T^{-1}_{\text{max}})
        \\
        &d+e+k=0,\ \Lambda(d) + \Lambda(e) + \Lambda(k) =\sigma_{4} + O(T^{-1}_{\text{max}})\}
    \end{split}
\end{equation}

We know that \eqref{eq.cuttingexmaple} can be rewritten into the form $\bigcup_{a,e\in A} B_{a,e}$, where
\begin{equation}\label{eq.couplequationA}
    A=\{a, e:\ |a|,|e|\lesssim 1,\ |a_x|\sim \kappa_{a},|e_x|\sim \kappa_{e}, a+e=k,\ \Lambda(a) + \Lambda(e) - \Lambda(k) =\sigma_{1} + O(T^{-1}_{\text{max}})\}
\end{equation}
\begin{equation}\label{eq.couplequationB}
    \begin{split}
        B_{a,e}=\{b, c, d:\ &|b|,|c|,|d|\lesssim 1,\ |b_x|\sim \kappa_{b},|c_x|\sim \kappa_{c},|d_x|\sim \kappa_{d}
        \\
        &a+c=b,\ \Lambda(a) + \Lambda(c) - \Lambda(b) =\sigma_{2} + O(T^{-1}_{\text{max}})
        \\
        &b+c=d,\ \Lambda(b) + \Lambda(c) - \Lambda(d) =\sigma_{3} + O(T^{-1}_{\text{max}})
        \\
        &d+e+k=0,\ \Lambda(d) + \Lambda(e) + \Lambda(k) =\sigma_{4} + O(T^{-1}_{\text{max}})\}
    \end{split}    
\end{equation}

Since an upper bound of $\# Eq(\mathcal{C})$ can be derived from upper bounds of $\# A$, $\# B_{a,e}$, we just need to consider $A$, $B_{a,e}$ which are systems of equations of smaller size. We can reduce the size of systems of equations in this way and prove upper bounds by induction.

One problem of applying induction argument is that $A$, $B_{a,e}$ cannot be represented by couple defined by Definition \ref{def.conple} that can contain at most two legs (an edge just connected to one node). In Definition \ref{def.conple}, a leg is used to represent a variable which is fixed, as in the condition $k_{\mathfrak{l}} = - k_{\mathfrak{l}}'= k$ in \eqref{eq.diophantineeqpairedsigma'}. The definition of $\# B_{a,e}$ contains three fixed variables $a$, $e$, $k$ which cannot be represented by just two legs. Therefore, we has to define a new type of couple that allows multiple legs.

Except for the lack of legs, we also have the problem of representing free variables. We know that the couple representation of $A$ should contain one node and three edges if we insist on the rule that a node corresponds to an equation and the variables in the equation correspond to edges connected to this node. All these edges are legs, but two of three edges correspond to variable $a$, $b$ which are not fixed. Therefore, we have to define a type of legs that can correspond to unfixed variables.

To solve the above problems, we introduce the following definition.

\begin{defn}\label{def.couplemultileg}
\begin{enumerate}
    \item \textbf{Couples with multiple legs:} A graph in which all nodes have degree $1$ or $3$ is called a \underline{couples with multiple legs}. The graph $A$ and $B_{a,e}$ in Figure \ref{fig.exampleofcuttingidea} are examples of this definition.
    \item \textbf{Legs:} In a couple with multiple leg, an edge connected to a degree one node is called a \underline{leg}. Remember that we have encounter this concept in the second paragraph of section \ref{sec.refexp} and in what follows we call the leg defined there the \underline{root leg} of a tree.   
    \item \textbf{Free legs and fixed legs:} In a couple with multiple leg, we use two types of node decoration for degree $1$ nodes as in Figure \ref{fig.decorationdegreeone}. One is $\star$ and the other one is \underline{invisible}. 
    \begin{figure}[H]
    \centering
    \scalebox{0.5}{
    \begin{tikzpicture}[level distance=80pt, sibling distance=100pt]
        \node[draw, circle, minimum size=1cm, scale=2] at (0,0) (1) {$\mathcal{C}_1$} 
            child {node[fillstar] (2) {}};
        \node[draw, circle, minimum size=1cm, scale=2] at (5,0) (3) {$\mathcal{C}_2$}  
            child {node[] (4) {}};
        \draw[{Stealth[length=5mm, width=3mm]}-] (1) -- (2);
        \draw[{Stealth[length=5mm, width=3mm]}-] (3) -- (4);
            
    \end{tikzpicture}
    }
        \caption{Node decoration of degree one nodes}
        \label{fig.decorationdegreeone}
    \end{figure}
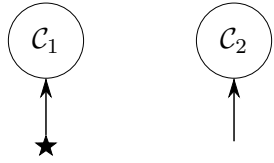
    An edge connected to a $\star$ or invisible nodes is called a \underline{free leg} or \underline{fixed leg} respectively.

    \item \textbf{Equations of a couple $Eq(\mathcal{C},\{c_{\mathfrak{l}}\}_{\mathfrak{l}})$:} We define the corresponding equations for a couples with multiple legs.
    \begin{equation}\label{eq.Eq(C,c)}
    \begin{split}
        &Eq(\mathcal{C},\{c_{\mathfrak{l}}\}_{\mathfrak{l}})
        \\
        =&\{k_{\mathfrak{e}}\in \mathbb{Z}^d_L,\ |k_{\mathfrak{e}}|\lesssim 1\ \forall \mathfrak{e}\in \mathcal{C}:\  |k_{\mathfrak{e}x}| \sim \kappa_{\mathfrak{e}},\ \forall \mathfrak{e}\in \mathcal{C}_{\text{norm}}.\ MC_{\mathfrak{n}},\  EC_{\mathfrak{n}},\ \forall \mathfrak{n}\in \mathcal{C}.\ k_{\mathfrak{l}}=c_{\mathfrak{l}},\ \forall \mathfrak{l}.\}   
    \end{split}
    \end{equation}
    In this representation, the corresponding variable of a fixed leg $\mathfrak{l}$ is fixed to be the constant $c_{\mathfrak{l}}$ and the corresponding variable of a free leg $\mathfrak{l}$ is not fixed.
\end{enumerate}
\end{defn}

With the above definition, it's easy to show that the couple $A$ and $B_{a,e}$ in Figure \ref{fig.exampleofcuttingidea} correspond to the system of equations \eqref{eq.couplequationA} and \eqref{eq.couplequationB} respectively.

Using the above argument, we can prove the following proposition which gives an upper bound of number of solutions of \eqref{eq.diophantineeqpairedsigma} (or \eqref{eq.diophantineeqpairedsigma'}).

\begin{prop}\label{prop.counting}
Let $\mathcal{C}=\mathcal{C}(T,T',p)$ be an connected couple with exactly one free and one fixed leg, $n$ be the total number of nodes in $\mathcal{C}$ and $Q=L^{d}T^{-1}_{\text{max}}$. We fix $k\in \mathbb{R}$ for the legs $\mathfrak{l}$, $\mathfrak{l}'$ and $\sigma_{\mathfrak{n}}\in\mathbb{R}$ for each $\mathfrak{n}\in \mathcal{C}$. Assume that $\alpha$ satisfies \eqref{eq.conditionalpha}. Then the number of solutions $M$ of \eqref{eq.diophantineeqpairedsigma} (or \eqref{eq.diophantineeqpairedsigma'}) is bounded by 
\begin{equation}\label{eq.countingbd0}
M\leq L^{O(n\theta)} Q^{\frac{n}{2}}\ \prod_{\mathfrak{e}\in \mathcal{C}_{\text{norm}}} \kappa^{-1}_{\mathfrak{e}}.
\end{equation}
\end{prop}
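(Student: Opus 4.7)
The plan is to prove the proposition by strong induction on the number of nodes $n$, using the edge-cutting decomposition illustrated in the discussion around Figure~\ref{fig.exampleofcuttingidea}. Since cutting produces sub-couples whose leg-counts differ from $\mathcal{C}$'s, one needs a mildly stronger inductive hypothesis that allows an arbitrary number of free and fixed legs, with each extra free leg contributing an additional $L^{d}$ factor to reflect its unconstrained lattice count; Proposition~\ref{prop.counting} then follows from the single-free-leg specialisation. The base case $n=0$ is immediate: the couple consists of one edge joining the two legs and the constraint $k_{\mathfrak{l}}=c_{\mathfrak{l}}$ pins down the free-leg value, so $M\leq 1$.

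For the inductive step, I would pick an \emph{extremal} node $\mathfrak{n}_{0}$, for example the deepest branching node of one of the constituent trees of $\mathcal{C}$, a choice that always exists in a connected couple with $n\geq 1$. Cut $\mathcal{C}$ at $\mathfrak{n}_{0}$ into (i) the single-node piece $\mathcal{C}_{A}$ containing $\mathfrak{n}_{0}$ and its three incident edges, the latter now regarded as legs, and (ii) the remainder $\mathcal{C}_{B}$ with $n-1$ nodes and those three edges promoted to fixed legs. Summing over the values of the three cut edges factorises as
\begin{equation*}
\#Eq(\mathcal{C}) \;\leq\; \#Eq(\mathcal{C}_{A}) \cdot \sup_{\text{cut-edge values}} \#Eq(\mathcal{C}_{B}),
\end{equation*}
and the generalised induction hypothesis controls $\#Eq(\mathcal{C}_{B})$, leaving only the single-node piece $\#Eq(\mathcal{C}_{A})$ to handle.

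The single-node count is the technical heart of the argument and the step I expect to be the main obstacle. After using the momentum constraint $\sum \iota_{\mathfrak{e}_{i}}k_{\mathfrak{e}_{i}}=0$ to eliminate one of the three variables, the thickened energy equation $\Omega=\sigma+O(T_{\mathrm{max}}^{-1})$ remains as a single scalar constraint. Standard thickened-level-set counting yields roughly $L^{d}T_{\mathrm{max}}^{-1}/|\nabla\Omega|=Q/|\nabla\Omega|$ per unit transverse volume, so everything hinges on a uniform lower bound for $|\nabla\Omega|$. The phase $\Lambda(k)=k_{x}|k|^{2}$ of the ZK equation has a gradient that degenerates precisely where the $x$-components of the edges vanish, which is the difficulty explicitly called out in Section~\ref{sec.latticeintro}, and a direct computation shows that on the dyadic shell $|k_{\mathfrak{e}x}|\sim \kappa_{\mathfrak{e}}$ one has $|\nabla\Omega|\gtrsim \max_{\mathfrak{e}}\kappa_{\mathfrak{e}}$. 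Inserting this bound together with the size restrictions $|k_{\mathfrak{e}x}|\sim\kappa_{\mathfrak{e}}$ and summing the dyadic decomposition in $\kappa$ produces exactly the $\prod_{\mathfrak{e}}\kappa_{\mathfrak{e}}^{-1}$ factor needed, together with one power of $Q^{1/2}$ per peeled-off node.

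Multiplying the single-node count with the induction hypothesis applied to $\mathcal{C}_{B}$, and accumulating the logarithmic losses $L^{O(\theta)}$ over at most $n$ cutting steps, gives the claimed bound $L^{O(n\theta)}\,Q^{n/2}\prod_{\mathfrak{e}\in\mathcal{C}_{\text{norm}}}\kappa_{\mathfrak{e}}^{-1}$. The hypothesis $d\geq 3$ enters in the level-set argument: it guarantees at least two transverse lattice directions once the $x$-direction is used to parameterise the dyadic shell, which is what makes the argument robust to the ZK degeneracy. The condition~\eqref{eq.conditionalpha} on $\alpha$ ensures that the dyadic range $\mathcal{D}(\alpha,1)$ is only logarithmically large in $L$, so the cost of summing over $\kappa$-scales fits inside the $L^{O(n\theta)}$ loss.
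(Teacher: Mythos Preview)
Your high-level strategy --- induction on $n$ via edge cutting, with a generalised hypothesis allowing more legs --- matches the paper's. But two concrete points in your execution would not close.

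First, the bookkeeping ``one power of $Q^{1/2}$ per peeled-off node'' is false. In the paper a single-node piece contributes either $Q^{1}$ or $Q^{0}$, never $Q^{1/2}$: the two admissible one-node couples $\mathcal C_{I}$ (one fixed normal leg, two free legs) and $\mathcal C_{II}$ (one fixed normal leg, one other fixed leg, one free leg) satisfy $\#Eq(\mathcal C_{I})\le L^{\theta}Q\,\kappa_{\mathfrak e}^{-1}$ and $\#Eq(\mathcal C_{II})\le 1$ respectively. The correct exponent is tracked by the additive invariant $\chi(\mathcal C)=n_{e}(\mathcal C)+n_{\mathrm{fr}}(\mathcal C)-n(\mathcal C)$, which the paper shows satisfies $\chi(\mathcal C)=\chi(\mathcal C_{1})+\chi(\mathcal C_{2})$ under any cut; for the original couple $\chi=n/2$, which is where the $Q^{n/2}$ comes from. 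Your hypothesis ``each extra free leg contributes $L^{d}$'' is not the right currency and will not reproduce this.

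Second, your cutting rule --- pick a deepest node and make all three incident edges free in $\mathcal C_{A}$, fixed in $\mathcal C_{B}$ --- gives $\mathcal C_{A}$ three free legs, hence $\chi(\mathcal C_{A})=2$ and a count of order $Q\cdot L^{d}$, which is far too large. The paper instead always cuts at a \emph{fixed normal leg}; this forces the one-node piece to carry a fixed normal leg, which is exactly the edge whose $x$-scale $\kappa_{\mathfrak e}$ appears in the number-theory estimate $\#\{\cdot\}\lesssim Q\,\kappa_{\mathfrak e}^{-1}$ of Theorem~\ref{th.numbertheory1}. Your gradient bound $|\nabla\Omega|\gtrsim \max_{\mathfrak e}\kappa_{\mathfrak e}$ is not what is used and does not produce the full product $\prod_{\mathfrak e\in\mathcal C_{\mathrm{norm}}}\kappa_{\mathfrak e}^{-1}$; that product arises because the paper's cutting algorithm is designed so that every normal edge of $\mathcal C$ eventually serves as the fixed normal leg of some one-node piece, and maintaining this (``Property~P'': exactly one free leg, at least one fixed normal leg) through the recursion is the combinatorial content of Proposition~\ref{prop.cuttingalgorithm} and Lemma~\ref{lem.normleg}, which your sketch does not address.
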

\begin{proof} 
The proof is lengthy and therefore divided into several steps. The main idea of the proof is to use the operation of edge cutting to decompose the couple $\mathcal{C}$ into smaller ones $\mathcal{C}_1$, $\mathcal{C}_2$, then apply Lemma \ref{lem.Eq(C)cutting} which relates $\#Eq(\mathcal{C})$ and $\#Eq(\mathcal{C}_i)$. The desire upper bounds of $\#Eq(\mathcal{C})$ can be obtained from that of $\#Eq(\mathcal{C}_i)$ inductively.

\textbf{Step 1.} In this step, we explain the cutting edge argument and prove the Lemma \ref{lem.Eq(C)cutting} which relates $\#Eq(\mathcal{C}_1)$, $\#Eq(\mathcal{C}_2)$ and $\#Eq(\mathcal{C})$.

Here is the formal definition of cutting 

\begin{defn}
\begin{enumerate}
    \item \textbf{Cutting an edge:} Given an edge $\mathfrak{e}$, we can cut it into two edges (a fixed and a free leg) as in Figure \ref{fig.cutedge}.
    
   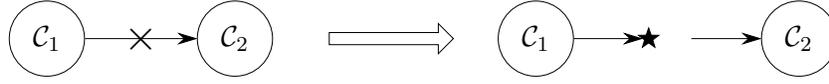
\begin{figure}[H]
    \centering
    \scalebox{0.5}{
    \begin{tikzpicture}[level distance=80pt, sibling distance=100pt]
        \node[draw, circle, minimum size=1cm, scale=2] at (0,0) (1) {$\mathcal{C}_1$}; 
        \node[draw, circle, minimum size=1cm, scale=2] at (5,0) (2) {$\mathcal{C}_2$};
        \draw[-{Stealth[length=5mm, width=3mm]}] (1) -- (2);
        \node[scale =3] at (2.5,0) {$\times$};

        \node[draw, single arrow,
              minimum height=33mm, minimum width=8mm,
              single arrow head extend=2mm,
              anchor=west, rotate=0] at (7.5,0) {};

        \node[draw, circle, minimum size=1cm, scale=2] at (13,0) (11) {$\mathcal{C}_1$}; 
        \node[fillstar] at (16,0) (12) {};
        \draw[-{Stealth[length=5mm, width=3mm]}] (11) -- (12);
         
        \node[] at (17,0) (13) {}; 
        \node[draw, circle, minimum size=1cm, scale=2] at (20,0) (14) {$\mathcal{C}_2$};
        \draw[-{Stealth[length=5mm, width=3mm]}] (13) -- (14);    
    \end{tikzpicture}
    }
        \caption{An example of cutting an edge}
        \label{fig.cutedge}
    \end{figure}

    \item \textbf{Cut:} A \underline{cut} $c$ of a couple $\mathcal{C}$ is a set of edges such that $\mathcal{C}$ is disconnected after cutting all edges in $c$. A \underline{refined cut} is a cut together with a map $\text{rc}:c\rightarrow \{\text{left}, \text{right}\}$. For each $\mathfrak{e}\in c$, if $\text{rc}(\mathfrak{e})=\text{left}$ (resp. right), then as in Figure \ref{fig.cutedge} the left node (resp. right node) produced by cutting $\mathfrak{e}$ is a $\star$ node (resp. invisible node). The map $\text{rc}$ describes which one should be the free or fixed leg in the two legs produced by cutting an edge.
    \item \textbf{$c(\mathfrak{e})$, $c(\mathfrak{n})$ and $c(\mathfrak{l})$:} Given an edge $\mathfrak{e}$ that is not a leg, define  $c(\mathfrak{e})$ to be the cut that contains only one edge $\mathfrak{e}$. Given a node $\mathfrak{n}\in \mathcal{C}$, let $\{\mathfrak{e}_{i}\}$ be edges that are connected to $\mathfrak{n}$, then define $c(\mathfrak{n})$ to be the cut that consists of edges $\{\mathfrak{e}_{i}\}$. Given an leg $\mathfrak{l}$, let $\mathfrak{n}$ be the unique node connected to it, then define  $c(\mathfrak{e})$ to be the cut $c(\mathfrak{n})$. An example of cutting $c(\mathfrak{e})$ is give by Figure \ref{fig.cutedge}. The following picture gives an example of cutting $c(\mathfrak{n})$ or $c(\mathfrak{l})$ (in this picture $\mathfrak{n}=\mathfrak{n}_1$ and $\mathfrak{l}$ is the leg labelled by $k$.)
    
    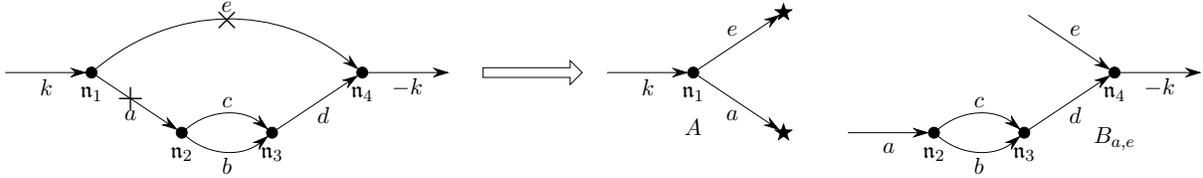
\begin{figure}[H]
    \centering
    \scalebox{0.4}{
    \begin{tikzpicture}[level distance=80pt, sibling distance=100pt]
        \node[] at (0,0) (1) {}; 
        \node[fillcirc] at (3,0) (2) {}; 
        \node[fillcirc] at (6,-2) (3) {}; 
        \node[fillcirc] at (9,-2) (4) {}; 
        \node[fillcirc] at (12,0) (5) {}; 
        \node[] at (15,0) (6) {}; 
        \draw[-{Stealth[length=5mm, width=3mm]}] (1) edge (2);
        \draw[-{Stealth[length=5mm, width=3mm]}] (2) edge (3);
        \draw[-{Stealth[length=5mm, width=3mm]}, bend left =40] (3) edge (4);
        \draw[-{Stealth[length=5mm, width=3mm]}, bend right =40] (3) edge (4);
        \draw[-{Stealth[length=5mm, width=3mm]}] (4) edge (5);
        \draw[-{Stealth[length=5mm, width=3mm]}] (5) edge (6);
        \draw[-{Stealth[length=5mm, width=3mm]}, bend left =40] (2) edge (5);
         
         \node[scale=2.0] at (3,-0.7) {$\mathfrak{n}_{1}$};
         \node[scale=2.0] at (6,-2.7) {$\mathfrak{n}_{2}$};
         \node[scale=2.0] at (9,-2.7) {$\mathfrak{n}_{3}$};
         \node[scale=2.0] at (12,-0.7) {$\mathfrak{n}_{4}$};
         \node[scale=2.0] at (1.5,-0.5) {$k$};
         \node[scale=2.0] at (4.3,-1.4) {$a$};
         \node[scale=2.0] at (7.5,-3.1) {$b$};
         \node[scale=2.0] at (7.5,-1) {$c$};
         \node[scale=2.0] at (10.7,-1.4) {$d$};
         \node[scale=2.0] at (7.5,2.2) {$e$};
         \node[scale=2.0] at (13.5,-0.5) {$-k$};
         \node[scale=3.0, rotate =45] at (4.3,-0.85) {$\times$};
         \node[scale=3.0, rotate = 0] at (7.5,1.75) {$\times$};

        \node[draw, single arrow,
              minimum height=33mm, minimum width=8mm,
              single arrow head extend=2mm,
              anchor=west, rotate=0] at (16,0) {};

        \node[] at (20,0) (11) {}; 
        \node[fillcirc] at (23,0) (12) {}; 
        \node[fillstar] at (26,-2) (13) {};
        \node[fillstar] at (26,2) (14) {}; 
        \draw[-{Stealth[length=5mm, width=3mm]}] (11) edge (12);
        \draw[-{Stealth[length=5mm, width=3mm]}] (12) edge (13);
        \draw[-{Stealth[length=5mm, width=3mm]}] (12) edge (14);
        
        \node[scale=2.0] at (23,-0.7) {$\mathfrak{n}_{1}$};
        \node[scale=2.0] at (21.5,-0.5) {$k$};
        \node[scale=2.0] at (24.3,-1.4) {$a$};
        \node[scale=2.0] at (24.3,1.4) {$e$};
        \node[scale=2.0] at (23,-1.8) {$A$};

        \node[] at (28,-2) (32) {}; 
        \node[fillcirc] at (31,-2) (33) {}; 
        \node[fillcirc] at (34,-2) (34) {}; 
        \node[fillcirc] at (37,0) (35) {}; 
        \node[] at (40,0) (36) {}; 
        \node[] at (34,2) (37) {};
        \draw[-{Stealth[length=5mm, width=3mm]}] (32) edge (33);
        \draw[-{Stealth[length=5mm, width=3mm]}, bend left =40] (33) edge (34);
        \draw[-{Stealth[length=5mm, width=3mm]}, bend right =40] (33) edge (34);
        \draw[-{Stealth[length=5mm, width=3mm]}] (34) edge (35);
        \draw[-{Stealth[length=5mm, width=3mm]}] (35) edge (36);
        \draw[-{Stealth[length=5mm, width=3mm]}] (37) edge (35);
        
        \node[scale=2.0] at (31,-2.7) {$\mathfrak{n}_{2}$};
        \node[scale=2.0] at (34,-2.7) {$\mathfrak{n}_{3}$};
        \node[scale=2.0] at (37,-0.7) {$\mathfrak{n}_{4}$};
        \node[scale=2.0] at (29.5,-2.5) {$a$};
        \node[scale=2.0] at (32.5,-3.1) {$b$};
        \node[scale=2.0] at (32.5,-1) {$c$};
        \node[scale=2.0] at (35.7,-1.4) {$d$};
        \node[scale=2.0] at (35.7,1.4) {$e$};
        \node[scale=2.0] at (38.5,-0.5) {$-k$};
        \node[scale=2.0] at (37,-2.2) {$B_{a,e}$};
    \end{tikzpicture}
    }
        \caption{An example of cuts, $c(\mathfrak{n})$ and $c(\mathfrak{l})$}
        \label{fig.c(n)c(e)}
    \end{figure}
    \item \textbf{Normal edges in couples with multiple legs:} In this paper, all couples with multiple legs are produced by cutting a couple defined in Definition \ref{def.conple}. If a normal edge $\mathfrak{e}$ is cut into $\mathfrak{e}_1$ and $\mathfrak{e}_2$, then $\mathfrak{e}_1$ and $\mathfrak{e}_2$ are defined to be normal in the resulting couples with multiple legs.
\end{enumerate}
\end{defn}
\begin{rem}
Explicitly writing down the full definition of $\text{rc}$ is often complicated, so in what follows, when defining $\text{rc}$, we will only describe which one should be the free or fixed leg in the two legs produced by cutting an edge.
\end{rem}

The couples in Proposition \ref{prop.counting} contains just $2$ fixed legs, but after cutting, these couples may contain more fixed or free legs. 

By Definition \ref{def.couplemultileg}, for a couple $\mathcal{C}$ with multiple legs, given constants $c_{\mathfrak{l}}$ for each fixed leg $\mathfrak{l}$, the corresponding equation of $\mathcal{C}$ is denoted by $Eq(\mathcal{C},\{c_{\mathfrak{l}}\}_{\mathfrak{l}})$.
In $Eq(\mathcal{C},\{c_{\mathfrak{l}}\}_{\mathfrak{l}})$ each edge $\mathfrak{e}$ is associated with a variable $k_{\mathfrak{e}}$ and each node $\mathfrak{n}$ is still associated with equations $MC_{\mathfrak{n}}$, $EC_{\mathfrak{n}}$. The corresponding variables of free (resp. fixed) legs are free (resp. fixed to be a constant $c_{\mathfrak{l}}$).

Let us explain how does $Eq(\mathcal{C})$ and $\#Eq(\mathcal{C})$ changes after cutting. The result is summarized in the following lemma.
\begin{lem}\label{lem.Eq(C)cutting}
Let $c$ be a cut of $\mathcal{C}$ that consists of edges $\{\mathfrak{e}_{i}\}$ and $\mathcal{C}_1$, $\mathcal{C}_2$ be two components after cutting. Let $\mathfrak{e}_{i}^{(1)}\in \mathcal{C}_1$, $\mathfrak{e}_{i}^{(2)}\in \mathcal{C}_2$ be two edges obtained by cutting $\mathfrak{e}_{i}$. The $\text{rc}$ map is defined by assigning $\{\mathfrak{e}_{i}^{(1)}\}$ to be free legs and $\{\mathfrak{e}_{i}^{(2)}\}$ to be fixed legs. Then we have 
\begin{equation}\label{eq.Eq(C)cutting}
    Eq(\mathcal{C},\{c_{\mathfrak{l}}\}_{\mathfrak{l}})=\left\{(k_{\mathfrak{e}_1},k_{\mathfrak{e}_{2}}):\ k_{\mathfrak{e}_1}\in Eq(\mathcal{C}_1,\{c_{\mathfrak{l}_1}\}),\  k_{\mathfrak{e}_{2}}\in Eq\left(\mathcal{C}_{2}, \{c_{\mathfrak{l}_2}\}, \left\{k_{\mathfrak{e}_{i}^{(1)}}\right\}_{i}\right)\right\}.
\end{equation}
and
\begin{equation}\label{eq.Eq(C)cuttingcounting}
    \sup_{\{c_{\mathfrak{l}}\}_{\mathfrak{l}}}\#Eq(\mathcal{C},\{c_{\mathfrak{l}}\}_{\mathfrak{l}})\le
    \sup_{\{c_{\mathfrak{l}_1}\}_{\mathfrak{l}_1\in \text{leg}(\mathcal{C}_1)} } \# Eq(\mathcal{C}_1,\{c_{\mathfrak{l}_1}\}) \sup_{\{c_{\mathfrak{l}_2}\}_{\mathfrak{l}_2\in \text{leg}(\mathcal{C}_2)} }\# Eq(\mathcal{C}_{2}, \{c_{\mathfrak{l}_2}\}).
\end{equation}
Here $\text{leg}(\mathcal{C})$ is the set of fixed legs in $\mathcal{C}$ (not the set of all legs!).
\end{lem}
\begin{proof}
By definition \eqref{eq.Eq(C,c)} we have
\begin{equation}
\begin{split}
    Eq(\mathcal{C},\{c_{\mathfrak{l}}\}_{\mathfrak{l}})=&\{k_{\mathfrak{e}}\in \mathbb{Z}^d_L,\ |k_{\mathfrak{e}}|\lesssim 1:\  |k_{\mathfrak{e}x}| \sim \kappa_{\mathfrak{e}},\ \forall \mathfrak{e}\in \mathcal{C}_{\text{norm}}.\ MC_{\mathfrak{n}},\  EC_{\mathfrak{n}},\ \forall \mathfrak{n}.\  k_{\mathfrak{l}}=c_{\mathfrak{l}},\ \forall \mathfrak{l}\in \text{leg}(\mathcal{C}).\} 
    \\
    =&\{(k_{\mathfrak{e}_1},k_{\mathfrak{e}_2}):\ |k_{\mathfrak{e}_1}| \lesssim 1,\ MC_{\mathfrak{n}_1},\  EC_{\mathfrak{n}_1}.\ \forall \mathfrak{e}_1, \mathfrak{n}_1\in\mathcal{C}_1.\ |k_{\mathfrak{e}x}| \sim \kappa_{\mathfrak{e}},\ \forall \mathfrak{e}\in \mathcal{C}_{\text{norm}}.
    \\
    &k_{\mathfrak{l}_1}=c_{\mathfrak{l}_1},\ \forall \mathfrak{l}_1\in \text{leg}(\mathcal{C})\cap \text{leg}(\mathcal{C}_1)
    \\
    &|k_{\mathfrak{e}_2}| \lesssim 1,\ MC_{\mathfrak{n}_2},\  EC_{\mathfrak{n}_2}.\ \forall \mathfrak{e}_2, \mathfrak{n}_2\in\mathcal{C}_2.\ |k_{\mathfrak{e}x}| \sim \kappa_{\mathfrak{e}},\ \forall \mathfrak{e}\in \mathcal{C}_{\text{norm}}.
    \\
    &k_{\mathfrak{l}_2}=c_{\mathfrak{l}_2},\ \forall \mathfrak{l}_2\in \text{leg}(\mathcal{C})\cap \text{leg}(\mathcal{C}_2),\ k_{\mathfrak{e}_{i}^{(2)}}=k_{\mathfrak{e}_{i}^{(1)}},\ \forall\mathfrak{e}_{i}\in c\}
    \\
    =&\left\{(k_{\mathfrak{e}_1},k_{\mathfrak{e}_{2}}):\ k_{\mathfrak{e}_1}\in Eq(\mathcal{C}_1,\{c_{\mathfrak{l}_1}\}),\  k_{\mathfrak{e}_{2}}\in Eq\left(\mathcal{C}_{2}, \{c_{\mathfrak{l}_2}\}, \left\{k_{\mathfrak{e}_{i}^{(1)}}\right\}_{i}\right)\right\}
\end{split}
\end{equation}
Here in $Eq\left(\mathcal{C}_{2}, \{c_{\mathfrak{l}_2}\}, \left\{k_{\mathfrak{e}_{i}^{(1)}}\right\}_{i}\right)$, $k_{\mathfrak{e}_{i}^{(1)}}$ are view as a constant value and $k_{\mathfrak{e}_{i}^{(2)}}$ are fixed to be this constant value.

Therefore, we have the following identity of $Eq(\mathcal{C},\{c_{\mathfrak{l}}\}_{\mathfrak{l}})$
\begin{equation}
    Eq(\mathcal{C},\{c_{\mathfrak{l}}\}_{\mathfrak{l}})=\left\{(k_{\mathfrak{e}_1},k_{\mathfrak{e}_{2}}):\ k_{\mathfrak{e}_1}\in Eq(\mathcal{C}_1,\{c_{\mathfrak{l}_1}\}),\  k_{\mathfrak{e}_{2}}\in Eq\left(\mathcal{C}_{2}, \{c_{\mathfrak{l}_2}\}, \left\{k_{\mathfrak{e}_{i}^{(1)}}\right\}_{i}\right)\right\}.
\end{equation}
which proves \eqref{eq.Eq(C)cutting}.

We can also find the relation between $\#Eq(\mathcal{C}_1)$, $\#Eq(\mathcal{C}_2)$ and $\#Eq(\mathcal{C})$. Applying \eqref{eq.Eq(C)cutting},
\begin{equation}
\begin{split}
    \#Eq(\mathcal{C},\{c_{\mathfrak{l}}\}_{\mathfrak{l}})=&\sum_{(k_{\mathfrak{e}_1},k_{\mathfrak{e}_{2}})\in \#Eq(\mathcal{C},\{c_{\mathfrak{l}}\}_{\mathfrak{l}})} 1
    \\
    =&\sum_{\left\{(k_{\mathfrak{e}_1},k_{\mathfrak{e}_{2}}):\ k_{\mathfrak{e}_1}\in Eq(\mathcal{C}_1,\{c_{\mathfrak{l}_1}\}),\  k_{\mathfrak{e}_{2}}\in Eq\left(\mathcal{C}_{2}, \{c_{\mathfrak{l}_2}\}, \left\{k_{\mathfrak{e}_{i}^{(1)}}\right\}_{i}\right)\right\}} 1
    \\
    =&\sum_{k_{\mathfrak{e}_1}\in Eq(\mathcal{C}_1,\{c_{\mathfrak{l}_1}\})} \sum_{k_{\mathfrak{e}_{2}}\in Eq\left(\mathcal{C}_{2}, \{c_{\mathfrak{l}_2}\}, \left\{k_{\mathfrak{e}_{i}^{(1)}}\right\}_{i}\right)} 1
    \\
    =&\sum_{k_{\mathfrak{e}_1}\in Eq(\mathcal{C}_1,\{c_{\mathfrak{l}_1}\})} \# Eq\left(\mathcal{C}_{2}, \{c_{\mathfrak{l}_2}\}, \left\{k_{\mathfrak{e}_{i}^{(1)}}\right\}_{i}\right)
\end{split}
\end{equation}
Take $\sup$ in the above equation
\begin{equation}
\begin{split}
    &\sup_{\{c_{\mathfrak{l}}\}_{\mathfrak{l}}}\#Eq(\mathcal{C},\{c_{\mathfrak{l}}\}_{\mathfrak{l}})
    =\sup_{\{c_{\mathfrak{l}}\}_{\mathfrak{l}}}\sum_{k_{\mathfrak{e}_1}\in Eq(\mathcal{C}_1,\{c_{\mathfrak{l}_1}\})} \# Eq\left(\mathcal{C}_{2}, \{c_{\mathfrak{l}_2}\}, \left\{k_{\mathfrak{e}_{i}^{(1)}}\right\}_{i}\right)
    \\
    \le &\sup_{\{c_{\mathfrak{l}_1}\}_{\mathfrak{l}_1\in \text{leg}(\mathcal{C})\cap \text{leg}(\mathcal{C}_1)} }\sum_{k_{\mathfrak{e}_1}\in Eq(\mathcal{C}_1,\{c_{\mathfrak{l}_1}\})} \sup_{\{c_{\mathfrak{l}_2}\}_{\mathfrak{l}_2\in \text{leg}(\mathcal{C})\cap \text{leg}(\mathcal{C}_2)} }\# Eq\left(\mathcal{C}_{2}, \{c_{\mathfrak{l}_2}\}, \left\{k_{\mathfrak{e}_{i}^{(1)}}\right\}_{i}\right)
    \\
    \le &\sup_{\{c_{\mathfrak{l}_1}\}_{\mathfrak{l}_1\in \text{leg}(\mathcal{C}_1)} }\sum_{k_{\mathfrak{e}_1}\in Eq(\mathcal{C}_1,\{c_{\mathfrak{l}_1}\})} \sup_{\{c_{\mathfrak{l}_2}\}_{\mathfrak{l}_2\in \text{leg}(\mathcal{C}_2)} }\# Eq(\mathcal{C}_{2}, \{c_{\mathfrak{l}_2}\})
    \\
    = &\sup_{\{c_{\mathfrak{l}_1}\}_{\mathfrak{l}_1\in \text{leg}(\mathcal{C}_1)} } \# Eq(\mathcal{C}_1,\{c_{\mathfrak{l}_1}\}) \sup_{\{c_{\mathfrak{l}_2}\}_{\mathfrak{l}_2\in \text{leg}(\mathcal{C}_2)} }\# Eq(\mathcal{C}_{2}, \{c_{\mathfrak{l}_2}\})
\end{split}
\end{equation}

This proves \eqref{eq.Eq(C)cuttingcounting}.
\end{proof}

\textbf{Step 2.} In this step, we specify the cutting procedure. 

Notice that Proposition \ref{prop.countingind}, the multiple leg analog of Proposition \ref{prop.counting}, is only true for couples satisfying the "property P". When designing the cutting procedure, we must make sure that all couples generated during the execution of this procedure satisfy the "property P". The following proposition guarantees the existence of such procedure.

\begin{prop}\label{prop.cuttingalgorithm}
There exists a recursive algorithm that repeatedly decomposes $\mathcal{C}$ into smaller pieces and satisfies the following requirements. In the rest of this paper, we will call this algorithm "the cutting algorithm".

(1) The input of the $0$-th step of this algorithm is $\mathcal{C}(0)=\mathcal{C}$. The inputs of other steps are the outputs of previous steps of the algorithm itself.

(2) In step $k$, $\mathcal{C}(k)$ is decomposed into 2 or 3 connected components by cutting edges and all components with more than one node are outputted. For $\mathcal{C}(1)$, $\# Eq(\mathcal{C}(1))=\# Eq(\mathcal{C})$ and $\mathcal{C}_{\text{norm}}(1)=\mathcal{C}_{\text{norm}}$.

(3) One of the connected components in (2) contains exactly one node $\mathfrak{n}$ and one fixed normal leg $\mathfrak{l}$. We call this component $\mathcal{C}(k)_{
\mathfrak{l}}$. There are only two possibilities of $\mathcal{C}(k)_{\mathfrak{l}}$ as in Figure \ref{fig.2possibilities}. We label them by $\mathcal{C}_{I}$, $\mathcal{C}_{II}$.

\begin{figure}[H]
    \centering
    \scalebox{0.3}{
    \begin{tikzpicture}[level distance=80pt, sibling distance=100pt]
        \node[] at (0,0) (1) {} 
            child {node[fillcirc] (2) {} 
                child {node[fillstar] (3) {}}
                child {node[fillstar] (4) {}}
            };
        \draw[-{Stealth[length=5mm, width=3mm]}] (1) -- (2);
        \draw[-{Stealth[length=5mm, width=3mm]}] (2) -- (3);
        \draw[-{Stealth[length=5mm, width=3mm]}] (2) -- (4);
        
        \node[] at (12,0) (1) {} 
            child {node[fillcirc] (2) {} 
                child {node[fillstar] (3) {}}
                child {node[xshift = 5pt, yshift = -10pt] (4) {}}
            };
        \draw[-{Stealth[length=5mm, width=3mm]}] (1) -- (2);
        \draw[-{Stealth[length=5mm, width=3mm]}] (2) -- (3);
        \draw[-{Stealth[length=5mm, width=3mm]}] (2) -- (4);
    \end{tikzpicture}
    }
        \caption{Two possibilities of $\mathcal{C}_\mathfrak{l}$.}
        \label{fig.2possibilities}
    \end{figure}
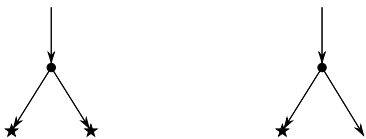
    
(4) The cutting algorithm satisfies the requirement that the all connected components in (2) generated in each step satisfy property P, where property P is defined below

\underline{Property P of a couple $\widetilde{\mathcal{C}}$}: $\widetilde{\mathcal{C}}$ is connected and contains exactly one free leg and at least one fixed normal leg.



\end{prop}

\begin{rem}
Although the definition of cut is rather general, only three special types of cuts, $c(\mathfrak{e})$, $c(\mathfrak{n})$ and $c(\mathfrak{l})$, are used in the cutting algorithm.
\end{rem}

\begin{rem}
The couple $\mathcal{C}$ does not satisfy the property P because it does not have any free leg.
\end{rem}

\begin{proof}[Proof of Proposition \ref{prop.cuttingalgorithm}.] Consider the following algorithm.

\medskip

\begin{mdframed}

\centerline{\textbf{The cutting algorithm}}

\medskip 

\ \ \ \textbf{Step 0.} The input $\mathcal{C}(0)$ of this step is $\mathcal{C}$. In this step, we replace one of the two fixed legs of $\mathcal{C}$ by a free leg to obtain a new couple $\widehat{\mathcal{C}}$. By Lemma \ref{lem.freeleg} (3), $\#Eq(\mathcal{C})=\#Eq(\widehat{\mathcal{C}})$. The output $\mathcal{C}(1)$ of this step is $\widehat{\mathcal{C}}$. An example of step 0 can be found in the following picture.
\begin{figure}[H]
    \centering
    \scalebox{0.4}{
    \begin{tikzpicture}[level distance=80pt, sibling distance=100pt]
        \node[] at (0,0) (1) {}; 
        \node[fillcirc] at (3,0) (2) {}; 
        \node[fillcirc] at (7,0) (3) {}; 
        \node[] at (10,0) (4) {}; 
        \draw[-{Stealth[length=5mm, width=3mm]}] (1) edge (2);
        \draw[-{Stealth[length=5mm, width=3mm]}, bend left =40] (2) edge (3);
        \draw[-{Stealth[length=5mm, width=3mm]}, bend right =40] (2) edge (3);
        \draw[-{Stealth[length=5mm, width=3mm]}] (3) edge (4);
        \node[scale=2.0] at (5.1,-2) {$\mathcal{C}(0)=\mathcal{C}$};
 
        \node[draw, single arrow,
              minimum height=33mm, minimum width=8mm,
              single arrow head extend=2mm,
              anchor=west, rotate=0] at (11,0) {};

        \node[] at (15,0) (11) {}; 
        \node[fillcirc] at (18,0) (12) {}; 
        \node[fillcirc] at (22,0) (13) {}; 
        \node[fillstar] at (25,0) (14) {}; 
        \draw[-{Stealth[length=5mm, width=3mm]}] (11) edge (12);
        \draw[-{Stealth[length=5mm, width=3mm]}, bend left =40] (12) edge (13);
        \draw[-{Stealth[length=5mm, width=3mm]}, bend right =40] (12) edge (13);
        \draw[-{Stealth[length=5mm, width=3mm]}] (13) edge (14);      
        \node[scale=2.0] at (20.1,-2) {$\mathcal{C}(1)=\widehat{\mathcal{C}}$};
    \end{tikzpicture}
    }
        \caption{An example of step $0$}
        \label{fig.step0}
    \end{figure}

\textbf{Step $k$.} Assume that the step $k-1$ have been finished. The input $\mathcal{C}(k)$ of step $k$ is the output of step $k-1$. (If there are two output couples from step $k-1$, apply step $k$ to these two couples separately.)

By property P, there exists a fixed normal leg in $\mathcal{C}(k)$. Choose one such leg $\mathfrak{l}$ and define $\mathcal{C}(k)_{\mathfrak{l}}$ to be the component which contains $\mathfrak{l}$ after cutting $c(\mathfrak{l})$ and define $\mathcal{C}(k)'=\mathcal{C}(k)\backslash \mathcal{C}(k)_{\mathfrak{l}}$. Check how many components does $\mathcal{C}(k)'$ have. Jump to case 1 if number of components equals to 1, otherwise jump to case 2. Examples of case 1 and case 2 can be found in the following picture.
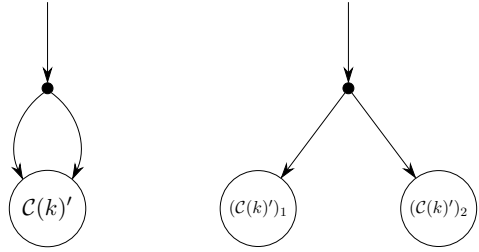
\begin{figure}[H]
    \centering
    \scalebox{0.4}{
    \begin{tikzpicture}[level distance=80pt, sibling distance=100pt]
        \node[] at (0,0) (1) {};
        \node[fillcirc] at (0, -3) (2) {};
        \node[draw, circle, minimum size=1cm, scale=2] at (0,-7) (3) {$\mathcal{C}(k)'$}; 
        \draw[-{Stealth[length=5mm, width=3mm]}] (1) edge (2);
        \draw[-{Stealth[length=5mm, width=3mm]}, bend left =40] (2) edge (3);
        \draw[-{Stealth[length=5mm, width=3mm]}, bend right =40] (2) edge (3);
        
        \node[] at (10,0) (1) {};
        \node[fillcirc] at (10, -3) (2) {};
        \node[draw, circle, minimum size=1cm, scale=1.5] at (7,-7) (3) {$(\mathcal{C}(k)')_1$};
        \node[draw, circle, minimum size=1cm, scale=1.5] at (13,-7) (4) {$(\mathcal{C}(k)')_2$};
        \draw[-{Stealth[length=5mm, width=3mm]}] (1) edge (2);
        \draw[-{Stealth[length=5mm, width=3mm]}] (2) edge (3);
        \draw[-{Stealth[length=5mm, width=3mm]}] (2) edge (4);
    \end{tikzpicture}
    }
        \caption{Examples of case 1 and case 2 (Left is case 1 and right is case two. $(\mathcal{C}(k)')_1$ and $(\mathcal{C}(k)')_2$ are two components of $\mathcal{C}(k)'$)}
        \label{fig.step1case}
    \end{figure}

\textbf{Case 1 of step $k$.} In this case $\mathcal{C}(k)'$ has one components. We rename  $\mathcal{C}(k)'$ to $\mathcal{C}(k)_1$. By property P, there exists a unique free leg $\mathfrak{l}_{fr}$ in $\mathcal{C}(k)$. Check if $\mathfrak{l}_{fr}$ and $\mathfrak{l}$ are connected to the same node. If yes, jump to case 1.1, otherwise jump to case 1.2.

\textbf{Case 1.1.} Cut edges in $c(\mathfrak{l})$ into $\{\mathfrak{e}_{i}^{(1)}\}_{i=1,2}$ and $\{\mathfrak{e}_{i}^{(2)}\}_{i=1,2}$, then $\mathcal{C}(k)$ is decomposed into $\mathcal{C}(k)_{\mathfrak{l}}$, $\mathcal{C}(k)_1=\mathcal{C}\backslash \mathcal{C}_{\mathfrak{l}}$. As in Lemma \ref{lem.Eq(C)cutting}, define $\{\mathfrak{e}_{i}^{(1)}\}\subseteq \mathcal{C}(k)_{\mathfrak{l}}$ to be free legs and $\{\mathfrak{e}_{i}^{(2)}\}\subseteq \mathcal{C}(k)_1$ to be fixed legs. 

If $\mathcal{C}(k)_1$ satisfies the property P, define $\mathcal{C}(k+1)=\mathcal{C}(k)_1$ to be the output of step $k$ and apply step $k+1$ to $\mathcal{C}(k+1)$. 

Otherwise by Lemma \ref{lem.normleg} (2) there exist exactly one free normal leg and at least one fixed leg. By Lemma \ref{lem.freeleg}, we can define a new couple $\widehat{\mathcal{C}}$ such that the free normal leg becomes fixed and the fixed leg becomes free.  Finally, define $\mathcal{C}(k+1)=\widehat{\mathcal{C}}$ to be the output. 

Examples of cutting in case 1.1 can be found in the following picture.
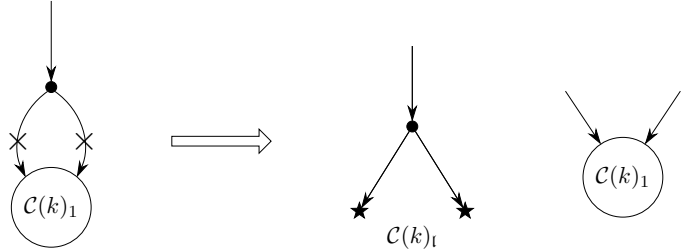
\begin{figure}[H]
    \centering
    \scalebox{0.4}{
    \begin{tikzpicture}[level distance=80pt, sibling distance=100pt]
        \node[] at (0,0) (1) {};
        \node[fillcirc] at (0, -3) (2) {};
        \node[draw, circle, minimum size=1cm, scale=2] at (0,-7) (3) {$\mathcal{C}(k)_1$}; 
        \draw[-{Stealth[length=5mm, width=3mm]}] (1) edge (2);
        \draw[-{Stealth[length=5mm, width=3mm]}, bend left =40] (2) edge (3);
        \draw[-{Stealth[length=5mm, width=3mm]}, bend right =40] (2) edge (3);
        \node[scale =3] at (-1.1,-4.8) {$\times$};
        \node[scale =3] at (1.1,-4.8) {$\times$};
        
        \node[draw, single arrow,
              minimum height=33mm, minimum width=8mm,
              single arrow head extend=2mm,
              anchor=west, rotate=0] at (4,-4.8) {};  
        
        \node[] at (12,-1.5) (11) {} 
            child {node[fillcirc] (12) {} 
                child {node[fillstar] (13) {}}
                child {node[fillstar] (14) {}}
            };
        \draw[-{Stealth[length=5mm, width=3mm]}] (11) -- (12);
        \draw[-{Stealth[length=5mm, width=3mm]}] (12) -- (13);
        \draw[-{Stealth[length=5mm, width=3mm]}] (12) -- (14);
        \node[scale =2] at (12,-8) {$\mathcal{C}(k)_{\mathfrak{l}}$};
        
        \node[] at (17,-3) (21) {};
        \node[] at (21,-3) (22) {};
        \node[draw, circle, minimum size=1cm, scale=2] at (19,-6) (23) {$\mathcal{C}(k)_1$}; 
        \draw[-{Stealth[length=5mm, width=3mm]}] (21) edge (23);
        \draw[-{Stealth[length=5mm, width=3mm]}] (22) edge (23);
    \end{tikzpicture}
    }
        \caption{Examples of cutting in case 1.1}
        \label{fig.step1case1.1}
    \end{figure}

\textbf{Case 1.2.} In this case, $\mathfrak{l}_{fr}$ and $\mathfrak{l}$ are connected to the same node $\mathfrak{n}$. Let $\mathfrak{e}$ be the edge connecting $\mathfrak{n}$ and another interior node. Cut $\mathfrak{e}$ into $\{\mathfrak{e}^{(1)},\mathfrak{e}^{(2)}\}$ and then $\mathcal{C}(k)$ is decomposed into $\mathcal{C}(k)_{\mathfrak{l}}$, $\mathcal{C}(k)_1=\mathcal{C}(k)\backslash \mathcal{C}(k)_{\mathfrak{l}}$. Define $\mathfrak{e}^{(1)}\in \mathcal{C}(k)_{\mathfrak{l}}$ to be fixed legs and $\mathfrak{e}^{(2)}\in \mathcal{C}(k)_1$ to be free legs. 

If $\mathcal{C}(k)_1$ satisfies the property P, define $\mathcal{C}(k+1)=\mathcal{C}(k)_1$.

Otherwise by Lemma \ref{lem.normleg} (2), 
$\mathfrak{e}^{(2)}$ is the only normal legs. Using Lemma \ref{lem.freeleg} (2), we may construct a new couple $\mathcal{C}(k+1)$ by assigning $\mathfrak{e}^{(2)}$ to be fixed and another leg to be free. 
Finally define $\mathcal{C}(k+1)$ to be the output of step $k$ and apply step $k+1$ to $\mathcal{C}(k+1)$. 

Examples of cutting in case 1.2 can be found in the following picture.
\begin{figure}[H]
    \centering
    \scalebox{0.4}{
    \begin{tikzpicture}[level distance=80pt, sibling distance=100pt]
        \node[] at (-2,0) (0) {};
        \node[fillstar] at (1.8,-0.2) (1) {};
        \node[fillcirc] at (0, -3) (2) {};
        \node[draw, circle, minimum size=1cm, scale=2] at (0,-7) (3) {$\mathcal{C}(k)_1$}; 
        \draw[-{Stealth[length=5mm, width=3mm]}] (0) edge (2);
        \draw[-{Stealth[length=5mm, width=3mm]}] (1) edge (2);
        \draw[-{Stealth[length=5mm, width=3mm]}] (2) edge (3);
        \node[scale =3] at (0,-4.4) {$\times$};
        
        \node[draw, single arrow,
              minimum height=33mm, minimum width=8mm,
              single arrow head extend=2mm,
              anchor=west, rotate=0] at (4,-4.8) {};  
        
        \node at (12,-7) (11) {} [grow =90]
            child {node[fillcirc] (12) {} 
                child {node[fillstar, xshift = -0.2cm, yshift = -0.2cm] (13) {}}
                child {node[] (14) {}}
            };
        \draw[{Stealth[length=5mm, width=3mm]}-] (11) -- (12);
        \draw[{Stealth[length=5mm, width=3mm]}-] (12) -- (13);
        \draw[{Stealth[length=5mm, width=3mm]}-] (12) -- (14);
        \node[scale =2] at (12,-8) {$\mathcal{C}(k)_{\mathfrak{l}}$};
        
        \node[fillstar] at (19,-2) (22) {};
        \node[draw, circle, minimum size=1cm, scale=2] at (19,-6) (23) {$\mathcal{C}(k)_1$}; 
        \draw[-{Stealth[length=5mm, width=3mm]}] (22) edge (23);
    \end{tikzpicture}
    }
        \caption{Examples of cutting in case 1.2}
        \label{fig.step1case1.2}
    \end{figure}
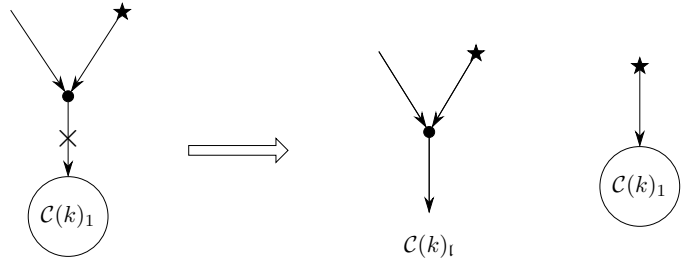

\textbf{Case 2 of step $k$.} Let the two connected components of $\mathcal{C}(k)\backslash \mathcal{C}(k)_{\mathfrak{l}}$ be $\mathcal{C}(k)_2$ and $\mathcal{C}(k)_3$. Let $\mathfrak{e}_{2}$, $\mathfrak{e}_{3}$ be the two edges that connect $\mathfrak{l}$ and  $\mathcal{C}(k)_2$, $\mathcal{C}(k)_3$ respectively. Cut
$\mathfrak{e}_{2}$, $\mathfrak{e}_{3}$ into $\{\mathfrak{e}_{2}^{(1)},\mathfrak{e}_{3}^{(1)}\}\subseteq \mathcal{C}(k)_{\mathfrak{l}}$ and $\mathfrak{e}_{2}^{(2)}\in \mathcal{C}(k)_2$, $\mathfrak{e}_{3}^{(2)}\in \mathcal{C}(k)_3$. 
By Lemma \ref{lem.normleg}, $\mathcal{C}(k)_2$, $\mathcal{C}(k)_3$ contain at least one normal leg and two legs. By symmetry, we can just consider $\mathcal{C}(k)_2$. 

If $\mathcal{C}(k)_2$ contains free legs, define $\mathfrak{e}_{2}^{(2)}\in \mathcal{C}(k)_2$ to be fixed, otherwise define $\mathfrak{e}_{2}^{(2)}$ to be free. 

In the case that $\mathcal{C}(k)_2$ contains free legs, define the output $\mathcal{C}(k+1)$ to be $\mathcal{C}(k)_2$ or $\mathcal{C}(k)_3$ and apply step $k+1$ to them separately. 

In the case that $\mathcal{C}(k)_2$ contains no free legs, $\mathfrak{e}_{2}^{(2)}$ is the only normal legs and it is defined to be free. Use Lemma \ref{lem.freeleg} (2) to construct a new couple $\widehat{\mathcal{C}}_2$ by assigning $\mathfrak{e}_2^{(2)}$ to be fixed and another leg to be free. Then define $\mathcal{C}(k+1)$ to be $\widehat{\mathcal{C}}_2$ or $\widehat{\mathcal{C}}_3$ and apply step $k+1$ to them separately. 

Examples of cutting in case 2 can be found in the following picture.
    \begin{figure}[H]
    \centering
    \scalebox{0.4}{
    \begin{tikzpicture}[level distance=80pt, sibling distance=100pt]
    
    \node[] at (0,0) (1) {};
        \node[fillcirc] at (0, -3) (2) {};
        \node[draw, circle, minimum size=1cm, scale=2] at (-3,-7) (3) {$\mathcal{C}(k)_2$};
        \node[draw, circle, minimum size=1cm, scale=2] at (3,-7) (4) {$\mathcal{C}(k)_3$};
        \draw[-{Stealth[length=5mm, width=3mm]}] (1) edge (2);
        \draw[-{Stealth[length=5mm, width=3mm]}] (2) edge (3);
        \draw[-{Stealth[length=5mm, width=3mm]}] (2) edge (4);
        \node[scale =3, rotate = 44] at (-1.18,-4.5) {$\times$};
        \node[scale =3, rotate = 44] at (1.18,-4.5) {$\times$};
        
        \node[draw, single arrow,
              minimum height=33mm, minimum width=8mm,
              single arrow head extend=2mm,
              anchor=west, rotate=0] at (5,-4.5) {};  
        
        \node[] at (12,-1.5) (11) {} 
            child {node[fillcirc] (12) {} 
                child {node[fillstar] (13) {}}
                child {node[fillstar] (14) {}}
            };
        \draw[-{Stealth[length=5mm, width=3mm]}] (11) -- (12);
        \draw[-{Stealth[length=5mm, width=3mm]}] (12) -- (13);
        \draw[-{Stealth[length=5mm, width=3mm]}] (12) -- (14);
        \node[scale =2] at (12,-8) {$\mathcal{C}(k)_{\mathfrak{l}}$};
        
        \node[] at (18,-1.5) (11) {}; 
        \node[draw, circle, minimum size=1cm, scale=2] at (18,-6.5) (12) {$\mathcal{C}(k)_2$}; 
        \node[] at (23,-1.5) (13) {}; 
        \node[draw, circle, minimum size=1cm, scale=2] at (23,-6.5) (14) {$\mathcal{C}(k)_3$};
        \draw[-{Stealth[length=5mm, width=3mm]}] (11) -- (12);
        \draw[-{Stealth[length=5mm, width=3mm]}] (13) -- (14);
    \end{tikzpicture}
    }
        \caption{Examples of cutting in case 2}
        \label{fig.step1case2}
    \end{figure}
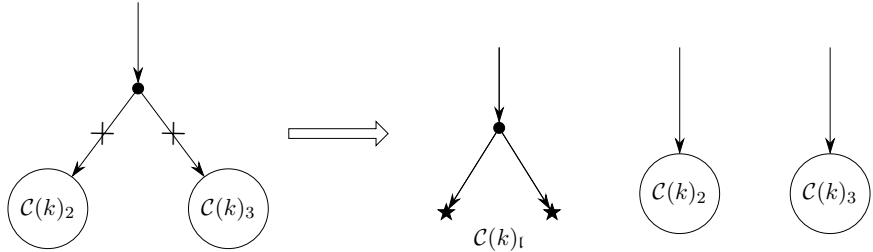
\end{mdframed}

(1), (3) are true by definition. (2) is true because
$\#Eq(\mathcal{C})=\#Eq(\widehat{\mathcal{C}})$ as explained in step $0$ of the algorithm. Since in step $0$, we just replace a fixed leg by a free leg, $\mathcal{C}_{\text{norm}}$ should not change, so we get $\mathcal{C}_{\text{norm}}(1)=\mathcal{C}_{\text{norm}}$. 

The only non-trivial part is (4), which is a corollary of Lemma \ref{lem.normleg} below.

Therefore, we complete the proof.
\end{proof}

\begin{lem}\label{lem.freeleg} 
Given a connected couple $\mathcal{C}$ with multiple legs, then we have the following conclusions.

(1) Let $\{k_{\mathfrak{l}_i}\}_{i=1,\cdots,n_{\text{leg}}}$ be the variables corresponding to legs in couple $\mathcal{C}$. Let $\iota_{\mathfrak{e}}$ be the same as \eqref{eq.iotadef}, then $Eq(\mathcal{C})$ implies the following momentum conservation equation
\begin{equation}\label{eq.momentumconservation}
    \sum_{i=1}^{n_{\text{leg}}} \iota_{\mathfrak{l}_i}k_{\mathfrak{l}_i}=0,
\end{equation}

(2) Assume that there is exactly one free leg $\mathfrak{l}_{i_0}$ in $\mathcal{C}$ and all other variables $\{k_{\mathfrak{l}_{i}}\}_{i\ne i_0}$ corresponding to fix legs are fixed to be constants $\{c_{\mathfrak{l}_{i}}\}_{i\ne i_0}$. For any $i_{1}=1,\cdots,n_{\text{leg}}$, we can construct a new couple $\widehat{\mathcal{C}}$ by replacing the $i_0$ leg by a fix leg and $i_1$ leg by a free leg. If $i\ne i_0, i_1$, fix $k_{\mathfrak{l}_{i}}$ to be the constant $c_{\mathfrak{l}_{i}}$, if $i=i_0$, fix $k_{\mathfrak{l}_{i_0}}$ to be the constant $-\iota_{\mathfrak{l}_{i_0}}\sum_{i\ne i_0} \iota_{\mathfrak{l}_i}k_{\mathfrak{l}_i}$. Under the above assumptions, we have
\begin{equation}
    Eq(\mathcal{C}, \{c_{\mathfrak{l}_{i}}\}_{i\ne i_0})=Eq\left(\widehat{\mathcal{C}}, \{c_{\mathfrak{l}_{i}}\}_{i\ne i_0, i_1}\cup \{-\iota_{\mathfrak{l}_{i_0}}\sum_{i\ne i_0} \iota_{\mathfrak{l}_i}k_{\mathfrak{l}_i}\}\right).
\end{equation}

(3) Assume that there is no free leg in $\mathcal{C}$ and all $\{k_{\mathfrak{l}_{i}}\}_{i\ne i_0}$ are fixed to be constants $\{c_{\mathfrak{l}_{i}}\}_{i\ne i_0}$. For any $i_{1}=1,\cdots,n_{\text{leg}}$, we can construct a new couple $\widehat{\mathcal{C}}$ by replacing the $i_0$ leg by a free leg. Then we have
\begin{equation}
    Eq(\mathcal{C}, \{c_{\mathfrak{l}_{i}}\}_{i})=Eq(\widehat{\mathcal{C}}, \{c_{\mathfrak{l}_{i}}\}_{i\ne i_0}).
\end{equation}

(4) If the couple $\mathcal{C}$ contains any leg, then it contains at least two legs.

\end{lem}
\begin{proof}
We first prove (1). Given a node $\mathfrak{n}$ and an edge $\mathfrak{e}$ connected to it, we define $\iota_{\mathfrak{e}}(\mathfrak{n})$ by the following rule
\begin{equation}
    \iota_{\mathfrak{e}}(\mathfrak{n})=\begin{cases}
        +1 \qquad \textit{if $\mathfrak{e}$ pointing towards $\mathfrak{n}$}
        \\
        -1 \qquad  \textit{if $\mathfrak{e}$ pointing outwards from $\mathfrak{n}$}
    \end{cases}
\end{equation}
For a leg $\mathfrak{l}$, since it is connected to just one node, we may omit the $(\mathfrak{n})$ and just write $\iota_{\mathfrak{l}}$ as in the statement of the lemma.

For each node $\mathfrak{n}$, let $\mathfrak{e}_1(\mathfrak{n})$, $\mathfrak{e}_2(\mathfrak{n})$, $\mathfrak{e}(\mathfrak{n})$ be the three edges connected to it. For each edge $\mathfrak{e}$, let $\mathfrak{n}_1(\mathfrak{e})$, $\mathfrak{n}_2(\mathfrak{e})$ be the two nodes connected to it. Then we know that $\iota_{\mathfrak{e}}(\mathfrak{n}_1(\mathfrak{e}))+\iota_{\mathfrak{e}}(\mathfrak{n}_2(\mathfrak{e}))$, since $\mathfrak{n}_1(\mathfrak{e})$ and $\mathfrak{n}_2(\mathfrak{e})$ have the opposite direction. 

Since $k_{\mathfrak{e}}$ satisfy $Eq(\mathcal{C})$, by \eqref{eq.momentumconservationunit}, we get $\iota_{\mathfrak{e}_1(\mathfrak{n})}(\mathfrak{n})k_{\mathfrak{e}_1(\mathfrak{n})}+\iota_{\mathfrak{e}_2(\mathfrak{n})}(\mathfrak{n})k_{\mathfrak{e}_2(\mathfrak{n})}+\iota_{\mathfrak{e}(\mathfrak{n})}(\mathfrak{n})k_{\mathfrak{e}(\mathfrak{n})}=0$. Summing over $\mathfrak{n}$ gives 
\begin{equation}
\begin{split}
    0=&\sum_{\mathfrak{n}\in \mathcal{C}}\iota_{\mathfrak{e}_1(\mathfrak{n})}(\mathfrak{n})k_{\mathfrak{e}_1(\mathfrak{n})}+\iota_{\mathfrak{e}_2(\mathfrak{n})}(\mathfrak{n})k_{\mathfrak{e}_2(\mathfrak{n})}+\iota_{\mathfrak{e}(\mathfrak{n})}(\mathfrak{n})k_{\mathfrak{e}(\mathfrak{n})}
    \\
    =& \sum_{\mathfrak{e}\text{ is not a leg}} 
    (\iota_{\mathfrak{e}}(\mathfrak{n}_1(\mathfrak{e}))+\iota_{\mathfrak{e}}(\mathfrak{n}_2(\mathfrak{e}))) k_{\mathfrak{e}}+ \sum_{\mathfrak{l}\text{ is a leg}} 
    \iota_{\mathfrak{l}} k_{\mathfrak{l}}
    \\
    =& \sum_{i=1}^{n_{\text{leg}}} \iota_{\mathfrak{l}_i}k_{\mathfrak{l}_i}
\end{split}
\end{equation}
This proves \eqref{eq.momentumconservation} and thus proves (1).

Now we prove (2). Since in $Eq\left(\widehat{\mathcal{C}}, \{c_{\mathfrak{l}_{i}}\}_{i\ne i_0, i_1}\cup \{-\iota_{\mathfrak{l}_{i_0}}\sum_{i\ne i_0} \iota_{\mathfrak{l}_i}k_{\mathfrak{l}_i}\}\right)$, $\{k_{\mathfrak{l}_{i}}\}_{i\ne i_0, i_1}$ are fixed to be constants $\{c_{\mathfrak{l}_{i}}\}_{i\ne i_0, i_1}$ and $k_{\mathfrak{l}_{i_0}}$ is fixed to be the constant $-\iota_{\mathfrak{l}_{i_0}}\sum_{i\ne i_0} \iota_{\mathfrak{l}_i}k_{\mathfrak{l}_i}$, by \eqref{eq.momentumconservation}, we know that 
\begin{equation}
    \iota_{\mathfrak{l}_{i_0}}\left(-\iota_{\mathfrak{l}_{i_0}}\sum_{i\ne i_0} \iota_{\mathfrak{l}_i}k_{\mathfrak{l}_i}\right)+ \iota_{\mathfrak{l}_{i_1}}k_{\mathfrak{l}_{i_1}}+\sum_{i\ne i_0, i_1} \iota_{\mathfrak{l}_i}c_{\mathfrak{l}_i}=0.
\end{equation}

This implies that $k_{\mathfrak{l}_{i_1}}=c_{\mathfrak{l}_{i_1}}$ in $Eq(\widehat{\mathcal{C}})$. Therefore, equations in $Eq(\widehat{\mathcal{C}})$ automatically imply $k_{\mathfrak{l}_{i_1}}=c_{\mathfrak{l}_{i_1}}$. Notice that whether or not containing $k_{\mathfrak{l}_{i_1}}=c_{\mathfrak{l}_{i_1}}$ is the only difference between $Eq(\mathcal{C})$ and $Eq(\widehat{\mathcal{C}})$. We conclude that $Eq(\mathcal{C})=Eq(\widehat{\mathcal{C}})$. We thus complete the proof of (2).

The proof of (3) is similar to (2). Whether or not containing $k_{\mathfrak{l}_{i_0}}=c_{\mathfrak{l}_{i_0}}$ is the only difference between $Eq(\mathcal{C})$ and $Eq(\widehat{\mathcal{C}})$. But if $\{k_{\mathfrak{l}_{i}}\}_{i\ne i_0}$ are fixed to be constants $\{c_{\mathfrak{l}_{i}}\}_{i\ne i_0}$, by momentum conservation we know that 
\begin{equation}
     k_{\mathfrak{l}_{i_0}}=-\iota_{\mathfrak{l}_{i_0}}\sum_{i\ne i_0} \iota_{\mathfrak{l}_i}c_{\mathfrak{l}_i}.
\end{equation}
Therefore, $k_{\mathfrak{l}_{i_0}}$ is fixed to be the constant $-\iota_{\mathfrak{l}_{i_0}}\sum_{i\ne i_0} \iota_{\mathfrak{l}_i}c_{\mathfrak{l}_i}$ in $Eq(\widehat{\mathcal{C}})$ and we conclude that $Eq(\mathcal{C})=Eq(\widehat{\mathcal{C}})$. We thus complete the proof of (3).

If (4) is wrong, then $\mathcal{C}$ just has one leg $\mathfrak{l}$. By \eqref{eq.momentumconservation}, $k_{\mathfrak{l}}=0$. This contradicts with $k_{\mathfrak{l},x}\ne 0$ in \eqref{eq.kappa}.
\end{proof}






\begin{lem}\label{lem.normleg} 
(1) The output $\mathcal{C}(k+1)$ of step $k$ of the cutting algorithm satisfies the property P.

(2) All the intermediate results $\mathcal{C}(k)_1$, $\mathcal{C}(k)_2$, $\mathcal{C}(k)_3$ satisfy the weak property P: either they satisfy the property P or they contain exactly one free normal leg and at least one fixed leg. Notice that the weak property implies that these couples contain at least one normal legs and two legs.
\end{lem}
\begin{proof}
We prove the following stronger result by induction.

\medskip

\textit{Claim.} Assume that the couple $\mathcal{C}=\mathcal{C}(T,T,p)$ is the input the cutting algorithm. Then for any $k$, there exist a finite number of trees disjoint subtrees $T^{(k)}_1$, $T^{(k)}_2$, $\cdots$, $T^{(k)}_{m^{(k)}}$ of the two copies of $T$ which satisfy the following property.

(1) Let $\text{leaf}(k)$ be the set of all leaves of these subtrees. Assume that $p$ induce a pairing $p|_{\text{leaf}_1(k)}$ of a subset $\text{leaf}_1(k)\subseteq\text{leaf}(k)$. Apply a similar construction to Definition \ref{def.conple} we can construct a couple from $T^{(k)}_1$, $T^{(k)}_2$, $\cdots$, $T^{(k)}_{m^{(k)}}$ from $p$ and this couple equals exactly to $\mathcal{C}(k)$.

(2) The root legs of $T^{(k)}_1$, $T^{(k)}_2$, $\cdots$, $T^{(k)}_{m^{(k)}}$ are exactly the normal legs of $\mathcal{C}(k)$. The edges in $\text{leaf}_2(k)=\text{leaf}(k)\backslash \text{leaf}_1(k)$ are exactly the leaf legs (legs that are leaf edges) of $\mathcal{C}(k)$.

(3) $\mathcal{C}(k)$ satisfies the property P.

(4) All the intermediate results  $\mathcal{C}(k)_1$, $\mathcal{C}(k)_2$, $\mathcal{C}(k)_3$ satisfy the weak property P.

\medskip

We show that $\mathcal{C}(0)$ satisfies (1) -- (4) of the above claim.

Let $\mathcal{C}$ be the input couple of step $0$ obtained by pairing two copies of $T$. In step $0$, $\mathcal{C}(0)$ is obtained by replacing a fixed leg by a free leg. Therefore, if we define $T^{(0)}_{1}=T$ and $T^{(0)}_{2}$ to be the tree obtained by replacing the fixed root leg in $T$ by a free leg, then $\mathcal{C}(0)$ is the couple obtained by pairing $T^{(0)}_{1}$ and $T^{(0)}_{2}$. Here the pairing is $p$ and $\text{leaf}_1(0)=\text{leaf}(0)$ and $\text{leaf}_2(0)=\emptyset$. Therefore, (1) is true for $\mathcal{C}(0)$.

The two fixed legs in $\mathcal{C}$ are all normal edges, because they come from the two root legs which belong to $T_{\text{in}}$. Therefore, the two legs in $\mathcal{C}(1)$ are also normal. Since the two legs in $\mathcal{C}(1)$ come from the root legs in $T^{(0)}_{1}$ and $T^{(0)}_{2}$, (2) is also true.

Since in step $0$, we replace a fixed leg by a free leg, $\mathcal{C}(0)$ contains exactly one free and one fixed leg which are all normal. Therefore, the output $\mathcal{C}(0)$ of step $0$ satisfies property P and (3) is proved.

In step $0$, (4) does not need any proof.

\medskip

Assume that $\mathcal{C}(k)$ satisfies (1) -- (4) of the above claim, then we prove the same for $\mathcal{C}(k+1)$.

Remember that in step $k$ of the algorithm, the input is $\mathcal{C}(k)$. The induction assumption implies that $\mathcal{C}(k)$ is obtained by pairing $T^{(k)}_1$, $T^{(k)}_2$, $\cdots$, $T^{(k)}_{m^{(k)}}$. There are several different cases in step $k$ and we treat them separately.

\underline{In case 1.1 of the cutting algorithm}, we cut $c(\mathfrak{l})$ into $\{\mathfrak{e}_{i}^{(1)}\}_{i=1,2}$ and $\{\mathfrak{e}_{i}^{(2)}\}_{i=1,2}$. By (2) $\mathfrak{l}$ is the root leg of some subtree $T^{(k)}_{j_0}$. Assume that in $T^{(k)}_{j_0}$ the two subtrees of the root are $T^{(k)}_{j_0,1}$, $T^{(k)}_{j_0,2}$. After cutting $c(\mathfrak{l})$, $T^{(k)}_{j_0}$ becomes two trees $T^{(k)}_{j_0,1}$, $T^{(k)}_{j_0,2}$ and $T^{(k)}_{j}$ $(j\ne j_0)$ do not change. We treat three different case separately.

\underline{Case 1.1 (i).} (Both $T^{(k)}_{j_0,1}$ and $T^{(k)}_{j_0,2}$ are one node trees.) In this case all edges in $\{\mathfrak{e}_{1}^{(1)}, \mathfrak{e}_{2}^{(1)}, \mathfrak{e}_{1}^{(2)}, \mathfrak{e}_{2}^{(2)}\}$ are leaf edges of some trees in $\{T_{j}^{(k)}\}$. Define $T^{(k+1)}_{j}=T^{(k)}_{j}$ for $j< j_0$ and $T^{(k+1)}_{j}=T^{(k)}_{j+1}$ for $j> j_0$, then $\mathcal{C}(k)_1$ can be constructed from these trees with $\text{leaf}_1(k+1)=\text{leaf}_1(k)\backslash\{\mathfrak{e}_{1}^{(1)}, \mathfrak{e}_{2}^{(1)}, \mathfrak{e}_{1}^{(2)}, \mathfrak{e}_{2}^{(2)}\}$. Remember that in the case 1.1 of the algorithm, the final result $\mathcal{C}(k+1)$ is obtained by replace (or not) some free normal leg by fixed normal leg in $\mathcal{C}(k)_1$. Therefore, if we change some free root leg of $\{T^{(k)}_{j}\}$ to be fixed, then $\mathcal{C}(k+1)$ can also be obtained from pairing these trees. Therefore, (1) is true for $\mathcal{C}(k+1)$ with $\text{leaf}_1(k+1)=\text{leaf}_1(k)\backslash\{\mathfrak{e}_{1}^{(1)}, \mathfrak{e}_{2}^{(1)}, \mathfrak{e}_{1}^{(2)}, \mathfrak{e}_{2}^{(2)}\}$.

Since both the sets of root legs and normal legs are deleted by one element $\mathfrak{l}$ and in step $k-1$ they are the same, they continue to be the same in step $k$. In case 1.1 (i), $\text{leaf}_2(k+1)=\text{leaf}_2(k)\cup \{\mathfrak{e}_{i}^{(2)}\}_{i=1,2}$ and the set of leaf legs in $\mathcal{C}(k)$ also changes in this way, so they continue to be the same in step $k$. Therefore, (2) is true.

We prove (4) by contradiction. The weak property P is equivalent to that $\mathcal{C}(k)_i$ contains exactly one free leg and at least one normal leg. In case 1.1 (i), $i=1$, assume that the weak property P is not true, then either $\mathcal{C}(k)_1$ does not any contain normal leg or the number of free leg is not 1. If $\mathcal{C}(k)_1\ne \emptyset$, then the number of $\{T^{(k)}_{j}\}$ is not zero. Because the root of $\{T^{(k)}_{j}\}$ are normal legs, there exists at least one normal leg in $\mathcal{C}(k)_1$. By our hypothesis that weak property P is wrong, the number of free leg is not 1. Since $\mathcal{C}(k)_1$ is obtained from $\mathcal{C}(k)$ by cutting a fixed leg, the number of free legs in $\mathcal{C}(k)_1$ equals to  $\mathcal{C}(k)$ which is one. Therefore, we find a contradiction.

Notice that in the case 1.1 of the algorithm we change the free normal leg in $\mathcal{C}(k)_1$ to be fixed if $\mathcal{C}(k)_1$ does not satisfy the property P. The result $\mathcal{C}(k+1)$ must satisfy the property P, so we proves (3). 

\underline{Case 1.1 (ii).} (One of $T^{(k)}_{j_0,1}$ or $T^{(k)}_{j_0,2}$ is an one node tree.) Without loss of generality assume that $T^{(k)}_{j_0,2}$ is an one node tree. In this case, $\mathfrak{e}_{2}^{(1)}$, $\mathfrak{e}_{2}^{(2)}$ are leaf edges of some trees in $\{T_{j}^{(k)}\}$. Define $T^{(k+1)}_{j}=T^{(k)}_{j}$ for $j\ne j_0$ and $T^{(k+1)}_{j_0}=T^{(k)}_{j_0,1}$, then $\mathcal{C}(k)_1$ can be constructed from these trees with  $\text{leaf}_1(k+1)=\text{leaf}_1(k)\backslash\{\mathfrak{e}_{2}^{(1)}, \mathfrak{e}_{2}^{(2)}\}$. By the same reason as in case 1.1 (i), (1) is true for $\mathcal{C}(k+1)$ with $\text{leaf}_1(k+1)=\text{leaf}_1(k)\backslash\{\mathfrak{e}_{2}^{(1)}, \mathfrak{e}_{2}^{(2)}\}$.

By the same reason as in case 1.1 (i), the set of root legs and the set of normal legs continue to be the same in step $k$. In case 1.1 (ii), $\text{leaf}_2(k+1)=\text{leaf}_2(k)\cup \{ \mathfrak{e}_{2}^{(2)}\}$ and the set of leaf legs also changes in this way, so they continue to be the same in step $k$. Therefore, (2) is true.

The proof of (3), (4) in case 1.1 (ii) is the same as that in case 1.1 (i).

\underline{Case 1.1 (iii).} (Neither $T^{(k)}_{j_0,1}$ or $T^{(k)}_{j_0,2}$ is an one node tree.) In this case, no edge in $\{\mathfrak{e}_{1}^{(1)}, \mathfrak{e}_{2}^{(1)}, \mathfrak{e}_{1}^{(2)}, \mathfrak{e}_{2}^{(2)}\}$ is leaf edge of $T_{j}^{(k)}$. Define $T^{(k+1)}_{j}=T^{(k)}_{j}$ for $j\le j_0$, $T^{(k+1)}_{j}=T^{(k)}_{j-1}$ for $j\ge j_0+2$, $T^{(k+1)}_{j_0}=T^{(k)}_{j_0,1}$ and $T^{(k+1)}_{j_0+1}=T^{(k)}_{j_0,2}$, then $\mathcal{C}(k)_1$ can be constructed from these trees with  $\text{leaf}_1(k+1)=\text{leaf}_1(k)$. By the same reason as in case 1.1 (i), (1) is true for $\mathcal{C}(k+1)$ with $\text{leaf}_1(k+1)=\text{leaf}_1(k)$.

By the same reason as in case 1.1 (i), the set of root legs and the set of normal legs continue to be the same in step $k$. In case 1.1 (iii), $\text{leaf}_2(k+1)=\text{leaf}_2(k)$ and the set of leaf legs also changes in this way, so they continue to be the same in step $k$. Therefore, (2) is true.

The proof of (3), (4) in case 1.1 (iii) is the same as that in case 1.1 (i).

\underline{In case 1.2 of the cutting algorithm}, we cut $\mathfrak{e}$ into $\{\mathfrak{e}^{(1)},\mathfrak{e}^{(2)}\}$. By (2), $\mathfrak{l}$ and $\mathfrak{l}_{fr}$ are the root leg and leaf of some subtrees $T^{(k)}_{j_0}$. Assume that in $T^{(k)}_{j_0}$ one subtree of the root is $T^{(k)}_{j_0,1}$ and the other one is a one node tree with edge $\mathfrak{l}_{fr}$. After cutting $c(\mathfrak{l})$, $T^{(k)}_{j_0}$ becomes $T^{(k)}_{j_0,1}$ and $T^{(k)}_{j}$ $(j\ne j_0)$ do not change. 
We treat two different case separately.

\underline{Case 1.2 (i).} ($T^{(k)}_{j_0,1}$ is an one node trees.) In this case $\mathfrak{e}^{(1)}$, $\mathfrak{e}^{(2)}$ are leaf edges of some trees in $\{T_{j}^{(k)}\}$. Define $T^{(k+1)}_{j}=T^{(k)}_{j}$ for $j< j_0$ and $T^{(k+1)}_{j}=T^{(k)}_{j+1}$ for $j> j_0$, then $\mathcal{C}(k)_1$ can be constructed from these trees with  $\text{leaf}_1(k+1)=\text{leaf}_1(k)\backslash\{\mathfrak{e}^{(1)}, \mathfrak{e}^{(2)}\}$. By the same reason as in case 1.1 (i), (1) is true for $\mathcal{C}(k+1)$ with $\text{leaf}_1(k+1)=\text{leaf}_1(k)\backslash\{\mathfrak{e}^{(1)}, \mathfrak{e}^{(2)}\}$.

By the same reason as in case 1.1 (i), the set of root legs and the set of normal legs continue to be the same in step $k$. In case 1.2 (i), $\text{leaf}_2(k+1)=\text{leaf}_2(k)\cup \{ \mathfrak{e}^{(2)}\}$ and the set of leaf legs also changes in this way, so they continue to be the same in step $k$. Therefore, (2) is true.

The proof of (3), (4) in case 1.2 (i) is the same as that in case 1.1 (i).

\underline{Case 1.2 (ii).} ($T^{(k)}_{j_0,1}$ is not an one node trees.) In this case $\mathfrak{e}^{(1)}$, $\mathfrak{e}^{(2)}$ are not leaf edges of any trees in $\{T_{j}^{(k)}\}$. Define $T^{(k+1)}_{j}=T^{(k)}_{j}$ for $j\ne j_0$ and $T^{(k+1)}_{j_0}=T^{(k)}_{j_0,1}$, then $\mathcal{C}(k)_1$ can be constructed from these trees with  $\text{leaf}_1(k+1)=\text{leaf}_1(k)$. By the same reason as in case 1.1 (i), (1) is true for $\mathcal{C}(k+1)$ with $\text{leaf}_1(k+1)=\text{leaf}_1(k)$.

By the same reason as in case 1.1 (i), the set of root legs and the set of normal legs continue to be the same in step $k$. In case 1.2 (ii), $\text{leaf}_2(k+1)=\text{leaf}_2(k)$ and the set of leaf legs also changes in this way, so they continue to be the same in step $k$. Therefore, (2) is true.

The proof of (3), (4) in case 1.2 (ii) is the same as that in case 1.1 (i).

\underline{In case 2 of the cutting algorithm}, we cut $c(\mathfrak{l})=\{\mathfrak{e}_2,\mathfrak{e}_3\}$ into $\{\mathfrak{e}_{2}^{(1)}, \mathfrak{e}_{3}^{(1)}\}$ and $\{\mathfrak{e}_{2}^{(2)}, \mathfrak{e}_{3}^{(2)}\}$. By (2), $\mathfrak{l}$ is the root leg of some subtree $T^{(k)}_{j_0}$. Assume that in $T^{(k)}_{j_0}$ the two subtrees of the root are $T^{(k)}_{j_0,1}$, $T^{(k)}_{j_0,2}$. After cutting $c(\mathfrak{l})$, $T^{(k)}_{j_0}$ becomes two trees $T^{(k)}_{j_0,1}$, $T^{(k)}_{j_0,2}$ and $T^{(k)}_{j}$ $(j\ne j_0)$ do not change. Since $\mathcal{C}(k)_{2}$ and $\mathcal{C}(k)_{3}$ are disjoint, for each $j\ne j_0$, $T^{(k)}_{j}$ should be a subset of one of these couples. Without loss of generality we assume that $j_0=1$ and $T^{(k)}_2, \cdots, T^{(k)}_{m^{(k)'}}\subseteq \mathcal{C}(k)_{2}$ and $T^{(k)}_{m^{(k)'}+1}, \cdots, T^{(k)}_{m^{(k)}}\subseteq \mathcal{C}(k)_{3}$. Since the output $\mathcal{C}(k+1)$ either comes from $\mathcal{C}(k)_{2}$ or $\mathcal{C}(k)_{3}$, without loss of generality we assume that the output comes from $\mathcal{C}(k)_{2}$. We treat two different case separately.

\underline{Case 2 (i).} ($T^{(k)}_{j_0,1}$ is an one node trees.) In this case, all edges in $\{\mathfrak{e}_{2}^{(1)}, \mathfrak{e}_{2}^{(2)}\}$ are leaf edges of some trees in $\{T_{j}^{(k)}\}$. Define $T^{(k+1)}_{j}=T^{(k)}_{j+1}$, then $\mathcal{C}(k)_2$ can be constructed from $T^{(k)}_2, \cdots, T^{(k)}_{m^{(k)'}}$ with $\text{leaf}_1(k+1)=\{\text{all leaves in }T^{(k)}_2, \cdots,$ $T^{(k)}_{m^{(k)'}}\}$. Remember that in the case 2 of the algorithm, the final result $\mathcal{C}(k+1)$ is obtained by replace (or not) some free normal leg by fixed normal leg in $\mathcal{C}(k)_2$, so by the same argument as in case 1.1 (i), (1) is true for $\mathcal{C}(k+1)$.

By the same reason as in case 1.1 (i), the set of root legs and the set of normal legs continue to be the same in step $k$. In case 2 (i), in the output $\mathcal{C}(k+1)$ coming from $\mathcal{C}(k)_2$, $\text{leaf}_2(k+1)=(\text{leaf}_1(k+1)\cap \mathcal{C}(k)_2)\cup \{ \mathfrak{e}_{2}^{(2)}\}$ and the set of leaf legs also changes in this way, so they continue to be the same in step $k$. Therefore, (2) is true.

The proof of (3), (4) in case 2 (i) is the same as that in case 1.1 (i).

\underline{Case 2 (ii).} ($T^{(k)}_{j_0,1}$ is not an one node trees.) In this case $\{\mathfrak{e}_{2}^{(1)}$, $\mathfrak{e}_{2}^{(2)}\}$ are not leaf edges of any trees in $\{T_{j}^{(k)}\}$. Define $T^{(k+1)}_{j}=T^{(k)}_{j}$ for $j>1$ and $T^{(k+1)}_{1}=T^{(k)}_{j_0,1}$, then $\mathcal{C}(k)_2$ can be constructed from $T^{(k+1)}_1, \cdots, T^{(k+1)}_{m^{(k)'}}$ with $\text{leaf}_1(k+1)=\{\text{all leaves in }T^{(k+1)}_1, \cdots, T^{(k+1)}_{m^{(k)'}}\}$. By the same reason as in case 1.1 (i), (1) is true for $\mathcal{C}(k+1)$.

By the same reason as in case 1.1 (i), the set of root legs and the set of normal legs continue to be the same in step $k$. In case 2 (ii), in the output $\mathcal{C}(k+1)$ coming from $\mathcal{C}(k)_2$, $\text{leaf}_2(k+1)=(\text{leaf}_1(k+1)\cap \mathcal{C}(k)_2)$ and the set of leaf legs also changes in this way, so they continue to be the same in step $k$. Therefore, (2) is true.

The proof of (3), (4) in case 2 (i) is the same as that in case 1.1 (i).
\end{proof}

\textbf{Step 3.} In this step, we state Proposition \ref{prop.countingind} which is a stronger version of Proposition \ref{prop.counting} and derive Proposition \ref{prop.counting} from it. In the end, we prove part (1), (2) and the 1 node case of part (3) of Proposition \ref{prop.countingind}.

\begin{prop}\label{prop.countingind}
Let $\mathcal{C}(k)$ be a couple which is the output of step $k$ of the cutting algorithm Proposition \ref{prop.cuttingalgorithm}. For any couple $\mathcal{C}$, let $n(\mathcal{C})$ be the total number of nodes in $\mathcal{C}$ and $n_e(\mathcal{C})$ (resp. $n_{fx}(\mathcal{C})$, $n_{\textit{fr}}(\mathcal{C})$) be the total number of non-leg edges (resp. fixed legs, free legs). We fix $\sigma_{\mathfrak{n}}\in\mathbb{R}$ for each $\mathfrak{n}\in \mathcal{C}(k)$ and $c_{\mathfrak{l}}\in \mathbb{R}$ for each fixed leg $\mathfrak{l}$. Assume that $\alpha$ satisfies \eqref{eq.conditionalpha}. Then we have

(1) We have following relation of $n(\mathcal{C})$, $n_e(\mathcal{C})$, $n_{fx}(\mathcal{C})$ and $n_{\textit{fr}}(\mathcal{C})$
\begin{equation}
    2n_e(\mathcal{C})+n_{fx}(\mathcal{C})+n_{\textit{fr}}(\mathcal{C})=3n(\mathcal{C})
\end{equation}

(2) For any couple $\mathcal{C}$, define $\chi(\mathcal{C})=n_e(\mathcal{C})+n_{\textit{fr}}(\mathcal{C})-n(\mathcal{C})$. Let $c$, $\mathcal{C}_1$, $\mathcal{C}_2$ be the same as in Lemma \ref{lem.Eq(C)cutting} and also assume that $\{\mathfrak{e}_{i}^{(1)}\}$ are free legs and $\{\mathfrak{e}_{i}^{(2)}\}$ are fixed legs. Then 
\begin{equation}
    \chi(\mathcal{C})=\chi(\mathcal{C}_1)+\chi(\mathcal{C}_2).
\end{equation}

(3) If we assume that $\mathcal{C}(k)$ satisfies the property P, then (recall that $Q=L^{d}T^{-1}_{\text{max}}$)
\begin{equation}\label{eq.countingbd3}
\sup_{\{c_{\mathfrak{l}}\}_{\mathfrak{l}}}\#Eq(\mathcal{C}(k),\{c_{\mathfrak{l}}\}_{\mathfrak{l}})\leq L^{O\left(\chi(\mathcal{C}(k))\theta\right)} Q^{\chi(\mathcal{C}(k))}\prod_{\mathfrak{e}\in \mathcal{C}_{\text{norm}}(k)} \kappa^{-1}_{\mathfrak{e}} .
\end{equation}

\end{prop}

\underline{Derivation of Proposition \ref{prop.counting} from Proposition \ref{prop.countingind}:} 
By Proposition \ref{prop.cuttingalgorithm} (2), $\mathcal{C}_{\text{norm}}(1)=\mathcal{C}_{\text{norm}}$ and $\# Eq(\mathcal{C})=\# Eq(\mathcal{C}(1))$. Then by Proposition \ref{prop.countingind} (3), 
\begin{equation}\label{eq.countinglemstep3}
    \# Eq(\mathcal{C})=\# Eq(\mathcal{C}(1))\leq L^{O(\chi(\mathcal{C}(1))\theta)} Q^{\chi(\mathcal{C}(1))}\prod_{\mathfrak{e}\in \mathcal{C}_{\text{norm}}(1)} \kappa^{-1}_{\mathfrak{e}}
\end{equation}
Since in $\mathcal{C}(1)$, $n_{fx}(\mathcal{C}(1))=n_{\textit{fr}}(\mathcal{C}(1))=1$, by Proposition \ref{prop.countingind} (1), $2n_e(\mathcal{C}(1))+2=3n(\mathcal{C}(1))$. Using this fact and the definition of $\chi$, we get $\chi(\mathcal{C}(1))=n_e(\mathcal{C}(1))+1-n(\mathcal{C}(1))=n(\mathcal{C}(1))/2$. Substituting this expression of $\chi(\mathcal{C}(1))$ into \eqref{eq.countinglemstep3} proves the conclusion of Proposition \ref{prop.counting}. 

The proof of Proposition \ref{prop.countingind} is the main goal of the rest of the proof.


\underline{Proof Proposition \ref{prop.countingind} (1):} Consider the set $\mathcal{S}=\{(\mathfrak{n}, \mathfrak{e})\in \mathcal{C}: \mathfrak{n} \textit{ is an end point of }\mathfrak{e}\}$, then 
\begin{equation}
\#\mathcal{S}=\sum_{\substack{(\mathfrak{n}, \mathfrak{e})\in \mathcal{C}\\ \mathfrak{n} \textit{ is an end point of }\mathfrak{e}}} 1
\end{equation}

First sum over $\mathfrak{e}$ and then over $\mathfrak{n}$, we get 
\begin{equation}
\#\mathcal{S}=\sum_{\mathfrak{n}}\sum_{\substack{\mathfrak{e}\in \mathcal{C}\\ \mathfrak{n} \textit{ is an end point of }\mathfrak{e}}} 1=\sum_{\mathfrak{n}}\ 3=3n(\mathcal{C}).
\end{equation}
In the second equality, $\sum_{\mathfrak{e}\in \mathcal{C}: \mathfrak{n} \textit{ is an end point of }\mathfrak{e}} 1 =3$ because for each node $\mathfrak{n}$ there are $3$ edges connected to it.

Switch the order of summation, we get 
\begin{equation}
\begin{split}
\#\mathcal{S}=&\sum_{\mathfrak{e} \textit{ is a non-leg edges}}\sum_{\substack{\mathfrak{n}\in \mathcal{C}\\ \mathfrak{n} \textit{ is an end point of }\mathfrak{e}}} 1+\sum_{\mathfrak{e} \textit{ is a leg}}\sum_{\substack{\mathfrak{n}\in \mathcal{C}\\ \mathfrak{n} \textit{ is an end point of }\mathfrak{e}}} 1
\\
=&\sum_{\mathfrak{e} \textit{ is a non-leg edges}} 2+\sum_{\mathfrak{e} \textit{ is a leg}} 1
\\
=& 2n_e(\mathcal{C})+n_{fx}(\mathcal{C})+n_{\textit{fr}}(\mathcal{C})
\end{split}
\end{equation}
In the second equality, $\sum_{\substack{\mathfrak{n}\in \mathcal{C}\\ \mathfrak{n} \textit{ is an end point of }\mathfrak{e}}} 1$ equals to $1$ or $2$ because for each non-leg edge (resp. leg) there are $2$ (resp. 1) nodes connected to it.

Because the value of $\#\mathcal{S}$ does not depend on the order of summation, we conclude that $2n_e(\mathcal{C})+n_{fx}(\mathcal{C})+n_{\textit{fr}}(\mathcal{C})=3n(\mathcal{C})$, which proves Proposition \ref{prop.countingind} (1).

\underline{Proof Proposition \ref{prop.countingind} (2):} Since cutting does change the number of nodes, we have $n(\mathcal{C})=n(\mathcal{C}_1)+n(\mathcal{C}_2)$. Let $c$ be the cut that consists of edges $\{\mathfrak{e}_{i}\}$ and $n(c)$ be the number of edges in $c$. 
When cutting $\mathcal{C}$ into $\mathcal{C}_1$ and $\mathcal{C}_2$, $n(c)$ non-leg edges are cut into pairs of free and fixed legs, so $n_e(\mathcal{C})=n(\mathcal{C}_1)+n(\mathcal{C}_2)+n(c)$. Because we have $n(c)$ additional free legs after cutting, so $n_{\textit{fr}}(\mathcal{C})=n_{\textit{fr}}(\mathcal{C}_1)+n_{\textit{fr}}(\mathcal{C}_2)-n(c)$. Therefore, we get
\begin{equation}
\begin{split}
    \chi(\mathcal{C})=&n_e(\mathcal{C})+n_{\textit{fr}}(\mathcal{C})-n(\mathcal{C})
    \\
    =&n(\mathcal{C}_1)+n(\mathcal{C}_2)+n(c)+n_{\textit{fr}}(\mathcal{C}_1)+n_{\textit{fr}}(\mathcal{C}_2)-n(c)-(n(\mathcal{C}_1)+n(\mathcal{C}_2))
    \\
    =&(n(\mathcal{C}_1)+n_{\textit{fr}}(\mathcal{C}_1)-n(\mathcal{C}_1))+(n(\mathcal{C}_2)+n_{\textit{fr}}(\mathcal{C}_2)-n(\mathcal{C}_2))
    \\
    =&\chi(\mathcal{C}_1)+\chi(\mathcal{C}_2).
\end{split}
\end{equation}

We complete the proof of (2) of Proposition \ref{prop.countingind}.

\underline{Proof of 1 node case of Proposition \ref{prop.countingind} (3):} We prove the following Lemma \ref{lem.countingbdunit} which is the 1 node case of Proposition \ref{prop.countingind} (3). Recall that $\mathcal{C}_{I}$ and $\mathcal{C}_{II}$ are the two possibilities of the 1 node couple $\mathcal{C}_{\mathfrak{l}}$ in Proposition \ref{prop.cuttingalgorithm} (3).

\begin{lem}\label{lem.countingbdunit}
$\mathcal{C}_{I}$, $\mathcal{C}_{II}$ satisfy the bound \eqref{eq.countingbd3} in Proposition \ref{prop.countingind}. In other words, fix $c_1$ (resp. $c_1$, $c_2$) for the fixed legs of $\mathcal{C}_{I}$ (resp. $\mathcal{C}_{II}$), then we have 
\begin{equation}\label{eq.countingbdunit}
    \# Eq(\mathcal{C}_{I})\leq L^\theta Q\kappa^{-1}_{\mathfrak{e}},\qquad \# Eq(\mathcal{C}_{II})\leq Q^0=1.
\end{equation}
\end{lem}
\begin{proof} Given $c_1$, $c_2$, the equation of $\mathcal{C}_{II}$ is 
\begin{equation}
    \begin{cases}
    k_{\mathfrak{e}_1}+k_{\mathfrak{e}_2}-k_{\mathfrak{e}}=0,\ k_{\mathfrak{e}_1}=c_1,\ k_{\mathfrak{e}}=c_2,\ |k_{\mathfrak{e}x}|\sim \kappa_{\mathfrak{e}}
    \\
    \Lambda_{k_{\mathfrak{e}_1}}+\Lambda_{k_{\mathfrak{e}_2}}-\Lambda_{k_{\mathfrak{e}}}=\sigma_{\mathfrak{n}}+O(T^{-1}_{\text{max}})
    \end{cases}
\end{equation}
It's obvious that there is at most one solution to this system of equations.

Given $c_1$, the equation of $\mathcal{C}_{I}$ is 
\begin{equation}
    \begin{cases}
    k_{\mathfrak{e}_1}+k_{\mathfrak{e}_2}-k_{\mathfrak{e}}=0,\ k_{\mathfrak{e}}=c_1,\ |k_{\mathfrak{e}x}|\sim \kappa_{\mathfrak{e}}
    \\
    \Lambda_{k_{\mathfrak{e}_1}}+\Lambda_{k_{\mathfrak{e}_2}}-\Lambda_{k_{\mathfrak{e}}}=\sigma_{\mathfrak{n}}+O(T^{-1}_{\text{max}})
    \end{cases}
\end{equation}
By Theorem \ref{th.numbertheory1}, the number of solutions of the above system of equations can be bounded by $L^\theta L^dT^{-1}_{\text{max}}|k_{\mathfrak{e}x}|^{-1}\lesssim L^\theta Q\kappa^{-1}_{\mathfrak{e}}$

Therefore, we complete the proof of this lemma.
\end{proof}

\textbf{Step 4.} In this step, we apply the edge cutting algorithm Proposition \ref{prop.cuttingalgorithm} to prove Proposition \ref{prop.countingind} (3) by induction.

If $\mathcal{C}$ has only one node ($n=1$), then $\mathcal{C}$ equals to $\mathcal{C}_{I}$ or $\mathcal{C}_{II}$ and Proposition \ref{prop.countingind} (3) in this case follows from Lemma \ref{lem.countingbdunit}.

Suppose that Proposition \ref{prop.countingind} (3) holds true for couples with number of nodes $\le n-1$. We prove it for couples with number of nodes $n$. 

Given the couple $\mathcal{C}(k)$ which is the output of the $k-1$-th step, apply the cutting algorithm to $\mathcal{C}(k)$. Then according to Proposition \ref{prop.cuttingalgorithm}, $\mathcal{C}(k)$ (2) is decomposed into 2 or 3 components.

In the first case, denote by $\mathcal{C}(k)_{
\mathfrak{l}}$ and $\mathcal{C}(k)_1$ the two components after cutting. In the second case, denote by $\mathcal{C}(k)_{
\mathfrak{l}}$, $\mathcal{C}(k)_2$ and $\mathcal{C}(k)_3$ the three components after cutting. By Proposition \ref{prop.cuttingalgorithm} (4), $\mathcal{C}(k)_1$, $\mathcal{C}(k)_2$, $\mathcal{C}(k)_3$ are couples of nodes $\le n-1$ that satisfy the property P and thus satisfy the assumption of Proposition \ref{prop.countingind} (3). Therefore, the induction assumption is applicable and Proposition \ref{prop.countingind} (3) is true for these three couples.

\textbf{Case 1.} Assume that there are two components after cutting. One example of this case is the left couple in Figure \ref{fig.step1case}.
    
Then applying Lemma \ref{lem.Eq(C)cutting} gives

\begin{equation}
\begin{split}
    \sup_{\{c_{\mathfrak{l}}\}_{\mathfrak{l}}}\#Eq(\mathcal{C}(k),\{c_{\mathfrak{l}}\}_{\mathfrak{l}})\le&
    \sup_{\{c_{\mathfrak{l}_1}\}_{\mathfrak{l}_1\in \text{leg}(\mathcal{C}(k)_{
\mathfrak{l}})} } \# Eq(\mathcal{C}(k)_{
\mathfrak{l}},\{c_{\mathfrak{l}_1}\}) \sup_{\{c_{\mathfrak{l}_2}\}_{\mathfrak{l}_2\in \text{leg}(\mathcal{C}(k)_1)} }\# Eq(\mathcal{C}(k)_1, \{c_{\mathfrak{l}_2}\})
    \\
    \lesssim&  L^{\theta} Q^{\chi(\mathcal{C}_1)} L^{O(\chi(\mathcal{C}_2)\theta)} Q^{\chi(\mathcal{C}_2)} = L^{O((\chi(\mathcal{C}_1)+\chi(\mathcal{C}_2))\theta)} Q^{\chi(\mathcal{C}_1)+\chi(\mathcal{C}_2)} \\
    =& L^{O(\chi(\mathcal{C})\theta)} Q^{\chi(\mathcal{C})}.
\end{split}
\end{equation}

Here the second inequality follows from induction assumption and Lemma \ref{lem.countingbdunit}. The last equality follows from Proposition \ref{prop.countingind} (2).

\textbf{Case 2.} Assume that there are three components after cutting. One example of this case is the right couple in Figure \ref{fig.step1case}.

Then applying Lemma \ref{lem.Eq(C)cutting} gives

\begin{equation}\label{eq.case2expand}
\begin{split}
    \sup_{\{c_{\mathfrak{l}}\}_{\mathfrak{l}}}\#Eq(\mathcal{C}(k),\{c_{\mathfrak{l}}\}_{\mathfrak{l}})\le&
    \sup_{\{c_{\mathfrak{l}_1}\}_{\mathfrak{l}_1\in \text{leg}(\mathcal{C}(k)\backslash\mathcal{C}(k)_2)} } \# Eq(\mathcal{C}(k)\backslash\mathcal{C}(k)_2,\{c_{\mathfrak{l}_1}\}) \sup_{\{c_{\mathfrak{l}_2}\}_{\mathfrak{l}_2\in \text{leg}(\mathcal{C}(k)_2)} }\# Eq(\mathcal{C}(k)_2, \{c_{\mathfrak{l}_2}\})
    \\
    \lesssim&  L^{O(\chi(\mathcal{C}(k)_2)\theta)} Q^{\chi(\mathcal{C}(k)_2)}\sup_{\{c_{\mathfrak{l}_1}\}_{\mathfrak{l}_1\in \text{leg}(\mathcal{C}(k)\backslash\mathcal{C}(k)_2)} } \# Eq(\mathcal{C}(k)\backslash\mathcal{C}(k)_2,\{c_{\mathfrak{l}_1}\}).
\end{split}
\end{equation}
Here in the second inequality, we apply the induction assumption to $\mathcal{C}(k)_2$.

Applying Lemma \ref{lem.Eq(C)cutting} and \eqref{eq.case2expand} gives
\begin{equation}\label{eq.case2expand'}
\begin{split}
    &\sup_{\{c_{\mathfrak{l}}\}_{\mathfrak{l}}}\#Eq(\mathcal{C}(k),\{c_{\mathfrak{l}}\}_{\mathfrak{l}})
    \lesssim  L^{O(\mathcal{C}(k)_2\theta)} Q^{\chi(\mathcal{C}(k)_2)}\sup_{\{c_{\mathfrak{l}_1}\}_{\mathfrak{l}_1\in \text{leg}(\mathcal{C}(k)\backslash\mathcal{C}(k)_2)} } \# Eq(\mathcal{C}(k)\backslash\mathcal{C}(k)_2,\{c_{\mathfrak{l}_1}\})
    \\
    \lesssim& L^{O(\chi(\mathcal{C}(k)_2)\theta)} Q^{\chi(\mathcal{C}(k)_2)}\sup_{\{c_{\mathfrak{l}_1}\}_{\mathfrak{l}_1\in \text{leg}(\mathcal{C}(k)_{\mathfrak{l}})} } \# Eq(\mathcal{C}(k)_{\mathfrak{l}},\{c_{\mathfrak{l}_1}\}) \sup_{\{c_{\mathfrak{l}_3}\}_{\mathfrak{l}_3\in \text{leg}(\mathcal{C}(k)_{3})} }\# Eq(\mathcal{C}(k)_{3}, \{c_{\mathfrak{l}_3}\})
    \\
    \lesssim& L^{O((\chi(\mathcal{C}(k)_2)+\chi(\mathcal{C}(k)_3))\theta)} Q^{\chi(\mathcal{C}(k)_2)+\chi(\mathcal{C}(k)_3)}\sup_{\{c_{\mathfrak{l}_1}\}_{\mathfrak{l}_1\in \text{leg}(\mathcal{C}(k)_{\mathfrak{l}})} } \# Eq(\mathcal{C}(k)_{\mathfrak{l}},\{c_{\mathfrak{l}_1}\}) 
    \\
    \lesssim& (L^{O(\theta)} Q)^{\chi(\mathcal{C}(k)_2)+\chi(\mathcal{C}(k)_3)+\chi(\mathcal{C}(k)_{\mathfrak{l}})}=L^{O(\chi(\mathcal{C})\theta)} Q^{\chi(\mathcal{C})}.
\end{split}
\end{equation}
Here in the third inequality, we apply the induction assumption to $\mathcal{C}(k)_3$. The last equality follows from Proposition \ref{prop.countingind} (2).

Therefore, we complete the proof of Proposition \ref{prop.countingind}
and thus the proof of Proposition \ref{prop.counting}.
\end{proof}

\subsection{An upper bound of tree terms}\label{sec.treetermsupperbound} In this section, we first prove the following Proposition which gives an upper bound of the variance of $\mathcal{J}_{T,k}$ and then prove Proposition \ref{prop.treetermsupperbound} as its corollary.

\begin{prop}\label{prop.treetermsvariance}
Assume that $\alpha$ satisfies \eqref{eq.conditionalpha} and $\rho=\alpha\, T^{\frac{1}{2}}_{\text{max}}$. For any $\theta>0$, we have
\begin{equation}
    \sup_k\, \mathbb{E}|(\mathcal{J}_T)_k|^2\lesssim L^{O(l(T)\theta)} \rho^{2l(T)}.
\end{equation}
and $\mathbb{E}|(\mathcal{J}_T)_k|^2=0$ if $|k|\gtrsim 1$.
\end{prop}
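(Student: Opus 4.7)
The plan is to combine the Wick couple decomposition of Proposition~\ref{prop.termcouple}, the pointwise bound on coefficients in Lemma~\ref{lem.boundcoef}, and the lattice point count of Proposition~\ref{prop.counting} by a dyadic decomposition in all of the phase variables. By Proposition~\ref{prop.termcouple},
\begin{equation*}
\mathbb{E}|(\mathcal{J}_T)_k|^2 = \left(\frac{\lambda}{L^{d}}\right)^{2l(T)}\sum_{p\in\mathcal{P}} Term(T,p)_k,
\end{equation*}
where $|\mathcal{P}|$ depends only on $l(T)$ and each $Term(T,p)_k$ is the sum of $|H^T_{\vec k}\overline{H^T_{\vec{k'}}}|\sqrt{n_{\textrm{in}}(k_1)}\cdots$ over the constraint set $Eq(\mathcal{C})$ of the couple $\mathcal{C}=\mathcal{C}(T,T,p)$, which has $n(\mathcal{C})=2l(T)$ nodes.

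Next I would apply Lemma~\ref{lem.boundcoef} to each copy of $H^T$ and dyadically decompose $|q_{\mathfrak{n}}|\sim\sigma_{\mathfrak{n}}$ over scales $\sigma_{\mathfrak{n}}\in\{2^{j}T^{-1}_{\text{max}}\}_{j\ge 0}\cap[0,O(1)]$ and $|k_{\mathfrak{e},x}|\sim\kappa_{\mathfrak{e}}$ as in \eqref{eq.kappa}, so that on each dyadic cell
\begin{equation*}
|H^T_{\vec k}\overline{H^T_{\vec{k'}}}| \lesssim \prod_{\mathfrak{n}\in\mathcal{C}}\frac{1}{\sigma_{\mathfrak{n}}+T^{-1}_{\text{max}}}\,\prod_{\mathfrak{e}\in\mathcal{C}_{\text{norm}}}\kappa_{\mathfrak{e}}.
\end{equation*}
Because the $q_{\mathfrak{n}}$ are linear combinations of the $\Omega_{\mathfrak{n}}$'s via \eqref{eq.q_n}, the cell $\{|q_{\mathfrak{n}}|\sim\sigma_{\mathfrak{n}}\}$ can be covered by $O(\sigma_{\mathfrak{n}}T_{\text{max}})$ strips of width $T^{-1}_{\text{max}}$ in $\Omega_{\mathfrak{n}}$, to each of which Proposition~\ref{prop.counting} applies (with $n=2l(T)$), yielding
\begin{equation*}
\#\{\vec k\in Eq(\mathcal{C}):\,|q_{\mathfrak{n}}|\sim\sigma_{\mathfrak{n}},\,|k_{\mathfrak{e},x}|\sim\kappa_{\mathfrak{e}}\} \lesssim L^{O(l(T)\theta)}\,Q^{l(T)}\prod_{\mathfrak{n}\in\mathcal{C}}\sigma_{\mathfrak{n}}T_{\text{max}}\prod_{\mathfrak{e}\in\mathcal{C}_{\text{norm}}}\kappa_{\mathfrak{e}}^{-1}.
\end{equation*}

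Combining the last two displays, the $\sigma_{\mathfrak{n}}$ and $\kappa_{\mathfrak{e}}$ dependences cancel exactly, so each dyadic cell contributes $L^{O(l(T)\theta)}T_{\text{max}}^{2l(T)}Q^{l(T)}$ independently of scales; the remaining sum over scales costs only $(\log L)^{O(l(T))}\subseteq L^{O(l(T)\theta)}$. Inserting $Q=L^{d}T^{-1}_{\text{max}}$ and the prefactor $(\lambda/L^{d})^{2l(T)}=\alpha^{2l(T)}L^{-dl(T)}$ gives
\begin{equation*}
\mathbb{E}|(\mathcal{J}_T)_k|^2 \lesssim L^{O(l(T)\theta)}\,\alpha^{2l(T)}\,L^{-dl(T)}\,T_{\text{max}}^{2l(T)}\,L^{dl(T)}\,T_{\text{max}}^{-l(T)} = L^{O(l(T)\theta)}\,\rho^{2l(T)},
\end{equation*}
as required. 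The vanishing for $|k|\gtrsim 1$ is automatic: compact support of $n_{\textrm{in}}$ forces $|k_j|\lesssim D$ for every leaf momentum, and iterating the conservations $S_{\mathfrak{n}}=0$ propagates this to $|k|\lesssim (l(T){+}1)D$, which is a universal constant.

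The main obstacle will be the inflation step in passing from Proposition~\ref{prop.counting}, which only counts lattice points in $\Omega$-strips of width $T^{-1}_{\text{max}}$, to counts in the dyadic cells of width $\sigma_{\mathfrak{n}}$; the $\sigma_{\mathfrak{n}}T_{\text{max}}$ inflation factors must be extracted carefully and shown to cancel exactly the $\prod\sigma_{\mathfrak{n}}^{-1}$ from the coefficient bound, so that the final per-cell estimate is scale-invariant up to log losses. A parallel but subtler bookkeeping is the matching of $\prod|k_{\mathfrak{e},x}|$ in Lemma~\ref{lem.boundcoef} with $\prod\kappa_{\mathfrak{e}}^{-1}$ in Proposition~\ref{prop.counting}, which requires identifying the $l(T)$ "parent-edge" factors from each copy of $T_{\text{in}}$ with the $2l(T)$ normal edges of the couple $\mathcal{C}_{\text{norm}}$.
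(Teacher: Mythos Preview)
Your proposal is correct and follows the same skeleton as the paper: Wick decomposition into $Term(T,p)_k$, the pointwise coefficient bound from Lemma~\ref{lem.boundcoef}, and the counting bound from Proposition~\ref{prop.counting}, with the $\kappa_{\mathfrak e}$'s from \eqref{eq.coef} cancelling the $\kappa_{\mathfrak e}^{-1}$'s from \eqref{eq.countingbd0} after identifying $\mathcal C_{\mathrm{norm}}$ with the two copies of $T_{\mathrm{in}}$.

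The one place where you diverge from the paper is the handling of the phase variables. You decompose dyadically in the $q_{\mathfrak n}$'s and then cover each $q$-cell by $\prod_{\mathfrak n}(\sigma_{\mathfrak n}T_{\text{max}})$ axis-aligned $\Omega$-cells of width $T^{-1}_{\text{max}}$, so that Proposition~\ref{prop.counting} applies to each. This works, but only because the change of variables $q\mapsto\Omega$ is \emph{unit triangular}: iterating from root to leaves, once $q_{\text{parent}(\mathfrak n)}$ is localized, $\Omega_{\mathfrak n}=q_{\mathfrak n}-d_{\mathfrak n}q_{\text{parent}(\mathfrak n)}$ ranges over an interval of length $O(\sigma_{\mathfrak n})$, giving the claimed covering number. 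For a generic bounded linear map the covering number would be $\prod_{\mathfrak n}(\max_{\mathfrak n'}\sigma_{\mathfrak n'})T_{\text{max}}$, which would not cancel. You flag this as ``the main obstacle'' but should make the triangularity explicit.

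The paper takes the mirror-image route: it decomposes uniformly in $\Omega$ (so Proposition~\ref{prop.counting} applies directly to each cell with no covering step) and pushes the linear transformation into the weighted sum $\sum_{\sigma}\prod_{\mathfrak n}(|c(\sigma)_{\mathfrak n}|+T^{-1}_{\text{max}})^{-1}$, which is then bounded via Euler--Maclaurin and the change of variables $\sigma\mapsto c(\sigma)$ with $|\det c|=1$. Both approaches exploit the same unimodularity of $c$; the paper's is marginally cleaner because the change of variables is a one-line integral substitution rather than an iterative covering. Note also that you must sum over the $2^{2l(T)}$ choices of $\{d_{\mathfrak n}\},\{d'_{\mathfrak n}\}$ coming from the two applications of Lemma~\ref{lem.boundcoef}; the paper packages these as the finite set $\mathscr M(\mathcal C)$.
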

\begin{proof}
By \eqref{eq.termTp} and \eqref{eq.termexp}, we know that $\mathcal{J}_{T,k}$ is a linear combination of $Term(T,p)_k$.
\begin{equation}
\begin{split}
    \mathbb{E}|\mathcal{J}_{T,k}|^2=\left(\frac{\lambda}{L^{d}}\right)^{2l(T)}
    \sum_{p\in \mathcal{P}(\{k_1,\cdots, k_{l(T)+1}, k'_1,\cdots, k'_{l(T)+1}\})} Term(T, p)_k.
\end{split}
\end{equation}

Since $\alpha=\frac{\lambda}{L^{\frac{d}{2}}}$, $\frac{\lambda}{L^{d}}=\alpha L^{-\frac{d}{2}}$. Since the number of elements in $\mathcal{P}$ can be bounded by a constant, by Lemma \ref{lem.Tpvariance} proved below, we get
\begin{equation}\label{eq.proptreetermsvariance1}
\begin{split}
    \mathbb{E}|\mathcal{J}_{T,k}|^2\lesssim (\alpha L^{-\frac{d}{2}})^{2l(T)}
    L^{O(n\theta)} Q^{\frac{n}{2}}  T^{n}_{\text{max}}  .
\end{split}
\end{equation}

By definition, $n$ is the total number of $\bullet$ nodes in the couple constructed from tree $T$ and pairing $p$. Therefore, $n$ equals to $2l(T)$.  Replacing $n$ by $2l(T)$ and $Q$ by $L^dT^{-1}_{\text{max}}$ in \eqref{eq.proptreetermsvariance1}, we get
\begin{equation}
\begin{split}
    \mathbb{E}|\mathcal{J}_{T,k}|^2\lesssim& (\alpha L^{-\frac{d}{2}})^{2l(T)}
    L^{O(l(T)\theta)} (L^dT^{-1}_{\text{max}})^{l(T)}  T_{\text{max}}^{2l(T)}  
    \\
    =& L^{O(l(T)\theta)} (\alpha^2 T_{\text{max}})^{l(T)}  
    \\
    =&  L^{O(l(T)\theta)} \rho^{2l(T)}.  
\end{split}
\end{equation}

By Lemma \ref{lem.Tpvariance} below $Term(T,p)_k=0$ if $|k|\gtrsim 1$, we know that the same is true for $\mathbb{E}|(\mathcal{J}_T)_k|^2=0$. 

Therefore, we complete the proof of this proposition.
\end{proof}

\begin{lem}\label{lem.Tpvariance} Let $n$ and $Q=L^dT^{-1}_{\text{max}}$ be the same as in Proposition \ref{prop.counting}. Assume that $\alpha$ satisfies \eqref{eq.conditionalpha} and $n_{\mathrm{in}} \in C^\infty_0(\mathbb{R}^d)$ is compactly supported. Let $\mathcal{C}$ be the couple constructed from tree $T$ and pairing $p$. Then for any $\theta>0$, we have
\begin{equation}
    \sup_k\, |Term(T,p)_k|\le L^{O(n\theta)} Q^{\frac{n}{2}} T_{\text{max}}^{n}.
\end{equation}
and $Term(T,p)_k=0$ if $|k|\gtrsim 1$.
\end{lem}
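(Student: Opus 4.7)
\textbf{Proof proposal for Lemma \ref{lem.Tpvariance}.} The plan is to combine the pointwise coefficient bound from Lemma \ref{lem.boundcoef} with the lattice point counting Proposition \ref{prop.counting}, through a dyadic decomposition in $|k_{\mathfrak{e},x}|$ and in $|\Omega_{\mathfrak{n}}|$. First I would apply Lemma \ref{lem.boundcoef} to both $H^T_{k_1\cdots k_{l(T)+1}}$ and $\overline{H^{T}_{k'_1\cdots k'_{l(T)+1}}}$ appearing in \eqref{eq.termTp}. This yields an upper bound of the form $\sum_{\{d_{\mathfrak{n}}\}} \prod_{\mathfrak{n}} (|q_{\mathfrak{n}}|+T_{\text{max}}^{-1})^{-1} \prod_{\mathfrak{e}} |k_{\mathfrak{e},x}|$ together with the momentum-conservation indicator at every node in each of the two trees. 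The pairing $\delta_p$ then glues the two copies of $T$ into the couple $\mathcal{C}$, identifying pairs of leaf variables and collapsing the double sum over $(k_j, k'_j)$ to a single sum over the lattice variables $\{k_{\mathfrak{e}}\}_{\mathfrak{e}\in\mathcal{C}}$ satisfying the full system $MC_{\mathfrak{n}}$ and $k_{\mathfrak{l}}=-k'_{\mathfrak{l}}=k$. The residual product $\prod|k_{\mathfrak{e},x}|$ becomes a product over the normal edges $\mathfrak{e}\in\mathcal{C}_{\text{norm}}$.

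Next I would dyadically decompose. For each normal edge $\mathfrak{e}$ localize $|k_{\mathfrak{e},x}|\sim\kappa_{\mathfrak{e}}$ with $\kappa_{\mathfrak{e}}\in\{0\}\cup\mathcal{D}(\alpha,1)$ as in \eqref{eq.kappa}, and for each node $\mathfrak{n}$ localize $|\Omega_{\mathfrak{n}}-\sigma_{\mathfrak{n}}|\lesssim T_{\text{max}}^{-1}$ for $\sigma_{\mathfrak{n}}$ ranging over dyadic scales in $[T_{\text{max}}^{-1},O(1)]$. Since $q_{\mathfrak{n}}$ is a linear combination of $\{\Omega_{\mathfrak{n}'}\}$ with universally bounded coefficients (determined by the boolean sequence $\{d_{\mathfrak{n}}\}$ via the recursion \eqref{eq.q_n}), on each such dyadic cell the factor $(|q_{\mathfrak{n}}|+T_{\text{max}}^{-1})^{-1}$ is bounded by $(\max_{\mathfrak{n}'}\sigma_{\mathfrak{n}'}+T_{\text{max}}^{-1})^{-1}$; absorbing the resulting combinatorial reindexing into $L^{O(n\theta)}$ gives the clean per-cell estimate
\[
\prod_{\mathfrak{n}\in\mathcal{C}}\frac{1}{|q_{\mathfrak{n}}|+T_{\text{max}}^{-1}}\prod_{\mathfrak{e}\in\mathcal{C}_{\text{norm}}}|k_{\mathfrak{e},x}|\lesssim L^{O(n\theta)}\prod_{\mathfrak{n}}\frac{1}{\sigma_{\mathfrak{n}}+T_{\text{max}}^{-1}}\prod_{\mathfrak{e}\in\mathcal{C}_{\text{norm}}}\kappa_{\mathfrak{e}}.
\]
The cell is then precisely of the form \eqref{eq.diophantineeqpairedsigma'}, so Proposition \ref{prop.counting} supplies at most $L^{O(n\theta)}Q^{n/2}\prod_{\mathfrak{e}\in\mathcal{C}_{\text{norm}}}\kappa_{\mathfrak{e}}^{-1}$ lattice points on it; crucially, the $\prod\kappa_{\mathfrak{e}}$ from the coefficient bound and $\prod\kappa_{\mathfrak{e}}^{-1}$ from the counting estimate cancel exactly.

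Combining, the contribution of each dyadic cell to $|Term(T,p)_k|$ is at most $L^{O(n\theta)}Q^{n/2}\prod_{\mathfrak{n}}(\sigma_{\mathfrak{n}}+T_{\text{max}}^{-1})^{-1}$, with the $\sqrt{n_{\text{in}}}$ factors absorbed by the boundedness and compact support of $n_{\text{in}}$. Summing over the $O(\log L)^{\#\mathcal{C}_{\text{norm}}}=L^{O(n\theta)}$ choices of $\{\kappa_{\mathfrak{e}}\}$ and using the geometric sum
\[
\sum_{\sigma\in[T_{\text{max}}^{-1},O(1)],\,\text{dyadic}}\frac{1}{\sigma+T_{\text{max}}^{-1}}\lesssim T_{\text{max}}\,L^{O(\theta)}
\]
at each of the $n$ nodes produces a final factor $T_{\text{max}}^{n}L^{O(n\theta)}$, giving the claimed bound $L^{O(n\theta)}Q^{n/2}T_{\text{max}}^n$. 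The vanishing statement $Term(T,p)_k=0$ for $|k|\gtrsim 1$ follows because $\sqrt{n_{\text{in}}(k_j)}$ forces every leaf variable into the compact support of $n_{\text{in}}$, and iterated momentum conservation along the tree then bounds $|k_{\mathfrak{l}}|=|k|$ by a constant depending only on $n$ and $D$, which is universal.

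The main obstacle I anticipate is the bookkeeping at the intersection of three combinatorial layers: the $2^{l(T)}$-fold boolean sum over $\{d_{\mathfrak{n}}\}$ from Lemma \ref{lem.boundcoef}, the linear change of variables between $\{q_{\mathfrak{n}}\}$ and $\{\Omega_{\mathfrak{n}}\}$, and the compatibility of dyadic cells in $|\Omega_{\mathfrak{n}}|$ with the hypothesis of Proposition \ref{prop.counting} (which is phrased in terms of $\sigma_{\mathfrak{n}}$). One must verify that the constants in the linear relations between $q_{\mathfrak{n}}$ and $\Omega_{\mathfrak{n}'}$ are universally bounded and thus absorbable into $L^{O(n\theta)}$, and that the boolean sum does not spoil the final exponent; both will follow from the recursive structure of the trees and the universal-constant conventions of Section \ref{sec.notat}.
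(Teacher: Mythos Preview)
Your overall architecture matches the paper's proof: apply Lemma \ref{lem.boundcoef} to both copies of $H^T$, pass to the couple $\mathcal{C}$, localize $|k_{\mathfrak{e},x}|\sim\kappa_{\mathfrak{e}}$ and $\Omega_{\mathfrak{n}}$ near $\sigma_{\mathfrak{n}}$, invoke Proposition \ref{prop.counting} so that the $\prod\kappa_{\mathfrak{e}}$ and $\prod\kappa_{\mathfrak{e}}^{-1}$ cancel, and then sum. The vanishing for $|k|\gtrsim 1$ is exactly as you say.

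However, your treatment of the $\sigma$-sum has a genuine gap. The claimed per-cell estimate
\[
\prod_{\mathfrak{n}\in\mathcal{C}}\frac{1}{|q_{\mathfrak{n}}|+T_{\text{max}}^{-1}}\ \lesssim\ L^{O(n\theta)}\prod_{\mathfrak{n}}\frac{1}{\sigma_{\mathfrak{n}}+T_{\text{max}}^{-1}}
\]
is false in general. Since $q_{\mathfrak{n}}=\sum_{\mathfrak{n}'}c_{\mathfrak{n}\mathfrak{n}'}\Omega_{\mathfrak{n}'}$ with signed bounded coefficients, knowing $|\Omega_{\mathfrak{n}'}|\sim\sigma_{\mathfrak{n}'}$ only gives $|q_{\mathfrak{n}}|\lesssim\max_{\mathfrak{n}'}\sigma_{\mathfrak{n}'}$, the \emph{wrong direction} for your inequality; the linear combination can cancel, forcing $|q_{\mathfrak{n}}|\approx 0$ even when every $\sigma_{\mathfrak{n}'}$ is of size $1$. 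For instance with two nodes and $q_2=\Omega_1+\Omega_2$, the cell $\sigma_1=1,\ \sigma_2=-1$ gives $(|q_2|+T_{\text{max}}^{-1})^{-1}\sim T_{\text{max}}$ while your right-hand side is $\sim 1$. No ``combinatorial reindexing'' absorbable into $L^{O(n\theta)}$ repairs this, and consequently your factorised dyadic sum $\prod_{\mathfrak{n}}\sum_{\sigma\ \text{dyadic}}(\sigma+T_{\text{max}}^{-1})^{-1}$ is not what you actually need to bound.

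The paper's remedy is precisely at this point. It localizes on the \emph{fine grid} $\sigma_{\mathfrak{n}}\in T_{\text{max}}^{-1}\mathbb{Z}$ (not dyadically), so that on each cell $|q_{\mathfrak{n}}-c(\sigma)_{\mathfrak{n}}|\lesssim T_{\text{max}}^{-1}$ and hence the weight is $\prod_{\mathfrak{n}}(|c(\sigma)_{\mathfrak{n}}|+T_{\text{max}}^{-1})^{-1}$, keeping the matrix $c$ explicit. The remaining sum
\[
\sum_{\sigma_{\mathfrak{n}}\in T_{\text{max}}^{-1}\mathbb{Z},\ |\sigma_{\mathfrak{n}}|\lesssim 1}\ \prod_{\mathfrak{n}\in\mathcal{C}}\frac{1}{|c(\sigma)_{\mathfrak{n}}|+T_{\text{max}}^{-1}}\ \lesssim\ L^{O(n\theta)}T_{\text{max}}^{n}
\]
does \emph{not} factor node-by-node; it is handled by comparing to an integral via the Euler--Maclaurin formula and then performing the linear change of variables $\sigma\mapsto c(\sigma)$ (the recursion \eqref{eq.q_n} makes $c$ unit upper-triangular, so $|\det c|=1$), after which the integral factors and each factor contributes $O(\log T_{\text{max}})\cdot T_{\text{max}}$. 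Your last paragraph correctly flags this junction as the delicate one, but boundedness of the entries of $c$ is not the issue --- invertibility with controlled Jacobian is, and it must be used via a change of variables rather than a pointwise inequality.
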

\begin{proof} By \eqref{eq.termTp}, we get

\begin{equation}
\begin{split}
    Term(T, p)_k=&\sum_{k_1,\, k_2,\, \cdots,\, k_{l(T)+1}}\sum_{k'_1,\, k'_2,\, \cdots,\, k'_{l(T)+1}} H^T_{k_1\cdots k_{l(T)+1}} H^{T}_{k'_1\cdots k'_{l(T)+1}}
    \\
    & \delta_{p}(k_1,\cdots, k_{l(T)+1}, k'_1,\cdots, k'_{l(T)+1})\sqrt{n_{\textrm{in}}(k_1)}\cdots\sqrt{n_{\textrm{in}}(k'_1)}\cdots
\end{split}
\end{equation}

Since $n_{\mathrm{in}}$ are compactly supported and there are bounded many of them in $Term(T, p)_k$, by $k_1 + k_2 + \cdots + k_{l(T)+1}=k$, we know that $Term(T, p)_k=0$ if $|k|\gtrsim 1$.

By \eqref{eq.boundcoef}, we get  
\begin{equation}\label{eq.termlemmaeq1}
    |H^T_{k_1\cdots k_{l+1}}|\lesssim \sum_{\{d_{\mathfrak{n}}\}_{\mathfrak{n}\in T_{\text{in}}}\in\{0,1\}^{l(T)}}\prod_{\mathfrak{n}\in T_{\text{in}}}\frac{1}{|q_{\mathfrak{n}}|+T^{-1}_{\text{max}}}\ \prod_{\mathfrak{e}\in T_{\text{in}}}|k_{\mathfrak{e},x}|\ \delta_{\cap_{\mathfrak{n}\in T_{\text{in}}} \{S_{\mathfrak{n}}=0\}}.
\end{equation}


By \eqref{eq.q_n}, $q_{\mathfrak{n}}$ is a linear combination of $\Omega_{\mathfrak{n}}$, so there exist constants $c_{\mathfrak{n},\widetilde{\mathfrak{n}}}$ such that $q_{\mathfrak{n}}=\sum_{\widetilde{\mathfrak{n}}}c_{\mathfrak{n},\widetilde{\mathfrak{n}}}\Omega_{\widetilde{\mathfrak{n}}}$. Let $c$ be the matrix $[c_{\mathfrak{n},\widetilde{\mathfrak{n}}}]$ and $\mathscr{M}(T)$ be the set of all possible such matrices, then the number of elements in $\mathscr{M}(T)$ can be bounded by a constant. Let $c(\Omega)$ be the vector $\{\sum_{\widetilde{\mathfrak{n}}}c_{\mathfrak{n},\widetilde{\mathfrak{n}}}\Omega_{\widetilde{\mathfrak{n}}}\}_{\mathfrak{n}}$ and $c(\Omega)_{\mathfrak{n}}=\sum_{\widetilde{\mathfrak{n}}}c_{\mathfrak{n},\widetilde{\mathfrak{n}}}\Omega_{\widetilde{\mathfrak{n}}}$ be the components of $c(\Omega)$.

With this notation, we know that $q_{\mathfrak{n}}=c(\Omega)_{\mathfrak{n}}$ and the right hand side of \eqref{eq.termlemmaeq1} becomes $\sum_{c\in \mathscr{M}(T) }\prod_{\mathfrak{n}\in T_{\text{in}}} \frac{1}{|c(\Omega)_{\mathfrak{n}}|+T^{-1}_{\text{max}}}$. Therefore, using the fact that $n_{\textrm{in}}$ are compactly supported, we have
\begin{equation}\label{eq.termlemmaeq3}
\begin{split}
    &|Term(T, p)_k|\lesssim \sum_{\substack{k_1,\, \cdots,\, k_{l(T)+1},\, k'_1,\, \cdots,\, k'_{l(T)+1}\\ |k_{j}|, |k'_j|\lesssim 1, \forall j}} \sum_{c\in \mathscr{M}(T) }\prod_{\mathfrak{n}\in T_{\text{in}}}\frac{1}{|c(\Omega)_{\mathfrak{n}}|+T^{-1}_{\text{max}}} \prod_{\mathfrak{e}\in T_{\text{in}}} |k_{\mathfrak{e},x}|\ \delta_{\cap_{\mathfrak{n}\in T_{\text{in}}} \{S_{\mathfrak{n}}=0\}} 
    \\
    &\sum_{c'\in \mathscr{M}(T)}\prod_{\mathfrak{n}'\in T_{\text{in}}}\frac{1}{|c'(\Omega)_{\mathfrak{n}'}|+T^{-1}_{\text{max}}}\prod_{\mathfrak{e}'\in T_{\text{in}}}|k_{\mathfrak{e}',x}|\ \delta_{\cap_{\mathfrak{n}'\in T_{\text{in}}} \{S_{\mathfrak{n}'}=0\}} \delta_{p}(k_1,\cdots, k_{l(T)+1}, k'_1,\cdots, k'_{l(T)+1})
\end{split}
\end{equation}

Switch the order of summations and products in \eqref{eq.termlemmaeq3}, then we get
\begin{equation}\label{eq.termlemmaeq2}
\begin{split}
    &|Term(T, p)_k|\lesssim \sum_{\substack{k_1,\, \cdots,\, k_{l(T)+1},\, k'_1,\, \cdots,\, k'_{l(T)+1}\\ |k_{j}|, |k'_j|\lesssim 1, \forall j}} \sum_{c, c'\in \mathscr{M}(T) }\prod_{\mathfrak{n}, \mathfrak{n}'\in T_{\text{in}}}\frac{1}{|c(\Omega)_{\mathfrak{n}}|+T^{-1}_{\text{max}}}
    \\
    &\frac{1}{|c'(\Omega)_{\mathfrak{n}'}|+T^{-1}_{\text{max}}}\prod_{\mathfrak{e},\mathfrak{e}'\in T_{\text{in}}}(|k_{\mathfrak{e},x}||k_{\mathfrak{e}',x}|)\ \delta_{\cap_{\mathfrak{n},\mathfrak{n}'\in T_{\text{in}}} \{S_{\mathfrak{n}}=0, S_{\mathfrak{n}'}=0\}} \delta_{p}(k_1,\cdots, k_{l(T)+1}, k'_1,\cdots, k'_{l(T)+1}).
\end{split}
\end{equation}

Given a tree $T$ and pairing $p$, we can construct a couple $\mathcal{C}$. We show that 
\begin{equation}\label{eq.termlemmaeq4}
\sum_{c, c'\in \mathscr{M}(T) }=\sum_{c\in \mathscr{M}(\mathcal{C}) },\qquad \prod_{\mathfrak{n}, \mathfrak{n}'\in T_{\text{in}}}=\prod_{\mathfrak{n}\in \mathcal{C}}, \qquad \prod_{\mathfrak{e},\mathfrak{e}'\in T_{\text{in}}}=\prod_{\mathfrak{e}\in \mathcal{C}_{\text{norm}}},\qquad \cap_{\mathfrak{n},\mathfrak{n}'\in T_{\text{in}}}=\cap_{\mathfrak{n}\in \mathcal{C}}.    
\end{equation}

Remember that $\mathcal{C}$ is constructed by glueing two copies of $T$ by $p$. In \eqref{eq.termlemmaeq2}, $\mathfrak{n}$, $\mathfrak{e}$, $\mathfrak{n}'$, $\mathfrak{e}'$ denote nodes and edges in the first or second copy respectively. Since all nodes in $\mathcal{C}$ come from the two copies of $T$, we get $\prod_{\mathfrak{n}, \mathfrak{n}'\in T_{\text{in}}}=\prod_{\mathfrak{n}\in \mathcal{C}}$ and $\cap_{\mathfrak{n},\mathfrak{n}'\in T_{\text{in}}}=\cap_{\mathfrak{n}\in \mathcal{C}}$. Remember that $T_{\text{in}}$ is the tree formed by all non-leaf nodes, so edges of $\mathcal{C}_{\text{norm}}$ all come from the two copies of $T_{\text{in}}$. Therefore $\prod_{\mathfrak{e},\mathfrak{e}'\in T_{\text{in}}}=\prod_{\mathfrak{e}\in \mathcal{C}_{\text{norm}}}$. Given two matrix $c$, $c'$, we can construct a new matrix $c\oplus c'$ as the following. Consider two vectors $\Omega=\{\Omega_{\mathfrak{n}}\}_{\mathfrak{n}'\in T}$, $\Omega'=\{\Omega_{\mathfrak{n}'}\}_{\mathfrak{n}'\in T}$, then define $\Omega\oplus\Omega'\coloneqq \{\Omega_{\mathfrak{n}},\Omega_{\mathfrak{n}'}\}_{\mathfrak{n},\mathfrak{n}'\in T}$. We know that $c(\Omega)=\sum_{\widetilde{\mathfrak{n}}}c_{\mathfrak{n},\widetilde{\mathfrak{n}}}\Omega_{\widetilde{\mathfrak{n}}}$ and $c(\Omega')=\sum_{\widetilde{\mathfrak{n}}'}c_{\mathfrak{n}',\widetilde{\mathfrak{n}}'}\Omega_{\widetilde{\mathfrak{n}}'}$. Define $c\oplus c'$ to be the linear map whose domain is all vector of the form $\Omega\oplus\Omega'$ and whose action is $c\oplus c'(\Omega\oplus\Omega')=\{\sum_{\widetilde{\mathfrak{n}}}c_{\mathfrak{n},\widetilde{\mathfrak{n}}}\Omega_{\widetilde{\mathfrak{n}}},\sum_{\widetilde{\mathfrak{n}}'}c_{\mathfrak{n}',\widetilde{\mathfrak{n}}'}\Omega_{\widetilde{\mathfrak{n}}'}\}_{\mathfrak{n},\mathfrak{n}'\in T}$. Define $\mathscr{M}(\mathcal{C})=\{c\oplus c':c, c'\in \mathscr{M}(T)\}$, then we get $\sum_{c, c'\in \mathscr{M}(T) }=\sum_{c\in \mathscr{M}(\mathcal{C}) }$.

By \eqref{eq.termlemmaeq4} and the fact that leaves corresponding to $k_j, k_j'$ are merged in $\mathcal{C}$, \eqref{eq.termlemmaeq2} is equivalent to 
\begin{equation}\label{eq.termlemmaeq5}
|Term(T, p)_k|\lesssim \sum_{\substack{k_1,\, \cdots,\, k_{l(T)+1},\, k'_1,\, \cdots,\, k'_{l(T)+1}\\ |k_{j}|, |k'_j|\lesssim 1, \forall j}} \sum_{c\in \mathscr{M}(\mathcal{C}) }\prod_{\mathfrak{n}\in \mathcal{C}}\frac{1}{|c(\Omega)_{\mathfrak{n}}|+T^{-1}_{\text{max}}} \prod_{\mathfrak{e}\in \mathcal{C}_{\text{norm}}}|k_{\mathfrak{e},x}|\ \delta_{\cap_{\mathfrak{n}\in \mathcal{C}} \{S_{\mathfrak{n}}=0\}}
\end{equation}

Assigning a number $\sigma_{\mathfrak{n}}\in \mathbb{Z}_{T_{\text{max}}}$ for each node $\mathfrak{n}\in \mathcal{C}$, a number $\kappa_{\mathfrak{e}}\in \mathcal{D}(\alpha,1)\coloneqq\{2^{-K_{\mathfrak{e}}}:K_{\mathfrak{e}}\in  \mathbb{Z}\cap [0,ln\ \alpha^{-1}]\}$ for each edge $\mathfrak{e}$ and a number $k\in \mathbb{Z}^d_{L}$ for each fixed leg, we can define the associated equation $Eq(\mathcal{C}, \{\sigma_{\mathfrak{n}}\}_{\mathfrak{n}}, \{\kappa_{\mathfrak{e}}\}_{\mathfrak{e}},k)=Eq(\mathcal{C})$ as in \eqref{eq.diophantineeqpairedsigma'}. Then we have  
\begin{equation}\label{eq.termlemmaeq8}
    \sum_{\substack{k_1,\, \cdots,\, k_{l(T)+1},\, k'_1,\, \cdots,\, k'_{l(T)+1}\\\cap_{\mathfrak{n}\in \mathcal{C}} \{S_{\mathfrak{n}}=0\}}}=\sum_{\kappa_{\mathfrak{e}}\in \mathcal{D}(\alpha,1)}\sum_{\sigma_{\mathfrak{n}}\in \mathbb{Z}_{T_{\text{max}}}}\sum_{Eq(\mathcal{C}, \{\sigma_{\mathfrak{n}}\}_{\mathfrak{n}}, \{\kappa_{\mathfrak{e}}\}_{\mathfrak{e}},k)},
\end{equation}
which implies that
\begin{equation}\label{eq.termlemmaeq6}
\begin{split}
    |Term(T, p)_k|\lesssim \sum_{\kappa_{\mathfrak{e}}\in \mathcal{D}(\alpha,1)}\sum_{\sigma_{\mathfrak{n}}\in \mathbb{Z}_{T_{\text{max}}}}\sum_{Eq(\mathcal{C}, \{\sigma_{\mathfrak{n}}\}_{\mathfrak{n}}, \{\kappa_{\mathfrak{e}}\}_{\mathfrak{e}},k)} \sum_{c\in \mathscr{M}(\mathcal{C}) }\prod_{\mathfrak{n}\in \mathcal{C}}\frac{1}{|c(\Omega)_{\mathfrak{n}}|+T^{-1}_{\text{max}}} \prod_{\mathfrak{e}\in \mathcal{C}_{\text{norm}}}|k_{\mathfrak{e},x}|
\end{split}
\end{equation}

Remember in the definition of $Eq(\mathcal{C})$, we have conditions that $|k_{\mathfrak{e}}|\lesssim 1$. These conditions come from the fact that on the support of $\delta_{\cap_{\mathfrak{n}\in T_{\text{in}}} \{S_{\mathfrak{n}}=0\}}$, if $k_1,\cdots,k_{l(T)+1}$ are bounded, then $|k_{\mathfrak{e}}|$ are also bounded. Recall that by \eqref{eq.defnS_n}, $S_{\mathfrak{n}}=\iota_{\mathfrak{e}_1}k_{\mathfrak{e}_1}+\iota_{\mathfrak{e}_2}k_{\mathfrak{e}_2}+\iota_{\mathfrak{e}}k_{\mathfrak{e}}$. The conditions that $S_{\mathfrak{n}}=0$ imply that the variables $k_{\mathfrak{e}}$ of the parent edge $\mathfrak{e}$ are linear combinations of the variables $k_{\mathfrak{e}_1}$, $k_{\mathfrak{e}_2}$ of children edges $\mathfrak{e}_1$, $\mathfrak{e}_2$. The variables of children edges $\mathfrak{e}_j$ are again a linear combinations of the variables of their children. We may iterate this argument to show that all variables $k_{\mathfrak{e}}$ are linear combinations of variables of leaves $k_1,\cdots,k_{l(T)+1}$. Because $k_1,\cdots,k_{l(T)+1}$ are bounded, then their linear combinations $k_{\mathfrak{e}}$ are also bounded.

In the definition of $Eq(\mathcal{C}, \{\sigma_{\mathfrak{n}}\}_{\mathfrak{n}}, \{\kappa_{\mathfrak{e}}\}_{\mathfrak{e}},k)$,
$|\Omega_{\mathfrak{n}}-\sigma_{\mathfrak{n}}|=O(T^{-1}_{\text{max}})$ and $|k_{\mathfrak{e}x}| \sim \kappa_{\mathfrak{e}}$. Denote the constant in $O(T^{-1}_{\text{max}})$ by $\delta$ and then we have $|\Omega_{\mathfrak{n}}-\sigma_{\mathfrak{n}}|\le \delta T^{-1}_{\text{max}}$. Therefore, we have $|c(\Omega)_{\mathfrak{n}}-c(\{\sigma_{\mathfrak{n}}\})_{\mathfrak{n}}|\lesssim \delta T^{-1}_{\text{max}}$. We have the freedom of choosing $\delta$ in the definition and we take it sufficiently small so that $|c(\Omega)_{\mathfrak{n}}-c(\{\sigma_{\mathfrak{n}}\})_{\mathfrak{n}}|\le \frac{1}{2}T^{-1}_{\text{max}}$. This implies that $|c(\Omega)_{\mathfrak{n}}|+T^{-1}_{\text{max}}\gtrsim |c(\{\sigma_{\mathfrak{n}}\})_{\mathfrak{n}}|+T^{-1}_{\text{max}}$. Since we also have $|k_{\mathfrak{e}x}| \sim \kappa_{\mathfrak{e}}$, by \eqref{eq.termlemmaeq6} we get
\begin{equation}\label{eq.lemboundtermTp}
\begin{split}
    &Term(T, p)_k
    \\
    \lesssim& \sum_{\kappa_{\mathfrak{e}}\in \mathcal{D}(\alpha,1)}\sum_{\sigma_{\mathfrak{n}}\in \mathbb{Z}_{T_{\text{max}}}}\sum_{Eq(\mathcal{C}, \{\sigma_{\mathfrak{n}}\}_{\mathfrak{n}}, \{\kappa_{\mathfrak{e}}\}_{\mathfrak{e}},k)} \sum_{c\in \mathscr{M}(\mathcal{C}) }\prod_{\mathfrak{n}\in \mathcal{C}}\frac{1}{|c(\Omega)_{\mathfrak{n}}|+T^{-1}_{\text{max}}} \prod_{\mathfrak{e}\in \mathcal{C}_{\text{norm}}}|k_{\mathfrak{e},x}|
    \\
    \lesssim &\sum_{c\in \mathscr{M}(\mathcal{C}) }\sum_{\kappa_{\mathfrak{e}}\in \mathcal{D}(\alpha,1)}\sum_{\substack{\sigma_{\mathfrak{n}}\in \mathbb{Z}_{T_{\text{max}}}\\ |\sigma_{\mathfrak{n}}|\lesssim 1}}\prod_{\mathfrak{n}\in \mathcal{C}}\frac{1}{|c(\{\sigma_{\mathfrak{n}}\})_{\mathfrak{n}}|+T^{-1}_{\text{max}}} \sum_{Eq(\mathcal{C}, \{\sigma_{\mathfrak{n}}\}_{\mathfrak{n}}, \{\kappa_{\mathfrak{e}}\}_{\mathfrak{e}},k)} \prod_{\mathfrak{e}\in \mathcal{C}_{\text{norm}}}|k_{\mathfrak{e},x}|
    \\
    \lesssim &\sum_{c\in \mathscr{M}(\mathcal{C}) }\sum_{\kappa_{\mathfrak{e}}\in \mathcal{D}(\alpha,1)}\sum_{\substack{\sigma_{\mathfrak{n}}\in \mathbb{Z}_{T_{\text{max}}}\\ |\sigma_{\mathfrak{n}}|\lesssim 1}}\prod_{\mathfrak{n}\in \mathcal{C}}\frac{1}{|c(\{\sigma_{\mathfrak{n}}\})_{\mathfrak{n}}|+T^{-1}_{\text{max}}} \left(\prod_{\mathfrak{e}\in \mathcal{C}_{\text{norm}}}\kappa_{\mathfrak{e}}\right)\#Eq(\mathcal{C}, \{\sigma_{\mathfrak{n}}\}_{\mathfrak{n}}, \{\kappa_{\mathfrak{e}}\}_{\mathfrak{e}},k) 
    \\
    \lesssim &\sum_{c\in \mathscr{M}(\mathcal{C}) }\sum_{\kappa_{\mathfrak{e}}\in \mathcal{D}(\alpha,1)}\sum_{\substack{\sigma_{\mathfrak{n}}\in \mathbb{Z}_{T_{\text{max}}}\\ |\sigma_{\mathfrak{n}}|\lesssim 1}}\prod_{\mathfrak{n}\in \mathcal{C}}\frac{1}{|c(\{\sigma_{\mathfrak{n}}\})_{\mathfrak{n}}|+T^{-1}_{\text{max}}} \left(\prod_{\mathfrak{e}\in \mathcal{C}_{\text{norm}}}\kappa_{\mathfrak{e}}\right)L^{O(n\theta)} Q^{\frac{n}{2}}\ \prod_{\mathfrak{e}\in \mathcal{C}_{\text{norm}}} \kappa^{-1}_{\mathfrak{e}}
\end{split}
\end{equation}
Here in the last inequality we applied \eqref{eq.countingbd0} in Proposition \ref{prop.counting}.

After simplification, \eqref{eq.lemboundtermTp} gives us 
\begin{equation}\label{eq.lemboundtermTpsimplify}
\begin{split}
    |Term(T, p)_k|\lesssim &L^{O(n\theta)} Q^{\frac{n}{2}}\sum_{c\in \mathscr{M}(\mathcal{C}) }\sum_{\kappa_{\mathfrak{e}}\in \mathcal{D}(\alpha,1)}\sum_{\substack{\sigma_{\mathfrak{n}}\in \mathbb{Z}_{T_{\text{max}}}\\ |\sigma_{\mathfrak{n}}|\lesssim 1}} \prod_{\mathfrak{n}\in \mathcal{C}}\frac{1}{|c(\{\sigma_{\mathfrak{n}}\})_{\mathfrak{n}}|+T^{-1}_{\text{max}}}
    \\
    \lesssim &L^{O(n\theta)} Q^{\frac{n}{2}} \left(\sum_{\kappa_{\mathfrak{e}}\in \mathcal{D}(\alpha,1)} 1\right) \sum_{c\in \mathscr{M}(\mathcal{C})}\sum_{\substack{\sigma_{\mathfrak{n}}\in \mathbb{Z}_{T_{\text{max}}}\\ |\sigma_{\mathfrak{n}}|\lesssim 1}} \prod_{\mathfrak{n}\in \mathcal{C}}\frac{1}{|c(\{\sigma_{\mathfrak{n}}\})_{\mathfrak{n}}|+T^{-1}_{\text{max}}}
    \\
    \lesssim & L^{O(n\theta)} Q^{\frac{n}{2}} \sum_{c\in \mathscr{M}(\mathcal{C})}\sum_{\substack{\sigma_{\mathfrak{n}}\in \mathbb{Z}_{T_{\text{max}}}\\ |\sigma_{\mathfrak{n}}|\lesssim 1}} \prod_{\mathfrak{n}\in \mathcal{C}}\frac{1}{|c(\{\sigma_{\mathfrak{n}}\})_{\mathfrak{n}}|+T^{-1}_{\text{max}}}
\end{split}
\end{equation}
Here in the last step we use the fact that $\#\mathcal{D}(\alpha,1)\lesssim ln(\alpha^{-1})$. $|\sigma_{\mathfrak{n}}|\lesssim 1$ in the sum of the first two inequalities comes from the fact that $\#Eq(\mathcal{C}, \{\sigma_{\mathfrak{n}}\}_{\mathfrak{n}},k)=0$ if some $|\sigma_{\mathfrak{n}}|\gtrsim 1$. This fact is true because in $Eq(\mathcal{C}, \{\sigma_{\mathfrak{n}}\}_{\mathfrak{n}},k)$, all $|k_{\mathfrak{e}}|\lesssim 1$, which implies that $|\Omega_{\mathfrak{n}}|\lesssim 1$ and therefore $|\sigma_{\mathfrak{n}}|\gtrsim 1$ (notice that $|\Omega_{\mathfrak{n}}-\sigma_{\mathfrak{n}}|=O(T^{-1}_{\text{max}})$).

We claim that 
\begin{equation}\label{eq.lemTpvarianceclaim}
     \sup_{c}\sum_{\substack{\sigma_{\mathfrak{n}}\in \mathbb{Z}_{T_{\text{max}}}\\ |\sigma_{\mathfrak{n}}|\lesssim 1}} \prod_{\mathfrak{n}\in \mathcal{C}}\frac{1}{|c(\{\sigma_{\mathfrak{n}}\})_{\mathfrak{n}}|+T^{-1}_{\text{max}}}\lesssim L^{O(n\theta)} T^{n}_{\text{max}}
\end{equation}

Since there are only bounded many matrices in $\mathscr{M}(\mathcal{C})$.  Given the above claim, we know that 
\begin{equation}
    |Term(T, p)_k|\lesssim L^{O(n\theta)} Q^{\frac{n}{2}} T^{n}_{\text{max}},
\end{equation}
which proves the lemma.

Now prove the claim. Notice that there are $n$ nodes in $\mathcal{C}$. We label these nodes by $h=1,\cdots,n$ and denote $\sigma_{\mathfrak{n}}$ by $\sigma_{h}$ if $\mathfrak{n}$ is labelled by $h$. Since $\sigma_{h}\in \mathbb{Z}_{T_{\text{max}}}$, there exists $m_{h}\in \mathbb{Z}$ such that $\sigma_{h}=\frac{m_{h}}{T_{\text{max}}}$. \eqref{eq.lemTpvarianceclaim} is thus equivalent to 
\begin{equation}\label{eq.lemTpvarianceclaim1}
    \sum_{\substack{m_{h}\in \mathbb{Z}\\ |m_{h}|\lesssim  T_{\text{max}}}} \prod_{h=1}^{n}\frac{T_{\text{max}}}{|c(\{m_{h}\})_{h}|+1}\lesssim L^{O(n\theta)}T^{n}_{\text{max}}
\end{equation}

Before proving \eqref{eq.lemTpvarianceclaim1}, let's first look at one of its special case. If $c=Id$, then $c(\{m_{h}\})_{h}=m_h$. The right hand side of \eqref{eq.lemTpvarianceclaim1} becomes
\begin{equation}
\begin{split}
    T^{n}_{\text{max}}\sum_{\substack{m_{h}\in \mathbb{Z}\\ |m_{h}|\lesssim  T_{\text{max}}}} \prod_{h=1}^{n}\frac{1}{|m_{h}|+1} = T^{n}_{\text{max}}\prod_{h=1}^{n}\left(\sum_{\substack{m_{h}\in \mathbb{Z}\\ |m_{h}|\lesssim  T_{\text{max}}}} \frac{1}{|m_{h}|+1}\right)
    \lesssim T^{n}_{\text{max}} (ln(\alpha^{-1}))^{n}= L^{O(n\theta)}T^{n}_{\text{max}}.
\end{split}
\end{equation}
Here we use the fact that $\sum_{\substack{j\in \mathbb{Z}\\ |j|\lesssim  T_{\text{max}}}} \frac{1}{|j|+1}=O(ln(\alpha^{-1}))$.

Now we prove \eqref{eq.lemTpvarianceclaim1}. We just need to show that 
\begin{equation}\label{eq.lemTpvarianceEulerMac}
    \sum_{\substack{m_{h}\in \mathbb{Z}\\ |m_{h}|\lesssim  T_{\text{max}}}} \prod_{h=1}^{n}\frac{1}{|c(\{m_{h}\})_{h}|+1}\lesssim L^{O(n\theta)}
\end{equation}

By Euler-Maclaurin formula \eqref{eq.EulerMaclaurin} and change of variable formula, we get
\begin{equation}
\begin{split}
    \sum_{\substack{m_{h}\in \mathbb{Z}\\ |m_{h}|\lesssim  T_{\text{max}}}} \prod_{h=1}^{n}\frac{1}{|c(\{m_{h}\})_{h}|+1}\le& \int_{|m_{h}|\lesssim  T_{\text{max}}}  \prod_{h=1}^{n}\frac{1}{|c(\{m_{h}\})_{h}|+1}\prod_{h=1}^{n} dm_{h}
    \\
    =& \int_{c(\{|m_{h}|\lesssim  T_{\text{max}}\})}  \prod_{h=1}^{n}\frac{1}{|m_{h}|+1}|\text{det}\ c|\prod_{h=1}^{n}  dm_{h}
    \\
    \lesssim &\prod_{h=1}^{n}\int_{|m_{h}|\lesssim  T_{\text{max}}}  \frac{1}{|m_{h}|+1}  dm_{h}
    \\
    \lesssim & (ln(\alpha^{-1}))^{n}\lesssim L^{O(n\theta)}
\end{split}
\end{equation}

We complete the proof of the claim and thus the proof of the lemma.
\end{proof}

Now we prove Proposition \ref{prop.treetermsupperbound}. To start with, recall the large deviation estimate for Gaussian polynomial.

\begin{lem}[Large deviation for Gaussian polynomial]\label{lem.largedev}
Let $\{\eta_k(\omega)\}$ be i.i.d. complex Gaussian variables with mean $0$ and variance $1$. Let $F=F(\omega)$ be an degree $n$ polynomial of $\{\eta_k(\omega)\}$ defined by \begin{equation}\label{indp}
F(\omega)=\sum_{k_1,\cdots,k_n}a_{k_1\cdots k_n}\prod_{j=1}^n\eta_{k_j}^{\iota_j},
\end{equation} 
where $a_{k_1\cdots k_n}$ are constants, then we have 
\begin{equation}\label{largedevest}\mathbb{P}\left(|F(\omega)|\geq A\cdot \left(\mathbb{E}|F(\omega)|^2\right)^{\frac{1}{2}}\right)\leq Ce^{-cA^{\frac{2}{n}}}
\end{equation} 
\end{lem}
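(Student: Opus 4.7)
The plan is to reduce the tail estimate to a moment bound by the standard Chebyshev/optimization argument, and to establish the moment bound via the hypercontractivity estimate for Wiener chaos (a polynomial of degree $n$ in Gaussians has $L^p$ norm controlled by $(p-1)^{n/2}$ times its $L^2$ norm). This is the classical route for Gaussian chaos tail estimates, and nothing in the statement forces us off of it.

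First I would record (or quote) the hypercontractivity bound: for any polynomial $F$ of degree $\leq n$ in i.i.d.\ complex Gaussian variables $\{\eta_k\}$, and for any $p \geq 2$,
\begin{equation}\label{eq.hypercontractivity}
\|F\|_{L^p(\Omega)} \leq (p-1)^{n/2}\, \|F\|_{L^2(\Omega)}.
\end{equation}
This is Nelson's hypercontractivity applied to the Ornstein--Uhlenbeck semigroup, restricted to the degree-$n$ Wiener chaos (see, e.g., Janson, \emph{Gaussian Hilbert Spaces}, or the equivalent statement that the OU semigroup $e^{-tL}$ is a contraction from $L^2$ to $L^q$ whenever $e^{2t}\geq q-1$, combined with the fact that a degree-$n$ chaos is an eigenfunction with eigenvalue $n$). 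If we want to stay self-contained, I would prove \eqref{eq.hypercontractivity} first for a single Gaussian by a direct computation (the two-point inequality) and then tensorize.

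Second, I would combine \eqref{eq.hypercontractivity} with Markov's inequality: for any $p \geq 2$ and any $A > 0$, writing $\sigma := (\mathbb{E}|F|^2)^{1/2}$,
\begin{equation}
\mathbb{P}\bigl(|F| \geq A\sigma\bigr) \leq (A\sigma)^{-p}\, \mathbb{E}|F|^p \leq \bigl((p-1)^{n/2}/A\bigr)^{p}.
\end{equation}
Now I would optimize the right-hand side in $p$. Choosing $p = 1 + (A/e)^{2/n}$ (a convenient choice when $A$ is large, so that $p \geq 2$) gives $(p-1)^{n/2} = A/e$, and the bound becomes $e^{-p} \leq e^{-c A^{2/n}}$ for some absolute $c > 0$. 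For small $A$ (say $A \leq$ some constant), the inequality $\mathbb{P}(|F|\geq A\sigma) \leq C e^{-cA^{2/n}}$ is trivial by adjusting $C$, so the estimate \eqref{largedevest} follows in all ranges.

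The main (and really only) obstacle is producing \eqref{eq.hypercontractivity} cleanly; modulo this, the tail bound is a one-line optimization. If one prefers to avoid invoking hypercontractivity as a black box, the same moment bound can be obtained by the combinatorial route used earlier in the paper: apply the Wick theorem to $\mathbb{E}|F|^{2m}$, so that the expectation becomes a sum over pairings of $2mn$ Gaussian factors, and then use the elementary bound on the number of pairings $(2mn)!/(2^{mn}(mn)!) \lesssim (Cmn)^{mn}$ combined with Cauchy--Schwarz on the coefficient sums to get $\|F\|_{L^{2m}}^{2m} \leq (Cmn)^{mn} \|F\|_{L^2}^{2m}$, i.e.\ $\|F\|_{L^{2m}} \lesssim (Cmn)^{n/2}\|F\|_{L^2}$. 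This is the same strength as \eqref{eq.hypercontractivity} up to constants, and feeding it into Markov with the same optimal choice $2m \sim A^{2/n}$ yields \eqref{largedevest}. Either presentation works; I would pick whichever the rest of the paper already assumes the reader is comfortable with.
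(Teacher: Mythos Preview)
Your proposal is correct and is exactly the approach the paper takes: the paper's proof simply cites hypercontractivity of the Ornstein--Uhlenbeck semigroup (referencing \cite{Oh}) and states that \eqref{largedevest} is equivalent to a known inequality there, without spelling out the Markov-plus-optimization step you wrote down. Your write-up is a fleshed-out version of the same argument.
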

\begin{proof} This is a corollary of the hypercontractivity of Ornstein-Uhlenbeck semigroup. A good reference of this topic is \cite{Oh}. \eqref{largedevest} is equivalent to (B.9) in \cite{Oh}.
\end{proof}

Proposition \ref{prop.treetermsupperbound} is a corollary of the above large deviation estimate and Proposition \ref{prop.treetermsvariance}.

\begin{proof}[Proof of Proposition \ref{prop.treetermsupperbound}] Because $(\mathcal{J}_{T})_{k}$ are Gaussian polynomials, we can take $F(\omega)=(\mathcal{J}_{T})_{k}$ in Lemma \ref{lem.largedev}. Then we obtained
\begin{equation}
|(\mathcal{J}_{T})_{k}(t)|\lesssim L^{\frac{n}{2}\theta} \sqrt{\mathbb{E}|(\mathcal{J}_T)_k|^2}
\end{equation} 
with probability less than $e^{-c(L^{\frac{n}{2}\theta})^{\frac{2}{n}}}=e^{-cL^{\theta}}$. By definition \ref{def.Lcertainly}, the above inequality holds true $L$-certainly.

Since by Proposition \ref{prop.treetermsvariance}, $\mathbb{E}|(\mathcal{J}_T)_k|^2\lesssim L^{O(l(T)\theta)} \rho^{2l(T)}$, then we get 
\begin{equation}\label{eq.proptreetermsupperbound1}
|(\mathcal{J}_{T})_{k}(t)|\lesssim L^{O(l(T)\theta)} \rho^{l(T)},\qquad \textit{L-certainly}
\end{equation} 

\eqref{eq.proptreetermsupperbound1} is very similar to the final goal \eqref{eq.treetermsupperbound}, except for the $\sup_t$ and $\sup_k$ in front. In what follows we apply the standard epsilon net and union bound method to remove these two $\sup$.

Assume that $|t-t'|\lesssim \rho^{l(T)}L^{-M}$, it's not hard to show that $|(\mathcal{J}_{T})_{k}(t)-(\mathcal{J}_{T})_{k}(t')|\lesssim \rho^{l(T)}$. Therefore, if $\sup_{i} \sup_{k} |(\mathcal{J}_{T})_{k}(i\rho^{l(T)}L^{-M})|\lesssim L^{O(l(T)\theta)} \rho^{l(T)}$, then $\sup_{t} \sup_{k} |(\mathcal{J}_{T})_{k}(t)|\lesssim L^{O(l(T)\theta)} \rho^{l(T)}$ and the proof is completed.

Notice that 
\begin{equation}\label{eq.unionbound}
\begin{split}
    &\mathbb{P}\left(\sup_{i\in \mathbb{Z}} \sup_{k} |(\mathcal{J}_{T})_{k}(i\rho^{l(T)}L^{-M})|\gtrsim L^{O(l(T)\theta)} \rho^{l(T)}\right)
    \\
    =&\mathbb{P}\left(\bigcup_{i\in \mathbb{Z}\cap [0, T_{\text{max}}\rho^{-l(T)}L^{M}]}\bigcup_{k\in \mathbb{Z}_{L}\cap [0,1]}\{ |(\mathcal{J}_{T})_{k}(i\rho^{l(T)}L^{-M})|\gtrsim L^{O(l(T)\theta)} \rho^{l(T)}\}\right)
    \\
    \lesssim & \sum_{i\in \mathbb{Z}\cap [0, T_{\text{max}}\rho^{-l(T)}L^{M}]}\sum_{k\in \mathbb{Z}_{L}\cap [0,1]}\mathbb{P}\left( |(\mathcal{J}_{T})_{k}(i\rho^{l(T)}L^{-M})|\gtrsim L^{O(l(T)\theta)} \rho^{l(T)}\right)
    \\
    \lesssim & \sum_{i\in \mathbb{Z}\cap [0, T_{\text{max}}\rho^{-l(T)}L^{M}]}\sum_{k\in \mathbb{Z}_{L}\cap [0,1]} e^{-O(L^{\theta})}= L^{2M}e^{-O(L^{\theta})}=e^{-O(L^{\theta})}
\end{split}
\end{equation}
Here in the second inequality the two ranges $[0, T_{\text{max}}\rho^{-l(T)}L^{M}]$ and $[0,1]$ of $i$ and $k$ come from the fact that $t=i\rho^{l(T)}L^{-M}\lesssim  T_{\text{max}}$ and $(\mathcal{J}_{T})_{k}=0$ for $|k|\gtrsim 1$. In the last line we can replace the probability by $e^{-O(L^{\theta})}$ because the estimate $|(\mathcal{J}_{T})_{k}(t)|\lesssim L^{O(l(T)\theta)} \rho^{l(T)}$ holds true $L$-certainly. 

Now we complete the proof of Proposition \ref{prop.treetermsupperbound}.
\end{proof}

\subsection{Norm estimate of random matrices} \label{sec.randommatrices} In this section, we prove Proposition \ref{prop.operatorupperbound}. 

Remember that by definition of $\mathcal{P}_{T}$ and $\mathcal{T}$, 
\begin{equation}\label{eq.formulaP_T}
\mathcal{P}_{T}(w)=\mathcal{T}(\mathcal{J}_{T},w)=\frac{i\lambda}{L^{d}} \sum\limits_{S(k_1,k_2,k)=0}\int^{t}_0k_{x}\mathcal{J}_{T,k_1} w_{k_2}e^{i s\Omega(k_1,k_2,k)- \nu|k|^2(t-s)} ds.
\end{equation}    

\subsubsection{Dyadic decomposition of $\mathcal{P}_{T}$}
Remember that we have the dyadic decomposition $[0,1]= \bigcup_{\tau=0}^{\infty}[2^{-\tau},2^{-\tau-1}]$.

We can then construct a dyadic decomposition  $\mathcal{P}_{T}=\sum_{l=0}^{\infty} \mathcal{P}^l_{T}$. Here $\mathcal{P}_T^l$ is defined by the following formula. 
\begin{equation}
\mathcal{P}_T^l(w)=\frac{i\lambda}{L^{d}} \sum\limits_{S(k_1,k_2,k)=0}\int_{(t-s)/T_{\text{max}}\in [2^{-\tau},2^{-\tau-1}]}k_{x}\mathcal{J}_{T,k_1} w_{k_2}e^{i s\Omega(k_1,k_2,k)- \nu|k|^2(t-s)} ds
\end{equation}

We also introduce the bilinear operator $\mathcal{T}^l(\phi,\phi)_k$
\begin{equation}
\mathcal{T}^l(\phi,\phi)_k=\frac{i\lambda}{L^{d}} \sum\limits_{S(k_1,k_2,k)=0}\int_{(t-s)/T_{\text{max}}\in [2^{-\tau},2^{-\tau-1}]}k_{x}\phi_{k_1} \phi_{k_2}e^{i s\Omega(k_1,k_2,k)- \nu|k|^2(t-s)} ds
\end{equation}

Proposition \ref{prop.operatorupperbound} is a corollary of the following proposition.

\begin{prop}\label{prop.operatorupperbound'}
Let $\rho=\alpha\, T^{\frac{1}{2}}_{\text{max}}$ and $\mathcal{P}^l_{T}$ be the operator defined above. Define the space $X^{p}_{L^{2M}}=\{w\in X^p: w_k=0\text{ if }|k|\gtrsim L^{2M}\}$ and the norm $||w||_{X^{p}_{L^{2M}}}=\sup_{|k|\lesssim L^{2M}} \langle k\rangle^{p} |w_k|$. Then for any sequence of trees and numbers $\{T_1,\cdots,T_K\}$ and $\{\tau_1,\cdots,\tau_K\}$, we have $L$-certainly the operator bound
\begin{equation}\label{eq.operatornormtau}
    \left|\left|\sum_{\tau_1,\cdots,\tau_K}\prod_{j=1}^K\mathcal{P}^{\tau_j}_{T_j}\right|\right|_{L_t^{\infty}X^{p}_{L^{2M}}\rightarrow L_t^{\infty}X^{p}}\le L^{O\left(1+\theta \sum_{j=1}^K l(T_j)\right)} \rho^{\sum_{j=1}^K l(T_j)}.
\end{equation}
for any $T_j$ with $l(T_j)\le N$. 
\end{prop}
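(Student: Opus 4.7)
\medskip

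\textbf{Proposal for the proof of Proposition \ref{prop.operatorupperbound'}.} My plan is to compute the kernel $K_{k,k'}$ of the composition $\prod_{j=1}^K\mathcal{P}^{\tau_j}_{T_j}$, recognize it as a Gaussian polynomial of total degree $\sum_j l(T_j)$ in the base Gaussians $\xi$, bound its variance via couples, then apply large deviation and union bound. Concretely, unfolding \eqref{eq.formulaP_T} along the composition produces an integral over nested times, a sum over intermediate momenta $k=k^{(0)},k^{(1)},\ldots,k^{(K)}=k'$ with momentum conservation at each step, and a product $\prod_j (\mathcal J_{T_j})_{k^{(j-1)}-k^{(j)}}$. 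Graphically this is the "caterpillar" couple obtained by hanging the trees $T_1,\ldots,T_K$ off a distinguished spine, with a free leg at $k'$ and a fixed leg at $k$. The structure is exactly that of the objects handled in sections \ref{sec.refexp}--\ref{sec.numbertheory}.

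The first step is to write the coefficient $K_{k,k'}$ in the form \eqref{eq.coefterm}--\eqref{eq.coef}, with the same oscillatory phase and viscous damping, but now associated to this caterpillar tree; the dyadic restriction $(t_{\widehat{\mathfrak n}}-t_{\mathfrak n})/T_{\max}\in[2^{-\tau_j},2^{-\tau_j-1}]$ at the spine nodes produces, via the integration-by-parts of Lemma \ref{lem.boundcoef'}, the extra exponential factor $e^{-c\,\nu|k^{(j)}|^2 2^{-\tau_j}T_{\max}}$. Then the variance of $K_{k,k'}$ is computed by Wick (Lemma \ref{th.wick}), producing a sum over pairings $p$ and hence over couples $\mathcal C$ with exactly one free leg (at $k'$) and one fixed leg (at $k$), so Proposition \ref{prop.counting} applies. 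Combining the coefficient bound from Lemma \ref{lem.boundcoef} with the lattice count from Proposition \ref{prop.counting}, as in the proof of Lemma \ref{lem.Tpvariance}, gives
\[
\mathbb E|K_{k,k'}|^2\lesssim L^{O(\theta\sum l(T_j))}\,\rho^{2\sum l(T_j)}\,\prod_{j=1}^K \langle k^{(j)}\rangle^{-2p-\text{viscous decay from }2^{-\tau_j}},
\]
where the viscous/frequency decay will let us both absorb $\langle k\rangle^p\langle k'\rangle^{-p}$ and sum in $k'$ and intermediate momenta.

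Next I apply the Gaussian hypercontractivity estimate Lemma \ref{lem.largedev} to $F=K_{k,k'}(t)$. This gives $|K_{k,k'}(t)|\le L^{O(\theta\sum l(T_j))}\sqrt{\mathbb E|K_{k,k'}|^2}$ with probability $1-e^{-cL^\theta}$ at a single $(t,k,k')$. Because of the cutoff $|k|,|k'|\le L^{2M}$ built into $X^p_{L^{2M}}$ and the $L$-Lipschitz continuity in $t$ of $K_{k,k'}$ (which costs only $L^{O(M)}$ net points), a standard union bound as in \eqref{eq.unionbound} upgrades this to a simultaneous $L$-certain bound over all $(t,k,k')$. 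Then the $\ell^\infty\to\ell^\infty$ operator norm becomes
\[
\sup_{t,k}\langle k\rangle^p\sum_{k'}\langle k'\rangle^{-p}|K_{k,k'}(t)|,
\]
and the viscous and $\langle k^{(j)}\rangle^{-p}$ factors allow us to carry out the sums over $k'$ and the spine momenta. Finally, summing over the dyadic parameters $\tau_j$ costs at most $L^{O(\theta)}$ per factor, and summing over $\tau_1,\dots,\tau_K$ contributes a harmless logarithmic loss that is absorbed into $L^{O(1+\theta\sum l(T_j))}$.

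\textbf{Main obstacle.} The delicate point is the quasilinear derivative $k_x$ in \eqref{eq.formulaP_T}, which at each spine node generates a factor $|k^{(j)}_x|$, so naive summation in $k'$ (or in the intermediate momenta) diverges. This is precisely the "unboundedness of $L$" mentioned in section \ref{sec.randmatintro}. To tame it, I must use the viscous decay $e^{-c\nu|k^{(j)}|^2 2^{-\tau_j}T_{\max}}$: combined with $\nu\ge c^2T_{\max}^{-1}$ from \eqref{eq.conditionnu}, each derivative loss at a high-frequency spine momentum is compensated, and the low-frequency part is already tamed by the compact support from $\mathcal J_T$ (Proposition \ref{prop.treetermsupperbound}). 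Making this quantitative — balancing the $k_x$ losses against the viscous gains while keeping the couple counting estimates from \eqref{eq.countingbd0} intact through the $\kappa_{\mathfrak e}$ weights — is the main technical task; everything else is parallel to the treatment of tree terms in section \ref{sec.treetermsupperbound}.
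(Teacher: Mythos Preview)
Your plan is essentially the paper's own route: build the composed tree $T_1\circ\cdots\circ T_K$ (your ``caterpillar''), express its kernel $H^{\tau_1\cdots\tau_K}_{Tkk'}(t,s)$ as a Gaussian polynomial in $\xi$ (Lemma~\ref{lem.treetermsoperator}), bound the variance via couples and the lattice count of Proposition~\ref{prop.counting} (this is Proposition~\ref{prop.treetermsvarianceoperator} and Lemma~\ref{lem.Tpvarianceop}), then apply hypercontractivity and an $\varepsilon$-net/union bound. Two points need tightening.

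First, the sum over $\tau_j$ is \emph{not} ``a harmless logarithmic loss'': each $\tau_j$ ranges over all of $\mathbb N$, so you need genuine exponential decay in $\tau_j$. The paper extracts this in Lemma~\ref{lem.boundcoefoperator}: the coefficient bound carries an explicit factor $2^{-\tau_{\mathfrak n}/2}$ at every substitution node, whence $\mathbb E|H^{\tau_1\cdots\tau_K}_{Tkk'}|^2\lesssim L^{O(l\theta)}2^{-\frac{1}{2}\sum_j\tau_j}\rho^{2l}$; after large deviation the factor $2^{-\frac{1}{4}\sum_j\tau_j}$ survives and makes $\sum_{\tau_1,\ldots,\tau_K}$ convergent. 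This decay arises from balancing the derivative loss $|k_x|$ against \emph{both} the viscous weight $e^{-c\,2^{-\tau}|k|^2}$ \emph{and} the dyadic interval length $\sim 2^{-\tau}T_{\max}$ in the integration by parts; the viscous factor alone yields $\sup_k|k_x|e^{-c\,2^{-\tau}|k|^2}\lesssim 2^{+\tau/2}$, i.e.\ growth rather than decay, so your ``main obstacle'' paragraph has the right ingredients but assigns them the wrong role.

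Second, your displayed variance bound still contains the intermediate spine momenta $k^{(j)}$, but those are summed inside the kernel and cannot appear in $\mathbb E|K_{k,k'}|^2$. The paper's mechanism for the $k'$-sum is much simpler than viscous decay: since each $\mathcal J_{T_j}$ has Fourier support in $\{|\cdot|\lesssim 1\}$ (compact support of $n_{\mathrm{in}}$), one has $H^{\tau_1\cdots\tau_K}_{Tkk'}=0$ unless $|k-k'|\lesssim 1$ (Proposition~\ref{prop.treetermsvarianceoperator}), so $\langle k\rangle^p\langle k'\rangle^{-p}\lesssim 1$ automatically and $\sum_{k'}$ costs only $L^d$. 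The residual $L^{O(1)}$ in \eqref{eq.operatornormtau} then comes from the outer time integral $\int_0^t ds\le T_{\max}\le L$, not from any high-frequency tail.
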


\begin{proof}[Proof of Proposition \ref{prop.operatorupperbound}:] By Proposition \ref{prop.operatorupperbound'}, we have
\begin{equation}
    \left|\left|\prod_{j=1}^K\mathcal{P}_{T_j}\right|\right|_{L_t^{\infty}X^p_{L^{2M}}\rightarrow L_t^{\infty}X^p}=\left|\left|\sum_{\tau_1,\cdots,\tau_K}\prod_{j=1}^K\mathcal{P}^{\tau_j}_{T}\right|\right|_{L_t^{\infty}X^{p}_{L^{2M}}\rightarrow L_t^{\infty}X^{p}}\le L^{O\left(1+\theta \sum_{j=1}^K l(T_j)\right)} \rho^{\sum_{j=1}^K l(T_j)}.
\end{equation}

Define $\left(X^{p}_{L^{2M}}\right)^{\perp}=\{w\in X^p: w_k=0\text{ if }|k|\lesssim L^{2M}\}$. To prove Proposition \ref{prop.operatorupperbound}, it suffices to show that for all $w\in \left(X^{p}_{L^{2M}}\right)^{\perp}$,
\begin{equation}\label{eq.prop2.8eq1}
    \left|\left|\prod_{j=1}^K\mathcal{P}_{T_j}w\right|\right|_{L_t^{\infty}X^p}\lesssim L^{O\left(1+\theta \sum_{j=1}^K l(T_j)\right)} \rho^{\sum_{j=1}^K l(T_j)} \left|\left|w\right|\right|_{L_t^{\infty}X^p}.
\end{equation}

By \eqref{eq.formulaP_T}, if $w\in \left(X^{p}_{L^{2M}}\right)^{\perp}$, then $\mathcal{P}_{T_j} w\in \left(X^{p}_{L^{2M}}\right)^{\perp}$ if we enlarge the constant in $|k|\lesssim L^{2M}$ in the definition of $\left(X^{p}_{L^{2M}}\right)^{\perp}$. Therefore, to prove \eqref{eq.prop2.8eq1}, it suffices to prove
\begin{equation}\label{eq.prop2.8eq2}
    \left|\left|\mathcal{P}_{T_j}w\right|\right|_{L_t^{\infty}X^p}\lesssim L^{O(1+l(T_j)\theta)} \rho^{l(T_j)} \left|\left|w\right|\right|_{L_t^{\infty}X^p},
\end{equation}
for all $w\in \left(X^{p}_{L^{2M}}\right)^{\perp}$.

By \eqref{eq.formulaP_T},
\begin{equation}\label{eq.prop2.8eq3}
\begin{split}
    |\mathcal{P}_{T_j}(w)_k|\le &\frac{\lambda}{L^{d}} \sum\limits_{S(k_1,k_2,k)=0}\int^{t}_0|k_{x}||\mathcal{J}_{T_j,k_1}| |w_{k_2}|e^{- \nu|k|^2(t-s)} ds
    \\
    \lesssim& L^{O(l(T_j)\theta)} \rho^{l(T_j)}\frac{\lambda}{L^{d}}|k_{x}| \int^{t}_0e^{- \nu|k|^2(t-s)} ds \sum_{k_2:|k_2-k|\lesssim 1} \langle k_2\rangle^{-p} 
    \\
    \lesssim& L^{O(l(T_j)\theta)} \rho^{l(T_j)}\frac{\lambda}{L^{d}} |k_{x}| \nu^{-1} \langle k\rangle^{-2} \sum_{k_2:|k_2-k|\lesssim 1} \langle k_2\rangle^{-p} 
    \\
    \lesssim& L^{O(l(T_j)\theta)} \rho^{l(T_j)}\frac{\lambda}{L^{d}} \nu^{-1} \langle k\rangle^{-1}  \langle k\rangle^{-p} \lesssim L^{O(l(T_j)\theta)} \rho^{l(T_j)}\frac{\lambda}{L^{d}} \nu^{-1} L^{-2M}  \langle k\rangle^{-p} 
    \\
    \lesssim& L^{-M} \rho^{l(T_j)} \langle k\rangle^{-p}
\end{split}
\end{equation} 
Here in the second inequality, we apply Proposition \ref{prop.treetermsupperbound}. In the third line we use the fact that $\int^{t}_0e^{- \nu|k|^2(t-s)} ds\le \nu^{-1} \langle k\rangle^{-2}$. In the fourth line we use the fact that $\sum_{k_2:|k_2-k|\lesssim 1} \langle k_2\rangle^{-p}\le \langle k\rangle^{-p}$ and $|k|\gtrsim L^{2M}$ (since if $|k|\lesssim L^{2M}$ then $|\mathcal{P}_{T_j}(w)_k|$ vanishes and there is nothing to prove).

\eqref{eq.prop2.8eq3} implies that $ \left|\left|\mathcal{P}_{T_j}w\right|\right|_{L_t^{\infty}X^p}\lesssim L^{-M} \rho^{l(T_j)} \left|\left|w\right|\right|_{L_t^{\infty}X^p}\lesssim L^{O(1+l(T_j)\theta)} \rho^{l(T_j)} \left|\left|w\right|\right|_{L_t^{\infty}X^p}$, so we have proved \eqref{eq.prop2.8eq2} and thus \eqref{eq.operatornorm'}.

Now we prove \eqref{eq.operatornorm}. Because $L=\sum_{1\le l(T)\le N} \mathcal{P}_{T}$ and $L^K=\sum_{1\le l(T_1),\cdots l(T_K)\le N} \mathcal{P}_{T_1}\cdots\mathcal{P}_{T_K}$, by \eqref{eq.operatornorm'} 
\begin{equation}
     \left|\left|L^K\right|\right|_{L_t^{\infty}X^p\rightarrow L_t^{\infty}X^p}\lesssim L^{O(1)} \sum_{1\le l(T_1),\cdots l(T_K)\le N} (L^{O(\theta)}\rho)^{\sum_{j=1}^K l(T_j)}\le L^{O\left(1+\theta \sum_{j=1}^K l(T_j)\right)} \rho^{K}
\end{equation}
Here in the last step we use the fact that $l(T_j)\ge 1$ for all $j$ and there are bounded many trees satisfy $1\le l(T_1),\cdots l(T_K)\le N$.

Therefore, we complete the proof of Proposition \ref{prop.operatorupperbound}.
\end{proof}

\subsubsection{Formulas for product of random matrices $\mathcal{P}^l_{T_j}$} In order to prove Proposition \ref{prop.operatorupperbound'}, it is very helpful to find a good formula of  $\prod_{j=1}^K\mathcal{P}^{\tau_j}_{T_j}$, which is the main goal of this section.

$\mathcal{P}^{l}_{T}$ is almost the same as $\mathcal{P}_{T}$ except for the limits in the time integral, so in the rest part of this section we do not stress their difference. Now let's find a tree representation for $\mathcal{P}^{l}_{T}$ or $\mathcal{P}_{T}$.


By \eqref{eq.treeterm}, we know that $\mathcal{J}_{T}=\mathcal{T}(\mathcal{J}_{T_{\mathfrak{n}_1}}, \mathcal{J}_{T_{\mathfrak{n}_2}})$ corresponds to the tree $T$ in which the two subtrees of the root nodes are $T_{\mathfrak{n}_1}$ and $T_{\mathfrak{n}_2}$. Taking $T_{\mathfrak{n}_2}$ to be an one node tree, as the left tree in Figure \ref{fig.T(J,xi)andT(J,w)}, then this graph represents a term $\mathcal{T}(\mathcal{J}_{T_{\mathfrak{n}_1}}, \xi)$. The right tree in Figure \ref{fig.T(J,xi)andT(J,w)} represents the term $\mathcal{T}(\mathcal{J}_{T_{\mathfrak{n}_1}}, w)$, because as in section \ref{sec.connection}, the $\Box$ node represents a function $w$ (in section \ref{sec.connection} the function is $\phi$).

 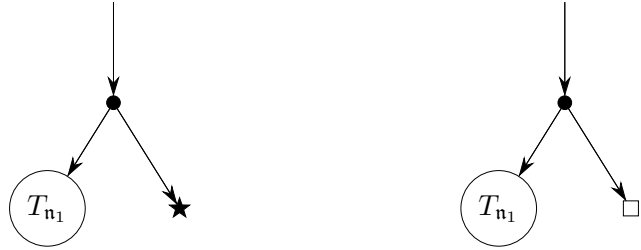
\begin{figure}[H]
    \centering
    \scalebox{0.5}{
    \begin{tikzpicture}[level distance=80pt, sibling distance=100pt]
        \node[] at (0,0) (1) {} 
            child {node[fillcirc] (2) {} 
                child {node[draw, circle, minimum size=1cm, scale=2] (3) {$T_{\mathfrak{n}_1}$}}
                child {node[fillstar] (4) {}}
            };
        \draw[-{Stealth[length=5mm, width=3mm]}] (1) -- (2);
        \draw[-{Stealth[length=5mm, width=3mm]}] (2) -- (3);
        \draw[-{Stealth[length=5mm, width=3mm]}] (2) -- (4);
        
        \node[] at (12,0) (1) {} 
            child {node[fillcirc] (2) {} 
                child {node[draw, circle, minimum size=1cm, scale=2] (3) {$T_{\mathfrak{n}_1}$}}
                child {node[draw, minimum size=0.4cm] (4) {}}
            };
        \draw[-{Stealth[length=5mm, width=3mm]}] (1) -- (2);
        \draw[-{Stealth[length=5mm, width=3mm]}] (2) -- (3);
        \draw[-{Stealth[length=5mm, width=3mm]}] (2) -- (4);
    \end{tikzpicture}
    }
        \caption{Graphical representations of $\mathcal{T}(\mathcal{J}_{T_{\mathfrak{n}_1}}, \xi)$ and $\mathcal{T}(\mathcal{J}_{T_{\mathfrak{n}_1}}, w)$.}
        \label{fig.T(J,xi)andT(J,w)}
    \end{figure}

Let's find the tree representation for $\prod_{j=1}^K\mathcal{P}^{\tau_j}_{T_j}$ or $\prod_{j=1}^K\mathcal{P}_{T_j}$. Recall the expansion process in section \ref{sec.connection}: the replacement of $\Box$ by a branching node indicates the substitution of $\phi$ by $\mathcal{T}(\xi, \xi)$. The action of composition $\mathcal{P}_{T_1}\circ \mathcal{P}_{T_2}(w)=\mathcal{T}(\mathcal{J}_{T_{1}}, \mathcal{T}(\mathcal{J}_{T_{2}}, w))$ is the substitution of $\cdot$ by $\mathcal{T}(\mathcal{J}_{T_{2}}, w)$ in $\mathcal{T}(\mathcal{J}_{T_{1}},\cdot)$. As in Figure \ref{fig.substitution}, if $\mathcal{P}_{T_1}=\mathcal{T}(\mathcal{J}_{T_{1}},\cdot)$ is represented by the left tree, then as an analog that a $\Box$ node is replaced by a branching node, the substitution of $\cdot$ by $\mathcal{T}(\mathcal{J}_{T_{2}}, w)$ should correspond to the operation that the $\Box$ node in the left tree is replaced by the middle tree corresponding to $\mathcal{T}(\mathcal{J}_{T_{2}}, w)$, and the finally resulting right tree should correspond to $\mathcal{P}_{T_1}\circ \mathcal{P}_{T_2}(w)$.

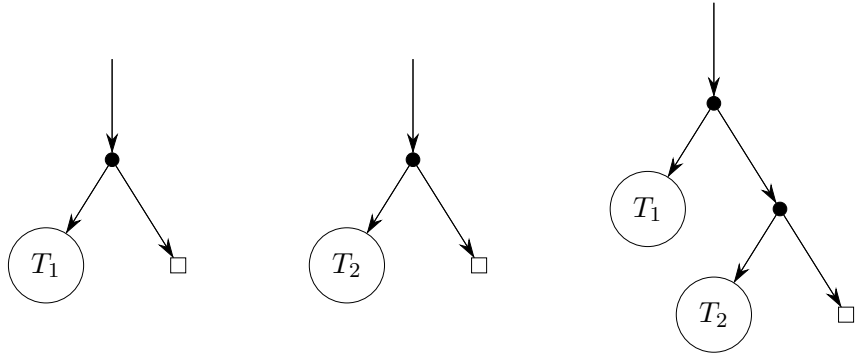
\begin{figure}[H]
    \centering
    \scalebox{0.5}{
    \begin{tikzpicture}[level distance=80pt, sibling distance=100pt]
        \node[] at (0,0) (1) {} 
            child {node[fillcirc] (2) {} 
                child {node[draw, circle, minimum size=1cm, scale=2] (3) {$T_{1}$}}
                child {node[draw, minimum size=0.4cm] (4) {}}
            };
        \draw[-{Stealth[length=5mm, width=3mm]}] (1) -- (2);
        \draw[-{Stealth[length=5mm, width=3mm]}] (2) -- (3);
        \draw[-{Stealth[length=5mm, width=3mm]}] (2) -- (4);
        
        \node[] at (8,0) (1) {} 
            child {node[fillcirc] (2) {} 
                child {node[draw, circle, minimum size=1cm, scale=2] (3) {$T_{2}$}}
                child {node[draw, minimum size=0.4cm] (4) {}}
            };
        \draw[-{Stealth[length=5mm, width=3mm]}] (1) -- (2);
        \draw[-{Stealth[length=5mm, width=3mm]}] (2) -- (3);
        \draw[-{Stealth[length=5mm, width=3mm]}] (2) -- (4);
        
        \node[] at (16,1.5) (1) {} 
            child {node[fillcirc] (2) {} 
                child {node[draw, circle, minimum size=1cm, scale=2] (3) {$T_{1}$}}
                child {node[fillcirc] (4) {} 
                child {node[draw, circle, minimum size=1cm, scale=2] (5) {$T_{2}$}}
                child {node[draw, minimum size=0.4cm] (6) {}}
            }
            };
        \draw[-{Stealth[length=5mm, width=3mm]}] (1) -- (2);
        \draw[-{Stealth[length=5mm, width=3mm]}] (2) -- (3);
        \draw[-{Stealth[length=5mm, width=3mm]}] (2) -- (4);
        \draw[-{Stealth[length=5mm, width=3mm]}] (4) -- (5);
        \draw[-{Stealth[length=5mm, width=3mm]}] (4) -- (6);
    \end{tikzpicture}
    }
        \caption{The substitution process.}
        \label{fig.substitution}
    \end{figure}

In conclusion, $\mathcal{P}_{T_1}\circ \mathcal{P}_{T_2}(w)$ corresponds to the right tree in Figure \ref{fig.substitution} and more generally, $\prod_{j=1}^K\mathcal{P}_{T_j}(w)$ (or $\prod_{j=1}^K\mathcal{P}^{\tau_j}_{T_j}(w)$) corresponds to the tree in in Figure \ref{fig.productformula}.

\begin{figure}[H]
    \centering
    \scalebox{0.4}{
    \begin{tikzpicture}[level distance=80pt, sibling distance=100pt]
        
        \node[] at (0,0) (1) {} 
            child {node[fillcirc] (2) {} 
                child {node[draw, circle, minimum size=1cm, scale=2] (3) {$T_{1}$}}
                child {node[fillcirc] (4) {} 
                child {node[draw, circle, minimum size=1cm, scale=2] (5) {$T_{2}$}}
                child {node[] (6) {}}
            }
            };
            
        \node[scale =3] at (3,-10.5) {$\cdots$};
        \draw[-{Stealth[length=5mm, width=3mm]}] (1) -- (2);
        \draw[-{Stealth[length=5mm, width=3mm]}] (2) -- (3);
        \draw[-{Stealth[length=5mm, width=3mm]}] (2) -- (4);
        \draw[-{Stealth[length=5mm, width=3mm]}] (4) -- (5);
        \draw[-{Stealth[length=5mm, width=3mm]}] (4) -- (6);

        \node[fillcirc] at (5,-12) (11) {} 
            child {node[draw, circle, minimum size=1cm, scale=2] (12) {$T_{K}$}}
            child {node[draw, minimum size=0.4cm] (13) {}
            };
        \node[scale =2] at (0.7,-2.8) {$s_1$};
        \node[scale =2] at (2.5,-5.6) {$s_2$};
        \node[scale =2] at (5.8,-12) {$s_K$};
    \end{tikzpicture}
    }
        \caption{Picture of $T_1\circ \cdots \circ T_{K}$}
        \label{fig.productformula}
    \end{figure}
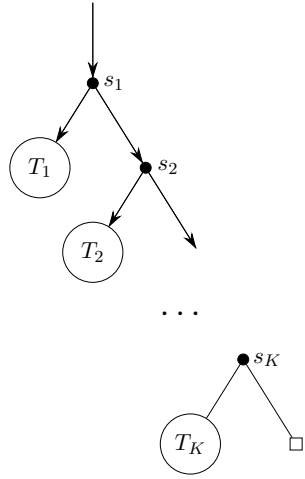

We introduce the following definition.

\begin{defn}
\begin{enumerate}
    \item \textbf{Definition of $T_1\circ \cdots \circ T_{K}$:} We define  $T_1\circ \cdots \circ T_{K}$ to be the tree in Figure \ref{fig.productformula}.
    \item \textbf{Substitution nodes:} A node in a tree $T$ with $\Box$ nodes is defined to be a \underline{substitution node} if it is an ancestor of some $\Box$ nodes. For a substitution node, we assign a number $\tau$ to it, called its index. In Figure \ref{fig.productformula}, $s_1,\cdots,s_{K}$ are all the substitution nodes in $T_1\circ \cdots \circ T_{K}$ and we assign index $\tau_1,\cdots,\tau_{K}$ to them. Notice that $s_1$ is the root $\mathfrak{r}$.
\end{enumerate}
\end{defn}

Because the tree in this section contains $\Box$ nodes and substitution nodes, we propose the following generalization of Definition \ref{def.treeterms} of tree terms.

\begin{defn}\label{def.treetermsoperator} Given a binary tree $T$ with $\Box$ nodes and substitution nodes, we inductively define the quantity $\mathcal{J}_T$ by:
\begin{equation}\label{eq.treetermoperator}
    \mathcal{J}_T=
    \begin{cases}
    \xi, \qquad\qquad\quad\ \   \textit{ if $T$ has only one node $\star$.}
    \\
    w, \qquad\qquad\quad\ \textit{ if $T$ has only one node $\Box$.}
    \\
    \mathcal{T}^l(\mathcal{J}_{T_{\mathfrak{n}_1}}, \mathcal{J}_{T_{\mathfrak{n}_2}}), \textit{ if the root of $T$ is a substitution node with index $\tau$.}
    \\
    \mathcal{T}(\mathcal{J}_{T_{\mathfrak{n}_1}}, \mathcal{J}_{T_{\mathfrak{n}_2}}),\  \textit{ otherwise.}
    \end{cases}
\end{equation}
Here $\mathfrak{n}_1$, $\mathfrak{n}_2$ are two children of the root node $\mathfrak{r}$ and $T_{\mathfrak{n}_1}$, $T_{\mathfrak{n}_2}$ are the subtrees of $T$ rooted at above nodes.
\end{defn}

Then we have the following formula of $\prod_{j=1}^K\mathcal{P}^{\tau_j}_{T_j}(w)$

\begin{lem}
With above definition of $T_1\circ \cdots \circ T_{K}$ and $\mathcal{J}_T$, we have 
\begin{equation}\label{eq.operatoreqsimple}
    \prod_{j=1}^K\mathcal{P}^{\tau_j}_{T_j}(w)=\mathcal{J}_{T_1\circ \cdots \circ T_{K}}
\end{equation}
\end{lem}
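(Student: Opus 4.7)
The plan is to prove the identity by induction on $K$, since both sides of \eqref{eq.operatoreqsimple} are built by the same recursive mechanism: composition of operators on the left, grafting of trees on the right. The point of the lemma is essentially bookkeeping, translating between the operator-theoretic and graph-theoretic descriptions. I would first unwind the three relevant definitions to pin down what is being claimed: by the definition of $\mathcal{P}_{T}^{\tau}$ one has $\mathcal{P}_{T}^{\tau}(w)=\mathcal{T}^{\tau}(\mathcal{J}_T,w)$; by Definition \ref{def.treetermsoperator} applied to a tree whose root is a substitution node with index $\tau$, one has $\mathcal{J}_{T}=\mathcal{T}^{\tau}(\mathcal{J}_{T_{\mathfrak{n}_1}},\mathcal{J}_{T_{\mathfrak{n}_2}})$; and finally $\mathcal{J}_{\Box}=w$ and $\mathcal{J}_{\star}=\xi$.

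For the base case $K=1$, the tree $T_1\circ\Box$ (which is what the construction of Figure \ref{fig.productformula} produces when $K=1$) has root $s_1$, a substitution node with index $\tau_1$, whose left subtree is $T_1$ and whose right subtree is a single $\Box$ leaf. Applying Definition \ref{def.treetermsoperator} to this root, $\mathcal{J}_{T_1\circ\Box}=\mathcal{T}^{\tau_1}(\mathcal{J}_{T_1},\mathcal{J}_{\Box})=\mathcal{T}^{\tau_1}(\mathcal{J}_{T_1},w)=\mathcal{P}_{T_1}^{\tau_1}(w)$.

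For the inductive step, I would first verify the structural identity
\[
T_1\circ T_2\circ\cdots\circ T_K=T_1\circ(T_2\circ\cdots\circ T_K),
\]
meaning that the tree of Figure \ref{fig.productformula} associated with the full sequence is obtained by taking a new substitution-node root with index $\tau_1$, attaching $T_1$ as its left subtree and the composite tree of the tail $(T_2,\dots,T_K)$ as its right subtree. This is immediate from the picture. Then, denoting $T^\sharp:=T_2\circ\cdots\circ T_K$, Definition \ref{def.treetermsoperator} gives $\mathcal{J}_{T_1\circ T^\sharp}=\mathcal{T}^{\tau_1}(\mathcal{J}_{T_1},\mathcal{J}_{T^\sharp})$. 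By the induction hypothesis $\mathcal{J}_{T^\sharp}=\prod_{j=2}^{K}\mathcal{P}^{\tau_j}_{T_j}(w)$, and since $\mathcal{T}^{\tau_1}(\mathcal{J}_{T_1},\,\cdot\,)=\mathcal{P}^{\tau_1}_{T_1}$, the right-hand side equals $\mathcal{P}^{\tau_1}_{T_1}\bigl(\prod_{j=2}^{K}\mathcal{P}^{\tau_j}_{T_j}(w)\bigr)=\prod_{j=1}^{K}\mathcal{P}^{\tau_j}_{T_j}(w)$, closing the induction.

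There is no real obstacle in this argument; the only thing that requires a bit of care is keeping the definition of the grafting $\circ$ consistent so that the inductive decomposition $T_1\circ\cdots\circ T_K=T_1\circ(T_2\circ\cdots\circ T_K)$ is truly true on the nose, and so that the root is always treated as the substitution node whose right child is either the next substitution node or the terminal $\Box$. Once that convention is fixed, the proof is a one-line induction.
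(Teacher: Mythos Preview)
Your proof is correct and is essentially a formalization of the paper's approach: the paper's proof consists of the single line ``This lemma follows from the above explanation,'' where the ``above explanation'' is the discussion around Figures \ref{fig.substitution} and \ref{fig.productformula} showing that substituting $\Box$ by $\mathcal{T}^{\tau}(\mathcal{J}_{T},w)$ corresponds to grafting at the branching leaf. Your induction on $K$ is precisely the rigorous version of that recursive picture, so there is nothing to add.
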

\begin{proof}
This lemma follows from the above explanation .
\end{proof}

To get a good upper bound, we need a better formula of $\prod_{j=1}^K\mathcal{P}^{\tau_j}_{T_j}(w)$ which is an analog of Lemma \ref{lem.treeterms}. 

\begin{lem}\label{lem.treetermsoperator} (1) Using the same notation as Lemma \ref{lem.treeterms}. Given a tree $T$ with $\Box$ nodes and substitution nodes. Assume that the root $\mathfrak{r}$ is a substitution nodes of index $\tau_{1}$. Let $\mathcal{J}_T$ be terms defined in Definition \ref{def.treetermsoperator}, then their Fourier coefficients $\mathcal{J}_{T,k}$ are degree $l$ polynomials of $\xi$ and $w$ given by the following formula

\begin{equation}\label{eq.coeftermoperator}
\mathcal{J}_{T,k}=\left(\frac{i\lambda}{L^{d}}\right)^l\sum_{k_1,\, k_2,\, \cdots,\, k_{l+1}} \int_{\cup_{\mathfrak{n}\in T_{\text{in}}} A_{\mathfrak{n}}} e^{\sum_{\mathfrak{n}\in T_{\text{in}}} it_{\mathfrak{n}}\Omega_{\mathfrak{n}}-\nu(t_{\widehat{\mathfrak{n}}}-t_{\mathfrak{n}})|k_{\mathfrak{e}}|^2} \prod_{j=1}^{l+1} (\xi|w)_{k_j} 
\prod_{\mathfrak{n}\in T_{\text{in}}} dt_{\mathfrak{n}} 
\ \delta_{\cap_{\mathfrak{n}\in T_{\text{in}}} \{S_{\mathfrak{n}}=0\}}\ \prod_{\mathfrak{e}\in T_{\text{in}}}\iota_{\mathfrak{e}}k_{\mathfrak{e},x}
\end{equation}
Here $\iota$, $(\xi|w)_{k_j}$, $A_{\mathfrak{n}}$, $S_{\mathfrak{n}}$, $\Omega_{\mathfrak{n}}$ are defined by 
\begin{equation}
    \iota_{\mathfrak{e}}=\begin{cases}
        +1 \qquad \textit{if $\mathfrak{e}$ pointing inwards to $\mathfrak{n}$}
        \\
        -1 \qquad  \textit{if $\mathfrak{e}$ pointing outwards from $\mathfrak{n}$}
    \end{cases}
\end{equation}
\begin{equation}
    (\xi|w)_{k_j}=\begin{cases}
        \xi \qquad\  \textit{if $j$-th leaf node is a $\star$ node}
        \\
        w \qquad  \textit{if $j$-th leaf node is a $\Box$ node}
    \end{cases}
\end{equation}
\begin{equation}
    A_{\mathfrak{n}}=\left\{
    \begin{aligned}
        &\{t_{\mathfrak{r}}\le t, (t-t_{\mathfrak{r}})/T_{\text{max}}\in [2^{-\tau_{1}},2^{-\tau_{1}-1}]\} && \textit{if $\mathfrak{n}$ is the root $\mathfrak{r}$ }
        \\
        &\{t_{\mathfrak{n}_1},\, t_{\mathfrak{n}_2},\, t_{\mathfrak{n}_3}\le t_{\mathfrak{n}}\} && \textit{if $\mathfrak{n}\ne \mathfrak{r}$ and is not a substitution node}
        \\
        &\{t_{\mathfrak{n}_1},\, t_{\mathfrak{n}_2},\, t_{\mathfrak{n}_3}\le t_{\mathfrak{n}}, (t_{\widehat{\mathfrak{n}}}-t_{\mathfrak{n}})/T_{\text{max}}\in [2^{-\tau},2^{-\tau-1}]\}  &&\textit{if $\mathfrak{n}\ne \mathfrak{r}$ is an index $\tau$ substitution node}
    \end{aligned}\right.
\end{equation}

\begin{equation}\label{eq.defnS_noperator}
    S_{\mathfrak{n}}=\iota_{\mathfrak{e}_1}k_{\mathfrak{e}_1}+\iota_{\mathfrak{e}_2}k_{\mathfrak{e}_2}+\iota_{\mathfrak{e}}k_{\mathfrak{e}}
\end{equation}

\begin{equation}
    \Omega_{\mathfrak{n}}=\iota_{\mathfrak{e}_1}\Lambda_{k_{\mathfrak{e}_1}}+\iota_{\mathfrak{e}_2}\Lambda_{k_{\mathfrak{e}_2}}+\iota_{\mathfrak{e}}\Lambda_{k_{\mathfrak{e}}}
\end{equation}
For root node $\mathfrak{r}$, we impose the constrain that $k_{\mathfrak{r}}=k$. We also define $t_{\widehat{\mathfrak{r}}}=t$ to fix the problem that $\widehat{\mathfrak{r}}$ is not well defined because $\mathfrak{r}$ does not have a parent. 

(2) Define $T=T_1\circ \cdots \circ T_{K}$ and by definition there are only one $\Box$ leaf. Assume that there are $l+1$ leaves in $T$ and label all $\star$ leaves by $1$, $\cdots$, $l$, then $l=\sum_{j=1}^K l(T_j)$. As a corollary of (1), we have the following formula for Fourier coefficients of $\prod_{j=1}^K\mathcal{P}^{\tau_j}_{T_j}$.
\begin{equation}
    \left(\prod_{j=1}^K\mathcal{P}^{\tau_j}_{T_j}(w)\right)_{k}(t)=\sum_{k'}\int_0^t H^{\tau_1\cdots \tau_{K}}_{Tkk'}(t,s) w_{k'}(s) ds
\end{equation}
and the kernel $H^{\tau_1\cdots \tau_{K}}_{Tkk'}$ is given by
\begin{equation}
\begin{split}
H^{\tau_1\cdots \tau_{K}}_{Tkk'}(t,s)=\left(\frac{i\lambda}{L^{d}}\right)^l\sum_{k_1,\, k_2,\, \cdots,\, k_{l}} H^{\tau_1\cdots \tau_{K}}_{Tk_1\cdots k_{l}kk'} \xi_{k_1}\cdots \xi_{k_{l}}
\end{split}
\end{equation}
and the coefficients $H^{\tau_1\cdots \tau_{K}}_{Tk_1\cdots k_{l}kk'}$ of the kernel is given by 
\begin{equation}
H^{\tau_1\cdots \tau_{K}}_{Tk_1\cdots k_{l}kk'}(t,s)=\int_{\cup_{\mathfrak{n}\in T_{\text{in}}} B_{\mathfrak{n}}} e^{\sum_{\mathfrak{n}\in T_{\text{in}}} it_{\mathfrak{n}}\Omega_{\mathfrak{n}}-\nu(t_{\widehat{\mathfrak{n}}}-t_{\mathfrak{n}})|k_{\mathfrak{e}}|^2}  
\prod_{\mathfrak{n}\in T_{\text{in}}} dt_{\mathfrak{n}} 
\ \delta_{\cap_{\mathfrak{n}\in T_{\text{in}}} \{S_{\mathfrak{n}}=0\}}\ \prod_{\mathfrak{e}\in T_{\text{in}}}\iota_{\mathfrak{e}}k_{\mathfrak{e},x}
\end{equation}
Here $k_1,\cdots,k_{l}$ are all variables corresponding to $\star$ leaves, $k'$ is the variable corresponding to the $\Box$ node and $B_{\mathfrak{n}}$ is defined by 
\begin{equation}
    B_{\mathfrak{n}}=\left\{
    \begin{aligned}
        &\{t_{\mathfrak{r}}\le t, (t-t_{\mathfrak{r}})/T_{\text{max}}\in [2^{-\tau_{1}},2^{-\tau_{1}-1}]\} && \textit{if $\mathfrak{n}$ is the root $\mathfrak{r}$ }
        \\
        &\{t_{\mathfrak{n}}\ge s\} && \textit{if $\mathfrak{n}$ is a parent of the $\Box$ nodes}
        \\
        &\  &&\textit{and is not a substitution nodes}
        \\
        &\{t_{\mathfrak{n}}\ge s, (t_{\widehat{\mathfrak{n}}}-t_{\mathfrak{n}})/T_{\text{max}}\in [2^{-\tau},2^{-\tau-1}]\} && \textit{if $\mathfrak{n}$ is a parent of the $\Box$ nodes}
        \\
        &\  &&\textit{and is a substitution nodes of index $\tau$}
        \\
        &\{t_{\mathfrak{n}_1},\, t_{\mathfrak{n}_2},\, t_{\mathfrak{n}_3}\le t_{\mathfrak{n}}, (t_{\widehat{\mathfrak{n}}}-t_{\mathfrak{n}})/T_{\text{max}}\in [2^{-\tau},2^{-\tau-1}]\}  &&\textit{if not in the first three cases}
        \\
        &\  &&\textit{and $\mathfrak{n}$ is a substitution node of index $\tau$}
        \\
        &\{t_{\mathfrak{n}_1},\, t_{\mathfrak{n}_2},\, t_{\mathfrak{n}_3}\le t_{\mathfrak{n}}\} && \textit{otherwise}
    \end{aligned}\right.
\end{equation}
\end{lem}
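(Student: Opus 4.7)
The plan is to prove both parts by induction on the tree structure, in direct analogy with the proof of Lemma \ref{lem.treeterms}. Part (1) extends \eqref{eq.coef} to trees carrying two kinds of leaf decorations ($\star$ and $\Box$) and a third kind of internal node (substitution nodes), with the only structural novelty being that each substitution node $\mathfrak{n}$ of index $\tau$ constrains $(t_{\widehat{\mathfrak{n}}}-t_{\mathfrak{n}})/T_{\text{max}}$ to the dyadic interval $[2^{-\tau},2^{-\tau-1}]$, exactly mirroring the definition of $\mathcal{T}^\tau$. Part (2) is then a direct consequence of \eqref{eq.operatoreqsimple} once the unique $\Box$ leaf is singled out and its contribution is peeled off as an integration kernel.

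For Part (1), I would check that the right-hand side of \eqref{eq.coeftermoperator} satisfies the recursive formula \eqref{eq.treetermoperator}; since that recursion uniquely determines $\mathcal{J}_{T,k}$, this verification suffices. The two base cases are immediate: a one-node tree consisting of $\star$ (resp.\ $\Box$) has $T_{\text{in}}=\varnothing$ and no factors, so the formula reduces to $(\xi|w)_k = \xi_k$ (resp.\ $w_k$). For the inductive step, let $\mathfrak{r}$ be the root with children $\mathfrak{n}_1,\mathfrak{n}_2$ rooting subtrees $T_{\mathfrak{n}_1},T_{\mathfrak{n}_2}$. Splitting the product $\prod_{\mathfrak{n}\in T_{\text{in}}}$, the sum $\sum_{\mathfrak{n}\in T_{\text{in}}}$, the measure $\prod dt_{\mathfrak{n}}$ and the $\delta$-constraints over $\mathfrak{r}$, $T_{\mathfrak{n}_1,\text{in}}$ and $T_{\mathfrak{n}_2,\text{in}}$, and resolving $S_{\mathfrak{r}}=0$ via $k_{\mathfrak{e}_1}+k_{\mathfrak{e}_2}=k$, the right-hand side of \eqref{eq.coeftermoperator} factors as
\[
\left(\frac{i\lambda}{L^d}\right)\sum_{k_{\mathfrak{e}_1}+k_{\mathfrak{e}_2}=k}\int_{t_{\mathfrak{r}}\in A_{\mathfrak{r}}} k_{x}\, e^{it_{\mathfrak{r}}\Omega_{\mathfrak{r}}-\nu(t-t_{\mathfrak{r}})|k|^2}\,\mathcal{J}_{T_{\mathfrak{n}_1},k_{\mathfrak{e}_1}}(t_{\mathfrak{r}})\,\mathcal{J}_{T_{\mathfrak{n}_2},k_{\mathfrak{e}_2}}(t_{\mathfrak{r}})\, dt_{\mathfrak{r}},
\]
where I have applied the induction hypothesis to $\mathcal{J}_{T_{\mathfrak{n}_j}}$. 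Depending on whether $\mathfrak{r}$ is a substitution node (index $\tau_1$) or an ordinary branched node, the range $A_{\mathfrak{r}}$ is $\{t_{\mathfrak{r}}\le t,\ (t-t_{\mathfrak{r}})/T_{\text{max}}\in [2^{-\tau_1},2^{-\tau_1-1}]\}$ or $\{t_{\mathfrak{r}}\le t\}$, which are precisely the integration ranges appearing in the definitions of $\mathcal{T}^{\tau_1}$ and $\mathcal{T}$. Hence the displayed expression equals $\mathcal{T}^{\tau_1}(\mathcal{J}_{T_{\mathfrak{n}_1}},\mathcal{J}_{T_{\mathfrak{n}_2}})_k$ or $\mathcal{T}(\mathcal{J}_{T_{\mathfrak{n}_1}},\mathcal{J}_{T_{\mathfrak{n}_2}})_k$, matching \eqref{eq.treetermoperator}. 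The only delicate bookkeeping issue is to make sure that substitution nodes strictly interior to $T_{\mathfrak{n}_j}$ (not just $\mathfrak{r}$) carry the correct dyadic constraints; this follows because the induction hypothesis is already stated with those constraints inside $A_{\mathfrak{n}}$.

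For Part (2), I apply Part (1) to $T=T_1\circ\cdots\circ T_K$. By construction the substitution nodes of $T$ are exactly $s_1=\mathfrak{r},s_2,\dots,s_K$ with indices $\tau_1,\dots,\tau_K$, and there is a unique $\Box$ leaf; label its edge variable $k_{l+1}=k'$. Then \eqref{eq.coeftermoperator} reads
\[
\mathcal{J}_{T,k}=\left(\frac{i\lambda}{L^d}\right)^l\sum_{k_1,\dots,k_l,k'}\xi_{k_1}\cdots\xi_{k_l}\,w_{k'}(t_{\widehat{\Box}})\,F_{T,k_1\cdots k_l k'k}(t),
\]
where $F$ gathers the oscillatory factor, dissipation factor, $\delta$-constraints and edge factors. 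To extract an explicit convolution kernel acting on $w$, I introduce a dummy time $s=t_{\widehat{\Box}}$ for the parent of the $\Box$ leaf and integrate $w_{k'}(s)\,\delta(s-t_{\widehat{\Box}})\,ds$; this converts the constraint $t_{\widehat{\Box}}$ into the integration variable $s\in[0,t]$ with the appropriate $s\le t_{\mathfrak{n}}$ for the parent of $\Box$, which is exactly the $\{t_{\mathfrak{n}}\ge s\}$ range in the definition of $B_{\mathfrak{n}}$. All remaining ranges $B_{\mathfrak{n}}$ coincide with $A_{\mathfrak{n}}$ from Part (1), now regarded as constraints on the remaining $t_{\mathfrak{n}}$'s, producing the stated formula for $H^{\tau_1\cdots\tau_K}_{Tk_1\cdots k_l kk'}$.

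The main obstacle is purely bookkeeping: carefully tracking which nodes are substitution nodes, which edges carry the variable $k'$ of the $\Box$ leaf, and ensuring the dyadic time constraints propagate correctly through the inductive unfolding. Once the case distinctions in the definitions of $A_{\mathfrak{n}}$ and $B_{\mathfrak{n}}$ are matched against the recursion \eqref{eq.treetermoperator}, the verification is mechanical and no new analytic input is needed beyond what was used in Lemma \ref{lem.treeterms}.
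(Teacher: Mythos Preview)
Your proposal is correct and follows exactly the approach of the paper: verify by direct substitution that the explicit formula satisfies the recursion \eqref{eq.treetermoperator} (just as in Lemma \ref{lem.treeterms}), and then deduce (2) as a corollary of (1). The paper's own proof says precisely this in two lines, so your write-up is a faithful (and more detailed) expansion of it.
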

\begin{proof}
Lemma \ref{lem.treetermsoperator} (1) can be proved by the same method as Lemma \ref{lem.treeterms}. We can check that $\mathcal{J}_T$ defined by \eqref{eq.coefterm} and \eqref{eq.coef} satisfies the recursive formula \eqref{eq.treeterm} by a direct substitution, so they are the unique solution of that recursive formula, and this proves (1).

(2) is a corollary of (1).
\end{proof}

We can also calculate $\prod_{j=1}^K\mathcal{P}^{\tau_j}_{T_j}(\mathcal{J}_{T})$, replacing $w$ by $\mathcal{J}_{T}$ corresponds to replacing a $\Box$ node by a tree $T$, so $\prod_{j=1}^K\mathcal{P}^{\tau_j}_{T_j}(\mathcal{J}_{T})=\mathcal{J}_{T_1\circ T_2\circ \cdots\circ T_K\circ T}$. Since $T$ does not contain $\Box$ node, so does $T_1\circ T_2\circ \cdots\circ T_K\circ T$ and $\mathcal{J}_{T_1\circ T_2\circ \cdots\circ T_K\circ T}$ is a polynomial of Gaussian variables $\xi_k$. Then we have the following lemma
\begin{lem}
We have 
\begin{equation}\label{eq.operatoreqsimpleJ_T}
    \prod_{j=1}^K\mathcal{P}^{\tau_j}_{T_j}(\mathcal{J}_{T})=\mathcal{J}_{T_1\circ T_2\circ \cdots\circ T_K\circ T}
\end{equation}
\end{lem}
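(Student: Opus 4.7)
The plan is to combine the already-established identity \eqref{eq.operatoreqsimple} with a general substitution principle: evaluating a tree with a single $\Box$ leaf at $w=\mathcal{J}_T$ is the same as evaluating the tree obtained by gluing $T$ in place of that $\Box$ leaf. The identity \eqref{eq.operatoreqsimpleJ_T} follows immediately from these two facts.

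First I would restate what \eqref{eq.operatoreqsimple} says: for any $w$,
\[
\prod_{j=1}^K \mathcal{P}^{\tau_j}_{T_j}(w) = \mathcal{J}_{T_1\circ\cdots\circ T_K},
\]
where $T_1\circ\cdots\circ T_K$ is the tree of Figure \ref{fig.productformula}, containing exactly one $\Box$ leaf, and $\mathcal{J}_{T_1\circ\cdots\circ T_K}$ is evaluated by Definition \ref{def.treetermsoperator} with that $\Box$ leaf assigned the value $w$. Specializing $w=\mathcal{J}_T$ reduces the lemma to the purely combinatorial claim
\[
\mathcal{J}_{T_1\circ\cdots\circ T_K}\Big|_{w=\mathcal{J}_T} \;=\; \mathcal{J}_{T_1\circ T_2\circ\cdots\circ T_K\circ T}.
\]

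Next I would prove, by induction on the number of nodes of an arbitrary binary tree $S$ that has exactly one $\Box$ leaf, the following substitution identity: for any tree $T$ without $\Box$ leaves,
\[
\mathcal{J}_{S}\big|_{w = \mathcal{J}_T} \;=\; \mathcal{J}_{S\circ T},
\]
where $S\circ T$ denotes the tree obtained by replacing the unique $\Box$ leaf of $S$ with the root of $T$. The base case, in which $S$ consists of a single $\Box$ node, gives $\mathcal{J}_S|_{w=\mathcal{J}_T} = w|_{w=\mathcal{J}_T} = \mathcal{J}_T = \mathcal{J}_{S\circ T}$ directly from \eqref{eq.treetermoperator}. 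For the inductive step, let $\mathfrak{n}_1,\mathfrak{n}_2$ be the children of the root of $S$; exactly one subtree $S_{\mathfrak{n}_i}$ contains the $\Box$ leaf and the other does not, so the recursive formula \eqref{eq.treetermoperator} gives $\mathcal{J}_S = \mathcal{T}(\mathcal{J}_{S_{\mathfrak{n}_1}},\mathcal{J}_{S_{\mathfrak{n}_2}})$ or $\mathcal{T}^{\tau}(\mathcal{J}_{S_{\mathfrak{n}_1}},\mathcal{J}_{S_{\mathfrak{n}_2}})$ depending on whether the root is a substitution node. The induction hypothesis applies to the subtree containing $\Box$, while the other subtree is unchanged by the substitution $w\mapsto\mathcal{J}_T$; since $(S\circ T)_{\mathfrak{n}_i} = S_{\mathfrak{n}_i}\circ T$ and the decoration of the root (and in particular the index $\tau$ of a substitution root) is unchanged by the gluing operation, this yields $\mathcal{J}_S|_{w=\mathcal{J}_T} = \mathcal{J}_{S\circ T}$.

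Applying this substitution identity with $S = T_1\circ\cdots\circ T_K$ and combining with \eqref{eq.operatoreqsimple} gives \eqref{eq.operatoreqsimpleJ_T}. There is no real analytic obstacle here; the only point requiring a bit of care is making sure that the bookkeeping of substitution nodes and their indices $\tau$ is preserved under the gluing operation $S\mapsto S\circ T$, which is immediate from the definition since $T$ has no $\Box$ leaves and therefore no substitution nodes are created or removed inside $T$, while every node of $S$ keeps its original role in $S\circ T$.
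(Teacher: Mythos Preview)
Your proof is correct and follows exactly the paper's approach: the paper's own proof is the single line ``This lemma follows from the above explanation,'' where the explanation is precisely the substitution principle that replacing $w$ by $\mathcal{J}_T$ corresponds to replacing the $\Box$ node by the tree $T$. Your inductive verification of this substitution identity simply fills in the details the paper leaves implicit.
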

\begin{proof}
This lemma follows from the above explanation .
\end{proof}

\subsubsection{The upper bound for coefficients} In this section, we prove the Lemma \ref{lem.boundcoefoperator} which gives an upper bound for $H^{\tau_1\cdots \tau_{K}}_{Tk_1\cdots k_{l}kk'}$. This lemma is an analog of Lemma \ref{lem.boundcoef}.

\begin{lem}\label{lem.boundcoefoperator}
Assume that $|k_1|, \cdots, |k_{l}|\lesssim 1$, then for $t\le  T_{\text{max}}$, we have the following upper bound for $H^{\tau_1\cdots \tau_{K}}_{Tk_1\cdots k_{l}kk'}$,
\begin{equation}\label{eq.boundcoefoperator}
    |H^{\tau_1\cdots \tau_{K}}_{Tk_1\cdots k_{l}kk'}|\lesssim \sum_{\{d_{\mathfrak{n}}\}_{\mathfrak{n}\in T_{\text{in}}}\in\{0,1\}^{l(T)}}\prod_{\mathfrak{n}\in T_{\text{in}}}\frac{2^{-\frac{\tau_{\mathfrak{n}}}{2}}}{|q_{\mathfrak{n}}|+T^{-1}_{\text{max}}}\ \delta_{\cap_{\mathfrak{n}\in T_{\text{in}}} \{S_{\mathfrak{n}}=0\}} \prod_{\mathfrak{e}\in T_{\text{in}}} p_{\mathfrak{e}}.
\end{equation}
Here $\tau_{\mathfrak{n}}$ is defined by 
\begin{equation}
   \tau_{\mathfrak{n}}=\left\{
    \begin{aligned}
        &0 && \textit{if $\mathfrak{n}$ is not a substitution node}
        \\
        &\tau_j  &&\textit{if $\mathfrak{n}$ is the $j$-th substitution node}
    \end{aligned}\right.
\end{equation}
Fix a sequence $\{d_{\mathfrak{n}}\}_{\mathfrak{n}\in T_{\text{in}}}$ whose elements $d_{\mathfrak{n}}$ takes boolean values $\{0,1\}$. We define the sequences $\{p_{\mathfrak{n}}\}_{\mathfrak{n}\in T_{\text{in}}}$, $\{q_{\mathfrak{n}}\}_{\mathfrak{n}\in T_{\text{in}}}$, $\{r_{\mathfrak{n}}\}_{\mathfrak{n}\in T_{\text{in}}}$ by following formulas
\begin{equation}\label{eq.p_noperator}
    p_{\mathfrak{e}}=\frac{|k_{\mathfrak{e},x}|}{|k_{\mathfrak{e},x}|+1}
\end{equation}
\begin{equation}\label{eq.q_noperator}
    q_{\mathfrak{n}}=
    \begin{cases}
    \Omega_{\mathfrak{r}}, \qquad\qquad \textit{ if $\mathfrak{n}=$ the root $\mathfrak{r}$.}
    \\
    \Omega_{\mathfrak{n}}+d_{\mathfrak{n}}q_{\mathfrak{n}'},\ \ \textit{ if $\mathfrak{n}\neq\mathfrak{r}$ and $\mathfrak{n}'$ is the parent of $\mathfrak{n}$.}
    \end{cases}
\end{equation}
\begin{equation}\label{eq.r_noperator}
    r_{\mathfrak{n}}=
    \begin{cases}
    |k_{\mathfrak{r}}|^2, \qquad\qquad \textit{ if $\mathfrak{n}=$ the root $\mathfrak{r}$.}
    \\
    |k_{\mathfrak{n}}|^2+d_{\mathfrak{n}}q_{\mathfrak{n}'},\ \ \textit{ if $\mathfrak{n}\neq\mathfrak{r}$ and $\mathfrak{n}'$ is the parent of $\mathfrak{n}$.}
    \end{cases}
\end{equation}

\end{lem}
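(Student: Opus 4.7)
The proof mirrors that of Lemma \ref{lem.boundcoef'}, proceeding by induction on the number of internal nodes of $T$ with integration by parts at the root at each step, modified to accommodate: (i) the additional time-integration constraints encoded in $B_\mathfrak{n}$ (arising both from substitution nodes of index $\tau$ and from the $\Box$ leaf); (ii) the sharpening of the weight $|k_{\mathfrak{e},x}|$ to the bounded weight $p_\mathfrak{e}=|k_{\mathfrak{e},x}|/(|k_{\mathfrak{e},x}|+1)$; (iii) the extra factor $2^{-\tau_\mathfrak{n}/2}$ at each substitution node. As in Lemma \ref{lem.boundcoef'}, I would first abstract the estimate by introducing
\[
\widetilde{F}_T(t,s,\{a_\mathfrak{n}\},\{b_\mathfrak{n}\}) = \int_{\cup_{\mathfrak{n}\in T_{\text{in}}} B_\mathfrak{n}} e^{\sum_{\mathfrak{n}\in T_{\text{in}}} it_\mathfrak{n} a_\mathfrak{n} - \nu(t_{\widehat{\mathfrak{n}}}-t_\mathfrak{n})b_\mathfrak{n}} \prod_{\mathfrak{n}\in T_{\text{in}}} dt_\mathfrak{n},
\]
and proving the analog of \eqref{eq.boundcoef'} with $\prod_\mathfrak{n}(|q_\mathfrak{n}|+T^{-1}_\text{max})^{-1}$ replaced by $\prod_\mathfrak{n} 2^{-\tau_\mathfrak{n}/2}(|q_\mathfrak{n}|+T^{-1}_\text{max})^{-1}$. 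The desired estimate \eqref{eq.boundcoefoperator} then follows by specializing $a_\mathfrak{n}=\Omega_\mathfrak{n}$, $b_\mathfrak{n}=|k_\mathfrak{e}|^2$ and factoring out the weight $\prod_\mathfrak{e}\iota_\mathfrak{e} k_{\mathfrak{e},x}$.

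For the base case, $T$ consists of the single $\Box$ leaf and the bound holds trivially (with $H$ replaced by a Kronecker delta). For the inductive step, I would apply Stokes' formula in $t_\mathfrak{r}$ exactly as in \eqref{eq.lemboundcoefexpand}. The denominator $(ia_\mathfrak{r}+iT^{-1}_\text{max}\mathrm{sgn}(a_\mathfrak{r})+\nu b_\mathfrak{r})^{-1}$ yields the prefactor $(|q_\mathfrak{r}|+T^{-1}_\text{max})^{-1}$, and the Stokes boundary produces: (a) an $F_I$ term at $t_\mathfrak{r}=t$ that factorizes as $F_{T_1} F_{T_2}$, controlled by the induction hypothesis on the two subtrees; (b) boundary terms $F_{T^{(j)}}$ at $t_\mathfrak{r}=t_{\mathfrak{n}_j}$, identified with couples of one fewer internal node; (c) an interior term $F_{II}$ handled recursively. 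When $\mathfrak{r}$ is a substitution node of index $\tau_1$, two additional boundary pieces appear at $t_\mathfrak{r}=t-T_\text{max}2^{-\tau_1}$ and $t_\mathfrak{r}=t-T_\text{max}2^{-\tau_1-1}$, each of which factorizes into a product to which induction applies.

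The gain $2^{-\tau_\mathfrak{r}/2}$ at the root substitution node is extracted by exploiting the short length $\sim 2^{-\tau_1}T_\text{max}$ of the $t_\mathfrak{r}$-integration interval: interpolating the direct volume bound with the integration-by-parts bound through $\min(A,B)\le \sqrt{AB}$ produces the desired $2^{-\tau_1/2}$ factor in the regime $|q_\mathfrak{r}|\lesssim T_\text{max}^{-1}$, while in the oscillatory regime $|q_\mathfrak{r}|\gg T_\text{max}^{-1}$ the factor is absorbed harmlessly since $2^{-\tau_1/2}\le 1$. The upgrade from $|k_{\mathfrak{e},x}|$ to $p_\mathfrak{e}$ comes from combining the $k_{\mathfrak{e},x}$ weight already present in the integrand with the viscous contribution $\nu b_\mathfrak{r}=\nu|k_\mathfrak{e}|^2$ that appears in the denominator after integration by parts: when $|k_{\mathfrak{e},x}|\gg 1$, the viscous term $\nu|k_\mathfrak{e}|^2\gtrsim T_\text{max}^{-1}|k_{\mathfrak{e},x}|^2$ dominates, providing a compensating factor $|k_{\mathfrak{e},x}|^{-1}$ that truncates the unbounded weight down to $\sim 1$, yielding exactly $p_\mathfrak{e}$.

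The main obstacle will be the careful bookkeeping of the recursive formulas \eqref{eq.q_noperator}, \eqref{eq.r_noperator} across all the boundary contributions: when a boundary term reduces to a smaller tree, one must verify that the sequences $\{q_\mathfrak{n}\}$, $\{r_\mathfrak{n}\}$, and the $\tau_\mathfrak{n}$ assignment restrict correctly to that subtree so that the inductive hypothesis can be invoked. A secondary point is ensuring that the factor $2^{-\tau_\mathfrak{n}/2}$ propagates correctly through the interior term $F_{II}$ when the root (or a node on the time-integration path from the root to $\Box$) is a substitution node, which requires tracking how the indicator of $B_\mathfrak{r}$ interacts with the residual time integral; once this is handled, the remainder of the argument follows the same pattern as \eqref{eq.lemboundcoefexpand}.
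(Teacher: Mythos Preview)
Your overall architecture (induction on internal nodes, Stokes in $t_\mathfrak{r}$, reduction to subtrees) is the same as the paper's, but the mechanism you propose for the two new ingredients --- the factor $2^{-\tau_\mathfrak{r}/2}$ and the truncation $|k_{\mathfrak{e},x}|\to p_\mathfrak{e}$ --- is different from the paper's, and your version of it contains a genuine gap.

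\textbf{The gap.} Your sentence ``in the oscillatory regime $|q_\mathfrak{r}|\gg T_{\text{max}}^{-1}$ the factor is absorbed harmlessly since $2^{-\tau_1/2}\le 1$'' has the inequality backwards. The target bound carries $2^{-\tau_1/2}$ in the \emph{numerator}, so it is \emph{smaller} than the plain IBP bound $(|q_\mathfrak{r}|+T_{\text{max}}^{-1})^{-1}$; proving the latter does not imply the former. In fact no interpolation between the volume bound $A=2^{-\tau_1}T_{\text{max}}$ and the IBP bound $B=(|q_\mathfrak{r}|+T_{\text{max}}^{-1})^{-1}$ can give $2^{-\tau_1/2}(|q_\mathfrak{r}|+T_{\text{max}}^{-1})^{-1}$ uniformly: checking $\sqrt{AB}\lesssim 2^{-\tau_1/2}(|q_\mathfrak{r}|+T_{\text{max}}^{-1})^{-1}$ is equivalent to $|q_\mathfrak{r}|\lesssim T_{\text{max}}^{-1}$, so it fails precisely in the oscillatory regime. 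Your $p_\mathfrak{e}$ mechanism (using $\nu b_\mathfrak{r}$ in the IBP denominator) has the same defect: when $|q_\mathfrak{r}|\gg \nu|k_\mathfrak{e}|^2$ the denominator is $\sim|q_\mathfrak{r}|$ and gives back $|k_{\mathfrak{e},x}|/|q_\mathfrak{r}|$, which is not $\lesssim p_\mathfrak{e}/|q_\mathfrak{r}|$ for large $|k_{\mathfrak{e},x}|$.

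\textbf{How the paper does it instead.} The paper does \emph{not} use the interval length for the $2^{-\tau_1/2}$ gain. It first uses the hypothesis $|k_1|,\dots,|k_l|\lesssim 1$ together with momentum conservation to show $|k_\mathfrak{e}|\lesssim 1$ for every edge neither of whose endpoints is a substitution node; this reduces $\bigl|\prod_{\mathfrak{e}\in T_{\text{in}}}k_{\mathfrak{e},x}\bigr|$ to $\prod_{j=1}^K|k_{s_j,x}|$ (substitution edges only), and these factors are absorbed into the integrand of $F_T$. Then, after IBP in $t_\mathfrak{r}$, every resulting term carries the viscous factor $e^{-\nu(t-t_\mathfrak{r})|k_{s_1}|^2}$ evaluated on $t-t_\mathfrak{r}\in[2^{-\tau_1-1},2^{-\tau_1}]T_{\text{max}}$; using $\nu T_{\text{max}}\gtrsim 1$ this is $\lesssim e^{-c\,2^{-\tau_1}|k_{s_1}|^2}$. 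The single pointwise inequality $(|k_{s_1,x}|+1)e^{-c\,2^{-\tau_1}|k_{s_1}|^2}\lesssim 2^{-\tau_1/2}$ is then invoked to produce \emph{both} the $p_\mathfrak{e}$ truncation and the $2^{-\tau_1/2}$ factor simultaneously, uniformly in $q_\mathfrak{r}$. You should rework your argument around this viscous--exponential mechanism rather than interval--length interpolation; in particular, the hypothesis $|k_j|\lesssim 1$ (which you never invoke) is what makes the reduction to substitution edges possible.
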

\begin{proof} By definition
\begin{equation}
H^{\tau_1\cdots \tau_{K}}_{Tk_1\cdots k_{l}kk'}(t,s)=\int_{\cup_{\mathfrak{n}\in T_{\text{in}}} B_{\mathfrak{n}}} e^{\sum_{\mathfrak{n}\in T_{\text{in}}} it_{\mathfrak{n}}\Omega_{\mathfrak{n}}-\nu(t_{\widehat{\mathfrak{n}}}-t_{\mathfrak{n}})|k_{\mathfrak{e}}|^2}  
\prod_{\mathfrak{n}\in T_{\text{in}}} dt_{\mathfrak{n}} 
\ \delta_{\cap_{\mathfrak{n}\in T_{\text{in}}} \{S_{\mathfrak{n}}=0\}}\ \prod_{\mathfrak{e}\in T_{\text{in}}}\iota_{\mathfrak{e}}k_{\mathfrak{e},x}
\end{equation}

For any edge $\mathfrak{e}$, assume that the two end points of $\mathfrak{e}$ are $\mathfrak{n}_1$ and $\mathfrak{n}_2$. If neither $\mathfrak{n}_1$ and $\mathfrak{n}_2$ is a substitution node, then we claim that $|k_{\mathfrak{e}}|\lesssim 1$. 

This is because if no end point of $\mathfrak{e}$ is a substitution node, then the subtree $T_{\mathfrak{e}}$ rooted at the upper end points of $\mathfrak{e}$ does not contain the $\Box$ node as its leaf. Therefore, the momentum conservation (Lemma \ref{lem.freeleg}) is applicable which implies that $k_{\mathfrak{e}}$ is a linear combination of $k_1,\cdots,k_{l}$. By $|k_1|, \cdots, |k_{l}|\lesssim 1$, we get $|k_{\mathfrak{e}}|\lesssim 1$. 

Since $|k_{\mathfrak{e}}|\lesssim 1$, we get $\left|\prod_{\mathfrak{e}\in T_{\text{in}}}\iota_{\mathfrak{e}}k_{\mathfrak{e},x}\right|\lesssim \prod^K_{j=1}|k_{s_j,x}|$. Here $s_1, \cdots, s_K$ are all substitution nodes and $k_{s_j}$ are corresponding variables of edges pointing towards $s_j$.

As in section \ref{sec.uppcoef}, we define 
\begin{equation}\label{eq.defF_Toperator}
F_{T}(t,\{a_{\mathfrak{n}}\}_{\mathfrak{n}\in T_{\text{in}}},\{b_{\mathfrak{n}}\}_{\mathfrak{n}\in T_{\text{in}}})=\int_{\cup_{\mathfrak{n}\in T_{\text{in}}} B_{\mathfrak{n}}} e^{\sum_{\mathfrak{n}\in T_{\text{in}}} it_{\mathfrak{n}} a_{\mathfrak{n}} - \nu(t_{\widehat{\mathfrak{n}}}-t_{\mathfrak{n}})b_{\mathfrak{n}}} \prod_{\mathfrak{n}\in T_{\text{in}}} dt_{\mathfrak{n}} \prod^K_{j=1}|k_{s_j,x}|
\end{equation}

Keeping notation the same as Lemma \ref{lem.boundcoef'}, if we can show that
\begin{equation}\label{eq.lemboundop1}
    |F_{T}(t,\{a_{\mathfrak{n}}\}_{\mathfrak{n}\in T_{\text{in}}},\{b_{\mathfrak{n}}\}_{\mathfrak{n}\in T_{\text{in}}})|\lesssim\sum_{\{d_{\mathfrak{n}}\}_{\mathfrak{n}\in T_{\text{in}}}\in\{0,1\}^{l(T)}}\prod_{\mathfrak{n}\in T_{\text{in}}}\frac{2^{-\frac{\tau_{\mathfrak{n}}}{2}}}{|q_{\mathfrak{n}}|+T^{-1}_{\text{max}}}\prod_{\mathfrak{e}\in T_{\text{in}}} p_{\mathfrak{e}}
,
\end{equation}
then this lemma can be proved by taking $a_{\mathfrak{n}}=\Omega_{\mathfrak{n}}$, $b_{\mathfrak{n}}=|k_{\mathfrak{e}}|^2$ in \eqref{eq.lemboundop1}.

Therefore, it suffices to prove \eqref{eq.lemboundop1}.

We run a similar inductive integration by parts argument of Lemma \ref{lem.boundcoef'}. If the roots $\mathfrak{r}$ is not a substitution node, then the same argument of Lemma \ref{lem.boundcoef'} works. Therefore, we just consider the case that the roots $\mathfrak{r}$ is a substitution node. 

Using the same calculation as \eqref{eq.lemboundcoef'1}, we get 
\begin{equation}
\begin{split}
    F_{T}(t)=\int_{\cup_{\mathfrak{n}\in T_{\text{in}}} B_{\mathfrak{n}}}&e^{it_{\mathfrak{r}}(a_{\mathfrak{r}}+T^{-1}_{\text{max}}\, \text{sgn}(a_{\mathfrak{r}}))- \nu(t-t_{\mathfrak{r}})b_{\mathfrak{r}}} e^{-iT^{-1}_{\text{max}}t_{\mathfrak{r}} \text{sgn}(a_{\mathfrak{r}})} 
    \\
    &e^{\sum_{\mathfrak{n}\in T_{\text{in},1}\cup T_{\text{in},2}} it_{\mathfrak{n}} a_{\mathfrak{n}} - \nu(t_{\widehat{\mathfrak{n}}}-t_{\mathfrak{n}})b_{\mathfrak{n}}}  \left(dt_{\mathfrak{r}}\prod_{j=1}^2\prod_{\mathfrak{n}\in T_{\text{in},j}}dt_{\mathfrak{n}}  \right)|k_{s_1,x}|\prod^K_{j=2}|k_{s_j,x}|
\end{split}
\end{equation}

We do integration by parts in above integrals using Stokes formula. Notice that for $t_{\mathfrak{r}}$, there are four inequality constrains, $t_{\mathfrak{r}}\le t-2^{-\tau_{1}}T_{\text{max}}$, $t_{\mathfrak{r}}\ge t-2^{-\tau_{1}-1}T_{\text{max}}$ and $t_{\mathfrak{r}}\ge t_{\mathfrak{n}_1},t_{\mathfrak{n}_2}$. Notice that the first two come from $(t-t_{\mathfrak{r}})/T_{\text{max}}\in [2^{-\tau_{1}},2^{-\tau_{1}-1}]$.

\begin{equation}\label{eq.lemboundcoefexpandop}
\begin{split}
    F_{T}(t)=&\frac{|k_{s_1,x}|}{ia_{\mathfrak{r}}+iT^{-1}_{\text{max}} \text{sgn}(a_{\mathfrak{r}})+\nu b_{\mathfrak{r}} }\int_{\cup_{\mathfrak{n}\in T_{\text{in}}} B_{\mathfrak{n}}} \frac{d}{dt_{\mathfrak{r}}}e^{it_{\mathfrak{r}}(a_{\mathfrak{r}}+T^{-1}_{\text{max}}\, \text{sgn}(a_{\mathfrak{r}}))- \nu(t-t_{\mathfrak{r}})b_{\mathfrak{r}}}  
    \\
    &\qquad\qquad\qquad\ \  e^{-iT^{-1}_{\text{max}}t_{\mathfrak{r}} \text{sgn}(a_{\mathfrak{r}})} e^{\sum_{\mathfrak{n}\in T_{\text{in},1}\cup T_{\text{in},2}} it_{\mathfrak{n}} a_{\mathfrak{n}} - \nu(t_{\widehat{\mathfrak{n}}}-t_{\mathfrak{n}})b_{\mathfrak{n}}}  \left(dt_{\mathfrak{r}}\prod_{j=1}^2\prod_{\mathfrak{n}\in T_{\text{in},j}}dt_{\mathfrak{n}}  \right)\prod^K_{j=2}|k_{s_j,x}|
\end{split}
\end{equation}
\begin{flalign*}
\hspace{1.3cm}
=&\frac{|k_{s_1,x}|}{ia_{\mathfrak{r}}+iT^{-1}_{\text{max}} \text{sgn}(a_{\mathfrak{r}})+\nu b_{\mathfrak{r}} }&&
\\
&\left(\int_{\cup_{\mathfrak{n}\in T_{\text{in}}} B_{\mathfrak{n}},\ t_{\mathfrak{r}}=t-2^{-\tau_{1}}T_{\text{max}}}
-\int_{\cup_{\mathfrak{n}\in T_{\text{in}}} B_{\mathfrak{n}},\ t_{\mathfrak{r}}=t-2^{-\tau_{1}-1}T_{\text{max}}}
-\int_{\cup_{\mathfrak{n}\in T_{\text{in}}} B_{\mathfrak{n}},\ t_{\mathfrak{r}}=t_{\mathfrak{n}_1}}
-\int_{\cup_{\mathfrak{n}\in T_{\text{in}}} B_{\mathfrak{n}},\ t_{\mathfrak{r}}=t_{\mathfrak{n}_2}}\right) &&
\\
& e^{it_{\mathfrak{r}}(a_{\mathfrak{r}}+T^{-1}_{\text{max}}\, \text{sgn}(a_{\mathfrak{r}}))- \nu(t-t_{\mathfrak{r}})b_{\mathfrak{r}}} e^{-iT^{-1}_{\text{max}}t_{\mathfrak{r}} \text{sgn}(a_{\mathfrak{r}})} e^{\sum_{\mathfrak{n}\in T_{\text{in},1}\cup T_{\text{in},2}} it_{\mathfrak{n}} a_{\mathfrak{n}} - \nu(t_{\widehat{\mathfrak{n}}}-t_{\mathfrak{n}})b_{\mathfrak{n}}} \left(dt_{\mathfrak{r}}\prod_{j=1}^2\prod_{\mathfrak{n}\in T_{\text{in},j}}dt_{\mathfrak{n}}  \right)\prod^K_{j=2}|k_{s_j,x}| &&
\end{flalign*}
\begin{flalign*}
\hspace{1.3cm}
-&\frac{|k_{s_1,x}|}{ia_{\mathfrak{r}}+iT^{-1}_{\text{max}} \text{sgn}(a_{\mathfrak{r}})+\nu b_{\mathfrak{r}} }\int_{\cup_{\mathfrak{n}\in T_{\text{in}}} B_{\mathfrak{n}}}e^{it_{\mathfrak{r}}(a_{\mathfrak{r}}+T^{-1}_{\text{max}}\, \text{sgn}(a_{\mathfrak{r}}))- \nu(t-t_{\mathfrak{r}})b_{\mathfrak{r}}} &&
    \\
    &\qquad\qquad \frac{d}{dt_{\mathfrak{r}}}(e^{-iT^{-1}_{\text{max}}t_{\mathfrak{r}} \text{sgn}(a_{\mathfrak{r}})}) e^{\sum_{\mathfrak{n}\in T_{\text{in},1}\cup T_{\text{in},2}} it_{\mathfrak{n}} a_{\mathfrak{n}} - \nu(t_{\widehat{\mathfrak{n}}}-t_{\mathfrak{n}})b_{\mathfrak{n}}}  \left(dt_{\mathfrak{r}}\prod_{j=1}^2\prod_{\mathfrak{n}\in T_{\text{in},j}}dt_{\mathfrak{n}}  \right)\prod^K_{j=2}|k_{s_j,x}| &&
\end{flalign*}
\begin{flalign*}
\hspace{1.3cm}
= \frac{|k_{s_1,x}|}{ia_{\mathfrak{r}}+iT^{-1}_{\text{max}} \text{sgn}(a_{\mathfrak{r}})+\nu b_{\mathfrak{r}} }(F_{I}-F_{I'}-\widetilde{F}_{T^{(1)}}-\widetilde{F}_{T^{(2)}}-F_{II}) &&
\end{flalign*}

We now derive upper bounds for $F_{I}$, $F_{I'}$, $\widetilde{F}_{T^{(1)}}$, $\widetilde{F}_{T^{(2)}}$, $F_{II}$.

The argument of $F_{I}$ and $F_{I'}$ is very similar, so we just consider $F_{I}$. By a direct calculation, we know that $\frac{|k_{s_1,x}|}{ia_{\mathfrak{r}}+iT^{-1}_{\text{max}} \text{sgn}(a_{\mathfrak{r}})+\nu b_{\mathfrak{r}} }F_{I}(t)$ equals to
\begin{equation}
    \begin{split}
        &\frac{|k_{s_1,x}|}{ia_{\mathfrak{r}}+iT^{-1}_{\text{max}} \text{sgn}(a_{\mathfrak{r}})+\nu b_{\mathfrak{r}} } \int_{\cup_{\mathfrak{n}\in T_{\text{in}}} B_{\mathfrak{n}},\ t_{\mathfrak{r}}=t-2^{-\tau_{1}}T_{\text{max}}} e^{it_{\mathfrak{r}}(a_{\mathfrak{r}}+T^{-1}_{\text{max}}\, \text{sgn}(a_{\mathfrak{r}}))- \nu(t-t_{\mathfrak{r}})b_{\mathfrak{r}}} e^{-iT^{-1}_{\text{max}}t_{\mathfrak{r}} \text{sgn}(a_{\mathfrak{r}})} 
        \\
        &e^{\sum_{\mathfrak{n}\in T_{\text{in},1}\cup T_{\text{in},2}} it_{\mathfrak{n}} a_{\mathfrak{n}} - \nu(t_{\widehat{\mathfrak{n}}}-t_{\mathfrak{n}})b_{\mathfrak{n}}} \left(dt_{\mathfrak{r}}\prod_{j=1}^2\prod_{\mathfrak{n}\in T_{\text{in},j}}dt_{\mathfrak{n}}  \right)\prod^K_{j=2}|k_{s_j,x}|
        \\
        =&\frac{|k_{s_1,x}|}{ia_{\mathfrak{r}}+iT^{-1}_{\text{max}} \text{sgn}(a_{\mathfrak{r}})+\nu b_{\mathfrak{r}} }  e^{i(t-2^{-\tau_{1}}T_{\text{max}})a_{\mathfrak{r}}- \nu T_{\text{max}} 2^{-\tau_{1}}b_{\mathfrak{r}}}\int_{\cup_{\mathfrak{n}\in T_{\text{in},1}\cup T_{\text{in},2}} B_{\mathfrak{n}},\ t_{\mathfrak{r}_1}, t_{\mathfrak{r}_2}\lesssim t-2^{-\tau_{1}}T_{\text{max}}}
        \\
        &  e^{\sum_{\mathfrak{n}\in T_{\text{in},1}\cup T_{\text{in},2}} it_{\mathfrak{n}} a_{\mathfrak{n}} - \nu(t_{\widehat{\mathfrak{n}}}-t_{\mathfrak{n}})b_{\mathfrak{n}}}\left(dt_{\mathfrak{r}}\prod_{j=1}^2\prod_{\mathfrak{n}\in T_{\text{in},j}}dt_{\mathfrak{n}}  \right)\prod^K_{j=2}|k_{s_j,x}|
        \\
        =&O\left(\left|\frac{|k_{s_1,x}|}{ia_{\mathfrak{r}}+iT^{-1}_{\text{max}} \text{sgn}(a_{\mathfrak{r}})+\nu b_{\mathfrak{r}} }  e^{i(t-2^{-\tau_{1}}T_{\text{max}})a_{\mathfrak{r}}- \nu T_{\text{max}} 2^{-\tau_{1}}b_{\mathfrak{r}}}\right||F_{T_1}(t)| |F_{T_2}(t)|\right)
        \\
        =&O\left(\frac{(|k_{s_1,x}|+1)e^{- 2^{-\tau_{1}}|k_{s_1}|^2} }{|q_{\mathfrak{r}}|+T^{-1}_{\text{max}}} |F_{T_1}(t)| |F_{T_2}(t)|\frac{|k_{s_1,x}|}{|k_{s_1,x}|+1}\right)=O\left(\frac{2^{-\frac{\tau_{1}}{2}}|F_{T_1}(t)| |F_{T_2}(t)|}{|q_{\mathfrak{r}}|+T^{-1}_{\text{max}}}p_{\mathfrak{e}}\right)
    \end{split}
\end{equation}
Here in the last line we use the fact that $b_{\mathfrak{r}}=|k_{s_1}|^2$, $\nu T_{\text{max}}\gtrsim 1$ \eqref{eq.conditionnu} and $(|k_{s_1,x}|+1)e^{- 2^{-\tau_{1}}|k_{s_1}|^2} \lesssim 2^{-\frac{\tau_{1}}{2}}$. By above equation and the induction assumption, we know that $\frac{|k_{s_1,x}|}{ia_{\mathfrak{r}}+iT^{-1}_{\text{max}} \text{sgn}(a_{\mathfrak{r}})+\nu b_{\mathfrak{r}} }F_{I}(t)$ can be bounded by the right hand side of \eqref{eq.boundcoefoperator}.

Now we find upper bounds of $\widetilde{F}_{T^{(1)}}$ and $\widetilde{F}_{T^{(2)}}$. Let $T^{(j)}$, $j=1,2$ are trees that is obtained by deleting the root $\mathfrak{r}$, adding edges connecting $\mathfrak{n}_j$ with another node and defining $\mathfrak{n}_j$ to be the new root. For $T^{(j)}$, we can define the term $F_{T^{(j)}}$ by \eqref{eq.defF_Toperator}. By a direct calculation, we know that $\frac{|k_{s_1,x}|}{ia_{\mathfrak{r}}+iT^{-1}_{\text{max}} \text{sgn}(a_{\mathfrak{r}})+\nu b_{\mathfrak{r}} }\widetilde{F}_{T^{(1)}}(t)$ equals to
\begin{equation}
    \begin{split}
        &\frac{|k_{s_1,x}|}{ia_{\mathfrak{r}}+iT^{-1}_{\text{max}} \text{sgn}(a_{\mathfrak{r}})+\nu b_{\mathfrak{r}} } \int_{\cup_{\mathfrak{n}\in T_{\text{in}}} B_{\mathfrak{n}},\ t_{\mathfrak{r}}=t_{\mathfrak{n}_j}} e^{it_{\mathfrak{r}}(a_{\mathfrak{r}}+T^{-1}_{\text{max}}\, \text{sgn}(a_{\mathfrak{r}}))- \nu(t-t_{\mathfrak{r}})b_{\mathfrak{r}}} e^{-iT^{-1}_{\text{max}}t_{\mathfrak{r}} \text{sgn}(a_{\mathfrak{r}})} 
        \\
        &e^{\sum_{\mathfrak{n}\in T_{\text{in},1}\cup T_{\text{in},2}} it_{\mathfrak{n}} a_{\mathfrak{n}} - \nu(t_{\widehat{\mathfrak{n}}}-t_{\mathfrak{n}})b_{\mathfrak{n}}} \left(dt_{\mathfrak{r}}\prod_{j=1}^2\prod_{\mathfrak{n}\in T_{\text{in},j}}dt_{\mathfrak{n}}  \right)\prod^K_{j=2}|k_{s_j,x}|
        \\
        =&\frac{|k_{s_1,x}|}{ia_{\mathfrak{r}}+iT^{-1}_{\text{max}} \text{sgn}(a_{\mathfrak{r}})+\nu b_{\mathfrak{r}} }  e^{- \nu T_{\text{max}} 2^{-\tau_{1}}b_{\mathfrak{r}}}\int_{\cup_{\mathfrak{n}\in T_{\text{in}}} B_{\mathfrak{n}}} e^{it_{\mathfrak{n}_j}(a_{\mathfrak{r}}+T^{-1}_{\text{max}}\, \text{sgn}(a_{\mathfrak{r}}))- \nu((t-T_{\text{max}} 2^{-\tau_{1}})-t_{\mathfrak{n}_j})b_{\mathfrak{r}}}
        \\
        &  e^{\sum_{\mathfrak{n}\in T_{\text{in},1}\cup T_{\text{in},2}} it_{\mathfrak{n}} a_{\mathfrak{n}} - \nu(t_{\widehat{\mathfrak{n}}}-t_{\mathfrak{n}})b_{\mathfrak{n}}}\left(dt_{\mathfrak{r}}\prod_{j=1}^2\prod_{\mathfrak{n}\in T_{\text{in},j}}dt_{\mathfrak{n}}  \right)\prod^K_{j=2}|k_{s_j,x}|
        \\
        =&O\left(\frac{|k_{s_1,x}|e^{- 2^{-\tau_{1}}|k_{s_1}|^2} }{|q_{\mathfrak{r}}|+T^{-1}_{\text{max}}} |F_{T^{(j)}}(t)|\right)=O\left(\frac{2^{-\frac{\tau_{1}}{2}}|F_{T^{(j)}}(t)| }{|q_{\mathfrak{r}}|+T^{-1}_{\text{max}}}p_{\mathfrak{e}}\right)
    \end{split}
\end{equation}
We can apply the induction assumption to $F_{T^{(j)}}$ and show that $\frac{|k_{s_1,x}|}{ia_{\mathfrak{r}}+iT^{-1}_{\text{max}} \text{sgn}(a_{\mathfrak{r}})+\nu b_{\mathfrak{r}} } F_{T^{(j)}}$ can be bounded by the right hand side of \eqref{eq.boundcoefoperator}.

Another direct calculation gives that 
\begin{equation}
    F_{II}(t)=\int_{(t-t_{\mathfrak{r}})/T_{\text{max}}\in [2^{-\tau_{1}},2^{-\tau_{1}-1}]}  e^{it_{\mathfrak{r}}(a_{\mathfrak{r}}+T^{-1}_{\text{max}}\, \text{sgn}(a_{\mathfrak{r}}))- \nu(t-t_{\mathfrak{r}})b_{\mathfrak{r}}} \frac{d}{dt_{\mathfrak{r}}}(e^{-iT^{-1}_{\text{max}}t_{\mathfrak{r}} \text{sgn}(a_{\mathfrak{r}})})  F_{T_1}(t_{\mathfrak{r}})F_{T_2}(t_{\mathfrak{r}}) dt_{\mathfrak{r}}.
\end{equation}
Apply the induction assumption
\begin{equation}
\begin{split}
    &\left| \frac{|k_{s_1,x}|}{ia_{\mathfrak{r}}+iT^{-1}_{\text{max}} \text{sgn}(a_{\mathfrak{r}})+\nu b_{\mathfrak{r}} } F_{II}(t)\right|
    \\
    \le &\frac{T^{-1}_{\text{max}}|k_{s_1,x}|}{|q_{\mathfrak{r}}|+T^{-1}_{\text{max}}}\int_{(t-t_{\mathfrak{r}})/T_{\text{max}}\in [2^{-\tau_{1}},2^{-\tau_{1}-1}]}  e^{- \nu(t-t_{\mathfrak{r}})b_{\mathfrak{r}}}   |F_{T_1}(t_{\mathfrak{r}})| |F_{T_2}(t_{\mathfrak{r}})| dt_{\mathfrak{r}}
    \\
    \le& \frac{T^{-1}_{\text{max}} |k_{s_1,x}| e^{- \nu T_{\text{max}} 2^{-\tau_{1}}|k_{s_1}|^2}}{|q_{\mathfrak{r}}|+T^{-1}_{\text{max}}}\prod_{j=1}^2\left(\sum_{\{d_{\mathfrak{n}}\}_{\mathfrak{n}\in T_{\text{in},j}}\in\{0,1\}^{l(T_j)}}\prod_{\mathfrak{n}\in T_{\text{in},j}}\frac{2^{-\frac{\tau_{\mathfrak{n}}}{2}}}{|q_{\mathfrak{n}}|+T^{-1}_{\text{max}}}\prod_{\mathfrak{e}\in T_{\text{in},j}} p_{\mathfrak{e}}\right)
    \\
    \le& \sum_{\{d_{\mathfrak{n}}\}_{\mathfrak{n}\in T_{\text{in}}}\in\{0,1\}^{l(T)}}\prod_{\mathfrak{n}\in T_{\text{in}}}\frac{2^{-\frac{\tau_{\mathfrak{n}}}{2}}}{|q_{\mathfrak{n}}|+T^{-1}_{\text{max}}}\prod_{\mathfrak{e}\in T_{\text{in}}} p_{\mathfrak{e}}.
\end{split}
\end{equation}

Therefore, we get an upper bound for $F_{II}$.

Combining the bounds of $F_{I}$, $F_{I'}$, $\widetilde{F}_{T^{(1)}}$, $\widetilde{F}_{T^{(2)}}$, $F_{II}$, we conclude that $F_T$ can be bounded by the right hand side of \eqref{eq.boundcoefoperator} and thus complete the proof of Lemma \ref{lem.boundcoef'}.
\end{proof}

\subsubsection{The upper bound for expectation of entries} In this section, we prove the Proposition \ref{prop.treetermsvarianceoperator} which gives an upper bound for $\mathbb{E}|H^{\tau_1\cdots \tau_{K}}_{Tkk'}|^2$. This lemma is an analog of Proposition \ref{prop.treetermsvariance}.

\begin{prop}\label{prop.treetermsvarianceoperator}
Assume that $\alpha$ satisfies \eqref{eq.conditionalpha}. For any $\theta>0$, we have
\begin{equation}
    \sup_k\, \mathbb{E}|H^{\tau_1\cdots \tau_{K}}_{Tkk'}|^2\lesssim L^{O(l(T)\theta)} 2^{-\frac{1}{2}\sum_{j=1}^K \tau_{j}} \rho^{2l(T)}.
\end{equation}
and $\mathbb{E}|H^{\tau_1\cdots \tau_{K}}_{Tkk'}|^2=0$ if $|k-k'|\gtrsim 1$.
\end{prop}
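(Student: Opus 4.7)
The plan is to mirror the strategy used for Proposition \ref{prop.treetermsvariance}: express $\mathbb{E}|H|^2$ as a sum of couple-based terms via Wick's theorem, bound the coefficients using the refined estimate of Lemma \ref{lem.boundcoefoperator}, and then apply the lattice-counting of Section \ref{sec.numbertheory}.

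First, by Lemma \ref{lem.treetermsoperator}(2), $H^{\tau_1\cdots\tau_K}_{Tkk'}$ is a degree-$l(T)$ polynomial in the $\xi_{k_j}$ indexed by the $\star$-leaves of $T = T_1\circ\cdots\circ T_K$, with $k'$ held fixed as the value attached to the unique $\Box$-leaf. Writing $\mathbb{E}|H|^2 = \mathbb{E}(H \overline{H})$ and applying Wick's theorem (Lemma \ref{th.wick}) produces a finite sum over pairings $p$ of the $2 l(T)$ $\star$-leaf variables. Each term corresponds to a couple $\mathcal{C}(T,T,p)$ with $2 l(T)$ nodes, in which the two root legs carry fixed values $\pm k$ and the two $\Box$-leaf legs carry fixed values $\pm k'$. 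The vanishing statement $\mathbb{E}|H|^2 = 0$ for $|k - k'|\gtrsim 1$ then follows immediately from the momentum conservation identity at the root (Lemma \ref{lem.freeleg}(1)) combined with the compact support of $n_{\mathrm{in}}$, which confines the $\xi$-variables to a bounded set.

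Next I would bound each couple-term in the spirit of Lemma \ref{lem.Tpvariance}. Two new features appear. First, the coefficient bound from Lemma \ref{lem.boundcoefoperator} (applied to each of the two copies of $T$) supplies an extra factor $2^{-\tau_{\mathfrak{n}}/2}$ at each substitution node; since the substitution nodes of $T_1\circ\cdots\circ T_K$ are exactly $s_1,\ldots,s_K$ with indices $\tau_1,\ldots,\tau_K$, the two copies together contribute $2^{-\sum_{j=1}^K \tau_j}$, which is at least as strong as the claimed $2^{-\frac{1}{2}\sum_j \tau_j}$. Second, the couple now carries four fixed legs instead of the one fixed $+$ one free in Proposition \ref{prop.counting}. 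Once this is resolved (see the obstacle paragraph below), the combination of the coefficient bound, the $\sigma_{\mathfrak{n}}$-sum estimate as in \eqref{eq.lemTpvarianceclaim1}, and the prefactor $(\alpha L^{-d/2})^{2 l(T)}$ produces, after the same chain of inequalities used in the proof of Proposition \ref{prop.treetermsvariance}, the target bound $L^{O(l(T)\theta)}\,2^{-\frac{1}{2}\sum_j\tau_j}\,\rho^{2 l(T)}$.

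The principal technical obstacle is this leg-count mismatch. Strictly, Proposition \ref{prop.counting} is stated only for couples with exactly one fixed and one free leg, whereas our couple has four fixed legs. I would resolve this by invoking Lemma \ref{lem.freeleg}(3) to convert the $-k$-leg into a free leg, so that we have three fixed legs $(k, k', -k')$ and one free leg, and then running the edge-cutting algorithm of Proposition \ref{prop.cuttingalgorithm} on this modified couple. The two extra fixed legs $\pm k'$ (which are leaf legs, not normal legs) only impose additional momentum constraints that cannot increase the lattice-point count; in terms of the exponent $\chi(\mathcal{C})$ of Proposition \ref{prop.countingind}, the extra fixed legs in fact strictly lower $\chi$, so the resulting counting bound is at most $L^{O(l(T)\theta)}\,Q^{l(T)}\prod_{\mathfrak{e}\in \mathcal{C}_{\mathrm{norm}}}\kappa_{\mathfrak{e}}^{-1}$, which is precisely what was used in Lemma \ref{lem.Tpvariance}. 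A cleaner, equivalent route is to cut each of the two $\Box$-leaf legs at their attachment nodes, which detaches two trivial one-node components and leaves a standard one-fixed/one-free couple on which Proposition \ref{prop.counting} applies directly. Either route closes the estimate with room to spare under condition \eqref{eq.conditionalpha}.
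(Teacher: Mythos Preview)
Your overall architecture matches the paper's proof: expand $\mathbb{E}|H|^2$ by Wick's theorem into couple terms, plug in the coefficient bound of Lemma \ref{lem.boundcoefoperator}, and reduce to the lattice-point counting of Section \ref{sec.numbertheory}. Your observation about the $2^{-\tau}$ gain is also correct; the two copies of $T$ together yield $2^{-\sum_j\tau_j}$, which the paper (like you) relaxes to $2^{-\frac{1}{2}\sum_j\tau_j}$ in the final statement.

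Where you diverge from the paper is in handling the four-leg couple. The paper resolves this by a single line: it uses the trivial inequality $|Term(T,p)_{op,k,k'}|\le \sum_{k'}|Term(T,p)_{op,k,k'}|$ (equation \eqref{eq.termlemmaeq4op}). Summing over $k'$ is exactly pairing the two $\Box$-leaves, so the right-hand side is governed by a standard two-leg couple to which Proposition \ref{prop.counting} applies verbatim. This sidesteps any need to re-examine the cutting algorithm.

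Your route (a) is plausible but incomplete as written: Proposition \ref{prop.countingind}(3) is only proved for couples that arise as outputs of the cutting algorithm of Proposition \ref{prop.cuttingalgorithm}, and that algorithm's Step 0 and the inductive proof of Lemma \ref{lem.normleg} are formulated for an initial couple with exactly two legs. Extending them to allow two extra fixed leaf legs is probably routine, but it is not in the paper and you would need to supply it. Your route (b) is not correct as stated: cutting the $\Box$-leaf legs (or applying $c(\mathfrak{n})$ at their attachment nodes) does not detach ``trivial one-node components'' leaving a standard two-leg couple; the $\Box$-leg's parent node $s_K$ has three edges, and after cutting you are left with a one-node piece carrying three legs plus a remainder whose leg structure still does not match Proposition \ref{prop.counting}. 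The paper's $\sum_{k'}$ trick is both simpler and avoids all of this.
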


\begin{proof} We first find a formula of $\mathbb{E}|H^{\tau_1\cdots \tau_{K}}_{Tkk'}|^2$ which is similar to \eqref{eq.termTp} and \eqref{eq.termexp}.

A direct calculation gives
\begin{equation}\label{eq.termexp1op}
\begin{split}
    \mathbb{E}|H^{\tau_1\cdots \tau_{K}}_{Tkk'}|^2=&\mathbb{E}(H^{\tau_1\cdots \tau_{K}}_{Tkk'}\overline{H^{\tau_1\cdots \tau_{K}}_{Tkk'}})=\left(\frac{\lambda}{L^{d}}\right)^{2l(T)}
    \sum_{k_1,\, k_2,\, \cdots,\, k_{l(T)}}\sum_{k'_1,\, k'_2,\, \cdots,\, k'_{l(T)}}
    \\[0.5em]
    & H^{\tau_1\cdots \tau_{K}}_{Tk_1\cdots k_{l(T)}kk'} \overline{H^{\tau_1\cdots \tau_{K}}_{Tk_1'\cdots k_{l(T)}'kk'}}  \mathbb{E}\Big(\xi_{k_1}\xi_{k_2}\cdots\xi_{k_{l(T)}}\xi_{k'_1}\xi_{k'_2}\cdots\xi_{k'_{l(T)}}\Big)
\end{split}
\end{equation}

Applying Wick theorem (Lemma \ref{th.wick}) to \eqref{eq.termexp1op}, we get
\begin{equation}\label{eq.termexpop}
\mathbb{E}|H^{\tau_1\cdots \tau_{K}}_{Tkk'}|^2=\left(\frac{\lambda}{L^{d}}\right)^{2l(T)}
    \sum_{p\in \mathcal{P}(\{k_1,\cdots, k_{l(T)}, k'_1,\cdots, k'_{l(T)}\})} Term(T, p)_{op,k,k
    '}.
\end{equation}
and
\begin{equation}\label{eq.termTpop}
\begin{split}
    &Term(T, p)_{op,k,k
    '}
    \\
    =&\sum_{k_1,\, k_2,\, \cdots,\, k_{l(T)}}\sum_{k'_1,\, k'_2,\, \cdots,\, k'_{l(T)}} H^{\tau_1\cdots \tau_{K}}_{Tk_1\cdots k_{l(T)}kk'} \overline{H^{\tau_1\cdots \tau_{K}}_{Tk_1'\cdots k_{l(T)}'kk'}} \delta_{p}(k_1,\cdots, k_{l(T)}, k'_1,\cdots, k'_{l(T)})\sqrt{n_{\textrm{in}}(k_1)}\cdots.
\end{split}
\end{equation}

Since $\alpha=\frac{\lambda}{L^{\frac{d}{2}}}$, $\frac{\lambda}{L^{d}}=\alpha L^{-\frac{d}{2}}$. Since the number of elements in $\mathcal{P}$ can be bounded by a constant, by Lemma \ref{lem.Tpvarianceop} proved below, we get
\begin{equation}
\begin{split}
    \mathbb{E}|H^{\tau_1\cdots \tau_{K}}_{Tkk'}|^2\lesssim& (\alpha L^{-\frac{d}{2}})^{2l(T)}
    L^{O(l(T)\theta)}2^{-\frac{1}{2}\sum_{j=1}^K \tau_{j}} (L^dT^{-1}_{\text{max}})^{l(T)} T_{\text{max}}^{2l(T)}
    \\
    =& L^{O(l(T)\theta)} 2^{-\frac{1}{2}\sum_{j=1}^K \tau_{j}} \rho^{2l(T)}
\end{split}
\end{equation}

By Lemma \ref{lem.Tpvariance} below $Term(T, p)_{op,k,k
    '}=0$ if $|k-k'|\gtrsim 1$, we know that the same is true for $\mathbb{E}|H^{\tau_1\cdots \tau_{K}}_{Tkk'}|^2=0$. 

Therefore, we complete the proof of this proposition.
\end{proof}

\begin{lem}\label{lem.Tpvarianceop} Let $Q=L^dT^{-1}_{\text{max}}$ be the same as in Proposition \ref{prop.counting}. Assume that $\alpha$ satisfies \eqref{eq.conditionalpha} and $n_{\mathrm{in}} \in C^\infty_0(\mathbb{R}^d)$ is compactly supported. Then for any $\theta>0$, we have
\begin{equation}
    \sup_k\, |Term(T, p)_{op,k,k
    '}|\le L^{O(l(T)\theta)}2^{-\frac{1}{2}\sum_{j=1}^K \tau_{j}} Q^{l(T)} T_{\text{max}}^{2l(T)}.
\end{equation}
and $Term(T, p)_{op,k,k
    '}=0$ if $|k-k'|\gtrsim 1$.
\end{lem}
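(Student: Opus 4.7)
The plan is to follow the proof of Lemma \ref{lem.Tpvariance} line by line, replacing Lemma \ref{lem.boundcoef} by Lemma \ref{lem.boundcoefoperator} and tracking the extra $\Box$-leaf variable $k'$. First I apply Lemma \ref{lem.boundcoefoperator} to bound both the factor $H^{\tau_1\cdots \tau_K}_{T k_1\cdots k_l k k'}$ and its conjugate appearing in \eqref{eq.termTpop}. Each factor produces, besides the denominators $(|q_{\mathfrak{n}}|+T^{-1}_{\text{max}})^{-1}$ and the edge weights $p_{\mathfrak{e}}$ familiar from Lemma \ref{lem.boundcoef}, an additional $\prod_{\mathfrak{n}\in T_{\text{in}}} 2^{-\tau_{\mathfrak{n}}/2}$. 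Since $\tau_{\mathfrak{n}}$ is zero unless $\mathfrak{n}$ is a substitution node, the two factors combine to an overall prefactor $2^{-\sum_j \tau_j}\le 2^{-\frac{1}{2}\sum_j \tau_j}$ using $\tau_j\ge 0$.

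The vanishing for $|k-k'|\gtrsim 1$ is immediate from momentum conservation (Lemma \ref{lem.freeleg}(1)) applied to the tree $T$, together with the compact support of $n_{\text{in}}$: these force $|k-k'|\lesssim l(T)\le KN$ on the support of the Wick delta, which is empty once the implicit constant in $\gtrsim 1$ exceeds this universal bound.

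The remainder is parallel to \eqref{eq.termlemmaeq3}--\eqref{eq.lemboundtermTpsimplify} in the proof of Lemma \ref{lem.Tpvariance}. The Wick pairing $p$ glues the $\star$-leaves of the two copies of $T$ into the couple $\mathcal{C}(T,T,p)$ with $n=2l(T)$ nodes; in the operator setting this couple carries four fixed legs (two at $k$ from the roots, two at $k'$ from the $\Box$-leaves). After dyadically splitting $|k_{\mathfrak{e},x}|\sim \kappa_{\mathfrak{e}}\in\mathcal{D}(\alpha,1)\cup\{0\}$ and $\Omega_{\mathfrak{n}}\in \sigma_{\mathfrak{n}}+O(T^{-1}_{\text{max}})$ with $\sigma_{\mathfrak{n}}\in\mathbb{Z}_{T_{\text{max}}}$, using $p_{\mathfrak{e}}\lesssim \kappa_{\mathfrak{e}}$ (valid since $\kappa_{\mathfrak{e}}\le 1$ on $\mathcal{D}(\alpha,1)$), applying the lattice-point count $\#Eq\le L^{O(\chi\theta)}Q^{\chi}\prod\kappa_{\mathfrak{e}}^{-1}$ with $\chi=(n-n_{fx}+n_{fr})/2=l(T)-1$ (and using $Q=L^d T^{-1}_{\text{max}}\ge 1$ under \eqref{eq.conditionalpha} to bound $Q^{l(T)-1}\le Q^{l(T)}$), and finally closing with the Euler--Maclaurin sum over $\{\sigma_{\mathfrak{n}}\}$ as in \eqref{eq.lemTpvarianceEulerMac}, the stated bound $L^{O(l(T)\theta)} 2^{-\frac{1}{2}\sum_j\tau_j} Q^{l(T)} T_{\text{max}}^{2l(T)}$ assembles. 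The main obstacle is justifying that the induction of Proposition \ref{prop.countingind}(3) and the cutting algorithm of Proposition \ref{prop.cuttingalgorithm} extend to couples whose initial configuration has four fixed legs rather than two; this should follow by an analogous ``step~0'' that converts the initial four-fixed configuration into a three-fixed-one-free configuration preserving $\#Eq$ via Lemma \ref{lem.freeleg}(3), after which the unchanged cutting steps yield the required exponent in $Q$.
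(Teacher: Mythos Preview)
Your outline is in the right spirit and most ingredients are identified correctly, but you have created an unnecessary obstacle for yourself and left it unresolved. The paper does \emph{not} work with a four-leg couple and does not need to extend the cutting algorithm. Instead it uses the trivial bound
\[
|Term(T,p)_{op,k,k'}|\le \sum_{k'}|Term(T,p)_{op,k,k'}|
\]
(equation \eqref{eq.termlemmaeq4op}), and then, when building the couple from the two copies of $T$, it \emph{pairs the two $\Box$ leaves with each other}. After this pairing the merged $\Box$-edge becomes an internal (leaf) edge, and the resulting couple $\mathcal{C}$ has exactly two legs --- the two root legs carrying $k$ --- so Proposition \ref{prop.counting} applies verbatim with $n=2l(T)$ and exponent $Q^{n/2}=Q^{l(T)}$. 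No modification of the cutting algorithm or of Proposition \ref{prop.countingind} is needed, and no appeal to $Q\ge 1$ to pass from $Q^{l(T)-1}$ to $Q^{l(T)}$ is required.

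The rest of your sketch (Lemma \ref{lem.boundcoefoperator} in place of Lemma \ref{lem.boundcoef}, the factor $2^{-\sum_j\tau_j}\le 2^{-\frac12\sum_j\tau_j}$, the $|k-k'|\lesssim 1$ support, the dyadic splitting in $\kappa_{\mathfrak{e}}$ and $\sigma_{\mathfrak{n}}$, and the Euler--Maclaurin step) matches the paper. The one cancellation you should note is that $p_{\mathfrak{e}}=|k_{\mathfrak{e},x}|/(|k_{\mathfrak{e},x}|+1)\sim \kappa_{\mathfrak{e}}/(\kappa_{\mathfrak{e}}+1)$, so after multiplying by the $\prod\kappa_{\mathfrak{e}}^{-1}$ coming from the counting bound one gets $\prod_{\mathfrak{e}\in\mathcal{C}_{\text{norm}}}(\kappa_{\mathfrak{e}}+1)^{-1}\le 1$; this is how the edge weights disappear, rather than via $p_{\mathfrak{e}}\lesssim\kappa_{\mathfrak{e}}$.
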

\begin{proof} By \eqref{eq.termTpop}, we get
\begin{equation}
\begin{split}
    &Term(T, p)_{op,k,k
    '}
    \\
    =&\sum_{k_1,\, k_2,\, \cdots,\, k_{l(T)}}\sum_{k'_1,\, k'_2,\, \cdots,\, k'_{l(T)}} H^{\tau_1\cdots \tau_{K}}_{Tk_1\cdots k_{l(T)}kk'} \overline{H^{\tau_1\cdots \tau_{K}}_{Tk_1'\cdots k_{l(T)}'kk'}} \delta_{p}(k_1,\cdots, k_{l(T)}, k'_1,\cdots, k'_{l(T)})\sqrt{n_{\textrm{in}}(k_1)}\cdots.
\end{split}
\end{equation}

Since $n_{\mathrm{in}}$ are compactly supported and there are bounded many of them in $Term(T, p)_{op,k,k'}$, by $k_1 + k_2 + \cdots + k_{l(T)}=k-k'$, we know that $Term(T, p)_{op,k,k'}=0$ if $|k-k'|\gtrsim 1$.

By \eqref{eq.boundcoefoperator}, we get  
\begin{equation}\label{eq.termlemmaeq1op}
    |H^{\tau_1\cdots \tau_{K}}_{Tk_1\cdots k_{l}kk'}|\lesssim \sum_{\{d_{\mathfrak{n}}\}_{\mathfrak{n}\in T_{\text{in}}}\in\{0,1\}^{l(T)}}\prod_{\mathfrak{n}\in T_{\text{in}}}\frac{2^{-\frac{\tau_{\mathfrak{n}}}{2}}}{|q_{\mathfrak{n}}|+T^{-1}_{\text{max}}}\ \delta_{\cap_{\mathfrak{n}\in T_{\text{in}}} \{S_{\mathfrak{n}}=0\}}\prod_{\mathfrak{e}\in T_{\text{in}}} p_{\mathfrak{e}}.
\end{equation}

Define $[c_{\mathfrak{n},\widetilde{\mathfrak{n}}}]$, $\mathscr{M}(T)$, $c(\Omega)$ in the same way as the proof of Lemma \ref{lem.Tpvariance}. We can apply the same derivation of \eqref{eq.termlemmaeq3} to obtain
\begin{equation}\label{eq.termlemmaeq3op}
\begin{split}
    &|Term(T, p)_{op,k,k'}|\lesssim \sum_{\substack{k_1,\, \cdots,\, k_{l(T)},\, k'_1,\, \cdots,\, k'_{l(T)}\\ |k_{j}|, |k'_j|\lesssim 1, \forall j}} \sum_{c\in \mathscr{M}(T) }\prod_{\mathfrak{n}\in T_{\text{in}}}\frac{2^{-\frac{\tau_{\mathfrak{n}}}{2}}}{|c(\Omega)_{\mathfrak{n}}|+T^{-1}_{\text{max}}}    \ \delta_{\cap_{\mathfrak{n}\in T_{\text{in}}} \{S_{\mathfrak{n}}=0\}}\prod_{\mathfrak{e}\in T_{\text{in}}} p_{\mathfrak{e}} 
    \\
    &\sum_{c'\in \mathscr{M}(T)}\prod_{\mathfrak{n}'\in T_{\text{in}}}\frac{2^{-\frac{\tau_{\mathfrak{n}}}{2}}}{|c'(\Omega)_{\mathfrak{n}'}|+T^{-1}_{\text{max}}} \prod_{\mathfrak{e}'\in T_{\text{in}}} p_{\mathfrak{e}'}  \ \delta_{\cap_{\mathfrak{n}'\in T_{\text{in}}} \{S_{\mathfrak{n}'}=0\}} \delta_{p}(k_1,\cdots, k_{l(T)}, k'_1,\cdots, k'_{l(T)})
\end{split}
\end{equation}

We obviously have the following inequality
\begin{equation}\label{eq.termlemmaeq4op}
\begin{split}
    &|Term(T, p)_{op,k,k'}|\lesssim \sum_{k'}|Term(T, p)_{op,k,k'}|
    \\
    \lesssim& \sum_{\substack{k_1,\, \cdots,\, k_{l(T)},\, k'_1,\, \cdots,\, k'_{l(T)}, k'\\ |k_{j}|, |k'_j|\lesssim 1, \forall j,\ |k'-k|\lesssim 1}} \sum_{c\in \mathscr{M}(T) }\prod_{\mathfrak{n}\in T_{\text{in}}}\frac{2^{-\frac{\tau_{\mathfrak{n}}}{2}}}{|c(\Omega)_{\mathfrak{n}}|+T^{-1}_{\text{max}}}    \ \delta_{\cap_{\mathfrak{n}\in T_{\text{in}}} \{S_{\mathfrak{n}}=0\}} \prod_{\mathfrak{e}\in T_{\text{in}}} p_{\mathfrak{e}}
    \\
    &\sum_{c'\in \mathscr{M}(T)}\prod_{\mathfrak{n}'\in T_{\text{in}}}\frac{2^{-\frac{\tau_{\mathfrak{n}}}{2}}}{|c'(\Omega)_{\mathfrak{n}'}|+T^{-1}_{\text{max}}}  \prod_{\mathfrak{e}'\in T_{\text{in}}} p_{\mathfrak{e}'}  \ \delta_{\cap_{\mathfrak{n}'\in T_{\text{in}}} \{S_{\mathfrak{n}'}=0\}} \delta_{p}(k_1,\cdots, k_{l(T)}, k'_1,\cdots, k'_{l(T)})
\end{split}
\end{equation}

Switch the order of summations and products in \eqref{eq.termlemmaeq3}, then we get
\begin{equation}\label{eq.termlemmaeq2op}
\begin{split}
    |Term(T, p)_{op,k,k'}|\lesssim& \sum_{\substack{k_1,\, \cdots,\, k_{l(T)},\, k'_1,\, \cdots,\, k'_{l(T)}, k'\\ |k_{j}|, |k'_j|\lesssim 1, \forall j,\ |k'-k|\lesssim 1}} \sum_{c, c'\in \mathscr{M}(T) }\prod_{\mathfrak{n}, \mathfrak{n}'\in T_{\text{in}}}\frac{2^{-\frac{\tau_{\mathfrak{n}}}{2}}}{|c(\Omega)_{\mathfrak{n}}|+T^{-1}_{\text{max}}}\frac{2^{-\frac{\tau_{\mathfrak{n}}}{2}}}{|c'(\Omega)_{\mathfrak{n}'}|+T^{-1}_{\text{max}}}
    \\
    & \prod_{\mathfrak{e},\mathfrak{e}'\in T_{\text{in}}} (p_{\mathfrak{e}}p_{\mathfrak{e}'})\ \delta_{\cap_{\mathfrak{n},\mathfrak{n}'\in T_{\text{in}}} \{S_{\mathfrak{n}}=0, S_{\mathfrak{n}'}=0\}} \delta_{p}(k_1,\cdots, k_{l(T)}, k'_1,\cdots, k'_{l(T)}).
\end{split}
\end{equation}

Consider a tree $T$ with a $\Box$ nodes and a pairing $p\in \mathcal{P}(\{k_1,\cdots, k_{l(T)}, k'_1,\cdots, k'_{l(T)}\})$. $p$ can be viewed as a pairing of all star nodes of two copies of $T$. A couple $\mathcal{C}$ can be constructed by merging all paired star nodes according $p$ and merging the two $\Box$ nodes. As in \eqref{eq.termlemmaeq4}, we can show that
\begin{equation}\label{eq.termlemmaeq4op'}
\sum_{c, c'\in \mathscr{M}(T) }=\sum_{c\in \mathscr{M}(\mathcal{C}) },\qquad \prod_{\mathfrak{n}, \mathfrak{n}'\in T_{\text{in}}}=\prod_{\mathfrak{n}\in \mathcal{C}}, \qquad \prod_{\mathfrak{e},\mathfrak{e}'\in T_{\text{in}}}=\prod_{\mathfrak{e}\in \mathcal{C}_{\text{norm}}},\qquad \cap_{\mathfrak{n},\mathfrak{n}'\in T_{\text{in}}}=\cap_{\mathfrak{n}\in \mathcal{C}}.    
\end{equation}

The analog of \eqref{eq.termlemmaeq5} is 
\begin{equation}\label{eq.termlemmaeq5op}
|Term(T, p)_{op,k,k'}|\lesssim \sum_{\substack{k_1,\, \cdots,\, k_{l(T)},\, k'_1,\, \cdots,\, k'_{l(T)}, k'\\ |k_{j}|, |k'_j|\lesssim 1, \forall j,\ |k'-k|\lesssim 1}} \sum_{c\in \mathscr{M}(\mathcal{C}) }\prod_{\mathfrak{n}\in \mathcal{C}}\frac{2^{-\frac{\tau_{\mathfrak{n}}}{2}}}{|c(\Omega)_{\mathfrak{n}}|+T^{-1}_{\text{max}}} \prod_{\mathfrak{e}\in \mathcal{C}_{\text{norm}}} p_{\mathfrak{e}}\  \delta_{\cap_{\mathfrak{n}\in \mathcal{C}} \{S_{\mathfrak{n}}=0\}}
\end{equation}

The rest part of the proof is exactly the same as the proof of Lemma \ref{lem.Tpvariance} after \eqref{eq.termlemmaeq5}. For completeness, we include a sketch.

As \eqref{eq.termlemmaeq8}, we have  
\begin{equation}
    \sum_{\substack{k_1,\, \cdots,\, k_{l(T)+1},\, k'_1,\, \cdots,\, k'_{l(T)+1}\\\cap_{\mathfrak{n}\in \mathcal{C}} \{S_{\mathfrak{n}}=0\}}}=\sum_{\kappa_{\mathfrak{e}}\in \mathcal{D}(\alpha,1)}\sum_{\sigma_{\mathfrak{n}}\in \mathbb{Z}_{T_{\text{max}}}}\sum_{Eq(\mathcal{C}, \{\sigma_{\mathfrak{n}}\}_{\mathfrak{n}}, \{\kappa_{\mathfrak{e}}\}_{\mathfrak{e}},k)},
\end{equation}
which implies that
\begin{equation}\label{eq.termlemmaeq6op}
\begin{split}
    |Term(T, p)_{op,k,k'}|\lesssim \sum_{\kappa_{\mathfrak{e}}\in \mathcal{D}(\alpha,1)}\sum_{\sigma_{\mathfrak{n}}\in \mathbb{Z}_{T_{\text{max}}}}\sum_{Eq(\mathcal{C}, \{\sigma_{\mathfrak{n}}\}_{\mathfrak{n}}, \{\kappa_{\mathfrak{e}}\}_{\mathfrak{e}},k)} \sum_{c\in \mathscr{M}(\mathcal{C}) }\prod_{\mathfrak{n}\in \mathcal{C}}\frac{2^{-\frac{\tau_{\mathfrak{n}}}{2}}}{|c(\Omega)_{\mathfrak{n}}|+T^{-1}_{\text{max}}} \prod_{\mathfrak{e}\in \mathcal{C}_{\text{norm}}} p_{\mathfrak{e}}
\end{split}
\end{equation}

As \eqref{eq.lemboundtermTp}, we get
\begin{equation}\label{eq.lemboundtermTpop}
\begin{split}
    &Term(T, p)_k
    \\
    \lesssim& \sum_{\kappa_{\mathfrak{e}}\in \mathcal{D}(\alpha,1)}\sum_{\sigma_{\mathfrak{n}}\in \mathbb{Z}_{T_{\text{max}}}}\sum_{Eq(\mathcal{C}, \{\sigma_{\mathfrak{n}}\}_{\mathfrak{n}}, \{\kappa_{\mathfrak{e}}\}_{\mathfrak{e}},k)} \sum_{c\in \mathscr{M}(\mathcal{C}) }\prod_{\mathfrak{n}\in \mathcal{C}}\frac{2^{-\frac{\tau_{\mathfrak{n}}}{2}}}{|c(\Omega)_{\mathfrak{n}}|+T^{-1}_{\text{max}}} \prod_{\mathfrak{e}\in \mathcal{C}_{\text{norm}}} p_{\mathfrak{e}}
    \\
    \lesssim &\sum_{c\in \mathscr{M}(\mathcal{C}) }\sum_{\kappa_{\mathfrak{e}}\in \mathcal{D}(\alpha,1)}\sum_{\substack{\sigma_{\mathfrak{n}}\in \mathbb{Z}_{T_{\text{max}}}\\ |\sigma_{\mathfrak{n}}|\lesssim 1}}\prod_{\mathfrak{n}\in \mathcal{C}}\frac{2^{-\frac{\tau_{\mathfrak{n}}}{2}}}{|c(\{\sigma_{\mathfrak{n}}\})_{\mathfrak{n}}|+T^{-1}_{\text{max}}} \sum_{Eq(\mathcal{C}, \{\sigma_{\mathfrak{n}}\}_{\mathfrak{n}}, \{\kappa_{\mathfrak{e}}\}_{\mathfrak{e}},k)} 1 \prod_{\mathfrak{e}\in \mathcal{C}_{\text{norm}}} \frac{\kappa_{\mathfrak{e}}}{\kappa_{\mathfrak{e}}+1}
    \\
    \lesssim &\sum_{c\in \mathscr{M}(\mathcal{C}) }\sum_{\kappa_{\mathfrak{e}}\in \mathcal{D}(\alpha,1)}\sum_{\substack{\sigma_{\mathfrak{n}}\in \mathbb{Z}_{T_{\text{max}}}\\ |\sigma_{\mathfrak{n}}|\lesssim 1}}\prod_{\mathfrak{n}\in \mathcal{C}}\frac{2^{-\frac{\tau_{\mathfrak{n}}}{2}}}{|c(\{\sigma_{\mathfrak{n}}\})_{\mathfrak{n}}|+T^{-1}_{\text{max}}} \#Eq(\mathcal{C}, \{\sigma_{\mathfrak{n}}\}_{\mathfrak{n}}, \{\kappa_{\mathfrak{e}}\}_{\mathfrak{e}},k) \prod_{\mathfrak{e}\in \mathcal{C}_{\text{norm}}} \frac{\kappa_{\mathfrak{e}}}{\kappa_{\mathfrak{e}}+1}
    \\
    \lesssim &\sum_{c\in \mathscr{M}(\mathcal{C}) }\sum_{\kappa_{\mathfrak{e}}\in \mathcal{D}(\alpha,1)}\sum_{\substack{\sigma_{\mathfrak{n}}\in \mathbb{Z}_{T_{\text{max}}}\\ |\sigma_{\mathfrak{n}}|\lesssim 1}}\prod_{\mathfrak{n}\in \mathcal{C}}\frac{2^{-\frac{\tau_{\mathfrak{n}}}{2}}}{|c(\{\sigma_{\mathfrak{n}}\})_{\mathfrak{n}}|+T^{-1}_{\text{max}}} L^{O(n\theta)} Q^{\frac{n}{2}} \prod_{\mathfrak{e}\in \mathcal{C}_{\text{norm}}} \frac{\kappa_{\mathfrak{e}}}{\kappa_{\mathfrak{e}}+1} \prod_{\mathfrak{e}\in \mathcal{C}_{\text{norm}}} \kappa_{\mathfrak{e}}^{-1}
\end{split}
\end{equation}
Here in the last inequality we applied \eqref{eq.countingbd0} in Proposition \ref{prop.counting}.

After simplification, \eqref{eq.lemboundtermTpop} gives us 
\begin{equation}
\begin{split}
    |Term(T, p)_{op,k,k'}|\lesssim &L^{O(n\theta)} Q^{\frac{n}{2}}\sum_{c\in \mathscr{M}(\mathcal{C}) }\sum_{\kappa_{\mathfrak{e}}\in \mathcal{D}(\alpha,1)}\sum_{\substack{\sigma_{\mathfrak{n}}\in \mathbb{Z}_{T_{\text{max}}}\\ |\sigma_{\mathfrak{n}}|\lesssim 1}} \prod_{\mathfrak{n}\in \mathcal{C}}\frac{2^{-\frac{\tau_{\mathfrak{n}}}{2}}}{|c(\{\sigma_{\mathfrak{n}}\})_{\mathfrak{n}}|+T^{-1}_{\text{max}}}
    \\
    \lesssim &L^{O(n\theta)} Q^{\frac{n}{2}} \left(\sum_{\kappa_{\mathfrak{e}}\in \mathcal{D}(\alpha,1)} 1\right) \sum_{c\in \mathscr{M}(\mathcal{C})}\sum_{\substack{\sigma_{\mathfrak{n}}\in \mathbb{Z}_{T_{\text{max}}}\\ |\sigma_{\mathfrak{n}}|\lesssim 1}} \prod_{\mathfrak{n}\in \mathcal{C}}\frac{2^{-\frac{\tau_{\mathfrak{n}}}{2}}}{|c(\{\sigma_{\mathfrak{n}}\})_{\mathfrak{n}}|+T^{-1}_{\text{max}}}
    \\
    \lesssim & L^{O(n\theta)} Q^{\frac{n}{2}} \sum_{c\in \mathscr{M}(\mathcal{C})}\sum_{\substack{\sigma_{\mathfrak{n}}\in \mathbb{Z}_{T_{\text{max}}}\\ |\sigma_{\mathfrak{n}}|\lesssim 1}} \prod_{\mathfrak{n}\in \mathcal{C}}\frac{2^{-\frac{\tau_{\mathfrak{n}}}{2}}}{|c(\{\sigma_{\mathfrak{n}}\})_{\mathfrak{n}}|+T^{-1}_{\text{max}}}
\end{split}
\end{equation}
Here in the first step we use the fact that $\prod_{\mathfrak{e}\in \mathcal{C}_{\text{norm}}} \frac{\kappa_{\mathfrak{e}}}{\kappa_{\mathfrak{e}}+1} \prod_{\mathfrak{e}\in \mathcal{C}_{\text{norm}}} \kappa_{\mathfrak{e}}^{-1}=\prod_{\mathfrak{e}\in \mathcal{C}_{\text{norm}}} \frac{1}{\kappa_{\mathfrak{e}}+1}\le 1$. The reason for other steps can be find in the derivation of \eqref{eq.lemboundtermTpsimplify}.

We claim that 
\begin{equation}\label{eq.lemTpvarianceclaimop}
     \sup_{c}\sum_{\substack{\sigma_{\mathfrak{n}}\in \mathbb{Z}_{T_{\text{max}}}\\ |\sigma_{\mathfrak{n}}|\lesssim 1}} \prod_{\mathfrak{n}\in \mathcal{C}}\frac{2^{-\frac{\tau_{\mathfrak{n}}}{2}}}{|c(\{\sigma_{\mathfrak{n}}\})_{\mathfrak{n}}|+T^{-1}_{\text{max}}}\lesssim L^{O(l(T)\theta)}2^{-\frac{1}{2}\sum_{j=1}^K \tau_{j}} T^{2l(T)}_{\text{max}}
\end{equation}

Since there are only bounded many matrices in $\mathscr{M}(\mathcal{C})$.  Given above claim, we know that 
\begin{equation}
    |Term(T, p)_{op,k,k'}|\lesssim L^{O(l(T)\theta)} 2^{-\frac{1}{2}\sum_{j=1}^K \tau_{j}} Q^{\frac{n}{2}} T^{2l(T)}_{\text{max}},
\end{equation}
which proves the lemma since $n=2l(T)$.

Now prove the claim. In a tree $T$, there are $l(T)$ branching nodes, so there are $l(T)$ nodes in $T_{\text{in}}$. Since all nodes of $\mathcal{C}$ comes the two copies of $T_{\text{in}}$, so there are $2l(T)$ nodes in $\mathcal{C}$. Label these nodes by $h=1,\cdots,2l(T)$ and denote $\sigma_{\mathfrak{n}}$ by $\sigma_{h}$ if $\mathfrak{n}$ is labelled by $h$. Since $\sigma_{h}\in \mathbb{Z}_{T_{\text{max}}}$, there exists $m_{h}\in \mathbb{Z}$ such that $\sigma_{h}=T^{-1}_{\text{max}} m_{h}$. \eqref{eq.lemTpvarianceclaimop} is thus equivalent to 
\begin{equation}\label{eq.lemTpvarianceclaim1op}
    T^{2l(T)}_{\text{max}}\sum_{\substack{m_{h}\in \mathbb{Z}\\ |m_{h}|\lesssim T_{\text{max}}}} \prod_{h=1}^{2l(T)}\frac{2^{-\frac{\tau_{\mathfrak{n}}}{2}}}{|c(\{m_{h}\})_{h}|+1}\lesssim L^{O(l(T)\theta)}2^{-\frac{1}{2}\sum_{j=1}^K \tau_{j}} T^{2l(T)}_{\text{max}}
\end{equation}

To we prove \eqref{eq.lemTpvarianceclaim1op}. We just need to show that 
\begin{equation}
    \sum_{\substack{m_{h}\in \mathbb{Z}\\ |m_{h}|\lesssim T_{\text{max}}}} \prod_{h=1}^{2l(T)}\frac{1}{|c(\{m_{h}\})_{h}|+1}\lesssim L^{O(l(T)\theta)}
\end{equation}
This can be proved by Euler-Maclaurin formula \eqref{eq.EulerMaclaurin} as \eqref{eq.lemTpvarianceEulerMac}.

Now we complete the proof of the claim and thus the proof of the lemma.
\end{proof}

\subsubsection{Proof of the operator norm bound} In this subsection, we finish the proof of Proposition \ref{prop.operatorupperbound'}.

\begin{proof}[Proof of Proposition \ref{prop.operatorupperbound'}]
By Lemma \ref{lem.treetermsoperator}, we have 

\begin{equation}
    \left(\prod_{j=1}^K\mathcal{P}^{\tau_j}_{T_j}(w)\right)_{k}(t)=\sum_{k'}\int_0^t H^{\tau_1\cdots \tau_{K}}_{Tkk'}(t,s) w_{k'}(s) ds
\end{equation}
and the kernel $H^{\tau_1\cdots \tau_{K}}_{Tkk'}$ is a polynomial of Gaussian variables given by
\begin{equation}
\begin{split}
H^{\tau_1\cdots \tau_{K}}_{Tkk'}(t,s)=\left(\frac{i\lambda}{L^{d}}\right)^l\sum_{k_1,\, k_2,\, \cdots,\, k_{l}} H^{\tau_1\cdots \tau_{K}}_{Tk_1\cdots k_{l}kk'} \xi_{k_1}\cdots \xi_{k_{l}}.
\end{split}
\end{equation}

By Proposition \ref{prop.treetermsvarianceoperator}, we have
\begin{equation}
    \sup_k\, \mathbb{E}|H^{\tau_1\cdots \tau_{K}}_{Tkk'}|^2\lesssim L^{O(l(T)\theta)}2^{-\frac{1}{2}\sum_{j=1}^K \tau_{j}} \rho^{2l(T)}.
\end{equation}

Then the large deviation estimate Lemma \ref{lem.largedev} gives 
\begin{equation}
|H^{\tau_1\cdots \tau_{K}}_{Tkk'}(t,s)|\lesssim L^{\frac{n}{2}\theta} \sqrt{\mathbb{E}|H^{\tau_1\cdots \tau_{K}}_{Tkk'}|^2}\lesssim L^{O(l(T)\theta)} 2^{-\frac{1}{4}\sum_{j=1}^K \tau_{j}} \rho^{l(T)},\qquad \textit{L-certainly}.
\end{equation} 

Summing over all $\tau_1,\cdots, \tau_{K}$ gives
\begin{equation}
\sum_{\tau_1,\cdots, \tau_{K}}|H^{\tau_1\cdots \tau_{K}}_{Tkk'}(t,s)|\lesssim L^{O(l(T)\theta)} \rho^{l(T)},\qquad \textit{L-certainly}.
\end{equation} 

Applying the epsilon net and union bound method as in \eqref{eq.unionbound}, we obtain
\begin{equation}
    \sup_{t,s}\sup_{|k|,|k'|\lesssim L^{2M}}\sum_{\tau_1,\cdots, \tau_{K}}|H^{\tau_1\cdots \tau_{K}}_{Tkk'}(t,s)|\lesssim L^{O(l(T)\theta)}  \rho^{l(T)},\qquad \textit{L-certainly}.
\end{equation}

For $w\in X^{p}_{L^{2M}}$, we have $|w_k(t)|\le \sup_{t}||w(t)||_{X^{p}_{L^{2M}}} \langle k\rangle^{-p}$. Since $w_{k'}=0$ if $|k'|\gtrsim L^{2M}$ and $H^{\tau_1\cdots \tau_{K}}_{Tkk'}=0$ if $|k-k'|\gtrsim 1$, we know that
\begin{equation}
\begin{split}
    \left|\left(\sum_{\tau_1,\cdots, \tau_{K}}\prod_{j=1}^K\mathcal{P}^{\tau_j}_{T_j}(w)\right)_{k}(t)\right|\le&\sum_{|k'|\lesssim L^{2M}}\int_0^t \sum_{\tau_1,\cdots, \tau_{K}}|H^{\tau_1\cdots \tau_{K}}_{Tkk'}(t,s)| |w_{k'}(s)| ds
    \\
    \lesssim&  L^{O(l(T)\theta)}  \rho^{l(T)}t \sup_{t}||w(t)||_{X^{p}_{L^{2M}}} \sum_{\substack{|k'|\lesssim L^{2M}\\ |k'-k|\lesssim 1}} \langle k'\rangle^{-p}
    \\
    \lesssim& L^{O(1+l(T)\theta)}  \rho^{l(T)} \sup_{t}||w(t)||_{X^{p}_{L^{2M}}}  \langle k\rangle^{-p}.
\end{split}
\end{equation}

Since $l(T)=\sum_{j=1}^K l(T_j)$, $L$-certainly we have 
\begin{equation}
    \left|\left|\sum_{\tau_1,\cdots,\tau_K}\prod_{j=1}^K\mathcal{P}^{\tau_j}_{T_j}\right|\right|_{L_t^{\infty}X^{p}}\le L^{O\left(1+\theta\sum_{j=1}^K l(T_j)\right)} \rho^{\sum_{j=1}^K l(T_j)}||w||_{L_t^{\infty}X^{p}_{L^{2M}}}.
\end{equation}

Therefore, we complete the proof of Proposition \ref{prop.operatorupperbound'}.
\end{proof}

\subsection{Asymptotics of the main terms} In this section, we prove \eqref{eq.n1} in Theorem \ref{th.main} which characterize the asymptotic behavior of $n^{(1)}(k)$.

\begin{prop}\label{prop.mainterms} Using the same notation as Theorem \ref{th.main} (2), then we have 
\begin{equation}
        n^{(1)}(k)=\left\{
\begin{aligned}
    &\frac{t}{T_{\mathrm{kin}}}\mathcal K(n_{\mathrm{in}})(k)+O_{\ell^\infty_k}\left(L^{-\theta}\frac{T_{\text{max}}}{T_{\mathrm {kin}}}\right)+\widetilde{O}_{\ell^\infty_k}\left(\epsilon_1\text{Err}_{D}(k_x)\frac{T_{\text{max}}}{T_{\mathrm {kin}}}\right)
    && \text{for any } |k|\le \epsilon_1 l_{d}^{-1},
    \\
    &0, && \text{for any } |k|\ge 2C_{2}  l_{d}^{-1}
\end{aligned}\right.
    \end{equation}
    Here 
    \begin{equation}
        \text{Err}_{D}(k_x)=\left\{\begin{aligned}
    &D^{d+1}, && \text{if } |k_x|\le D,
    \\
    &D^{d-1}(|k_x|^2+D|k_x|), && \text{if } |k_x|\ge D.
\end{aligned}
    \right.
    \end{equation}
\end{prop}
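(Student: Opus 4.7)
The plan is to start from the definition \eqref{eq.n(j)} with $j=1$. Since $l(T)+l(T')=1$, exactly one of the two trees is the trivial one-node tree with $\mathcal{J}_T=\xi$ and the other is the unique single-branch tree with $\mathcal{J}_T=\mathcal{T}(\xi,\xi)$. Applying Wick's theorem (Lemma \ref{th.wick}) as in Proposition \ref{prop.termcouple}, each such cross-term collapses onto an explicit oscillatory expression of the form
\[
\frac{i\lambda}{L^d}\sum_{\substack{k_1,k_2\in\Z^d_L\\ k_1+k_2=k}} k_x \int_0^t n_{\mathrm{in}}(k_1)n_{\mathrm{in}}(k_2)\, e^{is\Omega(k_1,k_2,k)-\nu(t-s)|k|^2}\,ds + (\text{c.c. and twin pairing}),
\]
with a corresponding conjugate term and a self-pairing term $k_1=-k_2$ that, after summation with its complex conjugate, produces the two pieces of $\mathcal K$ in \eqref{eq.WKE}.

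Next, I would analyze the time integral $I(\Omega,|k|^2):=\int_0^t e^{is\Omega-\nu(t-s)|k|^2}\,ds$ via the dyadic/integration by parts bound of Lemma \ref{lem.boundcoef'} (with $a=\Omega$, $b=|k|^2$). For $|k|\ge 2C_2 l_d^{-1}$, the assumption $\nu\gtrsim T_{\max}^{-1}$ gives $\nu(t-s)|k|^2\gtrsim L^{\varepsilon}$ in an overwhelming portion of the $s$-range, so $I$ is exponentially small and can be absorbed into $O(L^{-M})$; combined with the fact that $\mathcal{J}_T$ itself vanishes once $|k|\gtrsim 1$ via the compact support of $n_{\mathrm{in}}$ (Proposition \ref{prop.treetermsupperbound}), this already delivers the second case of \eqref{eq.n1}. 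For $|k|\le\epsilon_1 l_d^{-1}$, we have $\nu t|k|^2\ll 1$ so the viscosity factor differs from $1$ by a quantity that contributes only to the $O(L^{-\theta}T_{\max}/T_{\mathrm{kin}})$ error, and $I$ reduces to the standard oscillatory profile whose real part is $\approx t$ when $|\Omega|\ll t^{-1}$ and otherwise is bounded by $(|\Omega|+T_{\max}^{-1})^{-1}$.

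The heart of the proof is then the Riemann sum analysis converting
\[
\frac{1}{L^{d}}\sum_{k_1+k_2=k,\, k_i\in\Z_L^d}\;(\cdots)\,I(\Omega,|k|^2)
\quad\longrightarrow\quad
\int_{k_1+k_2=k}(\cdots)\,\delta(\Omega)\,dk_1\,dk_2.
\]
The delta function emerges from $\mathrm{Re}\,I\to \pi t\,\delta(\Omega)$, and matching the prefactor $\alpha^2 = \lambda^2 L^{-d}$ against $T_{\mathrm{kin}}=1/(8\pi\alpha^2)$ produces exactly $\frac{t}{T_{\mathrm{kin}}}\mathcal{K}(n_{\mathrm{in}})(k)$. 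The resulting $L^{-\theta}T_{\max}/T_{\mathrm{kin}}$ error is of two kinds: the lattice-vs-integral discrepancy on the resonance surface, controlled by the counting estimate Proposition \ref{prop.counting} with a $\theta$-loss; and the tail $|\Omega|\gtrsim T_{\max}^{-1}$ contribution, estimated via the number-theoretic bound in Theorem \ref{th.numbertheory1} exactly as in Lemma \ref{lem.Tpvariance}.

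The main obstacle, and the source of the separate $\widetilde O(\epsilon_1\,\mathrm{Err}_D(k_x)T_{\max}/T_{\mathrm{kin}})$ term, is the degeneracy of the phase $\Omega(k_1,k_2,k)=\Lambda(k_1)+\Lambda(k_2)-\Lambda(k)$ when $k_x$ is small, since $\partial_{k_{1\perp}}\Omega$ and $\partial_{k_{2\perp}}\Omega$ vanish to leading order and $\partial_{k_{1x}}\Omega$ is proportional to $k_x$ up to lower-order terms in $k_\perp$. I would split the $(k_1,k_2)$-integral according to whether $|k_x|\lesssim D$ or $|k_x|\gtrsim D$: in the first regime the natural stationary-phase/integration-by-parts argument breaks down, so one bounds the contribution crudely using the volume of the support of $n_{\mathrm{in}}(k_1)n_{\mathrm{in}}(k_2)$ (which is at most $D^{2d}$ intersected with the momentum constraint, giving after the $\delta$ the $D^{d+1}$ weight); in the second regime one can integrate by parts in the non-degenerate $k_{1x}$ direction, producing the sharper $D^{d-1}(|k_x|^2+D|k_x|)$ weight. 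Combining these two regime-dependent estimates with the discrete-to-continuous passage gives the stated $\mathrm{Err}_D(k_x)$ bound and completes the proof.
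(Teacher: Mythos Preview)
Your setup has a structural gap. You claim $n^{(1)}(k)$ comes from $l(T)+l(T')=1$, i.e.\ from $\mathbb E\big(\mathcal T(\xi,\xi)_k\,\overline{\xi_k}\big)$ plus conjugate; but this is an odd Gaussian moment and vanishes identically. The paper's $n^{(1)}(k)$ is in fact the first nontrivial correction, which at the level of the expansion reads
\[
n^{(1)}(k)=\mathbb E\big|\psi^{(1)}_{app,k}\big|^2+2\,\mathrm{Re}\,\mathbb E\big(\psi^{(2)}_{app,k}\,\overline{\xi_k}\big),
\]
where $\psi^{(1)}=\mathcal T(\xi,\xi)$ and $\psi^{(2)}=2\,\mathcal T(\mathcal T(\xi,\xi),\xi)$. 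The first summand, after Wick pairing, yields $|k_x|^2\sum n_{\mathrm{in}}(k_1)n_{\mathrm{in}}(k_2)\cdot 4\sin^2(\tfrac{t}{2}\Omega)/\Omega^2$ and produces the \emph{gain} term of $\mathcal K$; the second summand, via the pairing $k_3=k$, $k_1=-k_2$, yields $k_x(k_x-k_{1x})\,n_{\mathrm{in}}(k_1)n_{\mathrm{in}}(k)\cdot 2\sin^2(\tfrac{t}{2}\Omega)/\Omega^2$ and produces the \emph{loss} term $-2n(k)\int\ldots$ of $\mathcal K$. Your proposal never introduces $\psi^{(2)}$ and so cannot recover the loss term; the ``self-pairing $k_1=-k_2$'' you mention would force $k=0$ and is not the mechanism.

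Your attribution of the $\epsilon_1\,\mathrm{Err}_D(k_x)$ error is also off. In the paper this term comes entirely from the viscosity correction: one writes $e^{-\nu|k|^2(t-s)}=1-\nu|k|^2(t-s)\int_0^1 e^{-a\nu|k|^2(t-s)}\,da$, splits each of $\psi^{(1)},\psi^{(2)}$ accordingly, and bounds the remainder pieces by integration by parts in $s$ (and in $r$ for $\psi^{(2)}$) together with the asymptotic bound \eqref{eq.asymptoticsbound} of Theorem~\ref{th.numbertheory}. The factor $\nu|k|^2 T_{\max}\le \epsilon_1^2$ from $|k|\le\epsilon_1 l_d^{-1}$ is what produces the $\epsilon_1$; the $D$-powers and the dichotomy in $|k_x|$ come from the factors $k_x$, $(k_x-k_{1x})$ in the nonlinearity together with the $|k_x|^{-1}$ in the counting bound, not from a stationary-phase degeneracy argument. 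The main term $\frac{t}{T_{\mathrm{kin}}}\mathcal K(n_{\mathrm{in}})(k)$ and the $O(L^{-\theta}T_{\max}/T_{\mathrm{kin}})$ error are then extracted from the viscosity-free part via \eqref{eq.asymptotics}, not via Proposition~\ref{prop.counting}. Finally, the case $|k|\ge 2C_2 l_d^{-1}$ is a pure support argument (since $|k|\le |k_1|+|k_2|\le 2D\le 2C_2 l_d^{-1}$ on the support of $n_{\mathrm{in}}(k_1)n_{\mathrm{in}}(k_2)$), giving $n^{(1)}(k)=0$ exactly, not exponential smallness.
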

\begin{proof} The second case of \eqref{eq.n1} is obvious. We divide the proof of the first case into several steps.

\textbf{Step 1.} (Calculation of $\psi_{app,k}$ and $n^{(1)}(k)$) The first three terms in the tree expansion \eqref{eq.approxsol} can be calculated explicitly. 
\begin{equation}
\begin{split}
    \psi_{app,k}=\psi^{(0)}_{app,k}+\psi^{(1)}_{app,k}+\psi^{(2)}_{app,k}+\cdots
\end{split}
\end{equation}
where $\psi^{(0)}_{app,k}$,$\psi^{(1)}_{app,k}$, $\psi^{(2)}_{app,k}$ are given by
\begin{equation}
    \psi^{(0)}_{app,k}=\xi_k
\end{equation}
\begin{equation}
    \psi^{(1)}_{app,k}=\frac{i\lambda}{L^{d}} \sum\limits_{k_1+k_2=k} k_{x}\xi_{k_1} \xi_{k_2} \int^{t}_0e^{i s\Omega(k_1,k_2,k)- \nu|k|^2(t-s)} ds
\end{equation}
\begin{equation}
\begin{split}
    \psi^{(2)}_{app,k}=&-2\left(\frac{\lambda}{L^{d}}\right)^2 \sum\limits_{k_1+k_2+k_3=k} k_{x}(k_{2x}+k_{3x})\xi_{k_1} \xi_{k_2}\xi_{k_3}\times
    \\
    &\int_{0\le r<s\le t}e^{i s\Omega(k_1,k_2+k_3,k)- \nu|k|^2(t-s)} e^{i r\Omega(k_2,k_3,k_2+k_3)- \nu|k_2+k_3|^2(s-r)} dsdr
\end{split}
\end{equation}

By \eqref{eq.n(j)}, we know that
\begin{equation}\label{eq.n(1)}
    n^{(1)}(k)=\mathbb E \left|\psi^{(1)}_{app,k}\right|^2+ 2\text{Re}\  \mathbb E \left(\psi^{(2)}_{app,k}\overline{\xi_k}\right).
\end{equation}

\textbf{Step 2.} (Decomposition of $\psi_{app,k}$) $e^{- \nu|k|^2(t-s)}$ is supposed to be close to $1$, so we have the decomposition $e^{- \nu|k|^2(t-s)}=1+(e^{- \nu|k|^2(t-s)}-1)=1-\left(\int_{0}^1 e^{-a\nu|k|^2(t-s)} da\right) \nu|k|^2(t-s)$. We can also decompose $\psi^{(1)}_{app,k}=\psi^{(11)}_{app,k}+\psi^{(12)}_{app,k}$ and $\psi^{(2)}_{app,k}=\psi^{(21)}_{app,k}+\psi^{(22)}_{app,k}$ into $\psi^{(1)}_{app,k}$ accordingly.
\begin{equation}
    \psi^{(11)}_{app,k}=\frac{i\lambda}{L^{d}} \sum\limits_{k_1+k_2=k} k_{x}\xi_{k_1} \xi_{k_2} \int^{t}_0e^{i s\Omega(k_1,k_2,k)} ds
\end{equation}
\begin{equation}
    \psi^{(12)}_{app,k}=-\nu|k|^2\frac{i\lambda}{L^{d}} \int_{0}^1\Bigg(\sum\limits_{k_1+k_2=k} k_{x}\xi_{k_1} \xi_{k_2} \int^{t}_0e^{i s\Omega(k_1,k_2,k)- a\nu|k|^2(t-s)} (t-s)ds\Bigg) da
\end{equation}
\begin{equation}
\psi^{(21)}_{app,k}=-2\left(\frac{\lambda}{L^{d}}\right)^2 \sum\limits_{k_1+k_2+k_3=k} k_{x}(k_{2x}+k_{3x})\xi_{k_1} \xi_{k_2}\xi_{k_3}\int_{0\le r<s\le t}e^{i s\Omega(k_1,k_2+k_3,k)} e^{i r\Omega(k_2,k_3,k_2+k_3)} dsdr
\end{equation}
\begin{equation}
\begin{split}
    \psi^{(22)}_{app,k}=&2\left(\frac{\lambda}{L^{d}}\right)^2 \int_{0}^1\Bigg(\sum\limits_{k_1+k_2+k_3=k} k_{x}(k_{2x}+k_{3x})\int_{0\le r<s\le t}(\nu|k|^2(t-s)+\nu|k_2+k_3|^2(s-r))
    \\
    & e^{i s\Omega(k_1,k_2+k_3,k)- a\nu|k|^2(t-s)} e^{i r\Omega(k_2,k_3,k_2+k_3)- a\nu|k_2+k_3|^2(s-r)} dsdr\xi_{k_1}\xi_{k_2}\xi_{k_3}\Bigg)da
\end{split}
\end{equation}

$n^{(1)}(k)$ is also decomposed into $n^{(1)}(k)=n^{(11)}(k)+n^{(12)}(k)$
\begin{equation}\label{eq.n(11)}
    n^{(11)}(k)=\mathbb E \left|\psi^{(11)}_{app,k}\right|^2+ 2\text{Re}\  \mathbb E \left(\psi^{(21)}_{app,k}\overline{\xi_k}\right).
\end{equation}
\begin{equation}
    n^{(12)}(k)=\mathbb E \left|\psi^{(12)}_{app,k}\right|^2+2\text{Re}\ \mathbb E \left(\psi^{(11)}_{app,k}\overline{\psi^{(12)}_{app,k}}\right)+ 2\text{Re}\  \mathbb E \left(\psi^{(22)}_{app,k}\overline{\xi_k}\right).
\end{equation}

\textbf{Step 3.} (Estimate of $n^{(12)}(k)$ for $|k|\le \epsilon_1 l_{d}^{-1}$) In this step, all constants in $\lesssim$ depend only on the dimension $d$. Define 
\begin{equation}\label{eq.H(11)}
    H^{(11)}_{k_1k_2k}=k_{x} \int^{t}_0e^{i s\Omega(k_1,k_2,k)} ds
\end{equation}
\begin{equation}\label{eq.H(12)}
    H^{(12)}_{k_1k_2k}=-\nu|k|^2k_{x} \int^{t}_0e^{i s\Omega(k_1,k_2,k)- a\nu|k|^2(t-s)} (t-s)ds
\end{equation}
\begin{equation}\label{eq.H(21)}
    H^{(21)}_{k_1k_2k_3k}=2k_{x}(k_{2x}+k_{3x})\int_{0\le r<s\le t}e^{i s\Omega(k_1,k_2+k_3,k)} e^{i r\Omega(k_2,k_3,k_2+k_3)} dsdr
\end{equation}
\begin{equation}\label{eq.H(22)}
\begin{split}
    H^{(22)}_{k_1k_2k_3k}=&-2k_{x}(k_{2x}+k_{3x})\int_{0\le r<s\le t}(\nu|k|^2(t-s)+\nu|k_2+k_3|^2(s-r))
    \\
    & e^{i s\Omega(k_1,k_2+k_3,k)- a\nu|k|^2(t-s)} e^{i r\Omega(k_2,k_3,k_2+k_3)- a\nu|k_2+k_3|^2(s-r)} dsdr
\end{split}
\end{equation}

Then we have
\begin{equation}\label{eq.psi(11)app}
    \psi^{(11)}_{app,k}=\frac{i\lambda}{L^{d}} \sum\limits_{k_1+k_2=k} H^{(11)}_{k_1k_2k}\xi_{k_1} \xi_{k_2} 
\end{equation}
\begin{equation}\label{eq.psi(12)app}
    \psi^{(12)}_{app,k}=\int^1_{0}\frac{i\lambda}{L^{d}} \sum\limits_{k_1+k_2=k} H^{(12)}_{k_1k_2k}\xi_{k_1} \xi_{k_2} da 
\end{equation}
\begin{equation}\label{eq.psi(21)app}
    \psi^{(21)}_{app,k}=\left(\frac{i\lambda}{L^{d}}\right)^2 \sum\limits_{k_1+k_2+k_3=k} H^{(11)}_{k_1k_2k_3k}\xi_{k_1} \xi_{k_2}\xi_{k_3} 
\end{equation}
\begin{equation}\label{eq.psi(22)app}
    \psi^{(22)}_{app,k}=\int^1_{0}\left(\frac{i\lambda}{L^{d}}\right)^2 \sum\limits_{k_1+k_2+k_3=k} H^{(12)}_{k_1k_2k_3k}\xi_{k_1} \xi_{k_2}\xi_{k_3} da 
\end{equation}

To derive an upper bound for $n^{(12)}(k)$, it suffices to consider $\mathbb E \left|\psi^{(12)}_{app,k}\right|^2$, $\text{Re}\ \mathbb E \left(\psi^{(11)}_{app,k}\overline{\psi^{(12)}_{app,k}}\right)$ and $\text{Re}\  \mathbb E \left(\psi^{(22)}_{app,k}\overline{\xi_k}\right)$ separately.

\textbf{Step 3.1.} (Upper bounds of $\mathbb E \left|\psi^{(12)}_{app,k}\right|^2$ and $\text{Re}\ \mathbb E \left(\psi^{(11)}_{app,k}\overline{\psi^{(12)}_{app,k}}\right)$) We first derive upper bounds for $H^{(1j)}$. For $H^{(11)}$, we have
\begin{equation}
\begin{split}
    |H^{(11)}_{k_1k_2k}|\lesssim& \left|\int^{t}_0\frac{k_x}{i(\Omega+T^{-1}_{\text{max}}\text{sgn}(\Omega))}e^{-isT^{-1}_{\text{max}}\text{sgn}(\Omega)}\frac{d}{ds}e^{i s\Omega+is\, \text{sgn}(\Omega)/T_{\text{max}}} ds\right|
\end{split}
\end{equation}

Integration by parts we get 
\begin{equation}\label{eq.H(11)bound}
\begin{split}
    |H^{(11)}_{k_1k_2k}|\lesssim&\left|\int^{t}_0\frac{T_{\text{max}}^{-1}k_x}{\Omega+T^{-1}_{\text{max}}\text{sgn}(\Omega)}e^{i s\Omega} ds\right|+\left|\left[\frac{k_x}{i(\Omega+T^{-1}_{\text{max}}\text{sgn}(\Omega)}e^{i s\Omega}\right]_{0}^t\right|
    \\
    \lesssim& \frac{|k_x|}{|\Omega|+T^{-1}_{\text{max}}}
\end{split}
\end{equation}

By a similar integration by parts method, we get
\begin{equation}\label{eq.H(12)bound}
    |H^{(12)}_{k_1k_2k}|\lesssim \frac{|k|^2|k_x|\nu T_{\text{max}}}{|\Omega|+T^{-1}_{\text{max}}}.
\end{equation}

By \eqref{eq.psi(12)app}, we know that 
\begin{equation}\label{eq.psi(12)bound}
\begin{split}
    \mathbb E \left|\psi^{(12)}_{app,k}\right|^2\le& \int^1_{0}\frac{\lambda^2}{L^{2d}} \mathbb E\left|\sum\limits_{k_1+k_2=k} H^{(12)}_{k_1k_2k}\xi_{k_1} \xi_{k_2}\right|^2 da 
    \\
    \le& \frac{2\lambda^2}{L^{2d}} \sup_{a\in[0,1]}\sum\limits_{k_1+k_2=k} \left|H^{(12)}_{k_1k_2k}\right|^2n(k_1) n(k_2)
    \\
    \le& \frac{2\lambda^2}{L^{2d}} (|k|^2|k_x|\nu T_{\text{max}})^2 \sum\limits_{k_1} \frac{n(k_1) n(k-k_1)}{(|\Omega(k_1,k-k_1,k)|+T^{-1}_{\text{max}})^2}
    \\
    \le& \frac{2\epsilon_1^2\lambda^2}{L^{2d}} \max(|k_x|^2,D^2) T_{\text{max}} L^dD^{d-1}=2\epsilon_1^2 \max(|k_x|^2,D^2) T_{\text{max}} T^{-1}_{\text{kin}}D^{d-1} 
\end{split}   
\end{equation}
Here in the second inequality we apply the Wick theorem to calculate the expectation. In the third inequality we apply \eqref{eq.H(12)bound}. In the last line we apply \eqref{eq.asymptoticsbound} in Theorem \ref{th.numbertheory} by taking $t=T_{\text{max}}$, $g(x)=\frac{1}{(1+|x|)^2}$ and $F(k_1)=n(k_1) n(k-k_1)$. In the last line we also use the facts that $|k|\le \epsilon_1 l_{d}^{-1}=\epsilon_1 (\nu T_{\text{max}})^{-\frac{1}{2}}$, $T_{\text{kin}}=\frac{1}{8\pi\alpha^2}=\frac{L^{d}}{8\pi\lambda^2}$ and $|k_x|\le |k_{x1}|+|k_{x2}|\lesssim D$.

By \eqref{eq.H(11)bound} and \eqref{eq.H(12)bound}, we know that 
\begin{equation}\label{eq.psi(11)(12)bound}
\begin{split}
    \left|\text{Re}\ \mathbb E \left(\psi^{(11)}_{app,k}\overline{\psi^{(12)}_{app,k}}\right)\right|
    \le& \int^1_{0}\frac{2\lambda^2}{L^{2d}} \text{Re}\,\mathbb E\left(\sum\limits_{k_1+k_2=k} H^{(11)}_{k_1k_2k}\xi_{k_1} \xi_{k_2}  \sum\limits_{k_1+k_2=k} \overline{H^{(12)}_{k_1k_2k}\xi_{k_1} \xi_{k_2}}\right) da 
    \\
    \le& \frac{2\lambda^2}{L^{2d}} \sup_{a\in[0,1]}\sum\limits_{k_1+k_2=k} \left|H^{(11)}_{k_1k_2k}\right|\left|H^{(12)}_{k_1k_2k}\right|n(k_1) n(k_2)
    \\
    \le& \frac{2\lambda^2}{L^{2d}} |k|^2|k_x|^2\nu T_{\text{max}} \sum\limits_{k_1} \frac{n(k_1) n(k-k_1)}{(|\Omega(k_1,k-k_1,k)|+T^{-1}_{\text{max}})^2}D^{d-1}
    \\
    \le& \frac{2\epsilon_1\lambda^2}{L^{2d}} \max(|k_x|^2,D^2) T_{\text{max}} L^d=\epsilon_1 \max(|k_x|^2,D^2) T_{\text{max}} T^{-1}_{\text{kin}} D^{d-1}
\end{split}   
\end{equation}
Here in the second inequality we apply the Wick theorem to calculate the expectation. In the third inequality we apply \eqref{eq.H(11)bound} and \eqref{eq.H(12)bound}. In the last line we apply \eqref{eq.asymptoticsbound} in Theorem \ref{th.numbertheory} by taking $t=T_{\text{max}}$, $g(x)=\frac{1}{(1+|x|)^2}$ and $F(k_1)=n(k_1) n(k-k_1)$. In the last line we also use the facts that $|k|\le \epsilon_1 l_{d}^{-1}=\epsilon_1 (\nu T_{\text{max}})^{-\frac{1}{2}}$, $T_{\text{kin}}=\frac{1}{8\pi\alpha^2}=\frac{L^{d}}{8\pi\lambda^2}$ and $|k_x|\le |k_{x1}|+|k_{x2}|\lesssim D$.

\eqref{eq.psi(12)bound} and \eqref{eq.psi(11)(12)bound} give desire upper bounds of $\mathbb E \left|\psi^{(12)}_{app,k}\right|^2$ and $\text{Re}\ \mathbb E \left(\psi^{(11)}_{app,k}\overline{\psi^{(12)}_{app,k}}\right)$.

\textbf{Step 3.2.} (Upper bound of $\text{Re}\  \mathbb E \left(\psi^{(22)}_{app,k}\overline{\xi_k}\right)$) By \eqref{eq.psi(22)app}, we have 
\begin{equation}
\text{Re}\  \mathbb E \left(\psi^{(22)}_{app,k}\overline{\xi_k}\right)=\int^1_{0}\left(\frac{i\lambda}{L^{d}}\right)^2 \text{Re}\,\sum\limits_{k_1+k_2+k_3=k} H^{(22)}_{k_1k_2k_3k}\mathbb E\left(\xi_{k_1} \xi_{k_2}\xi_{k_3}\overline{\xi_k}\right) da 
\end{equation}

By Wick theorem, $\mathbb E\left(\xi_{k_1} \xi_{k_2}\xi_{k_3}\overline{\xi_k}\right)=\delta_{k_1=-k_2}\delta_{k_3=k}+\delta_{k_1=-k_3}\delta_{k_2=k}+\delta_{k_1=k}\delta_{k_2=-k_3}$. Therefore we get 
\begin{equation}\label{eq.H(22)wick}
\begin{split}
    \text{Re}\  \mathbb E \left(\psi^{(22)}_{app,k}\overline{\xi_k}\right)=&\int^1_{0}\left(\frac{i\lambda}{L^{d}}\right)^2 \text{Re}\,\sum\limits_{k_1+k_2+k_3=k} H^{(22)}_{k_1k_2k_3k}(\delta_{k_1=-k_2}\delta_{k_3=k}+\delta_{k_1=-k_3}\delta_{k_2=k}+0) da
    \\
    =&\int^1_{0}2\left(\frac{i\lambda}{L^{d}}\right)^2 \sum\limits_{k_1+k_2+k_3=k} \text{Re}\,\left(H^{(22)}_{k_1,-k_1,k,k}\right) da
\end{split}
\end{equation}
Here in the first equality the term corresponding to $\delta_{k_1=k}\delta_{k_2=-k_3}$ vanishes because $H^{(22)}_{k,k_2,-k_2,k}=0$ and the two terms corresponding to $\delta_{k_1=-k_2}\delta_{k_3=k}$, $\delta_{k_1=-k_3}\delta_{k_2=k}$ are equal.

By \eqref{eq.H(22)}, we get
\begin{equation}
\begin{split}
    H^{(22)}_{k_1,-k_1,k,k}=-2k_{x}(k_{x}-k_{1x})\int_{0\le r<s\le t}&(\nu|k|^2(t-s)+\nu|k-k_1|^2(s-r))
    \\
    & e^{i (s-r)\Omega(k_1,k-k_1,k)- a\nu|k|^2(t-s)-a\nu|k-k_1|^2(s-r)}  dsdr.
\end{split}
\end{equation}

We find upper bound of $\text{Re}\,\left(H^{(22)}_{k_1,-k_1,k,k}\right)$ using integration by parts.
\begin{equation}\label{eq.ReH(22)bound}
\begin{split}
    &\text{Re}\,\left(H^{(22)}_{k_1,-k_1,k,k}\right)=2k_{x}(k_{x}-k_{1x})\int_{0\le r<s\le t}(\nu|k|^2(t-s)+\nu|k-k_1|^2(s-r))
    \\
    & \frac{e^{irT^{-1}_{\text{max}}sgn\, \Omega}}{i(\Omega+T^{-1}_{\text{max}}sgn\, \Omega)}\frac{d}{dr}e^{i (s-r)\Omega-irT^{-1}_{\text{max}}sgn\, \Omega}e^{- a\nu|k|^2(t-s)-a\nu|k-k_1|^2(s-r)}  dsdr
    \\
    =&\text{Re}\,\frac{2k_{x}(k_{x}-k_{1x})}{i(\Omega+T^{-1}_{\text{max}}sgn\, \Omega)}\int_{0\le s\le t}\nu|k|^2(t-s) e^{- a\nu|k|^2(t-s)}  ds
    \\
    -&\text{Re}\,\frac{2k_{x}(k_{x}-k_{1x})}{i(\Omega+T^{-1}_{\text{max}}sgn\, \Omega)}\int_{0\le s\le t}(\nu|k|^2(t-s)+\nu|k-k_1|^2s) e^{- a\nu|k|^2(t-s)-a\nu|k-k_1|^2s} e^{is\Omega} ds
    \\
    -&\text{Re}\,\frac{2k_{x}(k_{x}-k_{1x})}{i(\Omega+T^{-1}_{\text{max}}sgn\, \Omega)}\int_{0\le r<s\le t}\big[\nu|k-k_1|^2(\nu|k|^2(t-s)+\nu|k-k_1|^2(s-r)-1)
    \\
    & -iT^{-1}_{\text{max}}sgn\, \Omega\big]e^{i (s-r)\Omega- a\nu|k|^2(t-s)-a\nu|k-k_1|^2(s-r)}  dsdr.
\end{split}
\end{equation}

The first term on the right hands side equals to $0$ after taking the real part. Using the same integration by parts argument as in \eqref{eq.H(11)bound}, the second term can be bounded by 
\begin{equation}\label{eq.H(22)secondterm}
    \frac{|k|^2|k_x|(|k_{1x}|+|k_x|)\nu T_{\text{max}}}{(|\Omega(k_1,k-k_1,k)|+T^{-1}_{\text{max}})^2}.
\end{equation}

The last term can be bounded by integration by parts in the following integral
\begin{equation}\label{eq.asymstep3.2}
\begin{split}
    \int_{0\le r<s\le t}\big[&\nu|k-k_1|^2(\nu|k|^2(t-s)+\nu|k-k_1|^2(s-r)-1)
    \\
    & -iT^{-1}_{\text{max}}sgn\, \Omega\big]\frac{e^{irT^{-1}_{\text{max}}sgn\, \Omega}}{i(\Omega+T^{-1}_{\text{max}}sgn\, \Omega)}\frac{d}{dr}e^{i (s-r)\Omega}e^{- a\nu|k|^2(t-s)-a\nu|k-k_1|^2(s-r)}  dsdr
\end{split}
\end{equation}

Integration by parts and bound the three resulting integral by taking the absolute value of the integrand, then we get 
\begin{equation}
|\eqref{eq.asymstep3.2}|\le  \frac{|k|^2}{|\Omega|+T^{-1}_{\text{max}}}
\end{equation}

Therefore, the last term in \eqref{eq.ReH(22)bound} can also be bounded by \eqref{eq.H(22)secondterm}.

Then we get 
\begin{equation}
    \text{Re}\,\left(H^{(22)}_{k_1,-k_1,k,k}\right)\lesssim \nu T_{\text{max}}|k|^2|k_x|\frac{|k_{1x}|+|k_x|}{(|\Omega(k_1,k-k_1,k)|+T^{-1}_{\text{max}})^2}.
\end{equation}

Substitute into \eqref{eq.H(22)wick}, then we have
\begin{equation}\label{eq.psi(22)bound}
\begin{split}
    \text{Re}\  \mathbb E \left(\psi^{(22)}_{app,k}\overline{\xi_k}\right)&\lesssim\frac{\lambda^2}{L^{2d}} |k|^2|k_x|\nu T_{\text{max}}n(k)\sum\limits_{k_1} \frac{|k_{1x}|+|k_x|}{(|\Omega(k_1,k-k_1,k)|+T^{-1}_{\text{max}})^2}n(k_1)
    \\
    &\lesssim \frac{\epsilon_1\lambda^2}{L^{2d}}|k_x|\sum\limits_{k_1} \frac{D}{(|\Omega(k_1,k-k_1,k)|+T^{-1}_{\text{max}})^2}n(k_1)
    \\
    &+\frac{\epsilon_1\lambda^2}{L^{2d}}|k_x|^2\sum\limits_{k_1} \frac{1}{(|\Omega(k_1,k-k_1,k)|+T^{-1}_{\text{max}})^2}n(k_1)
    \\
    &\lesssim \epsilon_1\frac{T_{\text{max}}}{T_{\text{kin}}}\max(D^{d+1},|k_x|^2D^{d-1}+|k_x|D^{d})
\end{split}
\end{equation}
In the second inequality, we also use the facts that $|k|\le \epsilon_1 l_{d}^{-1}=\epsilon_1 (\nu T_{\text{max}})^{-\frac{1}{2}}$ and $T_{\text{kin}}=\frac{1}{8\pi\alpha^2}=\frac{L^{d}}{8\pi\lambda^2}$. In the last inequality we apply \eqref{eq.asymptoticsbound} in Theorem \ref{th.numbertheory} by taking $t=T_{\text{max}}$, $g(x)=\frac{1}{(1+|x|)^2}$, $F(k_1)=n(k_1) n(k-k_1)$ and $|k_x|\le |k_{x1}|+|k_{x2}|\lesssim D$.

Combining \eqref{eq.psi(12)bound}, \eqref{eq.psi(11)(12)bound} and \eqref{eq.psi(22)bound}, we get the following upper bound
\begin{equation}\label{eq.n(12)final}
    n^{(12)}(k)\lesssim \epsilon_1\frac{T_{\text{max}}}{T_{\text{kin}}}\underbrace{\max(D^{d+1},|k_x|^2D^{d-1}+|k_x|D^{d})}_{\text{Err}_{D}(k_x)}.
\end{equation}

\textbf{Step 4.} (Asymptotics of $n^{(11)}(k)$) By \eqref{eq.n(11)} and Wick theorem, we get 
\begin{equation}\label{eq.n(11)asym}
\begin{split}
    n^{(11)}(k)=&\frac{2\lambda^2}{L^{2d}} |k_x|^2\sum\limits_{k_1+k_2=k}n(k_1) n(k_2) \left|\int^{t}_0e^{i s\Omega(k_1,k_2,k)} ds\right|^2
    \\
    -&\frac{8\lambda^2}{L^{2d}}\sum_{k_1}k_x(k_x-k_{1x})n(k_1) n(k)\text{Re}\left(\int_{0\le r<s\le t} e^{i (s-r)\Omega(k_1,k-k_1,k)}  dsdr\right)
    \\
    =&\frac{2\lambda^2}{L^{2d}} |k_x|^2\sum\limits_{k_1+k_2=k}n(k_1) n(k_2) \frac{4\sin^2 \left(\frac{t}{2}\Omega(k_1,k_2,k\right))}{\Omega^2(k_1,k_2,k)}
    \\
    -&\frac{8\lambda^2}{L^{2d}}\sum_{k_1}k_x(k_x-k_{1x})n(k_1) n(k) \frac{2\sin^2 \left(\frac{t}{2}\Omega(k_1,k-k_1,k\right))}{\Omega^2(k_1,k-k_1,k)}
    \\
    =&8\pi \alpha^2t|k_x|^2\int_{\substack{(k_1, k_2)\in \R^{2d}\\k_1+k_2=k}}n(k_1) n(k_2)\delta(|k_1|^2k_{1x}+|k_2|^2k_{2x}-|k|^2k_{x})\, dk_1 dk_2
    \\
    -& 16\pi \alpha^2t\, n(k)\int_{\mathbb{R}^d}k_x(k_x-k_{1x})n(k_1) \delta(|k_1|^2k_{1x}+|k_2|^2k_{2x}-|k|^2k_{x})\, dk_1+O\left(L^{-\theta}\frac{T_{\text{max}}}{T_{\mathrm {kin}}}\right)
    \\
    =&\frac{t}{T_{\text{kin}}}\mathcal{K}(n)(k)+O\left(L^{-\theta}\frac{T_{\text{max}}}{T_{\mathrm {kin}}}\right)
\end{split}
\end{equation}
Here in the third equality we apply \eqref{eq.numbertheory1} in Theorem \ref{th.numbertheory} by taking $t\rightarrow\frac{t}{2}$, $g(x)=\frac{\sin^2(x)}{x^2}$ and $F(k_1)=n(k_1) n(k-k_1)$ or $F(k_1)=n(k_1)$. In the third equality we also use the facts that  $T_{\text{kin}}=\frac{1}{8\pi\alpha^2}=\frac{L^{d}}{8\pi\lambda^2}$.

Combining \eqref{eq.n(12)final} and \eqref{eq.n(11)asym}, we complete the prove of the first case in \eqref{eq.n1}.
\end{proof}

\medskip

\appendix

\section{Number theoretic results}\label{sec.numbertheoryA}
The mains results of this appendix is to prove Theorem \ref{th.numbertheory1} and Theorem \ref{th.numbertheory} 

\begin{thm}\label{th.numbertheory1}
Let $\Lambda(k)=(\beta_x k_x^2+\beta_2 k_2^2+\cdots\beta_d k_d^2)k_x$, $d\ge 3$, then for all $\beta\in [1,2]^d$, the following number theory estimate is true

\begin{equation}\label{eq.numbertheory1}
    \sup_{\substack{k,\sigma\in\mathbb{Z}_L^d\\k\ne 0,\ T\le L}} |k_x|T\#\left
    \{\begin{matrix}
k_1,k_2\in\mathbb{Z}_L^d \\
|k_1|\lesssim 1
\end{matrix}
:
\begin{matrix}
k_1+k_2=k \\
\Lambda(k_1)+\Lambda(k_2)=\Lambda(k)+\sigma+O(T^{-1})
\end{matrix}
\right\}\le L^{2d}.
\end{equation}

\end{thm}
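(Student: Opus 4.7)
Plan: The strategy is to fix $a := k_{1,x} \in L^{-1}\mathbb{Z}$ (which has $O(L)$ admissible values on the support $|k_1|\lesssim 1$) and reduce, for each such $a$, to a lattice-point count on a thin ellipsoidal shell in the remaining $d-1$ transverse coordinates $b := k_{1,\perp}$.

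Writing $k_1 = (a, b)$ with $b \in (L^{-1}\mathbb{Z})^{d-1}$ and $k_2 = (k_x - a,\, k_\perp - b)$, the identity $a^3 + (k_x - a)^3 - k_x^3 = -3k_x a (k_x - a)$ together with a short expansion gives
\[
\Omega := \Lambda(k_1) + \Lambda(k_2) - \Lambda(k) = -3\beta_x k_x a(k_x - a) - a\,\beta\cdot k_\perp^2 + k_x\,\beta\cdot b^2 - 2(k_x - a)\,\beta\cdot(k_\perp b),
\]
where $\beta \cdot u^2 := \sum_{j=2}^d \beta_j u_{j-1}^2$. Since the prefactor $|k_x|T$ trivialises the $k_x=0$ case, I may assume $k_x \neq 0$ and complete the square in $b$ to get
\[
\Omega = k_x\,\beta\cdot(b - c)^2 + F(a, k),\qquad c := \tfrac{k_x - a}{k_x}\,k_\perp,
\]
with $F$ depending only on $a$ and $k$. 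The constraint $\Omega = \sigma + O(T^{-1})$ then rewrites as an ellipsoidal-shell condition $\beta\cdot(b - c)^2 \in [E_0,\, E_0 + W']$ with $E_0 := (\sigma - F(a,k))/k_x$ and $W' := O(1/(|k_x|T))$.

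Rescaling $n := Lb \in \mathbb{Z}^{d-1}$, $|n| \lesssim L$, the count for each fixed $a$ reduces to bounding the number of integer points in a shell of squared-norm thickness $\Delta := L^2/(|k_x|T)$ around an ellipsoid uniformly comparable to a Euclidean sphere (since $\beta \in [1,2]^d$). In dimension $m := d - 1 \geq 2$ a classical volume-plus-surface-area count yields
\[
\#\{n\}\;\lesssim\; L^{m-2}\,\Delta + L^{m-1}\;=\;\frac{L^{d-1}}{|k_x|T} + L^{d-2}.
\]
Summing over the $O(L)$ values of $a$ and multiplying by $|k_x|T$ (using $T \leq L$) produces $|k_x|T\cdot\#\lesssim L^d + |k_x|T\cdot L^{d-1} \lesssim L^{2d}$, with enormous room; in fact the sharper $L^d$-type bound actually invoked in the paper (e.g.\ in the proof of Lemma~\ref{lem.countingbdunit}) follows from the same argument together with the implicit constraint $|k_x|\lesssim 1$ in those applications.

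The main obstacle is the lattice-point bound on the ellipsoidal shell in the borderline case $m = 2$ (i.e., $d = 3$), where one counts integer points in a thin elliptical annulus in $\mathbb{R}^2$: the volume-plus-boundary estimate above is essentially sharp, and this is precisely the arithmetic input that forces $d \geq 3$ throughout the paper. For $d = 2$ one would instead face $m = 1$, where the shell degenerates into a pair of intervals and the structural $1/(|k_x|T)$ gain is unavailable, exactly matching the failure noted in Remark~\ref{rem.nottrue2d}.
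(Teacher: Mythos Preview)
Your approach is essentially the same as the paper's: slice on $a=k_{1x}$, complete the square in the transverse variable $b=k_{1\perp}$, and reduce to counting points on a thin spherical (ellipsoidal) shell in $\mathbb{R}^{d-1}$, where the hypothesis $d\ge 3$ enters exactly as you identify. The only organizational difference is that the paper first proves the estimate at the finest resolution $O(L^{-1})$ via a pure volume comparison (placing disjoint $L^{-1}$-boxes around each lattice point inside the thickened continuous shell, yielding $|k_x|\#\lesssim L^{d-1}$ with no boundary term), and then covers the window of width $T^{-1}$ by $\sim L/T$ such slabs; you instead work directly at resolution $T^{-1}$ with a volume-plus-surface lattice count. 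The paper's route has the advantage that the resulting bound $|k_x|T\#\lesssim L^d$ is uniform in $|k_x|$, whereas your extra surface term $|k_x|T\,L^{d-1}$ requires $|k_x|\lesssim 1$ to be absorbed---which, as you correctly note, is satisfied in every application (e.g.\ Lemma~\ref{lem.countingbdunit}).
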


\begin{rem}
The restriction $T\le L$ is not optimal. The optimal result is expected to be $T\le L^2$ for general $\beta$ and $T\le L^{d}$ for generic $\beta$. It is possible to apply circle method and probabilistic method in number theory method to prove the optimal result.
\end{rem}

\begin{rem}\label{rem.nottrue2d}
\ref{th.numbertheory1} and \ref{th.numbertheory} is unlikely to be true when $d=2$. Because in this case, the quadratic term of $k_1$ in \eqref{eq.A11} becomes $3k_xk_{1x}^2+k_x|k_{1y}|^2+2k_{y}\cdot k_{1x} k_{1y}$ ($k_{1\perp}$ becomes $k_{1\perp}=k_y$), which is degenerate when $k_{y}^2=3k_x^2$. 
\end{rem}

\begin{thm}\label{th.numbertheory} Let $\Omega_k(k_1)\defeq \Lambda(k_1)+\Lambda(k-k_1)-\Lambda(k)$ and $t$ be a large number, then given any smooth compactly supported $F(k)$ and smooth $g(s)$ satisfying $|g|(s)+|g'|(s)\lesssim 1/(1+s^2)$ and $\int_{\mathbb{R}} g(s) ds=c$, we have
\begin{equation}\label{eq.asymptotics}
    \sum_{k_1\in \mathbb{Z}_L^d} g(t\Omega_k(k_1)) F(k_1) = cL^dt^{-1} \int F(k_1) \delta(\Omega_k(k_1)) dk_1+ O(L^{d-1}).
\end{equation}
in particular
\begin{equation}\label{eq.asymptoticsbound}
    \sum_{k_1\in \mathbb{Z}_L^d} g(t\Omega_k(k_1)) F(k_1) \le  2ct^{-1}L^d D^{d-1}.
\end{equation}
\end{thm}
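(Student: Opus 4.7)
The plan is to view the sum as $L^d$ times a Riemann-type approximation of the integral $\int_{\R^d} g(t\Omega_k(k_1))F(k_1)\,dk_1$, isolate the $\delta$-like contribution as $t\to\infty$, and control the discretization error by combining Poisson summation with the lattice-point count of Theorem \ref{th.numbertheory1}. Concretely, I would begin from the Poisson summation identity on the lattice $\mathbb{Z}^d_L$,
\begin{equation*}
\sum_{k_1 \in \mathbb{Z}_L^d} h(k_1) = L^d \sum_{n\in \Z^d} \widehat{h}(Ln), \qquad h(k_1) := g(t\Omega_k(k_1))F(k_1),
\end{equation*}
so that the $n=0$ mode is exactly $L^d \int h\,dk_1$ and supplies the candidate main term.

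For the $n=0$ mode I would apply the co-area formula to obtain
\begin{equation*}
\int_{\R^d} g(t\Omega_k(k_1))F(k_1)\,dk_1 = \int_\R g(t\sigma)\,\mu(\sigma)\,d\sigma, \qquad \mu(\sigma) := \int_{\Omega_k = \sigma} \frac{F(k_1)}{|\nabla\Omega_k(k_1)|}\,dS,
\end{equation*}
after which the substitution $u = t\sigma$ and a Taylor expansion of $\mu$ at $\sigma=0$ identify the leading contribution as $cL^d t^{-1}\mu(0) = cL^d t^{-1}\int F(k_1)\delta(\Omega_k(k_1))\,dk_1$. The decay hypothesis on $g$ together with smoothness of $\mu$ away from the critical set of $\Omega_k$ renders the Taylor remainder harmless in the size regime in which \eqref{eq.asymptotics} is nontrivial.

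To control the non-diagonal Poisson modes $n\ne 0$, I would represent $g(t\sigma) = \frac{1}{2\pi}\int\widehat g(\eta)\, e^{i\eta t\sigma}\,d\eta$ and rewrite each $\widehat h(Ln)$ as an oscillatory integral with phase $\eta t\Omega_k(k_1) - 2\pi Ln\cdot k_1$. Away from the critical set of this phase, repeated integration by parts yields arbitrary polynomial decay in $|n|$, while at non-degenerate stationary points the standard stationary-phase bound contributes $|n|^{-d/2}$, which is summable in $d\ge 3$. Combining these with the decay of $\widehat g$ would yield $L^d\sum_{n\ne 0}|\widehat h(Ln)| = O(L^{d-1})$.

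The main obstacle, and the place where Theorem \ref{th.numbertheory1} actually earns its keep, is the region where $\nabla\Omega_k$ degenerates (for instance near $k_1 = 0$ and $k_1 = k$): both integration by parts and stationary phase fail there. On that set I would abandon the Fourier-analytic route and revert to direct lattice counting, dyadically partitioning the range of $\Omega_k$ into shells of width $1/t$ that match the oscillation scale of $g(t\cdot)$, and bounding the count on each shell by $L^{2d}/(|k_x|t)$ via \eqref{eq.numbertheory1}; summing against the decay of $g$ would again deliver an $O(L^{d-1})$ contribution. The companion bound \eqref{eq.asymptoticsbound} then follows directly from \eqref{eq.asymptotics}, since the compact support of $F$ with diameter $D$ forces $\int F\,\delta(\Omega_k)\,dk_1 \lesssim D^{d-1}\|F\|_{L^\infty}$ by the same co-area computation.
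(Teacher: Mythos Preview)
Your treatment of the main term matches the paper exactly: both apply the co-area formula to write $\int g(t\Omega_k)F\,dk_1=\int g(t\omega)h(\omega)\,d\omega$ with $h(\omega)=\int F\,\delta(\Omega_k-\omega)\,dk_1$, then extract $ct^{-1}h(0)$ from the differentiability of $h$. Your derivation of \eqref{eq.asymptoticsbound} from \eqref{eq.asymptotics} is also the same as the paper's.

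Where you diverge is the discretization error. The paper does \emph{not} use Poisson summation, stationary phase, or Theorem~\ref{th.numbertheory1} at all here. It applies the high-dimensional Euler--Maclaurin formula (Appendix~B), which replaces the $n\ne 0$ Poisson modes by a finite sum of integrals $L^{d-|J|}\int\{Lk_1\}^{J}\,\partial_{k_1}^{J}\bigl(g(t\Omega_k)F\bigr)\,dk_1$ over multi-indices with $|J|_\infty=1$. Bounding $|\{Lk_1\}^{J}|\le 1$ and expanding the derivative by the chain rule reduces everything to integrals of the form $\int g^{(j)}(t\Omega_k)\,F^{(j)}\,dk_1$, and these are again handled by the \emph{same} co-area step as the main term, yielding $O(t^{-1})$ each. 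The total error is then $\sum_j L^{d-j}t^{j-1}\lesssim L^{d-1}$ under $t\lesssim L$. No special treatment of the degeneracies of $\nabla\Omega_k$ is ever needed, because the error integrals are of exactly the same type as the leading one.

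Your route is more elaborate and, as sketched, has a gap in the degenerate region. The fallback you propose---dyadic shells in $\Omega_k$ of width $1/t$ counted by \eqref{eq.numbertheory1}---gives at best $\sum_m (1+m^2)^{-1}\cdot L^{2d}/(|k_x|t)\sim L^{2d}/(|k_x|t)$, which for $t\lesssim L$ is of order $L^{2d-1}$, not $L^{d-1}$. To salvage this you would need an additional smallness coming from the \emph{measure} of the degenerate set, which you have not quantified, and splicing that localization into the Poisson analysis on the complementary region introduces boundary terms you have not addressed. The Euler--Maclaurin argument sidesteps all of this.
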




\begin{proof}[Proof of Theorem \ref{th.numbertheory1}]\eqref{eq.numbertheory1} is a corollary of the following lemma. 

\begin{lem}\label{lem.rationallemma} For any $d\ge 4$ and $\beta$
\begin{equation}
    \sup_{\substack{k,\sigma\in\mathbb{Z}_L^d\\k\ne 0}} |k_x| \#\{k_1\in\mathbb{Z}^d_L,\ |k_1|\lesssim 1:\Lambda(k_1)+\Lambda(k-k_1)-\Lambda(k)=\sigma+O(L^{-1})\}\lesssim L^{d-1} .
\end{equation}
\end{lem}
\begin{proof}
Define $\mathcal{D}_{k,\sigma}=\#\{k_1\in\mathbb{Z}^d_L,\ |k_1|\lesssim 1:\Lambda(k_1)+\Lambda(k-k_1)-\Lambda(k)=\sigma+O(L^{-1})\}$, then we just need to show that 
\begin{equation}\label{eq.goalofrationallemma}
    \sup_{\substack{\substack{k,\sigma\in\mathbb{Z}_L^d\\k\ne 0}}} \#\mathcal{D}_{k,\sigma}\lesssim L^{d-1} .
\end{equation}

We prove \eqref{eq.goalofrationallemma} using volume bound. The proof is divided into three steps.



\textbf{Step 1.} In this step, we show that 
\begin{equation}\label{eq.goalofrationallemmastep1}
    \#\mathcal{D}_{k,\sigma}\le L^{d} \text{vol}(\mathcal{D}^{\mathbb{R}}_{k,\sigma}),
\end{equation}
where $\mathcal{D}^{\mathbb{R}}_{k,\sigma}=\{k_1\in \mathbb{R}^d,\ |k_1|\lesssim 1:\Lambda(k_1)+\Lambda(k-k_1)-\Lambda(k)=\sigma+O(L^{-1})\}$.

\eqref{eq.goalofrationallemmastep1} can be proved from the following claim.

\textit{Claim.} If $k_1\in \mathcal{D}_{k,\sigma}$, then $D_{1/(2L)}(k_1)\subseteq \mathcal{D}^{\mathbb{R}}_{k,\sigma}$. Here  $D_{r}(k_1)=\{k_1'\in \mathbb{R}^d: \sup_{i=1,\cdots, d} |(k_1')_i-(k_1)_i|\le r\}$ ($(k_1)_i$ are the components of $k_1$). 

We prove the claim now. $x\in \mathcal{D}_{k,\sigma}$ is equivalent to $\Lambda(k_1)+\Lambda(k-k_1)-\Lambda(k)=\sigma+O(L^{-1})$. For any $k_1'\in D_{1/(2L)}(k_1)$, $|k_1'-k_1|\lesssim 1/L$, because $\Lambda$ is a Lifschitz function, we have $|\Lambda(k_1)-\Lambda(k_1')|\lesssim L^{-1}$. Therefore, 
\begin{equation}
\begin{split}
    &|\Lambda_{\beta}(k_1')+\Lambda_{\beta}(k-k_1')-\Lambda(k)-\sigma|
    \\
    \le &|\Lambda_{\beta}(k_1)+\Lambda_{\beta}(k-k_1)-\Lambda(k)-\sigma|+|\Lambda_{\beta}(k_1)-\Lambda_{\beta}(k_1')|+|\Lambda_{\beta}(k-k_1)-\Lambda_{\beta}(k-k_1')|
    \\
    \lesssim & L^{-1}.
\end{split}
\end{equation}
Therefore, we have $\Lambda(k_1')+\Lambda(k-k_1')-\Lambda(k)=\sigma+O(L^{-1})$ and thus $k_1'\in \mathcal{D}^{\mathbb{R}}_{k,\sigma}$. This is true for any $k_1'\in D_{1/(2L)}(k_1)$, so $D_{1/(2L)}(k_1)\subseteq \mathcal{D}^{\mathbb{R}}_{k,\sigma}$.

Now we prove \eqref{eq.goalofrationallemmastep1}. Since for different $k_1,k_1'\in \mathcal{D}_{k,\sigma}$, $D_{1/(2L)}(k_1)\cap D_{1/(2L)}(k_1')=\emptyset$, we have 
\begin{equation}
    \sum_{k_1\in \mathcal{D}_{k,\sigma}} \text{vol}( D_{1/(2L)}(k_1))=\text{vol}\left( \bigcup_{k_1\in \mathcal{D}_{k,\sigma}} D_{1/(2L)}(k_1)\right)\le \text{vol}(\mathcal{D}^{\mathbb{R}}_{k,\sigma}).
\end{equation}
The left hand side equals to $L^{-d}\#\mathcal{D}_{k,\sigma}$, so we get
\begin{equation}
    L^{-d}\#\mathcal{D}_{k,\sigma}\le \text{vol}(\mathcal{D}^{\mathbb{R}}_{k,\sigma}),
\end{equation}
which implies \eqref{eq.goalofrationallemmastep1}.

\textbf{Step 2.} In this step, we show that 
\begin{equation}\label{eq.goalofrationallemmastep2}
    \text{vol}(\mathcal{D}^{\mathbb{R}}_{k,\sigma})\le L^{-1} |k_x|^{-1}.
\end{equation}
Combining \eqref{eq.goalofrationallemmastep1} and \eqref{eq.goalofrationallemmastep2}, we get
\eqref{eq.goalofrationallemma}, which proves Lemma \ref{lem.rationallemma}.

Given a vector $k$, we denote the first component of $k$ by $k_x$ and the vector formed by other components by $k_{\perp}$. Then $k=(k_x, k_{\perp})$, $k_1=(k_{1x}, k_{1\perp})$ and a simple calculation suggests that
\begin{equation}\label{eq.A11}
\begin{split}
    &\Lambda(k_1)+\Lambda(k-k_1)-\Lambda(k)
 \\
 =&3k_xk_{1x}^2+k_x|k_{1\perp}|^2+2k_{\perp}\cdot k_{1\perp}k_{1x}-(3k_x^2+|k_{\perp}|^2)k_{1x}-2k_x k_{\perp}\cdot k_{1\perp}
\end{split}
\end{equation}

If we fix $k_{1x}$ to be a constant $c$ in $\Lambda(k_1)+\Lambda(k-k_1)-\Lambda(k)$ and denote the resulting function by $F_{k,\sigma, c}$, then 
\begin{equation}
    F_{k,\sigma, c}(k_{1\perp})=k_x|k_{1\perp}|^2+2(c-k_x)k_{\perp}\cdot k_{1\perp}+3k_x c^2-(3k_x^2+|k_{\perp}|^2)c.
\end{equation}

Therefore, if we define $\mathcal{D}^{\mathbb{R}}_{k,\sigma}(k_{1x}=c)$ by
\begin{equation}
\mathcal{D}^{\mathbb{R}}_{k,\sigma}(k_{1x}=c)=\{k_{1\perp}\in \mathbb{R}^{d-1},\ |k_{1\perp}|\lesssim 1:F_{k,\sigma, c}(k_{1\perp})=\sigma+O(L^{-1})\}
\end{equation}

Then 
\begin{equation}
    \mathcal{D}^{\mathbb{R}}_{k,\sigma}=\cup_{|c|\lesssim 1} \mathcal{D}^{\mathbb{R}}_{k,\sigma}(k_{1x}=c)
\end{equation}

By Fubini theorem (or coarea formula), we get
\begin{equation}\label{eq.rationallemmastep2'}
    \text{vol}(\mathcal{D}^{\mathbb{R}}_{k,\sigma})=\int_{|c|\lesssim 1} \text{vol}(\mathcal{D}^{\mathbb{R}}_{k,\sigma}(k_{1x}=c)) dc.
\end{equation}

To prove \eqref{eq.goalofrationallemmastep2}, it suffices to find an upper of $\text{vol}(\mathcal{D}^{\mathbb{R}}_{k,\sigma}(k_{1x}=c))$. Since $F_{k,\sigma, c}(k_{1\perp})$ is a quadratic function in $k_{1\perp}$ whose degree 2 term is $k_x|k_{1\perp}|^2$, a translation $k_{1\perp}\rightarrow k_{1\perp}-(c-k_x)k_{1\perp}/k_x$ transforms $F_{k,\sigma, c}(k_{1\perp})=\sigma+O(L^{-1})$ to $k_x|k_{1\perp}|^2=C_{k,\sigma,c}+O(L^{-1})$.

Therefore, 
\begin{equation}\label{eq.rationallemmastep2''}
\begin{split}
    \text{vol}(\mathcal{D}^{\mathbb{R}}_{k,\sigma}(k_{1x}=c))\le &\text{vol}(\{k_{1\perp}\in \mathbb{R}^{d-1}:k_x|k_{1\perp}|^2=C_{k,\sigma,c}+O(L^{-1})\})
    \\
    =& \text{vol}(\{k_{1\perp}\in \mathbb{R}^{d-1}:|k_{1\perp}|^2=C_{k,\sigma,c}/k_x+O(L^{-1}|k_x|^{-1})\})
    \\
    \lesssim& L^{-1}|k_x|^{-1}
\end{split}
\end{equation}
Here the last inequality follows from the elementary fact that $\text{vol}(\{x\in \mathbb{R}^n:|x|^2=R^2+O(\eta)\})\lesssim \eta$ when $n\ge 2$. Notice here the dimension of $k_{1\perp}$ is $d-1$ which is greater than $2$, so this fact is applicable.

Combining \eqref{eq.rationallemmastep2'} and \eqref{eq.rationallemmastep2'}, we proves \eqref{eq.goalofrationallemmastep2}. Therefore, we complete the proof of Lemma \ref{lem.rationallemma}
\end{proof}

We now return to the proof of Theorem \ref{th.numbertheory1}. Define 
\begin{equation}
    \mathcal{D}_{k,\sigma}([-T^{-1},T^{-1} ])=\{k_1\in\mathbb{Z}^d_L,\ |k_1|\lesssim 1:\Lambda(k_1)+\Lambda(k-k_1)-\Lambda(k)=\sigma+O(L^{-1})\}
\end{equation}

Let us prove \eqref{eq.numbertheory1}. For any $T\le L$, let $N=[L/T]+1$, then $\cup_{j=-N}^N [jL^{-1}, (j+1)L^{-1}]$ is a cover of $[-T^{-1}, T^{-1}]$. Therefore, $\cup_{j=-N}^N\mathcal{D}_{k,\sigma}([jL^{-1}, (j+1)L^{-1}])$ is a cover of $\mathcal{D}_{k,\sigma}([-T^{-1}$ $,T^{-1} ])$

By Lemma \ref{lem.rationallemma} we get 
\begin{equation}\label{eq.thrationalexpand}
    \#\mathcal{D}_{k,\sigma}([-T^{-1},T^{-1} ])\lesssim \sum_{j=-N}^N\#\mathcal{D}_{k,\sigma}([jL^{-1}, (j+1)L^{-1}])\lesssim NL^{d-1} |k_x|^{-1}\lesssim L^dT^{-1}|k_x|^{-1}.
\end{equation}

This proves \eqref{eq.numbertheory1}.
\end{proof}

\begin{proof}[Proof of Theorem \ref{th.numbertheory}] The inequality in Theorem \ref{th.numbertheory} follows from the equality because $\int F(k_1)$ $ \delta(\Omega_k(k_1)) dk_1\le D^{d-1}$, $t\le L^{-\theta}\alpha^{-2}\le L^{1-\theta}$ and $D^{d-1}\lesssim L^{\theta}$ if $L$ is large enough.

Let $k_{1}=\frac{K_1}{L}$ and $k=\frac{K}{L}$. Apply the high dimensional Euler-Maclaurin formula, we know that
\begin{equation}
\begin{split}
    \sum_{k_1\in \mathbb{Z}_L^d} g(t\Omega_k(k_1)) F(k_1)=&\int g(t\Omega_k(K_1/L)) F\left(\frac{K_1}{L}\right) dK_1 
    \\
    +& \sum_{ |J|_{\infty} = 1}\int_{\mathbb{R}^d} \{K_1\}^{J} \partial_{K_1}^{J}\left(g(t\Omega_k(K_1/L)) F\left(\frac{K_1}{L}\right)\right) dK_1
\end{split}
\end{equation}

We have the following estimates
\begin{equation}\label{eq.asymptoticlemmaeq1}
    \int g(t\Omega_k(K_1/L)) F\left(\frac{K_1}{L}\right) dK_1 =L^d\int g(t\Omega_k(k_1)) F(k_1) dk_1 
\end{equation}
and
\begin{equation}\label{eq.asymptoticlemmaeq2}
\begin{split}
    &\int_{\mathbb{R}^d} \{K_1\}^{J} \partial_{K_1}^{J}\left(g(t\Omega_k(K_1/L)) F\left(\frac{K_1}{L}\right)\right) dK_1
    \\
    = &L^{d-|J|}\int_{\mathbb{R}^d} \{Lk_1\}^{J} \partial_{k_1}^{J}\left(g(t\Omega_k(k_1)) F(k_1)\right) dk_1
    \\
    = &O\left(L^{d-|J|}\int_{\mathbb{R}^d}  |\partial_{k_1}^{J}\left(g(t\Omega_k(k_1))F(k_1)|\right) dk_1\right)
    \\
    = &O\left(\sum^{|J|}_{j=1}L^{d-j}t^{j}\int_{\mathbb{R}^d}  g^{(j)}(t\Omega_k(k_1))F^{(j)}(k_1) dk_1 \right)
\end{split}
\end{equation}
Here in the second equality we apply the fact that $\{Lk_1\}\le 1$. In the last line we define $g^{(j)}=\sum_{j'\le j} \left|\frac{d^{j'}}{d^{j'}s}g(s)\right|$ and $F^{(j)}(k_1)=\sum_{|J'|\le j}|\partial^{J'}_{k_1}F(k_1)|$ (Here $J'$ is a multi-index) and we also use the fact that $|\partial_{k_1}^{j}(g(t\Omega_k(k_1)))|\le t^{j} g^{(j)}(t\Omega_k(k_1))$. 

Now we just need to show that 
\begin{equation}\label{eq.asymptoticlemmaeq3}
    \int g(t\Omega_k(k_1)) F(k_1) dk_1 =ct^{-1} \int F(k_1) \delta(\Omega_k(k_1)) dk_1+ O(t^{-2})
\end{equation}
and 
\begin{equation}\label{eq.asymptoticlemmaeq4}
    \int_{\mathbb{R}^d}  g^{(j)}(t\Omega_k(k_1))F^{(j)}(k_1) dk_1 \le t^{-1}
\end{equation}

In fact, if we substitute \eqref{eq.asymptoticlemmaeq3} into \eqref{eq.asymptoticlemmaeq1}, we get the main term in \eqref{eq.asymptotics}. If we substitute \eqref{eq.asymptoticlemmaeq4} into \eqref{eq.asymptoticlemmaeq2}, we know that the error terms come from \eqref{eq.asymptoticlemmaeq2} can be bounded by $\sum^{|J|}_{j=1}L^{d-j}t^{j-1}\le |J| L^{d-1}$. Therefore, \eqref{eq.asymptoticlemmaeq3} and \eqref{eq.asymptoticlemmaeq4} implies the lemma.

By coarea formula we know that
\begin{equation}
    \int g(t\Omega_k(k_1)) F(k_1) dk_1 =\int g(t\omega) \underbrace{\left(\int F(k_1) \delta(\Omega_k(k_1)-\omega)dk_1\right) }_{h(\omega)} d\omega 
\end{equation}

Notice that $h(\omega)$ is differentiable, then we have
\begin{equation}
\begin{split}
    \int g(t\omega)h(\omega) d\omega=&\int g(t\omega)(h(\omega)-h(0)) d\omega + h(0)\int g(t\omega)d\omega
    \\
    = &ct^{-1}h(0) +O(t^{-2}). 
\end{split}
\end{equation}

Therefore, 
\begin{equation}
\begin{split}
    &\int g(t\Omega_k(k_1)) F(k_1) dk_1 =\int g(t\omega)h(\omega)  d\omega 
    \\
    =&ct^{-1}h(0) +O(t^{-2})=ct^{-1}\left(\int F(k_1) \delta(\Omega_k(k_1))dk_1\right)  +O(t^{-2})
\end{split}
\end{equation}
Using the same argument we can also prove \eqref{eq.asymptoticlemmaeq4}, so we complete the proof of the Theorem \ref{th.numbertheory}.
\end{proof}




\section{High Dimensional Euler-Maclaurin Formula}

\begin{thm}
Assume that $J=(j_1,\cdots,j_d)$ is a multi-index. Given a vector $K=(K^{(1)},\cdots,K^{(d)})$, define $K^J=\left(K^{(1)}\right)^{j_1}\cdots\left(K^{(d)}\right)^{j_d}$. Given a number $a$, $\{a\}=a-[a]$ is its fractional part, $\{K\}\defeq(\{K^{(1)}\},\cdots,\{K^{(d)}\})$. We also define $|J|_{\infty}=\sup_{1\le n\le d} j_{n}$, $|J|=\sum_{1\le n\le d} j_{n}$. Then we have

\begin{equation}\label{eq.EulerMaclaurin}
    \sum_{K\in\mathbb{Z}^d} f(K)=\int_{\mathbb{R}^d} f(K)dK+\sum_{ |J|_{\infty} = 1}\int_{\mathbb{R}^d} \{K\}^{J} \partial_K^{J}f(K) dK
\end{equation}

\end{thm}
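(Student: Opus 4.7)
The plan is to reduce the $d$-dimensional identity to the one-dimensional Euler-Maclaurin formula applied coordinate-by-coordinate, exploiting the fact that $f$ is assumed to decay (or have compact support), so that boundary-at-infinity terms vanish and the usual $B_1(\{x\}) = \{x\} - 1/2$ in the standard statement collapses to just $\{x\}$.

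First I would prove the one-dimensional case: for $f \in C^1(\mathbb{R})$ with sufficient decay,
\begin{equation*}
\sum_{n \in \mathbb{Z}} f(n) = \int_{\mathbb{R}} f(x)\,dx + \int_{\mathbb{R}} \{x\}\, f'(x)\,dx.
\end{equation*}
This is immediate from integration by parts on each unit interval: on $[n,n+1]$, $\{x\} = x - n$, so
\begin{equation*}
\int_n^{n+1} \{x\}\, f'(x)\,dx = \bigl[(x-n)f(x)\bigr]_n^{n+1} - \int_n^{n+1} f(x)\,dx = f(n+1) - \int_n^{n+1} f(x)\,dx,
\end{equation*}
and summing over $n \in \mathbb{Z}$ (decay of $f$ kills all boundary contributions at $\pm\infty$) gives the claim after a reindexing.

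Next I would obtain the $d$-dimensional version by iterating the 1D formula in each coordinate and using Fubini. Write $K = (K^{(1)}, \ldots, K^{(d)})$ and apply the 1D identity to the variable $K^{(1)}$ with the other coordinates frozen; this produces
\begin{equation*}
\sum_{K^{(1)} \in \mathbb{Z}} f(K) = \int_{\mathbb{R}} f(K)\,dK^{(1)} + \int_{\mathbb{R}} \{K^{(1)}\}\,\partial_{K^{(1)}} f(K)\,dK^{(1)}.
\end{equation*}
Now sum over $K^{(2)} \in \mathbb{Z}$ and apply the 1D identity again to each of the two resulting terms (in the variable $K^{(2)}$), which yields four terms corresponding to $J \in \{0,1\}^2$. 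Continuing inductively for $n = 3, \ldots, d$ produces $2^d$ terms, one for each $J \in \{0,1\}^d$, in which each chosen coordinate contributes a factor $\{K^{(n)}\}\,\partial_{K^{(n)}}$ and each unchosen coordinate contributes nothing. The term with $J = 0$ is the full integral $\int_{\mathbb{R}^d} f$, and the remaining $2^d - 1$ terms are exactly those appearing in the sum $\sum_{|J|_\infty = 1}$ of the statement.

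There is essentially no real obstacle: the only care needed is to confirm that differentiation under the integral sign and the interchanges of sum and integral at each coordinate step are justified by the assumed regularity and decay of $f$ (which in the applications in this paper is guaranteed because $f$ is either compactly supported, as in the proof of Theorem \ref{th.numbertheory}, or summable together with all its partial derivatives of order $\le d$). Since each step only requires that $\partial_K^J f$ be integrable for $|J|_\infty \le 1$, the inductive scheme closes cleanly and the identity \eqref{eq.EulerMaclaurin} follows.
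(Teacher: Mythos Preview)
Your proof is correct and follows essentially the same approach as the paper: both establish the one-dimensional case by integration by parts on unit intervals and then lift to dimension $d$ by applying the 1D identity one coordinate at a time (the paper phrases this as induction from $d$ to $d+1$, you phrase it as iteration, but the content is identical). Your 1D computation and the observation that the $2^d-1$ nonzero $J \in \{0,1\}^d$ are exactly the multi-indices with $|J|_\infty = 1$ match the paper's argument.
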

\begin{proof}
This can be proved by induction.

When $d=1$, \eqref{eq.EulerMaclaurin} becomes
\begin{equation}
    \sum_{K\in\mathbb{Z}} f(K)=\int_{\mathbb{R}} f(K)dK+\int_{\mathbb{R}} \{K\} \partial_K f(K) dK
\end{equation}

This is the standard Euler-Maclaurin formula which can be proved by integration by parts in the second integral of the right hand side.

If the formula is true in dimension $d$, we prove it for dimension $d+1$. Assume that $K=(\widetilde{K},K^{(d+1)})$ is a $d+1$ dimensional vector, then apply the induction assumption 
\begin{equation}
    \sum_{\widetilde{K}\in\mathbb{Z}^d} f(\widetilde{K},K^{(d+1)})=\int_{\mathbb{R}^d} f(\widetilde{K},K^{(d+1)})d\widetilde{K}+\sum_{ |\widetilde{J}|_{\infty} = 1}\int_{\mathbb{R}^d} \{\widetilde{K}\}^{\widetilde{J}} \partial_{\widetilde{K}}^{\widetilde{J}}f(\widetilde{K},K^{(d+1)}) d\widetilde{K}.
\end{equation}

Sum over $K^{(d+1)}$ and apply $d=1$ Euler-Maclaurin formula we get
\begin{equation}
\begin{split}
    &\sum_{K^{(d+1)}\in\mathbb{Z}}\sum_{\widetilde{K}\in\mathbb{Z}^d} f(\widetilde{K},K^{(d+1)})
    \\
    =&\int_{\mathbb{R}^d} \sum_{K^{(d+1)}\in\mathbb{Z}}f(\widetilde{K},K^{(d+1)})d\widetilde{K}+\sum_{ |\widetilde{J}|_{\infty} = 1}\int_{\mathbb{R}^d} \{\widetilde{K}\}^{\widetilde{J}} \partial_{\widetilde{K}}^{\widetilde{J}}\sum_{K^{(d+1)}\in\mathbb{Z}}f(\widetilde{K},K^{(d+1)}) d\widetilde{K}.
    \\
    =&\int_{\mathbb{R}^d} \int_{\mathbb{R}}f(\widetilde{K},K^{(d+1)})dK^{(d+1)} d\widetilde{K}+\int_{\mathbb{R}^d} \int_{\mathbb{R}}\{K^{(d+1)}\}\partial_{K^{(d+1)}}f(\widetilde{K},K^{(d+1)})dK^{(d+1)} d\widetilde{K}
    \\
    +&\sum_{ |\widetilde{J}|_{\infty} = 1}\int_{\mathbb{R}^d}\int_{\mathbb{R}} \{\widetilde{K}\}^{\widetilde{J}} \partial_{\widetilde{K}}^{\widetilde{J}}f(\widetilde{K},K^{(d+1)})dK^{(d+1)} d\widetilde{K}
    \\
    +&\sum_{ |\widetilde{J}|_{\infty} = 1}\int_{\mathbb{R}^d}\int_{\mathbb{R}}\{K^{(d+1)}\}\partial_{K^{(d+1)}} \{\widetilde{K}\}^{\widetilde{J}} \partial_{\widetilde{K}}^{\widetilde{J}}f(\widetilde{K},K^{(d+1)})dK^{(d+1)} d\widetilde{K}.
    \\
    =&\int_{\mathbb{R}^{d+1}} f(K)dK + \sum_{ |J|_{\infty} = 1}\int_{\mathbb{R}^d} \{K\}^{J} \partial_K^{J}f(K) dK
\end{split}
\end{equation}
In the last step, $J=(\widetilde{J},j^{(d+1)})$. In the second equality, the second term corresponds to $\widetilde{J}=0$ and $j^{(d+1)}=1$, the third term corresponds to $\widetilde{J}\ne 0$ and $j^{(d+1)}=0$ and the fourth term corresponds to $\widetilde{J}\ne 0$ and $j^{(d+1)}=1$.
\end{proof}

\noindent {\bf Acknowledgments.}
The author would like to thank Tristan Buckmaster, Yu Deng, Zaher Hani and his advisor Alexandru Ionescu for many helpful discussions. The author also thanks Gigliola Staffilani and Minh-Binh Tran for the discussion on their deep paper \cite{ST}.

\bigskip

\Addresses

 \end{document}